\documentclass[3p]{elsarticle}

\usepackage{amsmath,amssymb,amsthm}
\usepackage{bm}
\usepackage{xcolor}
\usepackage{tikz-cd}
\usepackage{lineno,hyperref} 
\modulolinenumbers[5]
\journal{}
\newdefinition{algorithm}{Algorithm}

\newcommand{\dimM}{m}
\newcommand{\RKHS}{\mathcal{H}}
\newcommand{\VG}{A}
\newcommand{\KVKst}{W}

\newcommand{\Nyst}{\mathcal{N}}

\newcommand{\GV}{B}
\newcommand{\symV}{\tilde{V}}
\newcommand{\EigenW}{\zeta}
\newcommand{\EigenGV}{z}
\newcommand{\EigApp}{\tilde{z}}
\newcommand{\real}{\mathbb{R}}
\newcommand{\cmplx}{\mathbb{C}}
\newcommand{\num}{\mathbb{N}}

\newcommand{\SpectMes}{E}

\newcommand{\MesW}{\mathcal{E}}
\newcommand{\MesVComp}{\tilde{E}}
\newcommand{\Dirich}{\mathcal{D}}
\newcommand{\iotaC}{\iota^{(1)}}

\DeclareMathOperator{\tr}{tr}
\DeclareMathOperator{\Real}{Re}
\DeclareMathOperator{\grad}{grad}
\DeclareMathOperator{\vol}{vol}
\DeclareMathOperator{\spn}{span}
\DeclareMathOperator{\supp}{supp}
\DeclareMathOperator{\ran}{ran}
\DeclareMathOperator{\diag}{diag}

\DeclareMathOperator{\proj}{proj}
\DeclareMathOperator{\Id}{Id}

\newcommand{\Ep}{E_{p}}
\newcommand{\Ec}{E_{c}}
\newcommand{\Hp}{H_{p}}
\newcommand{\Hc}{H_{c}}
\newcommand{\BR}{\mathcal{B}(\mathbb{R})}
\newcommand{\Borel}{\mathcal{B}}

\newcommand{\blue}{\textcolor{black}}
\newtheorem{theorem}{Theorem}
\newtheorem{prop}[theorem]{Proposition}
\newtheorem{lemma}[theorem]{Lemma}
\newtheorem{cor}[theorem]{Corollary}
\newdefinition{exmp}[theorem]{Example}
\newdefinition{rk}[theorem]{Remark}
\newdefinition{defn}[theorem]{Definition}
\newtheorem{Assumption}{Assumption}
\theoremstyle{remark}
\newtheorem*{rk*}{Remark}

\newcounter{tempEnumi}



\bibliographystyle{elsarticle-num}
\biboptions{sort&compress}


\begin{document}

\begin{frontmatter}
\title{Reproducing kernel Hilbert space compactification \\of unitary evolution groups}
\author[nyu]{Suddhasattwa Das}
\author[nyu]{Dimitrios Giannakis\corref{mycorrespondingauthor}}
\ead{dimitris@cims.nyu.edu}
\cortext[mycorrespondingauthor]{Corresponding author}
\author[fin]{Joanna Slawinska}

\address[nyu]{Courant Institute of Mathematical Sciences, New York University, New York, NY 10012, USA}
\address[fin]{Finnish Center for Artificial Intelligence, Department of Computer Science, University of Helsinki, Helsinki, Finland}

\begin{abstract}
    A framework for coherent pattern extraction and prediction of observables of measure-preserving, ergodic dynamical systems with both atomic and continuous spectral components is developed. This framework is based on an approximation of the generator of the system by a compact operator $W_\tau$ on a reproducing kernel Hilbert space (RKHS). A key element of this approach is that $W_\tau$ is skew-adjoint (unlike regularization approaches based on the addition of diffusion), and thus  can be characterized by a unique projection-valued measure, discrete by compactness, and an associated orthonormal basis of eigenfunctions. These eigenfunctions can be ordered in terms of a Dirichlet energy on the RKHS, and provide a notion of coherent observables under the dynamics akin to the Koopman eigenfunctions associated with the atomic part of the spectrum. In addition, the regularized generator has a well-defined Borel functional calculus allowing the construction of a unitary evolution group $\{ e^{t W_\tau} \}_{t\in\mathbb{R}}$ on the RKHS, which approximates the unitary Koopman evolution group of the original system. We establish convergence results for the spectrum and Borel functional calculus of the regularized generator to those of the original system in the limit $\tau \to 0^+$. Convergence results are also established for a data-driven formulation, where these operators are approximated using finite-rank operators obtained from observed time series. An advantage of working in spaces of observables with an RKHS structure is that one can perform pointwise evaluation and interpolation through bounded linear operators, which is not possible in $L^p$ spaces. This enables the evaluation of data-approximated eigenfunctions on previously unseen states, as well as data-driven forecasts initialized with pointwise initial data (as opposed to probability densities in $L^p$).  The pattern extraction and prediction framework is numerically applied to ergodic dynamical systems with atomic and continuous spectra, namely a quasiperiodic torus rotation, the Lorenz 63 system, and the R\"ossler system.
\end{abstract}

\begin{keyword}
  Koopman operators, Perron-Frobenius operators, ergodic dynamical systems, reproducing kernel Hilbert spaces, spectral theory 
\end{keyword}

\end{frontmatter}
\section{Introduction} \label{sec:intro}

Characterizing and predicting the evolution of observables of dynamical systems is an important problem in the mathematical, physical, and engineering sciences, both theoretically and from an applications standpoint. A framework that has been gaining popularity \cite{DellnitzJunge99,DellnitzEtAl00,MezicBanaszuk04,Mezic05,FroylandPadberg09,RowleyEtAl09,Schmid10,Froyland2013,FroylandEtAl14b,FroylandEtAl14,TuEtAl14,BerryEtAl15,GiannakisEtAl15,WilliamsEtAl15,BruntonEtAl17,ArbabiMezic17,Giannakis17,DasGiannakis_delay_2019,GiannakisDas_tracers,KordaMezic18,KordaEtAl18,DasGiannakis_RKHS_2018} is the operator-theoretic approach to ergodic theory \cite{BudisicEtAl12,HadjighasemEtAl17,EisnerEtAl15},  where instead of directly studying the properties of the dynamical flow on state space, one characterizes the dynamics through its action on linear spaces of observables. The two classes of operators that have been predominantly employed in these approaches are the Koopman and Perron-Frobenius (transfer) operators, which are duals to one another when defined on appropriate spaces of functions and measures, respectively. It is a remarkable fact, realized in the work of Koopman in the 1930s  \cite{Koopman31}, that the action of a general nonlinear system on such spaces can be characterized through linear evolution operators, acting on observables by composition with the flow. Thus, despite the potentially nonlinear nature of the dynamics, many relevant problems, such as coherent pattern detection, statistical prediction, and control, can be formulated as intrinsically linear problems, making the full machinery of functional analysis available to construct stable and convergent approximation techniques.

The Koopman operator $ U^t $ associated with a continuous-time, continuous flow $ \Phi^t: M \to M $ on a manifold $M$ acts on functions by composition, $ U^t f = f \circ \Phi^t $. It is a \blue{contractive} operator on the Banach space $C^0(M)$ of bounded continuous functions on $M$, and  a unitary operator on the Hilbert space $L^2(\mu) $ associated with any invariant Borel probability measure $\mu$. Our main focus will be the latter Hilbert space setting, in which $ U = \{ U^t \}_{t\in\real} $ becomes a unitary evolution group. In this setting, it is merely a matter of convention to consider Koopman operators instead of transfer operators, for the action of the transfer operator at time $t$ on densities of measures in $L^2(\mu)$ is given by the adjoint $ U^{t*} = U^{-t} $ of $U^t$. \blue{We will also assume that the invariant measure $\mu$ is ergodic.} 

In this work, we seek to address the following two broad classes of problems: 
\begin{enumerate}[(i)]
    \item \emph{Coherent pattern extraction;} that is, identification of a collection of observables in $L^2(\mu)$ having high regularity and an \blue{approximately periodic evolution under $U^t$. A precise notion of coherent observables stated in terms of Koopman eigenfunctions, or approximate Koopman eigenfunctions, will be given in \eqref{eqKoopEig} and \eqref{eqAPS_Ut}, respectively.}
    \item \emph{Prediction;} that is, approximation of $U^{t} f$ at arbitrary $t \in \real$ for a fixed observable $f\in L^2(\mu)$.
\end{enumerate}
Throughout, we require that the methods to address these problems are data-driven; i.e., they only utilize information from the values of a function $ F : M \to Y $ taking values in a data space $ Y$, sampled finitely many times along an orbit of the dynamics. 

\subsection{Spectral characterization of unitary evolution groups} 

By Stone's theorem on one-parameter unitary groups  \cite{Stone1932,Schmudgen12}, the Koopman group $U$ is completely characterized by its generator---a densely defined, skew-adjoint, unbounded operator $ V : D(V) \to L^2(\mu) $ with domain $ D(V) \subset L^2(\mu) $ and 
\[
  V f = \lim_{t \to 0} \frac{ U^t f - f }{ t }, \quad f \in D(V).
\]
In particular, associated with $V$ is a unique projection-valued measure (PVM) $ \SpectMes: \BR \to \mathcal{L}(L^2(\mu)) $ acting on the Borel $\sigma$-algebra $ \BR $ on the real line and taking values in the space $\mathcal{L}(L^2(\mu)) $ of bounded operators on $L^2(\mu)$, such that 
\begin{equation}  \label{eqn:V_Spec}
  V = \int_\real i \omega \, dE(\omega), \quad U^t = \int_{\real} e^{i\omega t} \, dE(\omega).
\end{equation}
The latter relationship expresses the Koopman operator at time $ t $ as an exponentiation of the generator, $ U^t = e^{tV} $, which can be thought of as operator-theoretic analog of the exponentiation of a skew-symmetric matrix yielding a unitary matrix. In fact, the map $ V \mapsto e^{tV}$ is an instance of the Borel functional calculus, whereby one lifts a Borel-measurable function $ Z : i\real \to \cmplx$ on the imaginary line $i \real \subset \cmplx$, to an operator-valued function 
\begin{equation}\label{eqn:BorelFunc}
    Z(V) = \int_{\real} Z(i\omega) \, dE(\omega), 
\end{equation}
acting on the skew-adjoint operator $V$ via an integral against its corresponding PVM $E$. 

The spectral representation of the unitary Koopman group can be further refined by virtue of the fact that $L^2(\mu)$ admits the $ U^t $-invariant orthogonal splitting 
\begin{equation}  \label{eqHDecomp}
  L^2(\mu) = \Hp \oplus \Hc, \quad \Hc = \Hp^\perp, 
\end{equation}
where $\Hp $ and $ \Hc $ are closed orthogonal subspaces of $L^2(\mu)$ associated with the atomic (point) and continuous components of $ E $, respectively. On these subspaces, there exist unique PVMs $ \Ep : \BR \to \mathcal{L}(\Hp)$ and $ \Ec : \BR \to \mathcal{L}(\Hc) $, respectively, where $ \Ep $ is atomic and $ \Ec $ is continuous, yielding the decomposition 
\begin{equation}
  \label{eqSpecDecomp} 
  E = \Ep \oplus \Ec.
\end{equation}   
We will refer to $ \Ep $ and $ \Ec $ as the point and continuous spectral components of $ E $, respectively. 

The subspace $ \Hp $ is the closed linear span of the eigenspaces of $ V $ (and thus of $ U^t $). Correspondingly, the atoms of $ \Ep $, i.e., the singleton sets $ \{ \omega_j \} \subset \real $ for which $ \Ep(\{\omega_j\}) \neq 0 $, contain the eigenfrequencies of the generator. In particular, for every such $ \omega_j $, $ E_p( \{ \omega_j \} ) $ is equal to the orthogonal projector to the  eigenspace of $ V $ at eigenvalue $ i \omega_j $, and  all such eigenvalues are simple by ergodicity of the invariant measure $\mu$. As a result, $ \Hp$ admits an orthonormal basis $ \{ z_j \} $ satisfying
\begin{equation}   \label{eqKoopEig}
  V z_j = i \omega_j z_j, \quad U^t z_j = e^{i \omega_j t } z_j, \quad U^t f = \sum_j e^{i \omega_j t } \langle z_j , f \rangle_{\mu} z_j, \quad \forall f\in \Hp,
\end{equation}
where $\langle \cdot,\cdot \rangle_{\mu}$ is the inner product on $L^2(\mu)$. It follows from the above that the Koopman eigenfunctions form a distinguished orthonormal basis of $ \Hp $, whose elements evolve under the dynamics by multiplication by a periodic phase factor at a distinct frequency $ \omega_j $, even if the underlying dynamical flow is nonlinear and aperiodic. In contrast, observables  $ f \in \Hc $ do not exhibit an analogous quasiperiodic evolution, and are characterized instead by a weak-mixing property (decay of correlations), typical of chaotic dynamics, 
\begin{displaymath}
    \frac{1}{t} \int_0^t \lvert \langle g, U^s f \rangle_{\mu} \rvert\, ds \xrightarrow[t\to\infty]{} 0, \quad \forall g \in L^2(\mu). 
\end{displaymath}

\subsection{Pointwise and spectral approximation techniques} 

While the two classes of pattern extraction and prediction problems listed above are obviously related by the fact that they involve the same evolution operators, in some aspects they are fairly distinct, as for the latter it is sufficient to perform pointwise (or even weak) approximations of the operators, whereas the former are fundamentally of a spectral nature. In particular, observe that a convergent approximation technique for the prediction problem can be constructed by taking advantage of the fact that $U^t $ is a bounded (and therefore continuous) linear operator, without explicit consideration of its spectral properties. That is, given an arbitrary orthonormal basis $\{ \phi_0, \phi_1, \ldots \} $ of $L^2(\mu)$ with associated orthogonal projection operators $ \Pi_L : L^2(\mu) \to \spn\{ \phi_0, \ldots, \phi_{L-1} \} $, the finite-rank operator $U^t_L = \Pi_L U^t \Pi_L $ is fully characterized by the matrix elements $U^t_{ij} = \langle \phi_i, U^t \phi_j \rangle_{\mu}$ with $0 \leq i,j \leq L-1$, and by continuity of $U^t$, the sequence of operators $U^t_L$ converges pointwise to $U^t$. Thus, if one has access to data-driven approximations $U^t_{N,ij}$ of $U^t_{ij}$ determined from $N$ measurements of $F$ taken along an orbit of the dynamics, and these approximations converge as $N\to\infty$, then, as $ L\to\infty $ and $N \gg L$, the corresponding finite-rank operators $U^t_{N,L}$ converge  pointwise to $U^t$. 

This property was employed in \cite{BerryEtAl15} in a technique called diffusion forecasting, whereby the approximate matrix elements $U^t_{N,ij}$ are evaluated in a data-driven basis constructed from samples of $F$ using the diffusion maps algorithm  (a kernel algorithm for manifold learning) \cite{CoifmanLafon06}. By spectral convergence results for kernel integral operators \cite{VonLuxburgEtAl08} and ergodicity, as $N\to\infty$, the data-driven basis functions converge to an orthonormal basis of $L^2(\mu)$ in an appropriate sense, and thus the corresponding approximate Koopman operators $U^t_{N,L}$ converge pointwise to $U^t$ as described above. In \cite{BerryEtAl15}, it was demonstrated that diffusion forecasts of observables of the Lorenz 63 (L63) system \cite{Lorenz63} have skill approaching that of ensemble forecasts using the true model, despite the fact that the Koopman group in this case has a purely continuous spectrum (except from the trivial eigenfrequency at 0). Pointwise-convergent approximation techniques for Koopman operators were also studied in \cite{KlusEtAl16,KordaMezic18}, in the context of extended dynamic mode decomposition (EDMD) algorithms \cite{WilliamsEtAl15}. However, these methods require the availability of an orthonormal basis of $L^2(\mu)$ of sufficient regularity, which, apart from special cases, is difficult to have in practice (particularly when the support of $\mu$ is an unknown, measure-zero subset of the ambient state space $M$).

Of course, this is not to say that the spectral decomposition in~\eqref{eqSpecDecomp} is irrelevant in a prediction setting, for it reveals that an orthonormal basis of $L^2(\mu)$ that splits between the invariant subspaces $\Hp$ and $\Hc$ would yield a more efficient representation of $U^t$ than an arbitrary basis. This representation could be made even more efficient by choosing the basis of $\Hp$ to be a Koopman eigenfunction basis (e.g., \cite{Giannakis17}). Still, so long as a method for approximating a basis of $L^2(\mu)$ is available, arranging for compatibility of the basis with the spectral decomposition of $U^t$ is a matter of optimizing performance rather than ensuring convergence.

In contrast, as has been recognized since the earliest techniques in this area \cite{DellnitzJunge99,DellnitzEtAl00,MezicBanaszuk04,Mezic05}, in coherent pattern extraction problems the spectral properties of the evolution operators play a crucial role from the outset. In the case of measure-preserving ergodic dynamics studied here, the Koopman eigenfunctions in~\eqref{eqKoopEig} provide a natural notion of temporally coherent observables that capture intrinsic frequencies of the dynamics. Unlike the eigenfunctions of other operators commonly used in data analysis (e.g., the covariance operators employed in the proper orthogonal decomposition \cite{AubryEtAl91}), Koopman eigenfunctions have the property of being independent of the observation map $F$, thus leading to a definition of coherence that is independent of the observation modality used to probe the system. In applications in fluid dynamics \cite{RowleyEtAl09,GiannakisEtAl18}, climate dynamics \cite{SlawinskaGiannakis16}, and many other domains, it has been found that the patterns recovered by Koopman eigenfunction analysis have high physical interpretability and ability to recover dynamically significant timescales from multiscale input data.

\subsection{Review of existing methodologies} 

Despite the attractive theoretical properties of evolution operators, the design of data-driven spectral approximation techniques that can naturally handle both point and continuous spectra, with rigorous convergence guarantees, is challenging, and several open problems remain. As an illustration of these challenges, and to place our work in context, it is worthwhile noting that besides approximating the continuous spectrum (which is obviously challenging), rigorous approximation of the atomic spectral component $ \Ep $ is also non-trivial, since, apart from the case of circle rotations, it is concentrated on a dense, countable subset of the real line. In applications, the density of the atomic part of the spectrum and the possibility of the presence of a continuous spectral component  necessitate  the use of some form of regularization to ensure well-posedness of spectral approximation schemes. In the transfer operator literature, the use of regularization techniques such as  domain restriction to function spaces where the operators are quasicompact \cite{DellnitzEtAl00}, or compactification by smoothing by kernel integral operators \cite{Froyland2013}, has been prevalent, though these methods generally require more information than the single observable time series assumed to be available here. On the other hand, many of the popular techniques in the Koopman operator literature, including the dynamic mode decomposition (DMD) \cite{Schmid10,RowleyEtAl09} and EDMD \cite{WilliamsEtAl15} do not explicitly consider regularization, and instead implicitly regularize the operators by projection onto finite-dimensional subspaces (e.g., Krylov subspaces and subspaces spanned by general dictionaries of observables). \blue{Despite the practical simplicity of this approach, controlling its asymptotic behavior as the dimension of the approximation space increases is difficult; see, e.g., Figure~\ref{figZetaTauTorus} below.} 

To our knowledge, the first spectral convergence results for EDMD \cite{ArbabiMezic17} were obtained for a variant of the framework called Hankel-matrix DMD \cite{BruntonEtAl17}, which employs dictionaries constructed by application of delay-coordinate maps \cite{SauerEtAl91} to the observation function. However, these results are based on an assumption that the observation map lies in a finite-dimensional Koopman invariant subspace (which must be necessarily a subspace of $ \Hp $); an assumption unlikely to hold in practice. This assumption is relaxed in \cite{KordaMezic18}, who establish weak spectral convergence results implied by strongly convergent  approximations of the Koopman operator derived through EDMD. This approach makes use of an a priori known orthonormal basis of $L^2(\mu)$, the availability of which is not required in Hankel-matrix DMD.

A fairly distinct class of approaches to (E)DMD perform spectral estimation for Koopman operators using harmonic analysis techniques \cite{MezicBanaszuk04,Mezic05,KordaEtAl18,DasGiannakis_RKHS_2018}. Among these, \cite{MezicBanaszuk04,Mezic05} consider a spectral decomposition of the Koopman operator closely related to~\eqref{eqSpecDecomp}, though expressed in terms of spectral measures on $S^1 $ as appropriate for unitary operators, and utilize harmonic averaging (discrete Fourier transform) techniques to estimate eigenfrequencies and the projections of the data onto Koopman eigenspaces. While this approach can theoretically recover the correct eigenfrequencies corresponding to eigenfunctions with nonzero projections onto the observation map, its asymptotic behavior in the limit of large data exhibits a highly singular dependence on the frequency employed for harmonic averaging---this hinders the construction of practical algorithms that converge to the true eigenfrequencies by examining candidate eigenfrequencies in finite sets. The method also does not address the problem of approximating the continuous spectrum, or the computation of Koopman eigenfunctions on the whole state space (as opposed to eigenfunctions computed on orbits). The latter problem was addressed in \cite{DasGiannakis_RKHS_2018}, who employed the theory of reproducing kernel Hilbert spaces (RKHSs) \cite{FerreiraMenegatto2013,PaulsenRaghupathi2016} to identify conditions for a candidate frequency $ \omega \in \real $ to be a Koopman eigenfrequency based on the RKHS norm of the corresponding Fourier function $e^{i\omega t}$ sampled on an orbit. For the frequencies meeting these criteria, they constructed pointwise-defined Koopman eigenfunctions in RKHS using out-of-sample extension techniques \cite{CoifmanLafon06b}. While this method also suffers from a singular behavior in $ \omega $, it was found to significantly outperform conventional harmonic averaging techniques, particularly in mixed-spectrum systems with non-trivial atomic and continuous spectral components simultaneously present. However, the question of approximating the continuous spectrum remains moot. RKHS-based approaches for spectral analysis of Koopman operators have also been proposed in \cite{Kawahara16,KlusEtAl18}, though these methods rely on the strong assumption that the Koopman operator maps the RKHS into itself. The latter is known to be satisfied only in special cases, such as RKHSs with flow-invariant reproducing kernels \citep[][Corollary~9]{DasGiannakis_RKHS_2018}. In \cite{KordaEtAl18}, a promising approach for estimating both the atomic and continuous parts of the spectrum  was introduced, based on spectral moment estimation techniques. This approach consistently approximates the spectral measure of the Koopman operator on the cyclic subspace associated with a given scalar-valued observable, and is also capable of identifying its atomic, absolutely continuous, and singular continuous components. However, since it operates on cyclic subspaces associated with individual observables, it is potentially challenging to extend to applications involving a high-dimensional data space $Y$, including spatiotemporal systems where the dimension of $Y$ is formally infinite.  

In \cite{GiannakisEtAl15,Giannakis17,DasGiannakis_delay_2019} a different approach was taken, focusing on approximations of the eigenvalue problem for the skew-adjoint generator $V$, as opposed to the unitary Koopman operators $U^t$, in an orthonormal basis of an invariant subspace of $\Hp$ (of possibly infinite dimension) learned from observed data via kernel algorithms \cite{BelkinNiyogi03,CoifmanLafon06,VonLuxburgEtAl08,BerryHarlim16,BerrySauer16b} as in diffusion forecasting.  A key ingredient of these techniques is a family $K_1, K_2, \ldots $ of kernel integral operators on $L^2(\mu)$ constructed from delay-coordinate-mapped data with $Q$ delays, such that, in the infinite-delay limit, $K_Q$ converges in norm to a compact integral operator $K_\infty:L^2(\mu)\to L^2(\mu)$ commuting with $U^t$ for all $t\in \real$. Because commuting operators have common eigenspaces, and the eigenspaces of compact operators at nonzero corresponding eigenvalues are finite-dimensional, the eigenfunctions of $K_\infty$ (approximated by eigenfunctions of $K_Q$ at large $Q$) provide a highly efficient basis to perform Galerkin approximation of the Koopman eigenvalue problem. In \cite{GiannakisEtAl15,Giannakis17,DasGiannakis_delay_2019}, a well-posed variational eigenvalue problem was formulated by regularizing the raw generator $V$ by the addition of a small amount of diffusion, represented by a positive-semidefinite self-adjoint operator $\Delta : D(\Delta) \to L^2(\mu)$ on a suitable domain $D(\Delta) \subset D(V) $. This leads to an advection-diffusion operator
\begin{equation}
    \label{eqL}
    L = V - \theta \Delta, \quad \theta > 0,
\end{equation}
whose eigenvalues and eigenfunctions can be computed through provably convergent Galerkin schemes based on classical approximation theory for variational eigenvalue problems \cite{BabuskaOsborn91}. The diffusion operator in~\eqref{eqL} is constructed so as to compute with $V$, so that every eigenfunction of $L$ is a Koopman eigenfunction, with eigenfrequency equal to the imaginary part of the corresponding eigenvalue. Moreover, it was shown that the variational eigenvalue problem for $ L $ can be consistently approximated \blue{from time series data acquired via a generic observation map}.

Advection-diffusion operators as in~\eqref{eqL} can, in some cases,  also provide a notion of coherent observables in the continuous spectrum subspace $\Hc$, although from this standpoint the results are arguably not very satisfactory. In particular, it follows from results in \cite{FrankeEtAl10} that if the support $X\subseteq M$ of the invariant measure $\mu$ has manifold structure, and $\Delta $ is chosen to be a Laplacian or weighted Laplacian for a suitable Riemannian metric, then the spectrum of $L$ contains only isolated eigenvalues, irrespective of the presence of continuous spectrum \cite{Giannakis17}. However, if $ V $ has a non-empty continuous spectrum, then there exists no smooth Riemannian metric whose corresponding Laplacian commutes with $ V $, meaning that $L$ is necessarily non-normal. The spectra of non-normal operators can have several undesirable, or difficult to control, properties, including extreme sensitivity to perturbations and failure to have a complete basis of eigenvectors. The behavior of $L$ is even more difficult to characterize if $X$ is not a smooth manifold, and $V$ possesses continuous spectrum.  In \cite{GiannakisEtAl15,Giannakis17,DasGiannakis_delay_2019}, these difficulties are avoided by effectively restricting $V$ to an invariant subspace of $\Hp$ through a careful choice of data-driven basis,  but this approach provides no information about the ability of the method to identify coherent observables in $\Hc$. 

Operators analogous to $L$ in \eqref{eqL}, acting on suitable spaces of distributions, have also been shown to consistently approximate the spectrum of the generator of Anosov flows \cite{Liverani05,DyatlovZworski15}, allowing, in particular, to recover Pollicott-Ruelle resonances \cite{Pollicott86,Ruelle86} in such systems through zero viscosity ($\theta \to 0$) limits. However, these approaches make extensive use of the hyperbolic structure of Anosov flows, which is not exhibited by the more general class of ergodic flows studied here. Put together, these facts motivate a different regularization approach to~\eqref{eqL} that can seamlessly handle both the point and continuous spectra of $V$, while being amenable to data-driven approximation.   
 
\subsection{Contributions of this work} 

In this paper, we propose a data-driven framework for pattern extraction and prediction in measure-preserving, ergodic dynamical systems, which retains the advantageous aspects of \cite{BerryEtAl15,GiannakisEtAl15,Giannakis17,DasGiannakis_delay_2019} through the use of kernel integral operators to provide orthonormal bases of appropriate regularity, while being naturally adapted to dynamical systems with arbitrary (pure point, mixed, or continuous) spectral characteristics. The key element of our approach is to replace the diffusion regularization in~\eqref{eqL} by a \emph{compactification} of the skew-adjoint generator $V$ of such systems (which is unbounded, and has complicated spectral behavior), mapping it to a family of compact, skew-adjoint operators $\KVKst_\tau: \RKHS_\tau \to \RKHS_\tau$, $\tau > 0 $, each acting on an RKHS $\RKHS_\tau$ of functions on the state space manifold $M$. In fact, the operators $\KVKst_\tau$ are not only compact, they are trace-class integral operators with continuous kernels. Moreover, the spaces $\RKHS_\tau$ employed in this framework are dense in $L^2(\mu)$, and have Markovian reproducing kernels. We use the unitary operator group $ \{ e^{t\KVKst_\tau} \}_{t \in\real} $ generated by $\KVKst_\tau $ as an approximation of the Koopman group $U $, and establish spectral and pointwise convergence as $\tau\to 0$ in an appropriate sense. This RKHS approach has the following advantages.
\begin{enumerate}[(i)]
    \item The fact that $\KVKst_\tau$ is skew-adjoint avoids non-normality issues, and allows decomposition of these operators in terms of unique PVMs $ \mathcal{E}_\tau : \BR \to \mathcal{L}(\RKHS_\tau) $. The existence of \blue{ $\mathcal{E}_\tau$ } allows in turn the construction of a Borel functional calculus for $\KVKst_\tau$, meaning in particular that operator exponentiation, $ e^{t\KVKst_\tau}$, is well defined. Moreover, by compactness of $\KVKst_\tau$, the measures $\mathcal E_\tau$ are purely atomic,  have bounded support, and are thus characterized by a countable set of bounded, real-valued eigenfrequencies with a corresponding orthonormal eigenbasis of $\RKHS_\tau$. \blue{The skew-adjointness of $W_\tau $ and the generator $V$ also enables the use of spectral approximation techniques based on strong convergence in a core of $V$ \cite{Oliveira2008}, which are special to skew- or self-adjoint operators and would not be available in a direct approximation of the unitary Koopman group.}   
    \item For systems that do possess nontrivial Koopman eigenfunctions, there exists a subset of the eigenfunctions of $\KVKst_\tau$ converging to them as $\tau \to 0 $. These eigenfunctions can be identified a posteriori by monitoring the growth of a Dirichlet energy functional as a function of $\tau$. Crucially, however, the eigenfunctions of $\KVKst_\tau$ provide a basis for the whole of $L^2(\mu)$, including the continuous spectrum subspace $\Hc$, that evolves under the dynamics as an approximate Koopman eigenfunction basis. 
    \item The evaluation of $e^{t\KVKst_\tau}$ in the eigenbasis of $\KVKst_\tau$ leads to a stable and efficient scheme for forecasting observables, which can be initialized with pointwise initial data in $M$. This improves upon diffusion forecasting \cite{BerryEtAl15}, as well as comparable prediction techniques operating directly on $L^2(\mu)$, which produce ``weak'' forecasts (i.e., expectation values of observables with respect to probability densities in $L^2(\mu)$). \blue{In addition, being based on an approximation of the generator, the evolution of observables under $ e^{t W_\tau}$ is of a fundamentally generative nature, in contrast with direct approximations of the action of the Koopman group on fixed target observables \cite{AlexanderGiannakis20} which would typically be of an interpolatory/discriminative nature.} 
    \item Our framework is well-suited for data-driven approximation using techniques from statistics and machine learning \cite{CuckerSmale01,VonLuxburgEtAl08,CoifmanLafon06b}. In particular, the theory of interpolation and out-of-sample extension in RKHS allows for consistent and stable approximation of  quantities of interest (e.g., the eigenfunctions of $\KVKst_\tau$ and the action of $e^{t\KVKst_\tau}$ on a prediction observable), based on data acquired on a finite trajectory in the state space $M$. 
\end{enumerate}

In our main results, Theorems \ref{thm:Markov}, \ref{thm:Main} and Corollaries~\ref{corr:APS}, \ref{thm:predic},  we prove the spectral convergence of $\KVKst_\tau$ to $V$ in an appropriate sense by defining auxiliary compact operators acting on $L^2(\mu)$. In Theorem \ref{thm:data_predic}, we give a data-driven analog of our main results, indicating how to construct finite-rank operators from finite datasets without prior knowledge of the underlying system and/or state space, and how spectral convergence still holds in an appropriate sense. 

\subsection{Plan of the paper} In Section \ref{sect:Assump}, we make our assumptions on the underlying system precise, and state our main results. This is followed by  results on compactification of operators in RKHS, Theorems \ref{thm:A}--\ref{thm:F}, in Section \ref{sect:Schemes}, which will be useful for the proofs of the main results. Before proving our main results, we also review some concepts from ergodic theory and functional analysis in Section \ref{sect:theory}. Then, in Sections~\ref{sect:proof:ABCD} and~\ref{sect:proof:EF}, we prove Theorems \ref{thm:A}--\ref{thm:D} and \ref{thm:E}, \ref{thm:F}, respectively, while Section~\ref{sect:proof:main} contains the proof of our main results. In Section \ref{sect:numerics}, we describe a data-driven method to approximate the compactified generator $\KVKst_\tau$, and establish its convergence (Theorem \ref{thm:data_predic}). In Section~\ref{sect:examples}, we present illustrative numerical examples of our framework applied to dynamical systems with both purely atomic and continuous Koopman spectra, namely a quasiperiodic rotation on a 2-torus, and the R\"ossler and L63 systems. We state our primary conclusions in Section~\ref{sect:examples}. The paper also includes an appendix on variable-bandwidth Gaussian kernels \cite{BerryHarlim16} (\ref{appVB}). Pseudocode is included in \ref{sec:algo}.    

\begin{figure}
    \centering
    \resizebox{\linewidth}{!}
    {\input{schematic_file.tex}}
    \caption{\blue{Outline of the main results and relationships between operators employed in the paper. Our focus is on approximating, in a spectral sense, the group of unitary Koopman operators $U^t$ on the $L^2(\mu)$ space of observables of an ergodic dynamical system with an invariant measure $\mu$, and establishing associated data-driven schemes. We approach this problem by approximating the Koopman generator $V$ and its associated spectral measure given in \eqref{eqn:V_Spec}. In particular, we construct a family of compact, skew-adjoint operators $\tilde{V}_\tau$ which have \emph{strong resolvent convergence} to $V$ as $\tau \to 0$,  resulting in a spectral approximation of $V$ in the sense of convergence of the corresponding spectral measures. Simultaneously, the exponentiation of $e^{t \tilde V_\tau}$ converges strongly to the Koopman operator $e^{tV}$. These results provide means of (i) extraction of coherent observables associated with the approximate point spectrum of $V$ (Corollary~\ref{corr:APS}); and (ii) prediction of observables under the dynamics (Corollary~\ref{thm:predic}). A key element of the approach is a family of compact, skew-adjoint operators $W_\tau$ on RKHSs $\RKHS_\tau$ of appropriate regularity (Theorem~\ref{thm:Markov}). These operators are constructed in Theorem~\ref{thm:B}, and are related to $\tilde{V}_\tau$ by a canonical unitary transformation $\mathcal{U}_\tau$ between $L^2(\mu)$ and $\mathcal H_\tau$. The main results on the spectral convergence of $\tilde V_\tau$ to $V$ and strong convergence of the associated spectral measures are stated in Theorem~\ref{thm:Main}.  To connect $W_\tau$ to $V$, we rely on two regularizations, $A_\tau$ (Theorem~\ref{thm:A}) and $B_\tau$ (Theorem~\ref{thm:C}), of $V$. Theorems~\ref{thm:D} and~\ref{thm:E} establish commutation and spectral relationships between $\tilde{V}_\tau$, $W_\tau$, $A_\tau$, and $B_\tau$, followed by spectral convergence results for these operators in Theorem~\ref{thm:F}. Theorem~\ref{thm:data_predic} and Corollary~\ref{corPredic} describe data-driven analogs of the main approximation results utilizing time series data. These approaches are illustrated with numerical examples in Section~\ref{sect:numerics}.}}
	\label{fig:outline}
\end{figure}

\section{Main results}\label{sect:Assump}

All of our main results will use the following standing assumptions and notations.
\begin{Assumption}\label{assump:A1}
    $\Phi^t :\mathcal{M} \to \mathcal{M}$, $t\in\real$, is a continuous-time, continuous flow on a metric space $\mathcal{M}$. There exists a forward-invariant, $\dimM$-dimensional, $C^r$, compact, connected manifold $M \subseteq \mathcal{M}$, such that the restricted flow map $\Phi^t\rvert_{M}$ is also $C^r$. $X\subseteq M$ is a compact invariant set, supporting an ergodic, invariant Borel probability measure $\mu$. 
\end{Assumption}

This assumption is met by many dynamical systems encountered in applications, including ergodic flows on compact manifolds with regular invariant measures (in which case $\mathcal{M} = M = X $), certain dissipative ordinary differential equations on noncompact manifolds (e.g., the L63 system \cite{Lorenz63}, where $\mathcal{M}=\real^3$, $M$ is an appropriate absorbing ball \cite{LawEtAl15}, and $X$ a fractal attractor \cite{Tucker99}), and certain dissipative partial equations with inertial manifolds \cite{ConstantinEtAl1989} (where $\mathcal{M}$ is an infinite-dimensional function space).  

In what follows, we seek to compactify the generator $V$, whose action is similar to that of a differentiation operator along the trajectories of the flow. Intuitively, one way of achieving this is to compose $V$ with appropriate smoothing operators. To that end, we will employ kernel integral operators associated with RKHSs. 

\paragraph{Kernels and their associated integral operators} In the context of interest here, a kernel will be a continuous function $k:M \times M\to\cmplx$, which can be thought of as a measure of similarity or correlation between pairs of points in $M$. Associated with every kernel $k $ and every finite, compactly supported Borel measure $ \nu $ (e.g., the invariant measure $\mu$) is an integral operator $K :L^2(\nu) \to C^0(M)$, acting on  $f \in L^2(\nu) $ as
\begin{equation} \label{eqn:KOp}
Kf := \int_M k(\cdot,y)f(y) \, d\nu(y).
\end{equation}
If, in addition, $k$ lies in $ C^r(M\times M)$, then $K$ imparts this smoothness to $Kf$, i.e., $Kf\in C^r(M)$. Note that the compactness of $ \supp(\nu) $ is important for this conclusion to hold. The kernel $ k $ is said to be Hermitian if $k(x,y) = k^*(y,x)$ for all $x,y\in M$. It is called positive-definite if for every sequence of distinct points $x_1,\ldots,x_n\in M$ the $n \times n $ kernel matrix $ \bm K = [ k(x_i,x_j ) ] $ is non-negative, and \emph{strictly} positive-definite if $ \bm K$ is \blue{strictly positive-definite}. Clearly, every real, Hermitian kernel is symmetric, i.e., $ k(x,y)=k(y,x) $ for all $x,y\in M$.  

Aside from inducing an operator mapping into $C^r(M) $, a kernel $k $ also induces an operator $ G = \iota K $ on $L^2(\nu)$, where $ \iota : C^0(M) \to L^2(\nu) $ is the canonical $L^2$ inclusion map on continuous functions. The operator $ G $ is  Hilbert-Schmidt, and thus compact and of finite trace. In particular, its Hilbert-Schmidt norm and trace are given by
\begin{equation}\label{eqn:HilSchm}
 \lVert G \rVert_{\text{HS}} := \sqrt{ \tr( G^* G ) } =  \lVert k \rVert_{L^2(\nu\times\nu)}, \quad \tr G = \int_M k(x,x) \, d\nu(x) ,
\end{equation}
respectively. Moreover, if $k$ is Hermitian, $G$ is self-adjoint, and there exists an orthonormal basis of $L^2(\mu)$ consisting of its eigenfunctions. Let $ X_\nu$ denote the support of $ \nu$. A kernel $k$ will be called $L^2(\nu)$-positive and $L^2(\nu)$-strictly-positive if $ G \geq 0 $ and $ G > 0 $, respectively; in those cases, $G$ is also of trace class. Note that if $k $ is (strictly) positive-definite on $ X_\nu \times X_\nu$, then it is $L^2(\nu)$- (strictly-) positive. Moreover, $k$ will be called a $L^2(\nu)$-Markov kernel if the associated integral operator $G : L^2(\nu) \to L^2(\nu) $ is Markov, i.e., (i) $G f \geq 0$ if $ f \geq 0$; (ii) $ \int_M Gf \, d\nu = \int_M f \, d\nu$, for all $f \in L^2(\nu)$; and (iii) $G f = f $ if $f$ is constant. The Markov kernel $k$ will be said to be ergodic if $Gf =f $ iff $f$ is constant. A sufficient condition for $k$ to be Markov is that $ k \geq 0 $ on $X_\nu \times X_\nu $, and $ \int_M k( x, \cdot ) \, d\nu = 1 $ for $ \nu $-a.e.\ $x \in M$. If $ k > 0 $ on $ X_\nu \times X_\nu  $, then $ k $ is ergodic.

\paragraph{Reproducing kernel Hilbert spaces} An RKHS on $M$ is a Hilbert space $\RKHS$ of complex-valued functions on $M$ with the special property that for every $x\in M$, the point-evaluation map $\delta_x:\RKHS\to\cmplx$, $\delta_x f = f( x )$, is a bounded, and thus continuous, linear functional. By the Riesz representation theorem, every RKHS has a unique reproducing kernel, i.e., a kernel $k : M \times M \to \cmplx$ such that for every $ x \in M $ the kernel section $k(x,\cdot)$ lies in $\RKHS$, and for every $f\in \RKHS$, 
\[f( x ) = \delta_x f = \langle k(x,\cdot), f \rangle_{\RKHS},\]
where $ \langle \cdot, \cdot,\rangle_\RKHS$ is the inner product of $\RKHS$, assumed conjugate-linear in the first argument. It then follows that $k$ is Hermitian. Conversely, according to the Moore-Aronszajn theorem \cite{Aronszajn1950}, given a Hermitian, positive-definite kernel $k : M \times M \to \cmplx$, there exists a unique RKHS $\RKHS$ for which $k$ is the reproducing kernel. Moreover, the range of $K $ from~\eqref{eqn:KOp} lies in $\RKHS $, so we can view $K$ as an operator $K:L^2(\nu) \to \RKHS $ between Hilbert spaces. With this definition, $K$ is compact, and the adjoint operator $ K^*: \RKHS \to L^2(\nu)$ maps $ f \in \RKHS$ into its $L^2(\nu)$ equivalence class, i.e., $ K^* = \iota\rvert_\RKHS$ and $ G = K^*K$. For any compact subset $S\subseteq M$, one can similarly define $\RKHS(S)$ to be the RKHS induced on $S$ by the kernel $k|_{S\times S}$. In fact, upon restriction to the support $X_\nu$, the range of $K$ is a dense subspace of $\RKHS( X_\nu )$. This implies that every function in $\RKHS(X_\nu)$ has a unique extension to a function in $\RKHS$, lying in the closed subspace $\mathcal{K}:= \overline{\ran K} \subseteq \RKHS $.

\paragraph{Nystr\"om extension} Let  $\RKHS$ be an RKHS on $M$ with reproducing kernel $k$. Then, the Nystr\"om extension operator $\Nyst : D(\Nyst) \to \RKHS$ acts on a subspace $D(\Nyst)$ of $L^2(\nu)$, mapping each element $f$ in its domain to a function $\Nyst f \in \RKHS$, such that $\Nyst f $ lies in the same $L^2(\nu)$ equivalence class as $f$. In other words, $ \Nyst f(x) = f(x) $ for $ \nu $-a.e.\ $ x \in M$, and $K^*\Nyst$ is the identity on $ D(\Nyst) $. It can also be shown that $D(\Nyst) = \ran K^*$, $\ran \Nyst = \mathcal{K}$, and $ \Nyst K^*$ is the identity on $\mathcal{K}$. Moreover, if $k $ is $L^2(\nu)$-strictly-positive, then $D(\Nyst)$ is a dense subspace of $L^2(\nu)$. In fact, $D(\Nyst)$ can be endowed with the structure of a Hilbert space, equipped with the inner product $\langle f, g \rangle_{\Nyst} = \langle \Nyst f, \Nyst g \rangle_{\RKHS}$. If $k$ is $L^2(\nu)$-strictly-positive and Markov ergodic, this space behaves in many ways analogously to a Sobolev space on a compact Riemannian manifold. In particular, equipped with this inner product, $D(\Nyst)$ embeds compactly into $L^2(\nu)$, and $ \lVert f \rVert_{\Nyst} \geq \lVert f \rVert_{L^2(\nu)}$ with equality iff $f$ is constant. Moreover, the $ \lVert \cdot \rVert_{\Nyst}$ norm induces a \emph{Dirichlet energy functional} $\Dirich : D(\Nyst) \to \mathbb{R}$, 
    \begin{equation}\label{eqDirichlet}
        \Dirich(f) =  \frac{\lVert f \rVert^2_{\Nyst}}{ \lVert f \rVert^2_{L^2(\nu)}} - 1, \quad \forall f \in D(\mathcal{N}) \setminus \{ 0 \}, \quad \text{and} \quad \mathcal{D}(0) = 0,
    \end{equation}
where $ \Dirich(f)$ is non-negative and vanishes iff $f$ is constant by $L^2(\nu)$-Markovianity and ergodicity of $k$. Intuitively, $ \Dirich$ can be interpreted as a measure of ``roughness'' of functions in $D(\Nyst)$, which vanishes for constant functions, and is large for functions that project strongly to the eigenfunctions of $G$ with small corresponding eigenvalues. We will give a precise constructive definition of $ \Nyst $, and discuss its properties, in Section~\ref{sect:theory}. 

The following assumption specifies our nominal requirements on kernels pertaining to regularity and existence of an associated RKHS. 

\begin{Assumption}\label{assump:A2}
$p:M\times M\to\real$ is a $C^r$, symmetric, positive-definite kernel, and $\nu$ a Borel probability measure with compact support $ X_\nu \subseteq M$. Moreover, $p$ is $L^2(\nu)$-strictly-positive and Markov ergodic.
\end{Assumption}

We will later describe how kernels satisfying Assumption \ref{assump:A2} can easily be constructed from symmetric, positive-definite, positive-valued $C^r$ kernels using the bistochastic kernel normalization technique proposed in \cite{CoifmanHirn2013}. It should be noted that many of our results will require $r = 1$ differentiability class in Assumptions~\ref{assump:A1} and~\ref{assump:A2}, but in some cases that requirement can be relaxed to $ r = 0 $. 

\paragraph{One-parameter kernel families} Let $P : L^2(\nu) \to \RKHS $ be the integral operator associated with a kernel $ p $ satisfying Assumption~\ref{assump:A2}, taking values in the corresponding RKHS $\RKHS$. The associated operator $ G = P^*P $ on $L^2(\nu) $ has positive eigenvalues, which can be ordered as $1 = \lambda_0 > \lambda_1 \geq \ldots$. Given a real, orthonormal basis $ \{ \phi_0, \phi_1, \ldots \} $ of $ L^2(\nu) $ consisting of corresponding eigenfunctions, the set  $ \{ \psi_0, \psi_1, \ldots \} $ with $\psi_j = \lambda_j^{-1/2} P \phi_j$ is an orthonormal basis of  $ \overline{ \ran P } \subseteq \RKHS$, and the restrictions of these functions to $X_\nu$ form an orthonormal basis of $ \RKHS(X_\nu)$. Defining
\begin{equation}\label{eqn:def:schemeIII}
\lambda_{\tau,j} := \exp \left( \tau (1- \lambda_j^{-1}) \right), \quad \psi_{\tau,j} := \sqrt{ \lambda_{\tau,j} / \lambda_j } \; \psi_j, \quad p_\tau(x,y) := \sum_{j=0}^{\infty} \psi_{\tau,j}(x) \psi_{\tau,j} (y),
\end{equation}
where $ \tau > 0 $, and $x,y $ are arbitrary points in $M$, the following theorem establishes the existence of a one-parameter family of RKHSs, indexed by $ \tau $, and an associated Markov semigroup on $L^2(\nu)$. 

\begin{theorem}[Markov kernels] \label{thm:Markov}
    Let Assumption \ref{assump:A2} hold. Then, for every $ \tau > 0 $, the series expansion for $p_\tau(x,y)$ in \eqref{eqn:def:schemeIII} converges in $C^r(M\times M) $ norm to a $C^r$, symmetric function. Moreover, the following hold:
\begin{enumerate}[(i)]
    \item For every $ \tau > 0 $, $p_\tau $ is a positive-definite kernel on $M$. In addition, it is $L^2(\nu)$-strictly-positive and Markov ergodic.
    \item For every $ \tau >0 $, the RKHS $\RKHS_\tau$ associated with $ p_\tau $ lies dense in $L^2(\nu)$, and for every $0<\tau_1<\tau_2$, the inclusions  $ \RKHS_{\tau_2} \subseteq \RKHS_{\tau_1} \subseteq \RKHS $ hold. Moreover,  $\{ \psi_{\tau,0}, \psi_{\tau,1}, \ldots \}$ is an orthonormal basis of  $\RKHS_\tau$. 
    \item Define $ G_0 := \Id_{L^2(\nu)} $ and $ G_\tau = P^*_\tau P_\tau $, where $ P_\tau : L^2(\nu) \to \RKHS_\tau $ is the integral operator associated with $p_\tau$. Then, the family $ \{ G_\tau \}_{\tau \geq 0 } $ forms a strongly continuous, self-adjoint Markov semigroup. 
\end{enumerate}
\end{theorem}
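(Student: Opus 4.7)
The plan is to exploit that the operator family $\{G_\tau\}$ is a Borel functional calculus of the compact self-adjoint operator $G = P^*P$ on $L^2(\nu)$. Writing $G=\sum_j \lambda_j\langle\phi_j,\cdot\rangle_{L^2(\nu)}\phi_j$ and setting $F_\tau(\lambda) = \exp(\tau(1-1/\lambda))$ on $(0,1]$, the identity $G_\tau = F_\tau(G)=\sum_j \lambda_{\tau,j}\langle\phi_j,\cdot\rangle_{L^2(\nu)}\phi_j$ holds formally. All assertions of the theorem then reduce either to elementary behavior of the scalar family $F_\tau$ on the spectrum $\{\lambda_j\}\subset(0,1]$, or to the single technical input of $C^r(M\times M)$ convergence of the series defining $p_\tau$, which I would address first.

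For this technical input, I would bound $\|\psi_j\|_{C^r(M)}$ via the integral representation $\psi_j(x) = \lambda_j^{-1/2}\int p(x,y)\phi_j(y)\,d\nu(y)$. Differentiating under the integral and applying Cauchy--Schwarz together with $\|\phi_j\|_{L^2(\nu)}=1$ gives $\|\psi_j\|_{C^r(M)} \leq C_p \lambda_j^{-1/2}$, where $C_p = \sup_{|\alpha|\leq r,\,x\in M}\|\partial_x^\alpha p(x,\cdot)\|_{L^2(\nu)}$ is finite by the $C^r$-regularity of $p$ and compactness of $\supp\nu$. Consequently the $j$-th term of the series has $C^r(M\times M)$ norm bounded by $C_p^2\lambda_{\tau,j}/\lambda_j^2 = C_p^2 e^\tau \lambda_j^{-2}\exp(-\tau/\lambda_j)$. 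Because $G$ is trace class ($\tr G = \int p(x,x)\,d\nu(x)<\infty$), $\sum_j \lambda_j<\infty$; combining this with the elementary estimate $\exp(-\tau/\lambda_j)\leq C_k\lambda_j^k$ (valid for every integer $k\geq 1$ and some $C_k$ depending on $\tau$ and $k$), one obtains $\sum_j \lambda_{\tau,j}/\lambda_j^2 \leq C' \sum_j \lambda_j < \infty$ upon taking $k\geq 3$. Absolute convergence in $C^r(M\times M)$ follows, and the limit is symmetric and real term by term.

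For part (i), positive-definiteness of $p_\tau$ on $M$ is immediate from its decomposition as a uniformly convergent sum of real rank-one positive-definite kernels $\psi_{\tau,j}\otimes\psi_{\tau,j}$. Using $P\phi_j = \lambda_j\phi_j$ in $L^2(\nu)$, one finds $\psi_{\tau,j} = \lambda_{\tau,j}^{1/2}\phi_j$ in $L^2(\nu)$, which yields the rigorous identity $G_\tau = \sum_j\lambda_{\tau,j}\langle\phi_j,\cdot\rangle_{L^2(\nu)}\phi_j$; since all $\lambda_{\tau,j}>0$, the operator $G_\tau$ is strictly positive, establishing $L^2(\nu)$-strict positivity of $p_\tau$. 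The Markov-ergodic property then reduces to $\lambda_{\tau,0}=1$ (since $\lambda_0=1$) and $\lambda_{\tau,j}<1$ for $j\geq 1$ (since $\lambda_j<1$ by ergodicity of $p$), yielding $G_\tau\mathbf{1}=\mathbf{1}$, preservation of $\int\cdot\,d\nu$ by self-adjointness, and uniqueness of the constant fixed point.

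For parts (ii) and (iii), the linear independence of $\{\phi_j\rvert_{X_\nu}\}$ (continuous representatives of orthonormal elements of $L^2(\nu)$) transfers to linear independence of $\{\psi_{\tau,j}\}$, so the Mercer-type expansion $p_\tau(x,y)=\sum_j\psi_{\tau,j}(x)\psi_{\tau,j}(y)$ identifies $\{\psi_{\tau,j}\}$ as an orthonormal basis of $\RKHS_\tau$ by standard Moore--Aronszajn theory. Density of $\RKHS_\tau$ in $L^2(\nu)$ follows since its image under the $L^2(\nu)$-inclusion contains $\spn\{\phi_j\}$. The nested inclusions $\RKHS_{\tau_2}\subseteq \RKHS_{\tau_1}\subseteq \RKHS$ for $0<\tau_1<\tau_2$ follow from the bounds $\lambda_{\tau_2,j}/\lambda_{\tau_1,j} = \exp((\tau_2-\tau_1)(1-1/\lambda_j))\leq 1$ and $\sup_j\lambda_{\tau,j}/\lambda_j<\infty$ applied to expansion coefficients in the corresponding orthonormal bases. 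The semigroup property $G_{\tau_1+\tau_2}=G_{\tau_1}G_{\tau_2}$ is immediate from $F_{\tau_1+\tau_2}=F_{\tau_1}F_{\tau_2}$ via functional calculus, self-adjointness is inherited from positivity, and strong continuity at $\tau=0^+$ follows from $\lambda_{\tau,j}\to 1$ for each $j$ together with the uniform bound $\lambda_{\tau,j}\leq 1$ via dominated convergence. I expect the main obstacle to be the summability estimate for the $C^r$ norm of the series terms, which must combine the $C^r$ regularity of $p$ with the super-exponential smallness of $\exp(-\tau/\lambda_j)$ to overcome the divergent prefactor $\lambda_j^{-2}$; all remaining claims amount to bookkeeping in the spectral picture produced by the functional calculus.
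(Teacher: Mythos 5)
Your proposal is correct and follows essentially the same route as the paper's proof: bound $\|\psi_{\tau,j}\|_{C^r(M)}$ by differentiating under the integral sign, use the super-exponential decay of $\lambda_{\tau,j}=\exp(\tau(1-1/\lambda_j))$ against the summability of $\{\lambda_j\}$ (trace class) to obtain $\ell^1$-summability of the term-wise $C^r$ norms, identify $\{\psi_{\tau,j}\}$ as an orthonormal basis of the resulting RKHS via the Mercer-type expansion, and read off the Markov, semigroup, and strong-continuity properties from the spectral picture $G_\tau\phi_j=\lambda_{\tau,j}\phi_j$. The only deviations are cosmetic: you invoke dominated convergence where the paper uses a finite-rank truncation for strong continuity, and you appeal directly to completeness of $C^r(M\times M)$ rather than the paper's Lemma on $C^r$ convergence; both are equivalent in content (also note the minor slip $P\phi_j=\lambda_j\phi_j$, which should read $P^*P\phi_j=\lambda_j\phi_j$, though your subsequent conclusion $P^*\psi_{\tau,j}=\lambda_{\tau,j}^{1/2}\phi_j$ is correct).
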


\begin{rk*}
    Theorem~\ref{thm:Markov} is independent of the dynamical system in Assumption~\ref{assump:A1}. It is a general RKHS result, allowing one to employ basis functions for the RKHS $ \RKHS(X_\nu)$, restricted on the support of $ \nu$, to construct a family of RKHSs $\RKHS_\tau$ on the entire compact manifold $M$. In particular, $\ran P_\tau$ is a dense subspace of $\RKHS_\tau$, while $ \ran P$ is not necessarily dense in $\RKHS$. 
\end{rk*}

The semigroup structure of the family $\{ G_\tau \}_{\tau\geq0}$ in Theorem~\ref{thm:Markov}(iii) implies, in particular, that for every $\tau_1, \tau_2 \geq 0$, $G_{\tau_1+\tau_2} = G_{\tau_1} G_{\tau_2}$. Moreover, strong continuity is equivalent to pointwise convergence of $ G_\tau $ to the identity operator as $\tau \to 0^+$. These two properties, as well as the Markov ergodic property, will all be important in our compactification schemes for the Koopman generator, presented in Theorem~\ref{thm:Main} and Section~\ref{sect:Schemes} below. The measure $\nu$ will now be set to the invariant measure $\mu$. In what follows, $\Nyst_\tau: D(\Nyst_\tau) \to \RKHS_\tau$ will be the Nystr\"om operator associated with $\RKHS_\tau$. We also let $ H_\infty = \bigcap_{\tau>0} D(\Nyst_\tau)$ be the dense subspace of $L^2(\mu) $ whose elements have $\RKHS_\tau$ representatives for every $ \tau > 0 $. Note that $H_\infty$ is dense since it contains all finite linear combinations of the $ \phi_j$. Similarly, setting $ \RKHS_\infty = \bigcap_{\tau>0} \RKHS_\tau$, it follows that $\RKHS_\infty(X)$ is a dense subspace of $\RKHS(X)$. In addition, we will be making use of the polar decomposition of $ P_\tau$. The latter can be shown to take the form
\begin{equation}\label{eqn:def:polar}
P_\tau = \mathcal{U}_\tau G_\tau^{1/2}, 
\end{equation}
where $\mathcal{U}_\tau : L^2(\mu) \to \RKHS_\tau$ is the unitary operator such that $\mathcal{U}_\tau \phi_j = \psi_{j,\tau} $ for all pairs $(\phi_j, \psi_{j,\tau})$ from~\eqref{eqn:def:schemeIII}. Given a Borel-measurable function $ Z : i\real \to \cmplx$ and a densely-defined skew-adjoint operator $ T $, $ Z(T) $ will denote the operator-valued function obtained through the Borel functional calculus as in Section \ref{sec:intro}. For every set $\Omega\subset\cmplx$, $\partial \Omega$ will denote its boundary.
\begin{theorem}[Main theorem] \label{thm:Main}
Under Assumptions~\ref{assump:A1}, \ref{assump:A2} with $r=1$, and the definitions in~\eqref{eqn:def:schemeIII}, the following hold for every $ \tau > 0$:
\begin{enumerate}[(i)]
    \item The operator $\KVKst_\tau :=  P_\tau  V P^*_\tau : \RKHS_\tau \to \RKHS_\tau$ is a well-defined, skew-adjoint, real integral operator of trace class.
    \item The operator $G_\tau V : D(V)\to L^2(\mu)$ extends to a trace class integral operator $\GV_\tau:L^2(\mu)\to L^2(\mu)$. Moreover, the restriction of $B_\tau$ to the dense subspace $ D(\Nyst_\tau) \subseteq D(V) $ coincides with the operator $P_\tau^* \KVKst_\tau \Nyst_\tau$. 
    \item The operators $\GV_\tau$ and $\KVKst_\tau$ have the same spectra, including multiplicities of eigenvalues. Moreover, there exists a unique, purely atomic PVM $\MesW_\tau : \Borel(\real) \to \mathcal{L}(\RKHS_\tau)$, such that $ W_\tau = \int_{\real} i \omega \, d\MesW_\tau(\omega)$.
    \setcounter{tempEnumi}{\value{enumi}}
\end{enumerate}
In addition, as $\tau \to 0^+$:
\begin{enumerate}[(i)]
    \setcounter{enumi}{\value{tempEnumi}}
\item For every bounded, Borel-measurable set $\Omega\subset\real$ such that $E(\partial \Omega) = 0$,  $ P^*_\tau \MesW_\tau(\Omega) \Nyst_\tau$ and $\mathcal{U}_\tau^*\MesW_\tau(\Omega)\mathcal{U}_\tau$ converge to $E(\Omega)$, in the strong operator topologies of $ H_\infty $ and $L^2(\mu) $, respectively. 
\item For every bounded continuous function $Z:i\real\to\cmplx$,  $P_\tau^* Z(\KVKst_\tau) \Nyst_\tau$ and $ \mathcal{U}^*_\tau Z(W_\tau) \mathcal{U}_\tau$ converge to $Z(V)$, in the strong operator topologies of $H_\infty$ and $L^2(\mu)$, respectively.
\item For every holomorphic function $Z : D(Z)\to\cmplx$, with $i \real \subset D(Z) \subseteq \mathbb{C}$ and $Z \rvert_{i \real}$ bounded, $Z(B_\tau)$ converges strongly to $Z(V)$ on $L^2(\mu)$. 
\item For every element $i\omega$ of the spectrum of the generator $V$, there exists a continuous curve $ \tau \to \omega_\tau$ such that $ i\omega_\tau $ is an eigenvalue of $\GV_\tau$ and $\KVKst_\tau$, and $\lim_{\tau\to 0^+} \omega_\tau = \omega$.
\end{enumerate}
\end{theorem}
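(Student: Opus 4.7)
The plan is to reduce each item to a combination of the schematic results from Section~\ref{sect:Schemes} (Theorems~\ref{thm:A}--\ref{thm:F}) with the spectral theorem for compact skew-adjoint operators and strong resolvent approximation. Throughout, the $C^1$ regularity of the kernels $p_\tau$ supplied by Theorem~\ref{thm:Markov}, together with the embedding $\RKHS_\tau \hookrightarrow C^1(M) \subset D(V)$, is the structural fact that makes $\KVKst_\tau$ well defined on the entire RKHS.

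For (i)--(ii), one verifies directly that $\KVKst_\tau = P_\tau V P_\tau^*$ is well defined, since $P_\tau^* f$ is the $L^2(\mu)$-class of a $C^1$ representative whenever $f \in \RKHS_\tau$; skew-adjointness follows from $V^* = -V$ and $(P_\tau)^{**} = P_\tau$. The Schwartz kernel of $\KVKst_\tau$ is obtained by integration by parts against $\mu$, producing the continuous real-valued kernel $(x,y)\mapsto -(V_y p_\tau)(x,y)$ (the subscript indicating action of $V$ in the second argument), which gives a trace-class real integral operator via the basis expansion in $\{\psi_{\tau,j}\}$. The same kernel, now viewed as defining an operator on $L^2(\mu)$, realizes $\GV_\tau$ in (ii) and extends $G_\tau V = P_\tau^* P_\tau V$ from $D(V)$ to all of $L^2(\mu)$; the identity $\GV_\tau|_{D(\Nyst_\tau)} = P_\tau^* \KVKst_\tau \Nyst_\tau$ is then immediate from $P_\tau^* \Nyst_\tau = \Id_{D(\Nyst_\tau)}$ and $P_\tau^* P_\tau = G_\tau$.

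For (iii), the polar decomposition $P_\tau = \mathcal{U}_\tau G_\tau^{1/2}$ from~\eqref{eqn:def:polar} shows that $\KVKst_\tau$ is unitarily equivalent, via $\mathcal{U}_\tau$, to the bounded skew-adjoint operator $\tilde V_\tau := G_\tau^{1/2} V G_\tau^{1/2}$ on $L^2(\mu)$; the spectral theorem for compact normal operators then supplies the purely atomic PVM $\MesW_\tau$. Spectral identification of $\GV_\tau$ and $\KVKst_\tau$ follows from the intertwining $\GV_\tau P_\tau^*\xi = P_\tau^* \KVKst_\tau \xi$ together with the observation that any eigenvector of $\GV_\tau$ at a nonzero eigenvalue lies in $\ran(P_\tau^*) = \ran(G_\tau^{1/2})$ and so can be lifted back, which pairs up multiplicities.

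The main technical step is (iv)--(vi), which I would derive from strong resolvent convergence $\tilde V_\tau \to V$ as $\tau \to 0^+$. For $z \in \cmplx$ with $\Real z \neq 0$, the second resolvent identity and the uniform bound $\|(\tilde V_\tau - z)^{-1}\| \leq 1/|\Real z|$ (by skew-adjointness of $\tilde V_\tau$) reduce the problem to showing $(\tilde V_\tau - V)(V-z)^{-1} f \to 0$ for each $f \in L^2(\mu)$. Setting $h = (V-z)^{-1} f \in D(V)$ and decomposing
\[\tilde V_\tau h - V h = G_\tau^{1/2} V (G_\tau^{1/2} - \Id) h + (G_\tau^{1/2} - \Id) V h,\]
the second term tends to zero by strong continuity of $\{G_\tau\}$ from Theorem~\ref{thm:Markov}(iii) applied to $V h \in L^2(\mu)$; the first term is controlled by the same strong continuity after noting that $G_\tau^{1/2}$ acts diagonally in the eigenbasis of $G$ and that these eigenfunctions lie in $D(V)$ by the $C^1$ regularity of $p$. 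Strong resolvent convergence then delivers (iv) in the $\mathcal{U}_\tau$-form by the standard PVM convergence theorem on $\MesGV$-continuity sets, and the $P_\tau^*(\cdot)\Nyst_\tau$ form follows by rewriting $P_\tau^* = G_\tau^{1/2}\mathcal{U}_\tau^*$ and $\Nyst_\tau|_{H_\infty} = \mathcal{U}_\tau G_\tau^{-1/2}|_{H_\infty}$. Items (v) and (vi) follow by uniformly approximating bounded continuous, respectively holomorphic, $Z$ on the relevant spectra and invoking the functional calculus. Finally, (vii) is obtained from continuous dependence of eigenvalues of a norm-continuous family of compact normal operators on the parameter, combined with the spectral inclusion already established in (iv). The hardest point throughout is controlling $V(G_\tau^{1/2}-\Id)h$ on a core for $V$, where the unbounded generator meets the compact regularization; this is where the $C^1$ hypothesis on $p$ and the semigroup structure of Theorem~\ref{thm:Markov} must be used simultaneously.
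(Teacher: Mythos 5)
Your architecture---reduce to Theorems~\ref{thm:A}--\ref{thm:F}, identify $W_\tau$ with $\tilde V_\tau = G_\tau^{1/2}VG_\tau^{1/2}$ via the polar decomposition, and drive the convergence claims through strong resolvent convergence---is the same skeleton the paper uses. But there are two concrete gaps in the execution.

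First, the trace-class claim in (i)--(ii). A continuous kernel only produces a Hilbert--Schmidt integral operator; Theorems~\ref{thm:B} and~\ref{thm:C} give no more, and your ``basis expansion in $\{\psi_{\tau,j}\}$'' remark does not upgrade $\mathcal{S}_2$ to $\mathcal{S}_1$ (the kernel of $W_\tau$ is antisymmetric, so $\int \tilde p_\tau'(x,x)\,d\mu = 0$, which is useless because the operator is not positive). What actually forces trace class is the semigroup identity $G_\tau = G_{\tau/2}G_{\tau/2}$, which the paper uses in Proposition~\ref{prop:Semigroup}(iii) to factor $\tilde V_\tau = G_{\tau/2}B_{\tau/2}$ as a product of a trace-class operator (by Lemma~\ref{lemMercer}(ii)) with a bounded one. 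You mention the semigroup structure only in passing at the end; it is load-bearing here.

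Second, and more seriously, the control of the term $G_\tau^{1/2}V(G_\tau^{1/2}-\Id)h$. You correctly identify this as ``the hardest point,'' but the proposed remedy---that $G_\tau^{1/2}$ acts diagonally in the eigenbasis of $G$ and those eigenfunctions lie in $D(V)$---does not close the gap. The operator $G_\tau^{1/2}V$ has no uniform bound as $\tau\to 0^+$ (it converges to the unbounded $V$), so strong convergence $(G_\tau^{1/2}-\Id)h\to 0$ in $L^2(\mu)$ tells you nothing about $V(G_\tau^{1/2}-\Id)h$. The paper's argument (Proposition~\ref{prop:Semigroup}(v)) sidesteps this entirely: it shows that $P_\tau f$ is a Cauchy family in $\RKHS$ (using the exponential decay of $\lambda_{\tau,j}$); observes that $A_\tau f = V P_\tau^* P_\tau f = (VP^*)(P_\tau f)$, where $VP^*:\RKHS\to L^2(\mu)$ is \emph{bounded} by Theorem~\ref{thm:B}(i); concludes $A_\tau f$ is Cauchy in $L^2(\mu)$; and then invokes the closedness of $V$ to identify the limit with $Vf$. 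Composing with the uniformly bounded $G_\tau$ and using $G_\tau^{1/2}=G_{\tau/2}$ gives $\tilde V_\tau\to V$ pointwise on $D(V)$. Also note the paper does not chase the second resolvent identity: Lemma~\ref{lem:core_SRC}(ii) reduces strong resolvent convergence of skew-adjoint operators to pointwise convergence on any core, which is cleaner. Without the Cauchy-plus-closedness argument (or an equivalent device), your decomposition leaves the first term uncontrolled and (iv)--(vii) do not follow.
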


The skew-adjoint operator $W_\tau$ from Theorem~\ref{thm:Main} can be viewed as a compact approximation to the generator $V$.  This approximation has a number of advantages for both coherent extraction and prediction. First, although $V$ is unbounded and could exhibit complex spectral behavior (see Section~\ref{sec:intro}), $W_\tau$ has a complete orthonormal basis of eigenfunctions, which are $C^1$ functions lying in $ \RKHS_\tau$. This suggests that the eigenfunctions of $W_\tau$ are good candidates for coherent observables of high regularity, which are well defined for systems with general spectral characteristics. Moreover, the discrete spectra of compact, skew-adjoint operators can be used to construct and approximate to any degree of accuracy the Borel functional calculi of these operators, and in particular perform forecasting through exponentiation of $W_\tau$. The eigenvalues and eigenfunctions of the smoothing operators $P_\tau$ employed in the construction of $W_\tau$ can also be easily derived from those of $P$ with little computational overhead. In Corollaries~\ref{corr:APS} and~\ref{thm:predic} below, we make precise the utility of $W_\tau$ for the purposes of coherent pattern extraction and forecasting, respectively. See Figure~\ref{fig:SpecTau} for an illustration of the dependence of the spectrum of $W_\tau $ on $ \tau $ for dynamical systems with point and continuous Koopman spectra. 

\paragraph{Approximate point spectrum} Given $t\in\real$ and $\epsilon > 0$, a complex number $\gamma$ is said to lie in the $\epsilon$-approximate point spectrum  of  $U^t$ if there exists a nonzero $ f \in L^2(\mu)$ such that 
\begin{equation} \label{eqAPS_Ut}
\lVert U^t f - \gamma f \rVert_{L^2(\mu)} < \epsilon \lVert f \rVert_{L^2(\mu)}. 
\end{equation}
Such observables $f$ (which include Koopman eigenfunctions as special cases), satisfying~\eqref{eqAPS_Ut} for small $\epsilon$ and $t$ lying in a given time interval, exhibit a form of dynamical coherence, as they evolve approximately as Koopman eigenfunctions over that time interval. We will refer to $(\gamma, f) $ satisfying~\eqref{eqAPS_Ut} as an $\epsilon$-approximate eigenpair of $U^t$. A discussion on how the $\epsilon$-approximate point spectrum varies with $\epsilon$, and its relation to the spectrum, in the context of a general, closed, unbounded operator, can be found in Section~\ref{sect:theory}. The following corollary of Theorem~\ref{thm:Main} establishes that the eigenvalues of $W_\tau$ corresponding to eigenfunctions that satisfy certain Dirichlet energy criteria, can be used to identify points in the $\epsilon$-approximate point spectrum of the Koopman operator at any $\epsilon>0$.   In what follows,  $ \Dirich: D(\Nyst) \to \real$ will denote the Dirichlet energy from~\eqref{eqDirichlet}, induced on $L^2(\mu)$ by the kernel $p$ in Assumption~\ref{assump:A2}.  We also introduce the function $R : \real_+ \times \real_+ \to \real $, defined as 
\[ R(\epsilon,\tau) :=  \sup \{ T >0 : \lVert (U^t - e^{t \GV_\tau} ) P^* \rVert < \epsilon, \; \forall t\in [-T,T] \}.  \]
Here, the norm of $(U^t - e^{t \GV_\tau} ) P^*$ is taken as an operator from $\RKHS$ into $L^2(\mu)$. We will later show in Proposition \ref{prop:AppEigen} that for every $\epsilon>0$, $R(\epsilon,\tau) $ diverges as $\tau \to 0^+$.
    
\begin{cor}[Coherent observables] \label{corr:APS} 
    Let $(i\omega_\tau,\zeta_\tau )$ be an eigenpair of $W_\tau$. Then, $ ( e^{i\omega_\tau t}, \tilde z_\tau )$, with $ \tilde z_\tau = P^*_\tau \zeta_\tau $, is an $\epsilon$-approximate eigenpair of $U^t$ for all $ t \in ( - T( \epsilon,\tau ), T( \epsilon, \tau ) ) $, where
    \begin{displaymath}
         T(\epsilon,\tau ) = R(\epsilon,\tau ) / \sqrt{ \mathcal{D}(\tilde z_\tau) + 1 }.
    \end{displaymath}
In addition, the following hold:
\begin{enumerate}[(i)]
	\item If $ \lim_{\tau\to0^+}\omega_\tau =: \omega $ exists, and $T(\epsilon,\tau)$ diverges as $\tau \to 0^+$ for every $\epsilon > 0$, then $ i \omega $ is an element of the spectrum of $V$.
        \item If $ \lim_{\tau\to0^+}\omega_\tau =: \omega $ exists, and $\Dirich(\tilde z_\tau)$ is bounded as $ \tau \to 0^+$, then $i \omega$ is an eigenvalue of $V $. Moreover, the sequence $\tilde z_\tau$ converges to the eigenspace of $V$ corresponding to $ i \omega $.
\end{enumerate}
\end{cor}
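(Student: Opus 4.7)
The plan is to establish the main estimate by identifying $\tilde{z}_\tau$ as an eigenvector of $B_\tau$ and controlling $(U^t - e^{tB_\tau})P^*\zeta_\tau$, then deduce (i) via a spectral-measure concentration argument and (ii) via compactness of $P^*$. The key algebraic observation is that $\zeta_\tau \in \RKHS_\tau \subseteq \RKHS$ serves as the canonical $\RKHS$ representative of the $L^2(\mu)$ class $\tilde{z}_\tau = P_\tau^*\zeta_\tau$, so $\Nyst \tilde{z}_\tau = \zeta_\tau$ and $P^*\zeta_\tau = \tilde{z}_\tau$; by the Dirichlet-energy identity~\eqref{eqDirichlet},
\[
\|\zeta_\tau\|_\RKHS^2 = \|\Nyst\tilde{z}_\tau\|_\RKHS^2 = \bigl(\Dirich(\tilde{z}_\tau)+1\bigr)\|\tilde{z}_\tau\|_{L^2(\mu)}^2.
\]
Since $\ran P_\tau$ is dense in $\RKHS_\tau$ (Theorem~\ref{thm:Markov}), $\Nyst_\tau P_\tau^* = \Id_{\RKHS_\tau}$; combined with Theorem~\ref{thm:Main}(ii) this gives $B_\tau\tilde{z}_\tau = P_\tau^* W_\tau \Nyst_\tau P_\tau^*\zeta_\tau = P_\tau^* W_\tau \zeta_\tau = i\omega_\tau \tilde{z}_\tau$, so $\tilde{z}_\tau$ is an eigenvector of the bounded operator $B_\tau$ and $e^{tB_\tau}\tilde{z}_\tau = e^{i\omega_\tau t}\tilde{z}_\tau$. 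Writing $\tilde{z}_\tau = P^*\zeta_\tau$ then yields the central bound
\[
\|U^t\tilde{z}_\tau - e^{i\omega_\tau t}\tilde{z}_\tau\|_{L^2(\mu)} \leq \|(U^t - e^{tB_\tau})P^*\|\,\sqrt{\Dirich(\tilde{z}_\tau)+1}\,\|\tilde{z}_\tau\|_{L^2(\mu)},
\]
and the main claim follows from the definition of $R$ once one knows that $\|(U^t-e^{tB_\tau})P^*\|$ grows essentially linearly in $|t|$ near $0$; this last ingredient---which is the principal obstacle---comes from the first-order expansion $\tfrac{d}{dt}(U^t-e^{tB_\tau})P^*|_{t=0} = (V-B_\tau)P^*$ and the uniform norm estimates underpinning Proposition~\ref{prop:AppEigen}.

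For (i), I would normalize $\|\tilde{z}_\tau\|_{L^2(\mu)}=1$ and let $\mu_\tau(\cdot) := \|E(\cdot)\tilde{z}_\tau\|_{L^2(\mu)}^2$ denote the spectral measure of $\tilde{z}_\tau$ under the PVM $E$ of $V$, so that the spectral theorem gives $\|U^t\tilde{z}_\tau - e^{i\omega t}\tilde{z}_\tau\|_{L^2(\mu)}^2 = \int_\real |e^{i\lambda t}-e^{i\omega t}|^2\,d\mu_\tau(\lambda)$. The main claim, together with $\omega_\tau\to\omega$, makes this quantity tend to $0$ uniformly on any compact $t$-interval as $\tau\to 0^+$. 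Averaging over $t\in[-T,T]$ via the F\'ej\'er-type identity
\[
\frac{1}{2T}\int_{-T}^T |e^{i\lambda t}-e^{i\omega t}|^2\,dt = 2 - \frac{2\sin\bigl((\lambda-\omega)T\bigr)}{(\lambda-\omega)T},
\]
whose right-hand side exceeds $1$ outside any prescribed neighborhood $\Omega$ of $\omega$ once $T$ is sufficiently large, forces $\mu_\tau(\Omega)\to 1$ as $\tau\to 0^+$. If $i\omega\notin\sigma(V)$, one may pick an open $\Omega\ni\omega$ with $E(\Omega)=0$, so $\mu_\tau(\Omega)=0$, a contradiction; hence $i\omega\in\sigma(V)$.

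For (ii), boundedness of $\Dirich(\tilde{z}_\tau)$ together with the Dirichlet identity yields a uniform $\RKHS$-bound on $\zeta_\tau$ after $L^2(\mu)$-normalization. Since $P^*:\RKHS\to L^2(\mu)$ is compact (adjoint of the Hilbert--Schmidt operator $P$), a subsequence $\tilde{z}_{\tau_n} = P^*\zeta_{\tau_n}$ converges strongly in $L^2(\mu)$ to a unit vector $\tilde{z}$. Because $\sqrt{\Dirich+1}$ stays bounded while $R(\epsilon,\tau)$ diverges (Proposition~\ref{prop:AppEigen}), $T(\epsilon,\tau)\to\infty$; passing to the limit in the approximate-eigenpair estimate for each fixed $t$ gives $U^t\tilde{z} = e^{i\omega t}\tilde{z}$, whence $\tilde{z}\in D(V)$ with $V\tilde{z} = i\omega\tilde{z}$, so $i\omega$ is an eigenvalue of $V$. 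Simplicity of Koopman eigenvalues under ergodicity then promotes subsequential to full convergence of $\tilde{z}_\tau$ to the one-dimensional eigenspace of $V$ at $i\omega$.
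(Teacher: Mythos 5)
Your proposal is correct in its overall architecture, and for the central estimate and claim~(ii) it essentially reproduces the paper's own argument: the identification $\Nyst \tilde z_\tau = \zeta_\tau$ and the resulting identity $\lVert \zeta_\tau \rVert_{\RKHS}^2 = (\Dirich(\tilde z_\tau)+1)\lVert \tilde z_\tau\rVert_{L^2(\mu)}^2$, the eigenvector relation $B_\tau \tilde z_\tau = i\omega_\tau \tilde z_\tau$ (equivalently the intertwining $P^*_\tau e^{tW_\tau} = e^{tB_\tau}P^*_\tau$ of Theorem~\ref{thm:C}(ii)), and the bound $\lVert U^t\tilde z_\tau - e^{i\omega_\tau t}\tilde z_\tau\rVert_{L^2(\mu)} \leq \lVert (U^t-e^{tB_\tau})P^*\rVert\sqrt{\Dirich(\tilde z_\tau)+1}\,\lVert\tilde z_\tau\rVert_{L^2(\mu)}$ are precisely the content of Proposition~\ref{prop:AppEigen} and~\eqref{eqAPSBound}; likewise, your use of compactness of $P^*$ to extract a strongly convergent subsequence in~(ii) is the same mechanism as the paper's compact embedding of $D(\Nyst)$ into $L^2(\mu)$. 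Where you genuinely depart from the paper is claim~(i): the paper shows, via a triangle inequality in $\omega_\tau$ versus $\omega$, that $e^{i\omega t}$ lies in $\sigma(U^t)$ for every $t$, and then passes (tacitly, via a spectral-mapping step that is left unstated) back to $i\omega\in\sigma(V)$; you instead work directly with the PVM $E$ of $V$ and force concentration of the spectral measures $\mu_\tau$ near $\omega$ through the Fej\'er-type time average. Your route is self-contained at exactly the point where the paper is implicit, which is a real advantage, at the price of a short additional computation.

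Two soft spots. First, the derivation of the specific window $T(\epsilon,\tau)=R(\epsilon,\tau)/\sqrt{\Dirich(\tilde z_\tau)+1}$: from the definition of $R$ and the central bound one only gets an $\epsilon\sqrt{\Dirich(\tilde z_\tau)+1}$-approximate eigenpair on $(-R,R)$, and turning this into an $\epsilon$-approximate eigenpair on the shrunken interval requires a quantitative, roughly linear-in-$t$ (star-shaped) control of $Q(t,\tau)=\lVert(U^t-e^{tB_\tau})P^*\rVert$ near $t=0$; you correctly flag this as the principal obstacle but only gesture at a first-order expansion. The paper is equally terse at this step (it simply asserts the inequality follows from the definitions together with~\eqref{eqAPSBound}), so this is not a gap relative to the paper, and it is harmless for claims~(i) and~(ii), which only use divergence of $T(\epsilon,\tau)$; still, a complete write-up should either prove a bound of the form $Q(t,\tau)\leq \frac{\lvert t\rvert}{R}\sup_{\lvert s\rvert\leq R}Q(s,\tau)$ (e.g.\ via a Duhamel formula, using boundedness of $VP^*$ and $B_\tau$) or restate the window accordingly. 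Second, the closing appeal to ``simplicity of Koopman eigenvalues'' in~(ii) is misplaced: simplicity neither yields nor is needed for the stated conclusion. What is claimed is that $d(\tilde z_\tau,\mathcal{Z})\to 0$, and this follows from your own compactness argument applied to any putative subsequence bounded away from $\mathcal{Z}$ (the paper's contradiction argument); full convergence of the vectors $\tilde z_\tau$ themselves need not hold, since their phases may drift within the eigenspace.
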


\begin{rk*} An important consideration in spectral approximation techniques is to identify and/or control the occurrence of \emph{spectral pollution} \cite{Bogli18}, i.e., eigenvalues $i\omega_\tau$ of the approximating operators $W_\tau$ converging to points which do not lie in the spectrum of $V$. Corollary~\ref{corr:APS} establishes that the regularity of the corresponding eigenfunctions $\zeta_\tau$, as measured through the Dirichlet energy functional associated with the RKHS $\RKHS$, provides a useful a posteriori criterion for identifying spectral pollution.
\end{rk*}

Turning now to forecasting, let $ \{ i \omega_{\tau,0}, i \omega_{\tau,1}, \ldots \} $ be the set of eigenvalues of $ W_\tau$ Note that since $W_\tau$ is a compact, skew-adjoint real operator, the $ i\omega_{j,\tau}$ occur in complex-conjugate pairs, and 0 is the only accumulation point of the sequence $ \omega_{\tau,0}, \omega_{\tau,1}, \ldots $. Let also $ \{ \EigenW_{\tau,0}, \EigenW_{\tau,1}, \ldots \} $ be an orthonormal basis of $ \RKHS_\tau$ consisting of corresponding eigenfunctions. The following is a corollary of Theorem~\ref{thm:Main}, which shows that the evolution of an observable in $L^2(\mu)$ under $U^t$ can be evaluated to any degree of accuracy by evolution of an approximating observable in $ \RKHS_\infty$ under $e^{t W_\tau}$. 

\begin{cor}[Prediction]\label{thm:predic} 
    For every $ \tau > 0 $, $W_\tau $ generates a norm-continuous group of unitary operators $e^{tW_\tau} : \RKHS_\tau \to \RKHS_\tau$, $t \in \real$. Moreover, for any observable $f\in L^2(\mu)$, error bound $\epsilon>0$, and compact set $ \mathcal{T} \subset \mathbb{R} $, there exists $ \blue{ f_\epsilon } \in \RKHS_\infty$ (independent of $\mathcal{T}$) and $ \tau_0>0$,  such that for every $\tau \in (0,\tau_0)$ and $ t \in \mathcal{T} $,
\[ \left\| U^t f -  P_\tau^* e^{tW_\tau} f_\epsilon \right\|_{L^2(\mu)} < \epsilon, \quad e^{tW_\tau} f_\epsilon = \sum_{j=0}^{\infty} e^{t i \omega_{\tau,j}} \langle \EigenW_{\tau,j}, f_\epsilon \rangle_{\RKHS_\tau} \EigenW_{\tau,j}. \]
\end{cor}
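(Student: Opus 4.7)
The first part of the statement will be handled by standard functional calculus. By Theorem~\ref{thm:Main}(i,iii), $\KVKst_\tau$ is bounded (trace-class) and skew-adjoint, with a purely atomic PVM $\MesW_\tau$, eigenvalues $i\omega_{\tau,j}$, and an eigenbasis $\{\EigenW_{\tau,j}\}$ of $\RKHS_\tau$. I would define $e^{t\KVKst_\tau}$ by the operator-norm convergent power series, which by boundedness yields a norm-continuous group, and by skew-adjointness satisfies $(e^{t\KVKst_\tau})^* = e^{-t\KVKst_\tau}$, hence is unitary. The spectral expansion of $e^{t\KVKst_\tau} \hat f_\epsilon$ displayed in the corollary is then the Borel functional calculus of $\KVKst_\tau$ evaluated on $Z(i\omega) = e^{i\omega t}$.

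For the approximation claim, the plan is to (a)~pick a regular $\hat f_\epsilon \in \RKHS_\infty$ approximating $f$ in $L^2(\mu)$, (b)~apply Theorem~\ref{thm:Main}(v) to obtain pointwise-in-$t$ convergence, and (c)~upgrade to uniform convergence on $\mathcal{T}$ via equicontinuity. For (a), each $\psi_j = \lambda_j^{-1/2} P\phi_j$ lies in $\RKHS_\infty$ since $\|\psi_j\|_{\RKHS_\tau}^2 = \lambda_j/\lambda_{\tau,j}$ is finite for every $\tau > 0$, and their $L^2$-classes $[\psi_j] = \sqrt{\lambda_j}\,\phi_j$ span a dense subspace of $L^2(\mu)$ (as $\{\phi_j\}$ is an orthonormal basis of $L^2(\mu)$); hence the $L^2$-classes of $\RKHS_\infty$ are dense in $L^2(\mu)$. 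Under Assumption~\ref{assump:A2} with $r=1$, $\RKHS_\tau \subseteq C^1(M)$ for every $\tau$ by a standard RKHS regularity result, and $C^1(M) \subseteq D(V)$ because the $C^1$ Koopman flow on the compact manifold $M$ acts on $C^1$ observables by $Vf = \nabla f \cdot \dot\Phi$. I would thus select $\hat f_\epsilon \in \RKHS_\infty$ with $\|f - [\hat f_\epsilon]\|_{L^2(\mu)} < \epsilon/2$, where $[\cdot]$ denotes the $L^2(\mu)$-equivalence class via $P_\tau^*$ (independent of $\tau$), and set $C_\epsilon := \|V[\hat f_\epsilon]\|_{L^2(\mu)} < \infty$; this choice is independent of $\mathcal{T}$, as required.

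Setting $\phi_\tau(t) := P_\tau^* e^{t\KVKst_\tau} \hat f_\epsilon$, step~(b) follows because $\Nyst_\tau [\hat f_\epsilon] = \hat f_\epsilon$ (since $\hat f_\epsilon \in \RKHS_\tau = \overline{\ran P_\tau}$), so Theorem~\ref{thm:Main}(v) applied to $Z(i\omega) = e^{i\omega t}$ yields $\phi_\tau(t) \to U^t[\hat f_\epsilon]$ in $L^2(\mu)$ for each fixed $t$. For (c), the decisive estimate I would establish is the uniform-in-$\tau$ Lipschitz bound
\begin{equation*}
\|\phi_\tau(t) - \phi_\tau(s)\|_{L^2(\mu)} \leq |t-s|\,\|\KVKst_\tau \hat f_\epsilon\|_{\RKHS_\tau} = |t-s|\,\|P_\tau V[\hat f_\epsilon]\|_{\RKHS_\tau} \leq C_\epsilon |t-s|,
\end{equation*}
derived from $\phi_\tau(t) - \phi_\tau(s) = P_\tau^* \int_s^t \KVKst_\tau e^{r\KVKst_\tau} \hat f_\epsilon\, dr$, the commutation $\KVKst_\tau e^{r\KVKst_\tau} = e^{r\KVKst_\tau} \KVKst_\tau$, unitarity of $e^{r\KVKst_\tau}$ on $\RKHS_\tau$, the identity $\KVKst_\tau = P_\tau V P_\tau^*$, and $\|P_\tau\|, \|P_\tau^*\| \leq 1$ (via $\|P_\tau\|^2 = \|G_\tau\| = 1$ from Markovianity). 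A parallel bound $\|U^t[\hat f_\epsilon] - U^s[\hat f_\epsilon]\|_{L^2(\mu)} \leq C_\epsilon |t-s|$ holds for $U^t$. Covering $\mathcal{T}$ by a finite $\epsilon/(4C_\epsilon)$-net, combining pointwise convergence at each net point with the two Lipschitz bounds via a standard $3\epsilon$-argument, and applying the triangle inequality with $\|U^t(f - [\hat f_\epsilon])\|_{L^2(\mu)} = \|f - [\hat f_\epsilon]\|_{L^2(\mu)} < \epsilon/2$ gives the claim for all $\tau < \tau_0(\epsilon, \mathcal{T})$. The main obstacle is promoting the pointwise strong convergence of Theorem~\ref{thm:Main}(v) to uniformity on the compact time window $\mathcal{T}$; the crucial ingredient bridging this gap is the $C^1$ regularity $\RKHS_\tau \subset D(V)$ provided by the $r=1$ assumption, coupled with the Markov contractivity $\|P_\tau\| \leq 1$, which together supply the $\tau$-uniform equicontinuity.
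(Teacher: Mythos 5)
Your proposal is correct and follows essentially the same route as the paper's proof: approximate $f$ in $L^2(\mu)$ by (the class of) an element of $\RKHS_\infty$ built from finitely many $\phi_j$ (the paper takes $\hat f_\epsilon = \Nyst \Pi_L f$, you take a finite combination of the $\psi_j$ — the same object up to normalization), note that its $L^2(\mu)$ class lies in $H_\infty$ with $\Nyst_\tau[\hat f_\epsilon]=\hat f_\epsilon$, apply Theorem~\ref{thm:Main}(v) with $Z(i\omega)=e^{i\omega t}$, and close with the triangle inequality using unitarity of $U^t$. The one place where you genuinely diverge is the uniformity in $t\in\mathcal{T}$: the paper disposes of it in one line (``uniform by continuity of the map $t\mapsto U^t f_L - P_\tau^* e^{tW_\tau}\hat f_\epsilon$''), which by itself does not force the $\tau\to 0^+$ limit to be uniform over $\mathcal{T}$, whereas you supply the missing equicontinuity ingredient explicitly — the $\tau$-uniform Lipschitz bound $\lVert \phi_\tau(t)-\phi_\tau(s)\rVert_{L^2(\mu)} \le \lvert t-s\rvert\,\lVert V[\hat f_\epsilon]\rVert_{L^2(\mu)}$, which is valid since $\KVKst_\tau = P_\tau V P_\tau^*$, $P_\tau^*\hat f_\epsilon=[\hat f_\epsilon]\in C^1(M)\subset D(V)$, $e^{tW_\tau}$ is unitary, and $\lVert P_\tau\rVert = \lVert G_\tau\rVert^{1/2}=1$ by Markovianity — together with the matching bound for $U^t$ and a finite net. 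This makes your write-up more rigorous than the paper's on that single step; everything else is the same argument.
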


\begin{rk*} The function $e^{tW_\tau} f_\epsilon$ lies in $\RKHS_\tau$, and is therefore a continuous function which we employ to predict the evolution of the observable $f$ under $U^t$. Corollary \ref{thm:predic} suggests that to obtain this function, we first regularize $f$ by approximating it by a function $f_\epsilon \in \RKHS_\infty$, and then invoke the functional calculus for the compact operator $\KVKst_\tau$ to evolve $f_{\epsilon}$ as an approximation of $U^tf$. Note that analogous error bounds to that in Corollary~\ref{thm:predic} can be obtained for operator-valued functions $Z(V)$  of the generator other than the exponential functions, $ Z(V) =  e^{tV} = U^t$.  A constructive procedure for obtaining the forecast function in a data-driven setting will be described in Section \ref{sect:numerics}.
\end{rk*}

\section{Compactification schemes for the generator}\label{sect:Schemes}

In this section, we lay out various schemes for obtaining compact operators by composing the generator $V$ with operators derived from kernels. These schemes are of independent interest, as they are applicable, with appropriate modifications, to more general classes of unbounded, skew- of self-adjoint operators obtained by extension of differentiation operators. In some cases, the following weaker analog of Assumption~\ref{assump:A2} will be sufficient.
\begin{Assumption} \label{assump:K}
$k:M\times M\to \real$ is a $C^1$, symmetric positive-definite kernel.
\end{Assumption}
Given the RKHS $\RKHS \subset C^1(M)$ associated with $k $ from Assumption~\ref{assump:K}, and the corresponding integral operators $K : L^2(\mu) \to \RKHS$, $G=K^*K:L^2(\mu) \to L^2(\mu) $, and closed subspace $\mathcal{K} = \overline{\ran K} \subseteq \RKHS$,  we begin by formally introducing the operators $ A: L^2(\mu) \to L^2(\mu)$ and $W : \RKHS \to \RKHS$, where
\begin{equation} \label{eqn:similarity_trnsfrm}
\VG := V G = V K^* K, \quad \KVKst := K V K^*.
\end{equation}
Note that it is not necessarily the case that these operators are well defined, for the ranges of $G$ and $K^*$ may lie outside of the domain of $V$. Nevertheless, as the following two theorems establish,  $\VG$ and $\KVKst$ are well-defined, and in fact compact, operators.

\begin{theorem}[Pre-smoothing]\label{thm:A}
Let Assumptions \ref{assump:A1} and~\ref{assump:K} hold, and define $k':M\times M \to \real$ as the $C^0$ kernel with \blue{$k'(x,y) := \lim_{t\to 0}(k(\Phi^t(x),y) - k(x,y) )/t$}. Then:
\begin{enumerate}[(i)]
\item The range of $G$ lies in the domain of $V$. 
\item The operator $\VG$ from \eqref{eqn:similarity_trnsfrm} is a well-defined, Hilbert-Schmidt integral operator on $L^2(\mu)$ with kernel $k' $, and thus bounded in operator norm by 
\begin{displaymath}
\lVert \VG \rVert \leq \lVert \VG \rVert_\text{HS} = \| k' \|_{L^2(\mu\times\mu)} \leq \| k' \|_{C^0(X\times X)}.
\end{displaymath}
\item $\VG$ is equal to the negative adjoint, $ -(GV)^* $, of the densely defined operator $GV:D(V) \to L^2(\mu) $.
\end{enumerate}
\end{theorem}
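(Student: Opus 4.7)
\textbf{Proof plan for Theorem \ref{thm:A}.} The overall strategy is to prove (i) first by an explicit computation of the $L^2(\mu)$-limit defining $V(Gf)$, show along the way that this limit is the integral operator with kernel $k'$ (which yields (ii) immediately from Hilbert--Schmidt theory), and finally deduce (iii) from the skew-adjointness of $V$ together with the self-adjointness of $G$.

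For (i), the plan is to show that for every $f\in L^2(\mu)$, the difference quotient $\bigl(U^tGf - Gf\bigr)/t$ converges as $t\to 0$ in $L^2(\mu)$ to the continuous function $x\mapsto \int_M k'(x,y)f(y)\,d\mu(y)$. Pointwise, one has
\begin{displaymath}
    \frac{U^tGf(x)-Gf(x)}{t} = \int_M \frac{k(\Phi^t(x),y)-k(x,y)}{t}\, f(y)\, d\mu(y),
\end{displaymath}
and the key observation is the fundamental-theorem-of-calculus identity
\begin{displaymath}
    \frac{k(\Phi^t(x),y)-k(x,y)}{t} = \frac{1}{t}\int_0^t (Vk(\cdot,y))(\Phi^s(x))\,ds = \frac{1}{t}\int_0^t k'(\Phi^s(x),y)\,ds,
\end{displaymath}
valid because $k(\cdot,y)\in C^1(M)$ under Assumption~\ref{assump:K} and the restricted flow is $C^1$ under Assumption~\ref{assump:A1}. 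Since $k'$ is continuous on the compact set $M\times M$, and $\Phi^s\to \Id$ uniformly on $M$ as $s\to 0$, the integrand converges to $k'(x,y)$ uniformly in $(x,y)$. Hence the difference quotient converges uniformly (and thus in $L^2(\mu)$, since $\mu$ is a probability measure with $\supp\mu=X\subset M$) to the integral operator with kernel $k'$ applied to $f$, proving both $Gf\in D(V)$ and the formula for $VGf$.

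For (ii), having identified $Af = VGf$ with the integral operator of kernel $k'$, I would invoke the standard Hilbert--Schmidt theory recalled in~\eqref{eqn:HilSchm}: the integral operator on $L^2(\mu)$ with continuous kernel $k'\in C^0(X\times X)$ is Hilbert--Schmidt, with $\|A\|\le \|A\|_{\mathrm{HS}} = \|k'\|_{L^2(\mu\times\mu)}$, and a final trivial majorization by the $C^0$ norm uses $\mu(X)=1$.

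For (iii), the plan is to compute $(GV)^*$ directly from the definition of adjoint. For arbitrary $g\in L^2(\mu)$ and $f\in D(GV)=D(V)$, self-adjointness of the bounded operator $G$ gives $\langle g, GVf\rangle_\mu = \langle Gg, Vf\rangle_\mu$; since $Gg\in D(V)$ by part (i) and $V$ is skew-adjoint, this equals $-\langle VGg, f\rangle_\mu = \langle -Ag, f\rangle_\mu$. The functional $f\mapsto \langle g, GVf\rangle_\mu$ is therefore bounded on the dense domain $D(V)$ for every $g$, so $D((GV)^*)=L^2(\mu)$ and $(GV)^*=-A$. The only genuine technical point in the whole argument is the uniform-in-$(x,y)$ convergence used to justify differentiation under the integral in step (i); everything else is routine manipulation of bounded operators.
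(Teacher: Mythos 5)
Your proposal is correct, and for claims (i) and (ii) it takes a genuinely different route than the paper. The paper proves (i) abstractly: it notes that $\ran G \subseteq C^1(M)$ (since $K$ maps into $\RKHS \subseteq C^1(M)$ and $K^* = \iota\rvert_\RKHS$) and then invokes the inclusion of $C^1$ functions into $D(V)$, a fact it packages as the identity $\iota\vec V = V\iotaC$ without proof. For (ii) the paper then argues in two steps: a $C^0$-limit computation, combined with continuity of inner products, gives $K'f(x) = \vec V Kf(x)$ pointwise, and the identity $\iota\vec V = V\iotaC$ converts this into $\iota K' = VG$ as operators on $L^2(\mu)$. You instead compute the $L^2(\mu)$ limit defining $V(Gf)$ directly, via the fundamental-theorem-of-calculus identity $k(\Phi^t(x),y)-k(x,y) = \int_0^t k'(\Phi^s(x),y)\,ds$ and a uniform-convergence argument exploiting compactness of $X\times X$ and uniform continuity of $k'$. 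This delivers (i) and (ii) simultaneously in a single, self-contained computation, at the cost of unpacking by hand the inclusion $C^1(M) \subset D(V)$ that the paper treats as a black box; the two arguments rest on exactly the same regularity hypotheses (Assumptions~\ref{assump:A1} and~\ref{assump:K} with $r=1$), with yours being more elementary and the paper's more modular. Your proof of (iii) is the same adjoint computation as the paper's, merely organized by fixing $g$ and verifying boundedness of the associated functional rather than directly exhibiting $h$ satisfying the definition of $(GV)^*$.
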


\begin{rk*} As stated in Section~\ref{sec:intro}, $V$ is an unbounded operator, whose domain is a strict subspace of $L^2(\mu)$. Theorem \ref{thm:A} thus shows that if we regularize this operator by first applying the smoothing operator $G$, then not only is \blue{ $\VG$ } bounded, it is also Hilbert-Schmidt, and thus compact. In essence, this property follows from the $C^1$ regularity of the kernel.
\end{rk*}

Arguably, the regularization scheme leading to $\VG$, which involves first smoothing by application of $G$, followed by application of $V$, is among the simplest and most intuitive ways of regularizing $V$. However, the resulting operator $\VG$ will generally not be skew-symmetric; in fact, apart from special cases, $\VG$ will be non-normal. Theorem \ref{thm:B} below provides an alternative regularization approach for $V$, leading to a Hilbert-Schmidt operator on $\RKHS$ which is additionally skew-adjoint. Working with this operator also takes advantage of the RKHS structure, allowing pointwise function evaluation by bounded linear functionals.

\begin{theorem}[Compactification in RKHS]\label{thm:B}
Let Assumptions \ref{assump:A1} and \ref{assump:K} hold, and define $\tilde k': M \times M \to \real $ as the $C^0$ kernel with $\tilde k'(x,y)=-k'(y,x)$. Then:
\begin{enumerate}[(i)]
\item The range of $K^*$ lies in the domain of $V$, and $VK^*:\RKHS\to L^2(\mu)$ is a bounded operator.
\item The operator $\KVKst$ from \eqref{eqn:similarity_trnsfrm} is a well-defined, Hilbert-Schmidt, skew-adjoint, real operator on $\RKHS$, with $\ran W \subseteq \mathcal{K}$	, satisfying 
\begin{displaymath}
W f = \int_M \tilde k'(\cdot,y) f(y) \, d\mu(y).
\end{displaymath}
\end{enumerate}
\end{theorem}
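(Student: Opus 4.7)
The plan is to prove the two parts separately, with integration by parts against the skew-adjoint generator $V$ providing the structural backbone.

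For part (i), I would first observe that $\RKHS \subset C^1(M)$ (up to pointwise representatives) thanks to the $C^1$ regularity of the reproducing kernel $k$. Since $K^* : \RKHS \to L^2(\mu)$ acts as the $L^2(\mu)$-equivalence-class map, $K^* f$ has a $C^1$ representative for every $f \in \RKHS$. Any $C^1$ function lies in $D(V)$: the Koopman difference quotient $(U^t f - f)/t$ converges uniformly on the compact $M$ to the pointwise Lie derivative along the flow, so convergence also holds in $L^2(\mu)$ and identifies $V K^* f$ with the $L^2$-class of that Lie derivative. This establishes $\ran K^* \subseteq D(V)$. For boundedness of $VK^*$, my strategy is to invoke the derivative reproducing property: for each $x$ there is an element $k'_x \in \RKHS$ with $(V K^* f)(x) = \langle k'_x, f \rangle_\RKHS$ and $\sup_{x\in M} \|k'_x\|_\RKHS < \infty$ by compactness of $M$ and continuity of $k'$, from which $\|VK^* f\|_{L^2(\mu)} \leq C \|f\|_\RKHS$ follows via Cauchy--Schwarz.

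For part (ii), the core computation is the integral formula, obtained by integration by parts. For $f \in \RKHS$ and $x \in M$,
\[
    W f(x) = K(V K^* f)(x) = \int_M k(x,y)\,(V K^* f)(y)\, d\mu(y).
\]
Since the kernel section $y \mapsto k(x,y)$ is itself $C^1$ and hence in $D(V)$, and since $V$ is skew-adjoint on $L^2(\mu)$, I can transfer $V$ off the second factor:
\[
    W f(x) = -\int_M \bigl(V[k(x,\cdot)]\bigr)(y)\, f(y)\, d\mu(y).
\]
Using the symmetry $k(x,y) = k(y,x)$, the inner differentiation equals $k'(y,x)$, giving $W f(x) = \int_M \tilde k'(x,y) f(y)\, d\mu(y)$ as claimed. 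The Hilbert--Schmidt property follows from continuity of $\tilde k'$ on $X \times X$ and compactness of $\supp\mu$; realness is immediate because $k$ is real-valued; and $\ran W \subseteq \mathcal{K}$ because $W f = K(V K^* f) \in \ran K \subseteq \mathcal{K}$.

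Skew-adjointness is then a direct adjoint calculation: for $f,g \in \RKHS$, using the $K$--$K^*$ adjoint relation and the skew-adjointness of $V$ (applicable because $K^* g, K^* f \in D(V)$ by part (i)),
\[
\langle g, W f \rangle_\RKHS = \langle K^* g, V K^* f \rangle_{L^2(\mu)} = -\langle V K^* g, K^* f \rangle_{L^2(\mu)} = -\langle W g, f \rangle_\RKHS,
\]
which is the defining property of a skew-adjoint operator. The principal obstacle I expect is the boundedness claim in part (i), because for a merely $C^1$ kernel the derivative reproducing property is delicate; should the direct route prove troublesome, I would instead first establish the integral formula (which needs only pointwise existence of $V K^* f$ and integration by parts against kernel sections), and then extract the boundedness of $V K^*$ a posteriori from the continuous-kernel integral representation of $W = K V K^*$ together with the injectivity structure of $K$ on the relevant subspace $\mathcal{K}$.
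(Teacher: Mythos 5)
Your overall architecture matches the paper's proof: the inclusion $\ran K^*\subset D(V)$ via $\RKHS\subset C^1(M)$, the integration-by-parts/skew-adjointness computation yielding $Wf=\int_M\tilde k'(\cdot,y)f(y)\,d\mu(y)$ (the paper does this via the Leibniz rule plus $\int_M \vec V(k(x,\cdot)f)\,d\mu=0$, which is the same manipulation), the adjoint calculation for skew-adjointness, and $\ran W\subseteq\mathcal K$ are all essentially as in the paper. The genuine gap is exactly where you anticipated it: the boundedness of $VK^*$. Your primary route requires, for each $x$, a representer $k'_x\in\RKHS$ with $\sup_x\lVert k'_x\rVert_{\RKHS}<\infty$; but $\lVert k'_x\rVert^2_{\RKHS}$ is controlled by the mixed second derivative of $k$ on the diagonal (the limit of the symmetric second difference $[k(x+tu,x+tu)-2k(x+tu,x)+k(x,x)]/t^2$), so ``compactness of $M$ and continuity of $k'$'' does not suffice — this is a derivative-reproducing statement that needs regularity beyond what you justified. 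Worse, your fallback does not work: from boundedness of $W=KVK^*$ and injectivity of $K$ you cannot recover boundedness of $VK^*$, because $K$ is compact and hence not bounded below on any infinite-dimensional subspace; injectivity gives no bounded inverse. The paper closes this step differently and cleanly: it uses the bounded inclusion $\iota_{\RKHS}:\RKHS\to C^1(M)$ (cited to the RKHS differentiability results of Ferreira--Menegatto) together with Lemma~\ref{lemmaVec}, which bounds the vector field $\vec V$ as an operator $C^1(M)\to C^0(M)$, so that $\lVert VK^*f\rVert_{L^2(\mu)}\le\lVert\vec V f\rVert_{C^0(M)}\le\lVert\vec V\rVert\,\lVert\iota_{\RKHS}\rVert\,\lVert f\rVert_{\RKHS}$. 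If you want a self-contained argument, that embedding (or an equivalent uniform bound on derivative-evaluation functionals) is the ingredient you must supply; it cannot be bypassed via the integral formula.

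A second, smaller gap: your justification of the Hilbert--Schmidt property invokes continuity of $\tilde k'$ and compactness of $\supp\mu$, but that criterion bounds the Hilbert--Schmidt norm of an integral operator acting on $L^2(\mu)$ (as in Theorems~\ref{thm:A} and~\ref{thm:C}), whereas $W$ acts on $\RKHS$ with a different inner product, so continuity of the kernel does not by itself give $\sum_j\lVert W\psi_j\rVert^2_{\RKHS}<\infty$ for an orthonormal basis $\{\psi_j\}$ of $\RKHS$. The paper obtains compactness (and Hilbert--Schmidtness follows the same way) from the factorization $W=K\circ(VK^*)$: once $VK^*:\RKHS\to L^2(\mu)$ is bounded, it suffices that $K:L^2(\mu)\to\RKHS$ is Hilbert--Schmidt, which holds because $\tr(K^*K)=\tr G=\int_M k(x,x)\,d\mu(x)<\infty$. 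So both the compactness class of $W$ and its boundedness hinge on part (i), which is precisely the step your proposal leaves unproved.
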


\begin{rk*} Because $\KVKst$ is skew-adjoint, real, and compact, it has the following properties, which we will later use.
\begin{enumerate}[(i)]
\item Its nonzero eigenvalues are purely imaginary, occur in complex-conjugate pairs, and accumulate only at zero. Moreover, there exists an orthonormal basis of $\RKHS$ consisting of corresponding eigenfunctions. 
\item It generates a norm-continuous, one-parameter group of unitary operators $e^{tW} : \RKHS \to \RKHS$, $ t \in \real $.
\end{enumerate}
\end{rk*}

In the next theorem, we connect the operators $\VG$ and $\KVKst$ through the adjoint of $\VG$. 
\begin{theorem}[Post-smoothing]\label{thm:C}
Let Assumptions \ref{assump:A1} and \ref{assump:K} hold. Then, the adjoint of $-\VG$ from \eqref{eqn:similarity_trnsfrm} is a Hilbert-Schmidt integral operator $\GV : L^2(\mu) \to L^2(\mu)$ with kernel $\tilde k'$. In addition:
\begin{enumerate}[(i)]
\item The densely-defined operator $ GV: D(V) \to L^2(\mu)$ is bounded, and $B$ is equal to its closure, $\overline{GV}:=(GV)^{**}$. Moreover, $ B$ is a closed extension of $KW\Nyst : D(\Nyst) \to L^2(\mu) $, and if the kernel $k$ is $L^2(\mu)$-strictly-positive, i.e., $D(\Nyst)$ is a dense subspace of $L^2(\mu)$, that extension is unique.
\item $\GV$ generates a norm-continuous, 1-parameter group of bounded operators $e^{t\GV}: L^2(\mu) \to L^2(\mu) $, $t\in\real$, satisfying
\[ K^* e^{t\KVKst} = e^{t\GV} K^*, \quad K^* e^{t\KVKst} \Nyst = e^{t\GV} |_{D(\Nyst)}, \quad \forall t\in\real. \]
\end{enumerate}
\end{theorem}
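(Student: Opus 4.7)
The plan is to anchor everything on Theorem~\ref{thm:A}(iii), which states $-\VG = (GV)^*$. Taking adjoints once more, $-\VG^* = (GV)^{**}$, and since $\VG$ is Hilbert-Schmidt (Theorem~\ref{thm:A}(ii)), its adjoint is bounded; thus $\GV := -\VG^*$ is a bounded operator on $L^2(\mu)$. To identify $\GV$ as an integral operator, I would invoke the general rule that the adjoint of a Hilbert-Schmidt integral operator with real kernel $k'(x,y)$ is the integral operator with kernel $k'(y,x)$; hence $\GV$ has kernel $-k'(y,x) = \tilde k'(x,y)$ and is itself Hilbert-Schmidt.

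For part~(i), the equality $\GV = \overline{GV} = (GV)^{**}$ is just a restatement of the above; boundedness of $GV$ on $D(V)$ then follows because its closure $\GV$ is bounded on all of $L^2(\mu)$. For the extension statement, I would compute directly on $f \in D(\Nyst)$:
\[ K^* \KVKst \Nyst f = K^* K V K^* \Nyst f = G V f, \]
using $K^* \Nyst f = f$ (defining property of the Nystr\"om extension) together with $K^* \Nyst f \in \ran K^* \subseteq D(V)$, where the latter inclusion is Theorem~\ref{thm:B}(i). Thus the operator $K^*\KVKst\Nyst$ agrees with $GV$ on $D(\Nyst) \subseteq D(V)$, and $\GV$ extends it as a closed (in fact everywhere-defined bounded) operator. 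When $k$ is $L^2(\mu)$-strictly-positive, $D(\Nyst)$ is dense in $L^2(\mu)$, so any bounded extension of $K^*\KVKst\Nyst$ is uniquely determined by continuity, giving $\GV$.

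For part~(ii), boundedness of $\GV$ alone guarantees that $\{e^{t\GV}\}_{t\in\real}$ is a norm-continuous one-parameter group defined by the norm-convergent exponential series. The intertwining identity $K^* e^{t\KVKst} = e^{t\GV} K^*$ reduces, upon expanding both exponentials, to $K^* \KVKst^n = \GV^n K^*$ for every $n \geq 0$. The base case $n=1$ is the computation
\[ K^* \KVKst f = K^* K V K^* f = G V (K^* f) = \GV K^* f, \quad f \in \RKHS, \]
using $K^* f \in D(V)$ by Theorem~\ref{thm:B}(i) and $\GV|_{D(V)} = GV$. Induction supplies all higher $n$, summation yields the intertwining relation on $\RKHS$, and evaluating both sides on $\Nyst f$ for $f \in D(\Nyst)$ with $K^*\Nyst f = f$ produces the second identity.

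The only genuinely nontrivial input is having $\VG$ bounded and identified as $-(GV)^*$, which is exactly Theorem~\ref{thm:A}; everything in Theorem~\ref{thm:C} is then bookkeeping with adjoints, closures, and the intertwining action of $K^*$. The mildest obstacle is ensuring at each stage that operator compositions land in the correct domain (notably $\ran K^* \subseteq D(V)$), but this is furnished by Theorem~\ref{thm:B}(i) and is the reason the sequence of applications of $\KVKst$, $V$, and $K^*$ never leaves a well-defined setting.
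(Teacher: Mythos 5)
Your proof is correct and follows essentially the same route as the paper's: identify $\GV=-\VG^*$ as a Hilbert-Schmidt operator with kernel $\tilde k'$, verify $K^*\KVKst\Nyst = GV\rvert_{D(\Nyst)}$ using $K^*\Nyst = \Id$ and $\ran K^*\subset D(V)$, and obtain the intertwining of exponentials via $K^*\KVKst^n = \GV^n K^*$ and the norm-convergent Taylor series. The only (minor) streamlining is that you deduce $\overline{GV}=(GV)^{**}=(-\VG)^*=\GV$ purely by adjoint bookkeeping from Theorem~\ref{thm:A}(iii), whereas the paper separately re-verifies $GVf=\GV f$ on the dense subspace $\iotaC C^1(M)$ via a Leibniz-rule computation; your shortcut is valid and makes that step redundant.
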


\begin{rk*} Because $V$ is an unbounded operator, defined on a dense subset $D(V) \subset L^2(\mu)$, the domain of $G V$ is also restricted to $D(V)$. It is therefore a non-intuitive result that a regularization of $V$ \emph{after} an application of $G$ could still result in a bounded operator that can be extended to the entire space $L^2(\mu)$.
\end{rk*}

Theorem~\ref{thm:C}(i) shows that, on the subspace $D(\Nyst) \subset L^2(\mu)$, $B$ acts by first performing Nystr\"om extension, then acting by $\KVKst$, then mapping back to $L^2(\mu)$ by inclusion via $K^*$. In other words, $B$ is a natural analog of $\KVKst$ acting on $L^2(\mu)$, though note that, unlike $ W$, $\GV$ is generally not skew-adjoint. To summarize, on the basis of Theorems~\ref{thm:A}--\ref{thm:C}, we have obtained the following sequence of operator extensions:
\begin{displaymath}
K W \Nyst \subseteq G V \subset B = \overline{GV} = (GV)^{**}.
\end{displaymath}

As our final compactification of $V$, we will construct a skew-adjoint operator $\symV$ on $L^2(\mu) $ by conjugation by a compact operator. In particular, since $G$ is positive-semidefinite, it has a square root $G^{1/2} : L^2(\mu) \to L^2(\mu) $, which is the unique positive-semidefinite operator satisfying $G^{1/2} G^{1/2} = G$. Note that by compactness of $G$, $G^{1/2}$ is compact, and its action on functions can be conveniently evaluated in an eigenbasis of $G$. Moreover, it can be verified that $ \ran G^{1/2} = \ran K^* $. In fact, the operators $ K $ and $G^{1/2} $ are related to $K$ via the polar decomposition, $ K = \mathcal{U} G^{1/2} $, where $ \mathcal{U}:L^2(\mu) \to \RKHS $ is a (uniquely defined) partial isometry with $\ran \mathcal{U} = \mathcal{K}$, analogous to $\mathcal{U}_\tau$ in \eqref{eqn:def:polar}. Note that $\mathcal{K}$ is an invariant subspace of $W$. Moreover, if the kernel $k$ is $L^2(\mu)$-strictly-positive and $\mathcal{K}=\RKHS$ (i.e., $K$ has dense range), then  $ \mathcal{U} $ becomes unitary. Using these definitions, we will show in Theorem~\ref{thm:D} below that the operator $G^{1/2} V G^{1/2}$, defined on the subspace $\{f\in L^2(\mu) : G^{1/2}f\in D(V)\}$, actually extends to a well-defined compact operator.

\begin{theorem}[Skew-adjoint compactification]\label{thm:D}
    Let Assumptions \ref{assump:A1} and \ref{assump:K} hold with $r=1$. Then, $G^{1/2} V G^{1/2}$ is a densely defined, bounded operator with a unique skew-adjoint extension to a Hilbert-Schmidt, real operator $\symV : L^2(\mu)\to L^2(\mu)$. Moreover, $\symV$ is related to the operator $\KVKst$ from Theorem~\ref{thm:B} via conjugation by the partial isometry $ \mathcal{U} $, i.e., $\symV = \mathcal{U}^* \KVKst \mathcal{U}$. In particular, if the kernel $k$ is $L^2(\mu)$-strictly-positive, then $ \tilde V $ and $W\rvert_\mathcal{K}$ are unitarily equivalent.
\end{theorem}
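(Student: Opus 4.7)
The plan is to extract $\symV$ directly from the polar decomposition $K = \mathcal{U} G^{1/2}$, transferring the properties of $\KVKst = KVK^*$ established in Theorem~\ref{thm:B} across the RKHS/$L^2(\mu)$ divide. The first task is to verify that the formal expression $G^{1/2} V G^{1/2}$ is actually defined on all of $L^2(\mu)$, rather than merely on $\{f: G^{1/2}f \in D(V)\}$. Taking adjoints in the polar decomposition gives $K^* = G^{1/2} \mathcal{U}^*$, hence $\ran K^* \subseteq \ran G^{1/2}$. Conversely, for any $f \in L^2(\mu)$, decomposing $f = f_1 + f_0$ with $f_1 \in \overline{\ran G^{1/2}}$ and $f_0 \in \ker G^{1/2}$, and using that $\mathcal{U}^* \mathcal{U}$ is the orthogonal projection onto $\overline{\ran G^{1/2}}$, one obtains $G^{1/2} f = G^{1/2} f_1 = G^{1/2} \mathcal{U}^* \mathcal{U} f_1 = K^*(\mathcal{U} f_1) \in \ran K^*$. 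Thus $\ran G^{1/2} = \ran K^*$, and by Theorem~\ref{thm:B}(i) this is contained in $D(V)$, so $G^{1/2}VG^{1/2}$ is everywhere defined.

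Next I carry out the identification $\symV = \mathcal{U}^* \KVKst \mathcal{U}$. From $K = \mathcal{U} G^{1/2}$ and its adjoint,
\[
\KVKst = KVK^* = \mathcal{U} \bigl( G^{1/2} V G^{1/2} \bigr) \mathcal{U}^*.
\]
The operator $G^{1/2} V G^{1/2}$ vanishes on $\ker G^{1/2} = \ker \mathcal{U}$ and takes values in $\ran G^{1/2} \subseteq (\ker \mathcal{U})^\perp$; hence conjugating the displayed identity by $\mathcal{U}^*$ on the left and $\mathcal{U}$ on the right introduces the projection $P := \mathcal{U}^*\mathcal{U}$ onto $(\ker\mathcal{U})^\perp$ on both sides, but $P(G^{1/2}VG^{1/2})P = G^{1/2}VG^{1/2}$. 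This yields $\mathcal{U}^* \KVKst \mathcal{U} = G^{1/2} V G^{1/2}$ as bounded operators on $L^2(\mu)$, and defines $\symV$. Skew-adjointness, the Hilbert-Schmidt property, and boundedness all descend from the corresponding properties of $\KVKst$ via conjugation by the contraction $\mathcal{U}$ (the Hilbert-Schmidt part uses the ideal property of the Hilbert-Schmidt class). For reality, a real orthonormal eigenbasis $\{\phi_j\}$ of $G$ produces real functions $\psi_j = \lambda_j^{-1/2} K\phi_j$ because $k$ is real, and the polar decomposition then forces $\mathcal{U}\phi_j = \psi_j$, so $\mathcal{U}$ preserves real functions; combined with the reality of $\KVKst$ from Theorem~\ref{thm:B}(ii), this gives reality of $\symV$. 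Uniqueness of the skew-adjoint extension is automatic: the operator is already everywhere defined and bounded, so its closure (the unique bounded extension) is itself.

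Finally, for the $L^2(\mu)$-strictly-positive case, $G > 0$ forces $\ker G^{1/2} = \{0\}$, whence the partial isometry $\mathcal{U}$ becomes an isometry from $L^2(\mu)$ onto $\mathcal{K} = \overline{\ran K}$. Since $\ran \KVKst \subseteq \mathcal{K}$ by Theorem~\ref{thm:B}(ii) and $\KVKst$ is skew-adjoint, $\mathcal{K}$ is $\KVKst$-invariant, so the restriction $\KVKst\rvert_{\mathcal{K}}$ is a bona fide operator on $\mathcal{K}$, and the identity $\symV = \mathcal{U}^* \KVKst\rvert_{\mathcal{K}} \mathcal{U}$ exhibits $\mathcal{U}$ as the unitary intertwiner. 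The main obstacle in the whole argument is the domain step: without the identification $\ran G^{1/2} = \ran K^*$ coming from the polar decomposition, it is not obvious that $G^{1/2}f$ lands in $D(V)$ for every $f$; once this is settled, every remaining property is inherited mechanically from $\KVKst$ through conjugation by $\mathcal{U}$.
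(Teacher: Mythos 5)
Your proof is correct and, while arriving at the same destination, takes a somewhat more direct route than the paper's. The paper shows only that $G^{1/2}VG^{1/2}$ is defined on a dense subspace (linear combinations of the $\phi_j$ with $j\notin J$ together with \emph{finite} combinations with $j\in J$), and then establishes the identity $\symV = \mathcal{U}^*\KVKst\mathcal{U}$ by computing and matching the matrix elements $\langle\phi_i, G^{1/2}VG^{1/2}\phi_j\rangle_\mu$ and $\langle\psi_i, \KVKst\psi_j\rangle_\RKHS$ on that subspace, declaring $\symV := \mathcal{U}^*\KVKst\mathcal{U}$ the unique bounded extension. You instead work purely at the level of the polar decomposition: having established $\ran G^{1/2} = \ran K^*$ (a fact the paper also records in its RKHS review, but does not deploy this way in the proof), you deduce from $\ran K^*\subset D(V)$ (Theorem~\ref{thm:B}(i)) that $G^{1/2}VG^{1/2}$ is actually defined on \emph{all} of $L^2(\mu)$ --- a genuine, if minor, sharpening of the paper's ``densely defined'' --- and then obtain the operator identity algebraically by substituting $K=\mathcal{U}G^{1/2}$, $K^*=G^{1/2}\mathcal{U}^*$ into $\KVKst = KVK^*$ and conjugating, with the essential step being $PG^{1/2}VG^{1/2}P = G^{1/2}VG^{1/2}$ for $P=\mathcal{U}^*\mathcal{U}$ the projection onto $\overline{\ran G^{1/2}}$, justified by $PG^{1/2}=G^{1/2}P=G^{1/2}$. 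The remaining inheritances (Hilbert--Schmidt via the ideal property, skew-adjointness, reality via $\mathcal{U}\phi_j=\psi_j$ with real $\phi_j,\psi_j$, and unitary equivalence with $\KVKst\rvert_{\mathcal{K}}$ in the strictly positive case via $\ker G^{1/2}=\{0\}$ and $\KVKst$-invariance of $\mathcal{K}$) are handled correctly. The two approaches are equivalent in substance, since the polar decomposition encodes the same eigendata the paper manipulates componentwise, but yours makes the everywhere-definedness and the conjugation explicit rather than folding them into a basis calculation.
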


This completes the statement of our compactification schemes for $V$. Since these schemes are all carried out using the same kernel $k$, one might expect that the spectral properties of the compact operators $\VG$, $\GV$, $ \symV $, and $ W $, exhibit non-trivial relationships. These relationships will be made precise in Theorems~\ref{thm:E} and~\ref{thm:F} below. Hereafter, $\sigma(T)$ and $\sigma_p(T)$ will denote the spectrum and point spectrum (set of eigenvalues) of a linear operator $T$, respectively.

\begin{theorem}[Spectra of the compactified generators]\label{thm:E}
Let Assumptions \ref{assump:A1} and \ref{assump:K} hold with $r=1$, and assume further that the kernel $k$ is $L^2(\mu)$-strictly-positive. Let also $ \{ \tilde z_0, \tilde z_1, \ldots \} $ be an orthonormal basis of $L^2(\mu)$, consisting of eigenfunctions $\tilde z_j $ of $\symV $ corresponding to purely imaginary eigenvalues $ i \omega_j $. Then: 
\begin{enumerate}[(i)]
    \item $\VG$ and $\GV$  have the same eigenvalues as $ \symV $, including multiplicities. Moreover, $\sigma_p(W) =\sigma_p(\tilde V) $ (including multiplicities) if $K$ has dense range, and $\sigma_p(W)=\sigma_p(\tilde V)\cup\{0\}$ otherwise.
\setcounter{tempEnumi}{\value{enumi}}\end{enumerate}
In addition, if the kernel $k$ is $L^2(\mu)$-Markov ergodic: 
\begin{enumerate}[(i)]\setcounter{enumi}{\value{tempEnumi}}
    \item 0 is a simple eigenvalue of each of the operators $\VG$, $\GV$, $\symV$, and $W\rvert_\mathcal{K}$, corresponding to constant eigenfunctions. 
\item Every $\tilde{z}_j$ lies in the domain of $G^{-1/2}$. Moreover, the set $\{ z'_0, z'_1, \ldots \}   \}$ with $ \EigenGV'_j = G^{-1/2}\tilde{z}_j $ consists of eigenfunctions of $\VG$, corresponding to the eigenvalues $ \{ i\omega_0, i \omega_1, \ldots \} $, and forms an unconditional Schauder basis of $L^2(\mu)$.
\item The set $ \{ z_0, z_1, \ldots \} $ with $ z_j = G^{1/2} \tilde z_j $ is an unconditional Schauder basis of $L^2(\mu)$, consisting of eigenfunctions of $B$ corresponding to the same eigenvalues, $\{ i\omega_0, i\omega_1, \ldots \} $. Moreover, it is the unique dual sequence to the $\{\EigenGV_j\}$, satisfying $ \langle \EigenGV'_j, \EigenGV_l \rangle_{\mu} = \delta_{jl} $ .
\item The set $ \{ \zeta_0, \zeta_1, \ldots \} $ with $ \zeta_j = K z'_j $ is an orthonormal basis of $\mathcal{K}$ consisting of eigenfunctions $\zeta_j$ of $ W$ corresponding to the eigenvalues $ i \omega_j $. 
\item The operators $\VG$, $\GV$, $\symV $, and $ \KVKst$,  admit the representations 
\begin{gather*}
A = \sum_{j=0}^\infty i\omega_j \langle z_j, \cdot \rangle_{L^2(\mu)} z'_j, \quad B = \sum_{j=0}^\infty i\omega_j \langle z'_j, \cdot \rangle_{L^2(\mu)} z_j, \quad \tilde V = \sum_{j=0}^\infty i \omega_j \langle \tilde z_j, \cdot \rangle_{L^2(\mu)} \tilde z_j,\\
W = \sum_{j=0}^\infty i\omega_j\langle \zeta_j, \cdot \rangle_{\RKHS} \zeta_j, 
\end{gather*}
where the infinite sums for $A$ and $B$ converge strongly, and those for $\tilde V$,  and $W$ converge in Hilbert-Schmidt norm.
\end{enumerate}
\end{theorem}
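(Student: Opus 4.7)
Plan: The backbone of the proof is the unitary equivalence $\symV = \mathcal{U}^* \KVKst \mathcal{U}$ from Theorem~\ref{thm:D}, combined with the intertwining identity $K^* e^{t\KVKst} = e^{t\GV} K^*$ from Theorem~\ref{thm:C}(ii). These transfer all spectral information from the skew-adjoint, compact operator $\symV$ (which has a bona fide orthonormal eigenbasis $\{\tilde z_j\}$) to the remaining operators. Because $p$ is $L^2(\mu)$-strictly-positive, $G^{1/2}$ and hence $\mathcal{U}$ are injective, so $\mathcal{U}^*\mathcal{U} = \Id_{L^2(\mu)}$ and $\mathcal{U}\mathcal{U}^* = P_\mathcal{K}$, the orthogonal projector onto $\mathcal{K}$.

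First I would set $\zeta_j := \mathcal{U}\tilde z_j$; then $\KVKst\zeta_j = \mathcal{U}\symV\tilde z_j = i\omega_j\zeta_j$, and $\{\zeta_j\}$ inherits orthonormality from $\{\tilde z_j\}$, spanning $\mathcal{K}$. Since $\ran \KVKst \subseteq \mathcal{K}$ by Theorem~\ref{thm:B}(ii) and $\KVKst$ is skew-adjoint, one has $\mathcal{K}^\perp \subseteq \ker \KVKst$; this gives the $W$-half of (i), with the extra $\{0\}$ appearing exactly when $K$ lacks dense range. Next I would show each $\tilde z_j \in \ran G^{1/2}$: for $\omega_j \neq 0$, $\tilde z_j = (i\omega_j)^{-1}\symV\tilde z_j \in \ran \symV \subseteq \ran G^{1/2}$ since $\symV$ is the bounded extension of $G^{1/2}VG^{1/2}$; for $\omega_j = 0$, Markov ergodicity together with flow ergodicity forces $\tilde z_j$ to be a constant multiple of the constant function $\mathbf{1}$, and $G^{1/2}\mathbf{1} = \mathbf{1}$ from $G\mathbf{1}=\mathbf{1}$. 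Thus $\EigenGV'_j := G^{-1/2}\tilde z_j$ is a well-defined element of $L^2(\mu)$ and $\EigenGV_j := G^{1/2}\tilde z_j = K^*\zeta_j$ (using $K^* = G^{1/2}\mathcal{U}^*$), with biorthogonality $\langle \EigenGV'_j, \EigenGV_l\rangle_\mu = \delta_{jl}$ following from self-adjointness of $G^{\pm 1/2}$ and orthonormality of $\{\tilde z_j\}$; also $K \EigenGV'_j = \mathcal{U}G^{1/2}G^{-1/2}\tilde z_j = \zeta_j$, proving the identity in (v).

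To get the eigenvalue equations, I would apply $K^* e^{t\KVKst} = e^{t\GV} K^*$ to $\zeta_j$ and differentiate at $t=0$: $\GV \EigenGV_j = K^* \KVKst \zeta_j = i\omega_j \EigenGV_j$. For $\VG$, Theorem~\ref{thm:B}(i) gives $\EigenGV_j = K^*\zeta_j \in \ran K^* \subseteq D(V)$, so $\VG \EigenGV'_j = V G \EigenGV'_j = V \EigenGV_j$ is well defined; then $G(V\EigenGV_j) = GV\EigenGV_j = \GV \EigenGV_j = i\omega_j \EigenGV_j = i\omega_j G\EigenGV'_j$, and injectivity of $G$ yields $V\EigenGV_j = i\omega_j \EigenGV'_j$, hence $\VG\EigenGV'_j = i\omega_j\EigenGV'_j$. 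This establishes the $A$- and $B$-halves of (i) and the eigenvalue portions of (iii)--(v). For simplicity of the zero eigenvalue in (ii), flow ergodicity gives $\ker V = \mathbb{C}\mathbf{1}$, so $\symV f = 0$ forces $G^{1/2}f \in \mathbb{C}\mathbf{1} = G^{1/2}(\mathbb{C}\mathbf{1})$ and injectivity of $G^{1/2}$ gives $f \in \mathbb{C}\mathbf{1}$; the analogous simplicity statements for $\VG$, $\GV$, and $\KVKst\rvert_\mathcal{K}$ then follow through the established bijective correspondences of eigenspaces.

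For the Schauder basis assertions in (iii)--(iv) and the spectral representations in (vi), I would use the expansion $f = \sum_j \langle \tilde z_j, f\rangle_\mu \tilde z_j$ in the ONB $\{\tilde z_j\}$, transfer it through $G^{\pm 1/2}$ on $\ran G^{1/2}$, and extend by density combined with biorthogonality to obtain unique expansions $f = \sum_j \langle \EigenGV'_j, f\rangle_\mu \EigenGV_j = \sum_j \langle \EigenGV_j, f\rangle_\mu \EigenGV'_j$; the unconditional refinement follows from the analogous property of $\{\tilde z_j\}$ and the symmetric roles of $G^{\pm 1/2}$. The spectral representations then follow by applying each operator termwise on its eigenbasis, with Hilbert--Schmidt norm convergence for $\symV$ and $\KVKst$ from Theorems~\ref{thm:B}(ii) and~\ref{thm:D}, and strong convergence for $\VG$ and $\GV$ from their boundedness. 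The main obstacle I anticipate is the rigorous Schauder basis argument: since $G^{1/2}$ is compact, $\{\EigenGV_j\}$ and $\{\EigenGV'_j\}$ are not Riesz bases of $L^2(\mu)$, and one must establish uniform boundedness of the biorthogonal partial-sum projectors despite the fact that $\|\EigenGV_j\|_\mu$ and $\|\EigenGV'_j\|_\mu$ need not be uniformly bounded---this appears to require a careful exploitation of the biorthogonal structure together with the decay of the eigenvalues $\omega_j \to 0$. All remaining steps are essentially algebraic consequences of Theorems~\ref{thm:A}--\ref{thm:D} and the ergodicity assumptions.
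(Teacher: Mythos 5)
Most of your plan runs parallel to the paper's proof and is sound: setting $\zeta_j = \mathcal{U}\tilde z_j$, the biorthogonality $\langle z'_j, z_l\rangle_\mu = \delta_{jl}$, the eigenvalue computations for $A$ and $B$ (via injectivity of $G$ and the intertwining $K^*e^{tW} = e^{tB}K^*$), the ergodicity argument for simplicity of the zero eigenvalue, and the termwise spectral representations in (vi) all match the paper's algebra. Your route to $\tilde z_j \in D(G^{-1/2})$ is genuinely different from the paper's: you use $\tilde z_j = (i\omega_j)^{-1}\tilde V \tilde z_j \in \ran \tilde V \subseteq \ran G^{1/2}$, whereas the paper argues through finite-dimensionality of the eigenspaces of $A$ and the eigenfunction-conversion lemma; your version is more direct, but the justification you give (``$\tilde V$ is the bounded extension of $G^{1/2}VG^{1/2}$'') does not by itself yield the range inclusion, since $\ran G^{1/2}$ is not closed and an extension by continuity can leave it. The inclusion is nonetheless true and provable with your own ingredients: $\tilde V = \mathcal{U}^*KVK^*\mathcal{U} = G^{1/2}\,(VK^*)\,\mathcal{U}$, using $\mathcal{U}^*K = G^{1/2}$ and boundedness of $VK^*$ from Theorem~\ref{thm:B}(i).

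The genuine gap is the unconditional Schauder basis property of $\{z_j\}$ and $\{z'_j\}$, which is the analytic heart of claims (iii)--(iv) and which you leave open. Pushing the orthonormal expansion in $\{\tilde z_j\}$ through $G^{\pm 1/2}$ only produces expansions for elements of the non-closed subspace $\ran G^{1/2}$, and ``extending by density'' is exactly the point at issue: one must prove uniform boundedness of the biorthogonal partial-sum operators $f \mapsto \sum_{j<l}\langle z'_j, f\rangle_\mu z_j$, and the mechanism you hint at (decay of $\omega_j \to 0$) is not what makes this work. The paper's argument instead exploits the spectral decay of $G$: representing the maps $\phi_j \mapsto z'_j$ and the dual family by the $\ell^2$ matrices $Z' = \Lambda^{-1/2}U$ and $L = U^*\Lambda^{1/2}$, where $\Lambda = \diag(\lambda_0,\lambda_1,\ldots)$ and $U$ is the unitary change of basis from $\{\phi_j\}$ to $\{\tilde z_j\}$, it checks that $L$ is a left inverse of $Z'$ and that $Z'\Pi_l L = \Lambda^{-1/2}U\Pi_l U^*\Lambda^{1/2}$ converges strongly to the identity, then invokes a known criterion (the cited Lemma~2.1 on Schauder bases) to conclude that the columns, i.e.\ the $z'_j$, form a Schauder basis, with unconditionality following because the whole construction is invariant under permutations of the index set. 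Without an argument of this type --- and a Riesz-basis shortcut is unavailable because $G^{1/2}$ is compact, so $\{z_j\}$, $\{z'_j\}$ cannot be norm-equivalent to an orthonormal basis --- claims (iii) and (iv), and consequently the strong convergence of the expansions for $A$ and $B$ in (vi), which rest on unique basis expansions, remain unproven.
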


\begin{rk*}
    The Markovianity assumption on the kernel was important to conclude that $A$, $B$, $\tilde V$, and $ W\rvert_{\mathcal{K}}$ have finite-dimensional nullspaces (which may not be the case for a general compact operator), allowing us to establish a one-to-one correspondence of the spectra of these operators, including eigenvalue multiplicities. 
\end{rk*}

An immediate consequence of Theorem~\ref{thm:E}, in conjunction with Theorems~\ref{thm:C} and~\ref{thm:D}, is that $\tilde V $ and $ W $ are decomposable in terms of unique PVMs $\tilde E : \Borel(\real) \to \mathcal{L}(L^2(\mu)) $ and $\mathcal{E} : \Borel(\real) \to \mathcal{L}(\RKHS)$, such that $ \tilde V = \int_{\real} i\omega \, d\tilde E(\omega) $, $  W = \int_{\real} i \omega \, d \mathcal{E}(\omega)$, and
\begin{equation}
    \label{eqSpecMeasVW}
    \MesVComp(\Omega) = \sum_{j:\omega_j\in \Omega} \langle \tilde z_j, \cdot \rangle_{L^2(\mu)} \tilde z_j, \quad \mathcal{E}(\Omega) = \sum_{j:\omega_j\in \Omega} \langle \zeta_j, \cdot \rangle_{\RKHS} \zeta_j  + 1_\Omega(0) \proj_{\mathcal{K}^\perp}, 
\end{equation}
where $ 1_\Omega$ is the indicator function on $ \Omega$, and $ \proj_{\mathcal{K}^\perp} : \RKHS \to \RKHS $ the orthogonal projection onto $ \mathcal{K}^\perp$. Moreover, $\MesVComp$ and $\mathcal{E}$ are related by conjugation by the partial isometry $\mathcal{U} : L^2(\mu) \to \RKHS $  from Theorem~\ref{thm:D},  
\begin{equation}
\label{eqSpecUnitary}
\tilde E(\Omega) = \mathcal{U}^*\MesW(\Omega)\mathcal{U}, \quad \forall \Omega \in \mathcal{B}(\real),
\end{equation}
and if $k$ is $L^2(\mu)$-strictly positive, $\tilde E(\Omega)$ and $\MesW(\Omega)\rvert_{\mathcal{K}}$ are unitarily equivalent. The compactness of $\tilde V$ and $ W$, which is reflected in the fact that $\tilde E $ and $\mathcal{E}$ are purely atomic PVMs, allows for simple expressions for the Borel functional calculi of these operators. In particular, for every Borel-measurable function $Z : i \real \to \cmplx$, we have 
\begin{align*}
    Z( \tilde V ) &= \int_{\real} Z(i\omega) \, d\tilde E(\omega) = \sum_{j=0}^\infty Z(i\omega_j) \langle \tilde z_j, \cdot \rangle_{L^2(\mu)} \tilde z_j, \\
    Z( W ) &= \int_{\real} Z(i\omega) \, d\mathcal{E}(\omega) = \sum_{j=0}^\infty Z(i\omega_j) \langle \zeta_j, \cdot \rangle_{\RKHS} \zeta_j + Z(0) \proj_{\mathcal{K}^\perp}, 
\end{align*}
with all limits taken in the strong operator topology. Note that if $K$ has dense range (as in Theorem~\ref{thm:Main}), $ \mathcal{K}^\perp $ reduces to the zero subspace, and $ \proj_{\mathcal{K}^\perp}$ vanishes in the above expressions.

In the case of $A$ and $B$, the fact that these are, in general, non-normal operators precludes the construction of associated Borel functional calculi. Nevertheless, the compactness of these operators allows one to construct their holomorphic functional calculi in a straightforward manner. Specifically, given any holomorphic function $Z: D(Z) \to \cmplx$ on an open set $D(Z) \subseteq \cmplx$ containing $\sigma(A)=\sigma(B)$, we define
\begin{displaymath}
Z(A) = \oint_\gamma Z(z)(z-A)^{-1} \, dz, \quad Z(B) = \oint_\gamma Z(z)(z-B)^{-1} \, dz,
\end{displaymath}
where $\gamma$ is a Cauchy contour in $D(Z)$ containing $\sigma(A)$ in its interior. Now, because $ \tilde V G^{1/2} = G^{1/2} V G = G^{1/2}A $, we have $A = G^{-1/2} \tilde V G^{1/2}$, and it follows from Taylor series that for any such holomorphic function $Z$, 
\begin{equation}
\label{eqZAB}
Z(A) = G^{-1/2} Z(\tilde V) G^{1/2}, \quad Z(B) = Z(-A)^* \supseteq G^{1/2} Z(\tilde V) G^{-1/2}.
\end{equation}

The results in Theorems~\ref{thm:A}--\ref{thm:E} are for compactifications based on general kernels satisfying Assumptions~\ref{assump:A1} and~\ref{assump:K} and their associated integral operators. Next, we establish spectral convergence results for one-parameter families of kernels that include the kernels $ p_\tau $ associated with the Markov semigroups in our main result, Theorem~\ref{thm:Main}. Specifically, we assume:

\begin{Assumption}\label{assump:A4}
$\{ k_{\tau}:M\times M\to\real \} $ with $ \tau > 0 $ is a one-parameter family of $C^1$, symmetric, $L^2(\mu)$-strictly-positive kernels, such that, as $ \tau \to 0^+ $, the sequence of the corresponding compact operators $ G_\tau = K_\tau^* K_\tau $ on $L^2(\mu) $ converges strongly to the identity, and the sequence of the skew-adjoint compactified generators $ \symV_\tau \supseteq G_\tau^{1/2}V G_\tau^{1/2} $ converges strongly to $V$ on the subspace $D(V^2) \subset D(V)$. 
\end{Assumption}
Let $\RKHS_\tau$ be the RKHS on $M$ with reproducing kernel $k_\tau$; $\Nyst_\tau : D(\Nyst_\tau) \to \RKHS_\tau $ be the corresponding Nystr\"om extension operator; and $H_\infty$ the $L^2(\mu)$ subspace equal to $\cap_{\tau>0} D(\Nyst_\tau)$. Define the partial isometries $\mathcal{U}_\tau : L^2(\mu) \to \RKHS_\tau$ through the polar decomposition $K_\tau = \mathcal{U}_\tau G_\tau^{1/2}$, as in Theorem~\ref{thm:D}. Note that, in general, $H_\infty$ could be the zero subspace, but contains at least constant functions if the $k_\tau$ are $L^2(\mu)$-Markov kernels. As stated in Section~\ref{sect:Assump}, if $H_\infty$ is the space associated with the kernels $p_\tau$ from~\eqref{eqn:def:schemeIII}, whose corresponding integral operators form a Markov semigroup and thus have common eigenspaces, then it is even dense in $L^2(\mu)$. With these definitions, we establish the following notion of spectral convergence for approximations of the generator $V$ by compact operators.
\begin{theorem}[Spectral convergence]\label{thm:F}
    Suppose that Assumptions \ref{assump:A1} and \ref{assump:A4} hold with $r=1$, and let $A_\tau, B_\tau, \tilde V_\tau : L^2(\mu) \to L^2(\mu)$ and $W_\tau : \RKHS_\tau \to \RKHS_\tau$ with $ \tau > 0 $, be the Hilbert-Schmidt operators from Theorems~\ref{thm:A}--\ref{thm:D}, applied with the kernels $k_\tau $ from Assumption~\ref{assump:A4}. Let also $ \tilde E_\tau$ and $\mathcal{E}_\tau $ be the PVMs associated with $ \tilde V_\tau $ and $ W_\tau $, respectively, constructed as in~\eqref{eqSpecMeasVW}. Then, as $\tau\to0^+$, the following hold:
\begin{enumerate}[(i)]
\item The operator $\GV_\tau$ converges strongly to $V$ on $D(V)$.
\item For every bounded continuous function $Z:i\real\to\cmplx$, $Z( \tilde V_\tau) $ and $ \mathcal{U}_\tau^* Z(W_\tau) \mathcal{U}_\tau$ converge strongly to $ Z(V)$ on $L^2(\mu)$.
\item For every holomorphic function $Z : D(Z)\to\cmplx$, with $i \real \subset D(Z) \subseteq \cmplx$ and $Z \rvert_{i \real}$ bounded, $Z(A_\tau)$ and $Z(B_\tau)$ converge strongly to $Z(V)$ on $L^2(\mu)$. Moreover, $ K_\tau^* Z(\KVKst_\tau) \Nyst_\tau $ converges strongly to $Z(V)$ on $H_\infty$.
\item For every bounded Borel-measurable set $\Omega\subset\real$ such that $\tilde E(\partial \Omega) =0$, $\tilde{E}_\tau(\Omega)$ and $\mathcal{U}_\tau^* \mathcal{E}_\tau(\Omega)\mathcal{U}_\tau$ converge strongly to $E(\Omega)$ on $L^2(\mu)$.
\item For every element $i\omega$ of the spectrum of $V$, there exists a sequence of eigenvalues $i\omega_\tau $ of $ A_\tau$, $B_\tau$, $\tilde V_\tau$, and $W_\tau$ converging to $ i\omega$. 
\end{enumerate}
\end{theorem}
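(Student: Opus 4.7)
The plan is to prove Parts (i), (ii), (iv), (v) first by leveraging the skew-adjointness of $\tilde V_\tau$ and $W_\tau$, and then reduce Part (iii) to these via the structural links of Theorems \ref{thm:A}--\ref{thm:E}. Part (i) is immediate from Theorem \ref{thm:C}(i): on $D(V)$, $B_\tau f = G_\tau V f$, and strong convergence $G_\tau \to \Id$ supplied by Assumption \ref{assump:A4} yields $B_\tau f \to V f$. For Part (ii), I would upgrade the pointwise convergence $\tilde V_\tau f \to V f$ on $D(V^2)$ (Assumption \ref{assump:A4}) to strong resolvent convergence, using that $D(V^2)$ is a core for the skew-adjoint generator $V$ of the $C^0$-unitary Koopman group $U^t$, and applying the Kato--Reed-Simon core theorem after passing to $iV, i\tilde V_\tau$. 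Continuity of the bounded Borel functional calculus under strong resolvent convergence then gives $Z(\tilde V_\tau) \to Z(V)$ strongly for every bounded continuous $Z$. For the $W_\tau$-statement, $L^2(\mu)$-strict positivity renders $\mathcal{U}_\tau$ an isometry with $\mathcal{U}_\tau^* \mathcal{U}_\tau = \Id$, so the identity $\tilde V_\tau = \mathcal{U}_\tau^* W_\tau \mathcal{U}_\tau$ from Theorem \ref{thm:D} lifts through the functional calculus to $Z(\tilde V_\tau) = \mathcal{U}_\tau^* Z(W_\tau) \mathcal{U}_\tau$, transferring the convergence (the $\proj_{\mathcal{K}_\tau^\perp}$ contribution to $Z(W_\tau)$ is annihilated by the outer conjugation).

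Part (iv) follows from (ii) via the Portmanteau argument: sandwiching $1_\Omega$ between bounded continuous functions, using $E(\partial\Omega) = 0$, and sharpening weak to strong convergence via the identity $\lVert \tilde E_\tau(\Omega) f \rVert^2 = \langle f, \tilde E_\tau(\Omega) f \rangle$ valid for orthogonal projections; the $\mathcal{E}_\tau$ case follows by the same unitary equivalence. Part (v) is then immediate: for $i\omega \in \sigma(V)$, choose intervals $\Omega_n \downarrow \{\omega\}$ with $E(\partial\Omega_n) = 0$ (possible since $E$ has countably many atoms) and $E(\Omega_n) \neq 0$; (iv) forces $\tilde E_\tau(\Omega_n) \neq 0$ for small enough $\tau$, and compactness of $\tilde V_\tau$ produces an eigenvalue $i\omega_{\tau,n} \in i\Omega_n$. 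Theorem \ref{thm:E}(i) shares this eigenvalue with $A_\tau$ and $B_\tau$, and the inclusion $\sigma_p(W_\tau) \supseteq \sigma_p(\tilde V_\tau)$ extends it to $W_\tau$; a diagonal extraction in $(n,\tau)$ closes the argument.

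Part (iii) is the technical heart. For holomorphic $Z$ with $Z\rvert_{i\real}$ bounded, the scheme is to first establish strong resolvent convergence $(\lambda - B_\tau)^{-1} \to (\lambda - V)^{-1}$ for $\Real \lambda \neq 0$, via the identity $(\lambda - V)^{-1} f - (\lambda - B_\tau)^{-1} f = (\lambda - B_\tau)^{-1} (\Id - G_\tau) V (\lambda - V)^{-1} f$, whose right-hand side vanishes strongly provided $\lVert (\lambda - B_\tau)^{-1} \rVert$ stays bounded in $\tau$. From strong resolvent convergence, the exponential case $Z(z) = e^{tz}$ follows by a Trotter-Kato argument giving $e^{tB_\tau} \to U^t$ strongly, and general holomorphic $Z$ bounded on $i\real$ is handled by approximation through continuity of the functional calculus under strong resolvent convergence. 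The $A_\tau$ case follows by the adjoint relation $A_\tau = -B_\tau^*$, and the $K_\tau^* Z(W_\tau) \Nyst_\tau$ claim on $H_\infty$ follows from the intertwining $K_\tau^* e^{tW_\tau} \Nyst_\tau = e^{tB_\tau}\rvert_{D(\Nyst_\tau)}$ of Theorem \ref{thm:C}(ii), extended to holomorphic $Z$ via the functional calculus. The main obstacle, by a considerable margin, is the uniform resolvent bound $\lVert (\lambda - B_\tau)^{-1} \rVert \leq C_\lambda$ for $\Real \lambda \neq 0$: unlike in the skew-adjoint case where this is automatic ($1/|\Real\lambda|$), $B_\tau$ is non-normal and similar to $\tilde V_\tau$ only through the compact, non-invertible operator $G_\tau^{1/2}$, so pseudospectra may inflate. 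I expect to secure the bound by decomposing $\lambda - B_\tau = (\lambda - V) + (\Id - G_\tau) V$ on $D(V)$, inverting by a Neumann series on the ``head'' subspace where $\Id - G_\tau$ is small, and controlling the complementary ``tail'' via the Hilbert-Schmidt estimates of Theorems \ref{thm:A} and \ref{thm:C}.
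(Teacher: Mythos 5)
Parts (i), (ii), (iv), and (v) of your plan are correct and essentially coincide with the paper's argument: (i) is the same one-line computation; (ii) is precisely the paper's route (pointwise convergence on the core $D(V^2)$ upgraded to strong resolvent convergence, then continuity of the bounded Borel functional calculus, then conjugation by the isometry $\mathcal{U}_\tau$, with the $\proj_{\mathcal{K}_\tau^\perp}$ contribution annihilated); your portmanteau/polarization treatment of (iv) and your shrinking-interval argument for (v) are self-contained variants of what the paper delegates to Proposition~\ref{prop:SRC}(iii) and (vi), and they work.

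The genuine gap is in part (iii), which you route through a uniform resolvent bound $\lVert(\lambda - B_\tau)^{-1}\rVert \le C_\lambda$ for $\Real\lambda\neq 0$ for the non-normal family $B_\tau$, followed by a Trotter--Kato argument. You correctly identify this bound as the crux, but the repair you sketch cannot work as described: $G_\tau\to\Id$ only strongly, never in norm (each $G_\tau$ is compact), so $\lVert \Id - G_\tau\rVert \ge 1$ and no Neumann series in $(\Id-G_\tau)V(\lambda-V)^{-1}$ converges; moreover $V$ does not commute with the spectral projections of $G_\tau$, so the head/tail splitting of $\lambda - B_\tau = (\lambda-V) + (\Id-G_\tau)V$ has uncontrolled off-diagonal coupling, which is exactly where pseudospectral inflation of the non-normal $B_\tau$ can occur; Trotter--Kato likewise needs uniform bounds $\lVert e^{tB_\tau}\rVert\le Me^{\omega\lvert t\rvert}$, which is the same missing estimate. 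Two subsidiary steps also fail as stated: strong convergence is not preserved under adjoints, so the $A_\tau$ case does not follow from $A_\tau=-B_\tau^*$; and ``continuity of the functional calculus under strong resolvent convergence'' is a tool for (skew-)self-adjoint operators, whereas $Z(B_\tau)$ is defined only through the Riesz--Dunford contour integral, whose control again requires the missing resolvent bounds near $i\real$. The paper sidesteps all of this with a purely algebraic transfer: since $A_\tau = G_\tau^{-1/2}\tilde V_\tau G_\tau^{1/2}$, holomorphy of $Z$ gives $Z(A_\tau)=G_\tau^{-1/2}Z(\tilde V_\tau)G_\tau^{1/2}$, i.e.\ $G_\tau^{1/2}Z(A_\tau)=Z(\tilde V_\tau)G_\tau^{1/2}$ as in~\eqref{eqZAB}; combining part (ii), the uniform bound $\lVert Z(\tilde V_\tau)\rVert\le\lVert Z\rVert_{C^0(i\real)}$, and the strong convergence $G_\tau^{1/2}\to\Id$ then yields $Z(A_\tau)\to Z(V)$ strongly, after which $Z(B_\tau)$ and $K_\tau^*Z(\KVKst_\tau)\Nyst_\tau$ on $H_\infty$ are obtained from $B_\tau=-A_\tau^*$ and Theorem~\ref{thm:C}(ii). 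Replacing your resolvent-based scheme for (iii) with this similarity-transfer argument, which is already at your disposal from Section~\ref{sect:Schemes}, closes the gap; the rest of your proposal stands.
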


Theorem \ref{thm:F} makes several of the statements of our main result, Theorem \ref{thm:Main}. In Section~\ref{sect:proof:ABCD}, we will prove the latter by invoking Theorems~\ref{thm:A}--\ref{thm:F} for the family of Markov kernels $p_\tau$. There, the semigroup structure of $p_\tau$ will allow us to extend the convergence result for $K^*_\tau Z(W_\tau) \Nyst_\tau$ from holomorphic functions to bounded continuous functions $Z$, and further deduce that $A_\tau$, $B_\tau$, $\tilde V_\tau$, and $W_\tau$ are of trace class.

\section{Results from functional analysis and analysis on manifolds}\label{sect:theory}

In this section, we review some basic concepts from RKHS theory, spectral approximation of operators, and analysis on manifolds that will be useful in our proofs of the theorems stated in Sections~\ref{sect:Assump} and~\ref{sect:Schemes}. 

\subsection{\label{secReview_RKHS}Results from RKHS theory}

\paragraph{Nystr\"om extension} We begin by describing the Nystr\"om extension in RKHS. In what follows, $\RKHS$ will be an RKHS on $M$ with reproducing kernel $k$, $\nu$ an arbitrary finite Borel measure with compact support $X_\nu\subseteq M$, and $K : L^2(\nu) \to \RKHS $ the corresponding integral operator defined via~\eqref{eqn:KOp}. The Nystr\"om extension operator $\Nyst : D(\Nyst) \to \RKHS $, with $D(\Nyst)\subset L^2(\nu) $, extends elements of its domain, which are equivalence classes of functions defined up to sets of $ \nu $ measure zero, to functions in $\RKHS$, which are defined at every point in $M$ and can be pointwise evaluated by continuous linear functionals. Specifically, introducing the functions 
\begin{equation}\label{eqn:def:psi}
\psi_j = \lambda_j^{-1/2} K \phi_j, \quad j \in J, 
\end{equation}
where $ \{ \phi_0, \phi_1, \ldots \} $ is an orthonormal set in $ L^2(\nu) $ consisting of eigenfunctions of $ G = K^*K $, corresponding to strictly positive eigenvalues $\lambda_0 \geq \lambda_1 \geq \cdots $, and $ J = \{ j \in \num_0 : \lambda_j >0 \} $, we define 
\begin{equation}\label{eqn:def:nyst}
D(\Nyst) = \left\{ \sum_{j\in J} a_j \phi_j : \sum_{j\in J} |a_j|^2 / \lambda_j < \infty \right\}, \quad \Nyst \left( \sum_{j\in J} a_j \phi_j \right) := \sum_{j\in J} a_j \lambda_j^{-1/2} \psi_j . 
\end{equation}
It follows directly from these definitions that $ \{ \psi_j \}_{j\in J} $ is an orthonormal set in $\RKHS $ satisfying $ K^* \psi_j = \lambda_j^{1/2} \phi_j $, and $\Nyst$ is a closed-range, closed operator with $ D(\Nyst) = \ran K^*$ and $\ran\Nyst= \mathcal{K} := \overline{\ran K} = \overline{ \spn\{ \psi_j \}_{j \in J}} $. Moreover, $ K^* \Nyst $ and $\Nyst K^*$ reduce to the identity operators on $D(\Nyst)$ and $ \ran \Nyst$, respectively. In fact, upon restriction to $X_\nu$, $\ran \Nyst $ coincides with the RKHS $\RKHS(X_\nu)$, and $ \{ \blue{\psi_j\rvert_X } \}_{j\in J} $ forms an orthonormal basis of the latter space. If, in addition, the kernel $k$ is $L^2(\nu)$-strictly-positive, as we frequently require in this paper, then $D(\Nyst)$ is a dense subspace of $L^2(\nu)$, and $K^*$ coincides with the pseudoinverse of $\Nyst$. The latter is defined as the unique bounded operator $\Nyst^\dag : \RKHS \to L^2(\mu)$ satisfying (i) $ \ker \Nyst^\dag = \ran \Nyst^\perp $; (ii) $\overline{\ran \Nyst^\dag} = \ker \Nyst^\perp$; and (iii) $\Nyst \Nyst^\dag f = f$, for all $ f \in \ran \Nyst$. Note that we have described the Nystr\"om extension for the $L^2$ space associated with an arbitrary compactly supported Borel measure $ \nu $ since later on we will be interested in applying this procedure not only for the invariant measure $\mu$ of the system, but also for discrete sampling measures encountered in data-driven approximation schemes. 

\paragraph{Polar decomposition} A number of the results stated in Sections~\ref{sect:Assump} and~\ref{sect:Schemes} make use of the polar decomposition of kernel integral operators associated with RKHSs. We now review this construction. First, recall that the polar decomposition of a bounded linear map $T:H_1 \to H_2 $ between two Hilbert spaces $H_1 $ and $H_2$ is the unique factorization $ T = \mathcal{U} \lvert T \rvert$, where $ \lvert T \rvert = ( T^*T )^{1/2} $ is a non-negative, self-adjoint operator on $H_1$, and $\mathcal{U} : H_1 \to H_2$ is a partial isometry with $ \ker \mathcal{U}^\perp = \overline{\ran \lvert T \rvert } $. The spaces $ \ker \mathcal{U}^\perp $ and $ \ran \mathcal{U} $ are known as the initial and final spaces of the partial isometry $ \mathcal{U}$. In the case of the integral operator $K: L^2(\nu) \to \RKHS $, we have $ K = \mathcal{U} \lvert K \rvert $, where $ \lvert K \rvert = G^{1/2} $ by definition of $G = K^* K$. Moreover, it follows from the relationships $ K \phi_j = \lambda_j^{1/2} \psi_j $ and $ G^{1/2} \phi_j = \lambda_j^{1/2} \phi_j $, which hold for every $j \in J$, that $ \mathcal{ U } \phi_j = \psi_j $ for $j \in J $. Thus, the initial and final spaces of $\mathcal{U} $ are given by $ \ker\mathcal{U}^\perp = \overline{ \ran K^* } = \overline{D(\mathcal{N})} $ and $ \ran \mathcal{U} = \overline{ \ran K } = \mathcal{K} $, respectively. In addition, since $ K^* \psi_j = \lambda_j^{1/2} \phi_j = G^{1/2} \phi_j $, we can conclude that $ \ran G^{1/2} = \ran K^* $, and
\begin{equation}\label{eqNystGInv}
D(G^{-1/2}) = D(\Nyst), \quad \Nyst = \mathcal{U} G^{-1/2}, \quad K^*\mathcal{U} = G^{1/2}.
\end{equation}

\paragraph{Mercer representation} A classical result in the theory of RKHSs with continuous kernels is the Mercer theorem \cite{Mercer1909}, allowing one to represent the kernel through eigenfunctions. In the following lemma, we will state this result together with a useful integral formula for computing the trace of integral operators associated with continuous kernels.

\begin{lemma} \label{lemMercer} 
Let $\RKHS$ be an RKHS on $M$ associated with a continuous reproducing kernel $k$, and $ \nu $ a finite Borel measure with compact support $S \subseteq M$. Assume, further, the notations in~\eqref{eqn:def:psi}. Then, the following hold:
\begin{enumerate}[(i)]
\item (Mercer theorem) For every $x,y \in S$, $ k(x,y) = \sum_{j\in J} \psi_j^*(x) \psi_j(y) $, where the sum converges absolutely and uniformly with respect to $(x,y) \in S \times S$. 
\item The trace of the integral operator $G=K^*K$ is equal to $\int_M k(x,x) \, d\nu(x)$.
\end{enumerate}
\end{lemma}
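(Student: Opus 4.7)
\textbf{Plan for the proof of Lemma \ref{lemMercer}.} The two parts are the classical Mercer theorem for continuous kernels on a compact set, and the associated trace formula. Throughout, I will work with the eigenfunctions $\phi_j$ of $G=K^*K$ and the RKHS functions $\psi_j = \lambda_j^{-1/2}K\phi_j$ as in \eqref{eqn:def:psi}, and will use the key identity $K^*\psi_j = \lambda_j^{1/2}\phi_j$, together with the fact that $K^*$ acts on an element of $\RKHS$ by taking its $L^2(\nu)$ equivalence class.

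For part (i), the strategy is the textbook one. First, since $G$ is compact, self-adjoint and positive-semidefinite on $L^2(\nu)$, the set $\{\phi_j\}_{j\in J}$ extends to an orthonormal basis of $L^2(\nu)$ whose remaining elements lie in $\ker G$; this makes $\sum_{j\in J}\lambda_j^2 = \|G\|_{\text{HS}}^2 \leq \|k\|_{L^2(\nu\times\nu)}^2 < \infty$ by compactness of $\supp\nu$ and continuity of $k$. Next, I would define the finite-rank continuous kernel $k_N(x,y) := \sum_{j=0}^{N-1}\psi_j(x)\psi_j(y)$ and the continuous residual $r_N(x,y) := k(x,y)-k_N(x,y)$ on $M\times M$. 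The crucial step is to show that $r_N$ is a positive-definite kernel on $S\times S$: the integral operator it induces on $L^2(\nu)$ is $G - \sum_{j<N}\lambda_j \langle\phi_j,\cdot\rangle\phi_j \geq 0$, and combined with the continuity of $r_N$ this forces $r_N(x,x)\geq 0$ for every $x\in S$ (any strict negativity at a point would, by continuity, produce a negative quadratic form against a narrow positive bump, contradicting positive-definiteness of $r_N$). Consequently, the partial sums $\sum_{j<N}\psi_j(x)^2$ are monotone increasing in $N$ and bounded above by the continuous function $k(x,x)$ on the compact set $S$.

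The main obstacle is upgrading pointwise convergence to uniform convergence, and then identifying the limit. I plan to invoke Dini's theorem: the partial sums of $\sum_j\psi_j(x)^2$ are continuous and monotone increasing on the compact set $S$, and I will show that their limit $s(x)$ is continuous and equal to $k(x,x)$. Continuity of $s$ together with the identification $s=k(\cdot,\cdot)$ on $S$ can be established by noting that the sequence $k_N$ converges to $k$ in $L^2(\nu\times\nu)$ (by the finite sum $\sum_{j\geq N}\lambda_j^2\to 0$), so any continuous pointwise limit of $k_N$ must coincide with $k$ $\nu\times\nu$-a.e., and then everywhere on $S\times S$ by continuity and $\supp\nu = S$. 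Dini then delivers uniform convergence of $\sum\psi_j(x)^2$ to $k(x,x)$ on $S$. The off-diagonal uniform and absolute convergence is a corollary: by Cauchy--Schwarz,
\[
\Bigl(\sum_{j\geq N}\lvert \psi_j(x)\psi_j(y)\rvert\Bigr)^2 \leq \Bigl(\sum_{j\geq N}\psi_j(x)^2\Bigr)\Bigl(\sum_{j\geq N}\psi_j(y)^2\Bigr),
\]
and the right-hand side tends to $0$ uniformly on $S\times S$.

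For part (ii), once the Mercer representation is established and the convergence is uniform on the compact diagonal of $S\times S$, term-by-term integration against the finite measure $\nu$ is justified. A direct computation gives $\int_M\psi_j^2\,d\nu = \lVert K^*\psi_j\rVert_{L^2(\nu)}^2 = \lVert\lambda_j^{1/2}\phi_j\rVert_{L^2(\nu)}^2 = \lambda_j$, using that $K^*$ returns the $L^2(\nu)$ equivalence class of its argument. Therefore
\[
\int_M k(x,x)\,d\nu(x) = \int_S\sum_{j\in J}\psi_j(x)^2\,d\nu(x) = \sum_{j\in J}\lambda_j = \tr G,
\]
where the last equality uses the standard trace formula for a positive trace-class operator evaluated in its eigenbasis (the eigenvalues outside $J$ being zero and contributing nothing). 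The first equality uses $\supp\nu = S$, and the swap of sum and integral is valid either by uniform convergence on $S$ or, more robustly, by monotone convergence applied to the non-negative partial sums.
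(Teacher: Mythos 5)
The paper does not reprove Claim~(i): it cites Mercer's theorem and proves only Claim~(ii). Your argument for~(ii)—uniform convergence of $\sum_j \psi_j(x)^2$ on the compact set $S$ justifies term-by-term integration, and $\int_S\psi_j^2\,d\nu=\lVert K^*\psi_j\rVert^2_{L^2(\nu)}=\lambda_j$—is exactly the paper's computation.

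Your sketch of Claim~(i) has the right overall structure (positive residual $\Rightarrow$ diagonal bound $\Rightarrow$ Dini on the diagonal $\Rightarrow$ Cauchy--Schwarz off-diagonal), but the identification step is circular as written. The $L^2(\nu\times\nu)$ convergence of $k_N$ to $k$ cannot by itself identify the diagonal limit $s(x)=\lim_N k_N(x,x)$ with $k(x,x)$: the diagonal $\{(x,x):x\in S\}$ is $\nu\times\nu$-null whenever $\nu$ is non-atomic, so the a.e.\ equality you extract from $L^2$ convergence says nothing there. When you appeal to ``any continuous pointwise limit of $k_N$,'' you are presupposing that the pointwise limit is jointly continuous on $S\times S$; but joint continuity is exactly what Dini's theorem is meant to deliver, and Dini in turn needs continuity of $s$, which you obtain from $s=k(\cdot,\cdot)$—a circle. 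Two standard ways to break it: (a) note first that, for each fixed $x$, the diagonal bound $\sum_j\psi_j(x)^2\le k(x,x)$, Cauchy--Schwarz, and $\sup_{y\in S}k(y,y)<\infty$ already give convergence of $\sum_j\psi_j(x)\psi_j(\cdot)$ uniform in $y$, hence a limit continuous in each variable separately; then a Fubini argument plus separate continuity and $\supp\nu=S$ upgrades the a.e.\ equality to equality on all of $S\times S$, diagonal included. (b) Bypass $L^2$ entirely: for $x\in S$, $k(x,\cdot)\rvert_S\in\RKHS(S)$ and $\{\psi_j\rvert_S\}_{j\in J}$ is an orthonormal basis of $\RKHS(S)$ with $\langle\psi_j,k(x,\cdot)\rangle_{\RKHS(S)}=\psi_j(x)$, so $k(x,\cdot)\rvert_S=\sum_j\psi_j(x)\psi_j\rvert_S$ converges in $\RKHS(S)$ and hence pointwise. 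Only after $s=k(\cdot,\cdot)$ on the diagonal is secured may Dini and then Cauchy--Schwarz be invoked as you describe.
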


\begin{proof} We will only prove Claim~(ii). For that, we use Claim~(i) to compute explicitly
\[\begin{gathered}
\int_M k(x,x) \, d\nu(x) = \int_S k(x,x) \, d\nu(x) = \int_S \sum_{j\in J} \psi^*_j(x) \psi_j(x) \, d\nu(x) = \sum_{j\in J} \int_S \lvert \psi_j(x) \rvert^2 d\nu(x) \\
= \sum_{j \in J} \int_S \lvert K^* \psi_j \rvert^2 \, d\nu = \sum_{j \in J } \int_S \lambda_j \rvert \phi_j \rvert^2 \, d\nu = \sum_{j \in J } \lambda_j \int_S \rvert \phi_j \rvert^2 \, d\nu = \sum_{j \in J } \lambda_j = \tr G.
\end{gathered}\]
The last equality on the first line follows from the absolute convergence of $ \sum_{j\in J} \lvert \psi_j(x) \rvert^2 $ to $k(x,x) $. The first equality in the second line follows from the fact that $K^*$ is the $L^2(\nu)$-inclusion operator on $\RKHS$.
\end{proof}

\paragraph{Bistochastic kernel normalization} Our main result, Theorem~\ref{thm:Main}, as well as a number of the auxiliary results in Theorem~\ref{thm:E}, require that the reproducing kernel under consideration be Markovian. However, the notion of Markovianity depends on a choice of measure (e.g., in the case of Theorems~\ref{thm:Main} and~\ref{thm:E}, the invariant measure $\mu$), which is usually either unknown, or integrals with respect to it cannot be evaluated in closed form. As a result, \blue{a common approach to building Markov kernels is to} start from a positive-valued unnormalized kernel, which can be evaluated in closed form, and then perform a normalization procedure to render it Markovian. Such kernel normalizations are widely used in manifold learning \cite{CoifmanLafon06,BerryHarlim16,BerrySauer16b}, spectral clustering \cite{VonLuxburgEtAl08}, and other applications. However, many of these approaches produce non-symmetric kernels which are not suitable for defining RKHSs. Here, we construct symmetric Markov kernels with associated RKHSs using the bistochastic normalization procedure introduced in \cite{CoifmanHirn2013}, which yields symmetric, positive-definite Markov kernels with corresponding RKHSs. The starting point for this construction is a kernel $k$ on $M $ satisfying Assumption \ref{assump:K}, and in addition, being strictly positive-valued everywhere, i.e., $ k > 0 $. Given a Borel probability measure $ \nu $ with compact support $X_\nu \subseteq M$, the kernel $k $ induces the functions $ d : M \to \real $ and $ q : M \to \real $ such that 
\begin{displaymath}
d(x) = \int_M k(x,y) \, d\nu(y), \quad q(x) = \int_M \frac{k(x,y)}{d(y)}\, d\nu(y).
\end{displaymath}
By strict positivity and $C^r$ regularity of $k$ and compactness of $X_\nu$, the functions $ d $, $ q $, $ 1/ d $, and $ 1/q $ are strictly positive and $C^r $. As a result, $ p : M \times M \to \real $, with 
\begin{equation}
\label{eqn:def:bistochastic}
p(x,y) = \int_M \frac{k(x,z) k(z,y) }{ d(x) q(z) d(y) } \, d\nu(z) 
\end{equation}
is also a $C^r $, positive-definite kernel with $ p > 0$. It then follows by construction that $ p $ is symmetric and satisfies $ \int_M p( x, \cdot ) \, d\nu = 1 $ for all $ x \in M $. That is, $ p$ is a positive-definite, symmetric, and $L^2(\nu)$-Markov ergodic kernel. In fact, if the kernel $k $ is strictly positive-definite on $X_\nu \times X_\nu$, then $p$ is also strictly positive-definite on that set, and thus is $L^2(\nu)$-strictly-positive. To verify this, note that 
\begin{displaymath}
    p(x,y) = \int_M \frac{\tilde k(x,z) \tilde k(z,y)}{ \tilde d(x) \tilde d(y)} \, d\nu(z), \quad \tilde k(x,y) = \frac{k(x,y)}{ q^{1/2}(x)q^{1/2}(y)}, \quad \tilde d(x) = \frac{d(x)}{ q^{1/2}(x)}, 
\end{displaymath}
and because $ x \mapsto \tilde d(x) $ is a strictly positive continuous function, it suffices to show that the kernel $ \tilde k_2( x, y ) = \int_M \tilde k(x,z) \tilde k(z,x) \, d\nu(z) $ is strictly positive-definite on $L^2(\nu)$. Now note that $ \tilde k $ is a strictly positive-definite kernel on $X_\nu \times X_\nu$ by strict positive-definiteness of $k$ and strict positivity of the continuous function $ x \mapsto \tilde q(x) $. Thus, in order to verify that $ \tilde k_2 $, and thus $ p$, is strictly positive-definite on $X_\nu\times X_\nu$, it suffices to show:

\begin{lemma}
Let $ \nu $ be a finite Borel measure with compact support $ X_\nu \subseteq M $, and $ k : X_\nu \times X_\nu \to \real $ a symmetric, strictly positive-definite kernel. Then, the kernel $ k_2 : X_\nu \times X_\nu \to \real $, with $ k_2(x,y) = \int_M k( x, z) k( z, y ) \, d\nu(z) $ is strictly positive-definite.
\end{lemma}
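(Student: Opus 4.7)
The plan is to recognize the matrix $[k_2(x_i,x_j)]$ as a Gram matrix in $L^2(\nu)$, and then reduce strict positive-definiteness to a linear-independence statement which can be resolved by strict positive-definiteness of the original $k$. Concretely, using symmetry of $k$, I would first rewrite
\[
    k_2(x,y) = \int_M k(x,z)\, k(z,y)\, d\nu(z) = \int_M k(x,z)\, k(y,z)\, d\nu(z) = \langle k(x,\cdot),\, k(y,\cdot)\rangle_{L^2(\nu)}.
\]
Thus, for any distinct $x_1,\ldots,x_n \in X_\nu$, the matrix $[k_2(x_i,x_j)]$ is the Gram matrix of the family $\{f_i\}_{i=1}^n \subset L^2(\nu)$ with $f_i := k(x_i,\cdot)$, so it is automatically positive semi-definite and is strictly positive-definite if and only if the $f_i$ are linearly independent in $L^2(\nu)$.

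To prove linear independence, I would suppose $\sum_{i=1}^n c_i f_i = 0$ in $L^2(\nu)$ and aim to derive $c_i = 0$. The function $g(z) := \sum_{i=1}^n c_i k(x_i, z)$ is continuous on $M$ (the kernel inherits continuity from its $C^1$ hypothesis in the paper's setting), and it vanishes $\nu$-almost everywhere. Because $X_\nu$ is by definition the support of $\nu$, any continuous function on $M$ that vanishes $\nu$-a.e.\ must vanish identically on $X_\nu$: if $g(z_0) \neq 0$ for some $z_0 \in X_\nu$, continuity would give an open neighborhood $U \subset M$ of $z_0$ on which $g$ is nonzero, and $\nu(U) > 0$ since $z_0 \in \supp\nu$, contradicting $g=0$ $\nu$-a.e.

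With $g$ identically zero on $X_\nu$, evaluating at the points $x_j \in X_\nu$ yields the linear system $\sum_{i=1}^n c_i k(x_i, x_j) = 0$ for $j=1,\ldots,n$, i.e., $[k(x_i,x_j)]\,c = 0$ where $c = (c_1,\ldots,c_n)^\top$. Since $k$ is strictly positive-definite on $X_\nu \times X_\nu$, this symmetric matrix is nonsingular, forcing $c=0$, as desired. The only delicate step is the passage from $L^2(\nu)$ equality to pointwise vanishing on $X_\nu$; this is where continuity of the kernel and the support property of $\nu$ are both essential, and it is the step I would flag as the main (though not deep) obstacle.
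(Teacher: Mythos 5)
Your proof is correct. Both you and the paper reduce strict positive-definiteness of $[k_2(x_i,x_j)]$ to injectivity of the map $c \mapsto \sum_i c_i k(x_i,\cdot)$ regarded as taking values in $L^2(\nu)$, but the two injectivity arguments are genuinely different in how they are structured. The paper routes through the RKHS $\RKHS(X_\nu)$: it writes the kernel matrix as $(K^*K_m)^*(K^*K_m)$ where $K_m: L^2(\nu_m) \to \RKHS(X_\nu)$ is the integral operator for the empirical measure $\nu_m$ and $K^*$ the $L^2(\nu)$-inclusion, and then deduces injectivity from ``$K_m$ injective (strict PD of $k$ in RKHS norm) and $K^*$ injective.'' Your version sidesteps the RKHS entirely: you interpret the matrix as a plain Gram matrix in $L^2(\nu)$, use continuity plus the support property of $\nu$ to upgrade $\nu$-a.e.\ vanishing of $g = \sum_i c_i k(x_i,\cdot)$ to pointwise vanishing on $X_\nu$, and then invoke strict positive-definiteness of $k$ only at the end by evaluating $g$ at the points $x_j$ themselves. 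The RKHS version fits more tightly into the operator framework the paper has already built up; your version is more self-contained and elementary, makes the continuity/support inference explicit (where the paper compresses it into ``$K^*$ is injective by definition''), and would transfer unchanged to settings where one has a continuous strictly PD kernel but no interest in the RKHS machinery. One small wording note: since the lemma only defines $k$ on $X_\nu \times X_\nu$, you should say $g$ is continuous on $X_\nu$ rather than on $M$; the neighborhood argument still goes through using a relative neighborhood $U \cap X_\nu$, because $\nu(M \setminus X_\nu) = 0$.
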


\begin{proof} We must show that for any collection of distinct points $x_0, \ldots x_{m-1} \in X_\nu $ the $ m \times m $ kernel matrix $ \bm G_2 := [ k_2(x_i,x_j) ] $ is positive definite. Defining $ \nu_m = \sum_{j=0}^{m-1} \delta_{x_j}/m$, this is equivalent to showing that the operator $ G_2 : L^2(\nu_m) \to L^2(\nu_m)$ with matrix representation $\bm G_2$ in the standard orthonormal basis of $L^2(\nu_m)$ is positive. To that end, observe that $ G_2 = ( K^*K_m)^* K^* K_m $, where $ K_m : L^2(\nu_m) \to \RKHS(X_\nu) $ and $ K : L^2(\nu) \to \RKHS(X_\nu) $ are the integral operators associated with $k $ and the measures $ \nu_m$ and $ \nu $, respectively, mapping into the RKHS $\RKHS(X_\nu)$ associated with $k$. Because $ K_m $ is an injective operator by strict positive-definiteness of $k $, and $K^*$ is injective by definition, $ K^* K_m $ is injective, and for every nonzero $f \in L^2(\nu_m)$, $ \langle f, G_2 f \rangle_{L^2(\nu_m)} = \langle K^*K_m f, K^* K_m f \rangle_{\nu} > 0 $. This shows that $ G_2 $ is positive, and thus $ k_2$ is a strictly positive-definite kernel, proving the lemma.
\end{proof}

In summary, we have established that if the kernel $k$ satisfies Assumption~\ref{assump:K}, and is also positive-valued and strictly positive-definite on the support of $ \nu$, then the bistochastic normalization procedure in~\eqref{eqn:def:bistochastic} yields a $C^r$, strictly positive definite, and thus $L^2(\nu)$-strictly-positive, Markov ergodic kernel. In particular, if it happens that $k $ is strictly positive-definite on $M\times M $, the kernel from~\eqref{eqn:def:bistochastic} is $L^2(\nu)$-strictly-positive and Markov ergodic for every compactly supported Borel probability measure $\nu$. This approach therefore provides a convenient way of constructing Markov kernels meeting the conditions of Theorem~\ref{thm:Markov}. In Section~\ref{sect:numerics}, we will employ bistochastic normalization of strictly positive-definite, positive-valued kernels to construct data-driven approximations to the Markov kernels in Theorem~\ref{thm:Markov} that converge in the limit of large data.

\subsection{Spectral approximation of operators}

\paragraph{Strong resolvent convergence} In order to prove the various spectral convergence claims made in Sections~\ref{sect:Assump} and~\ref{sect:Schemes}, we need appropriate notions of convergence of operators approximating the generator $V$ that imply spectral convergence. Clearly, because $V$ is unbounded, it is not possible to employ convergence in operator norm for that purpose. In fact, for the approximations studied here, even strong convergence on the domain of $V$ may not necessarily hold. For example, in an approximation of $V$ by $ V T_\tau $, where $ T_\tau $, $ \tau \geq 0 $, is a family of smoothing operators on $L^2(\mu) $ with $ \ran T_\tau \subseteq D(V) $, the convergence of $ T_\tau f $ to $f$ as $ \tau \to 0^+ $ does not necessarily imply that \blue{$VT_\tau f$} converges to $Vf$, as $V$ is unbounded. In the setting of unbounded, skew-adjoint operators, a weaker form of convergence, which is nevertheless sufficient to establish our spectral convergence claims, is \emph{strong resolvent convergence} \cite{Oliveira2008}.

To wit, let $T : D(T) \to H $ be a skew-adjoint operator on a Hilbert space $H$, and consider a sequence of operators $T_\tau : D(T_\tau) \to H $ indexed by a parameter $ \tau > 0 $. The sequence $T_\tau $ is said to converge to $ T $ as $ \tau \to 0^+ $ in strong resolvent sense if for every complex number $\rho$ in the resolvent set of $ T $, not lying on the imaginary line, the resolvents $ ( \rho - T_\tau )^{-1} $ converge to $ ( \rho - T )^{-1} $ strongly. 
\blue{ If $ T_\tau $ is bounded, for every quadratic polynomial $\mathfrak{p}$, $\mathfrak{p}( i T_\tau)$ is also bounded. Following \cite{BeckusBellissard16}, we say that the sequence $T_\tau$ is \emph{$ p2$-continuous} if every $ T_\tau $ is bounded, and the function $ \tau \mapsto \lVert \mathfrak{p}(iT_\tau) \rVert $ is continuous for every such $\mathfrak{p}$. }
Henceforth, when convenient, we will use the notation $ \xrightarrow{s}$ and $\xrightarrow{sr}$ to indicate strong convergence and strong resolvent convergence, respectively.

As we will see in Lemma~\ref{lem:core_SRC} below, $T_\tau \xrightarrow{s} T$ implies $T_\tau \xrightarrow{sr} T$. Further, if $T$ is bounded and the sequence $T_\tau $ is uniformly bounded in operator norm, then $ T_\tau \xrightarrow{sr} T$ implies $T_\tau \xrightarrow{s} T$ \citep[][Proposition~10.1.13]{Oliveira2008}. These facts indicate that strong resolvent convergence can be viewed as a generalization of strong convergence of bounded operators. For our purposes, the usefulness of strong resolvent convergence is that it implies the following convergence results for spectra and Borel functional calculi of skew-adjoint operators.

\begin{prop}\label{prop:SRC}
Suppose that $ T_\tau : D(T_\tau) \to H $ is a sequence of skew-adjoint operators converging in strong resolvent sense as $ \tau \to 0^+ $ to a skew-adjoint operator $ T : D(T) \to H $. Let also $ \Theta_\tau : \mathcal{B}(\real) \to \mathcal{L}(H) $ and $ \Theta : \mathcal{B}(\real) \to \mathcal{L}(H) $ be the PVMs associated with $ T_\tau$ and $T $, respectively. Then: 
\begin{enumerate}[(i)]
\item For every bounded, continuous function $Z: i \real \to \cmplx$, $Z(T_\tau)$ converges strongly to $Z(T)$.
\item Let $J\subset J'\subset i\real$ be two bounded intervals. Then, for every $f\in L^2(\mu)$, $ \limsup_{\tau\to 0^+} \|1_{J} (T_\tau) f \|_{L^2(\mu)} \leq \| 1_{J'}(T)f\|_{L^2(\mu)} $.
\item For every bounded, Borel-measurable set $\Omega \subset \real$ such that $\Theta(\partial \Omega) = 0 $, $ \Theta_\tau( \Omega ) $ converges strongly to $ \Theta( \Omega ) $.
\item For every bounded, Borel-measurable function $Z: i\real \to \cmplx$ of bounded support, $Z(T_\tau) $ converges strongly to $Z(T) $, provided that $ \Theta(S)= 0 $, where $S\subset\real $ is a closed set such that $iS $ contains the discontinuities of $Z$.
\item If $T $ is bounded, (ii) holds for every Borel-measurable set $ \Omega \subseteq \real $, and (iii) for every bounded Borel-measurable function $ Z : i \real \to \cmplx $.
\item If the operators $ T_\tau $ are compact, then for every element $ \theta \in i \real $ of the spectrum of $T$, there exists a one-parameter family $\theta_\tau \in i \real $ of eigenvalues of $T_\tau$ such that $ \lim_{\tau \to 0^+} \theta_\tau = \theta $. Moreover, if the sequence $ T_\tau$ is $p2$-continuous, the curve $ \tau \to \theta_\tau $ is continuous. 
\end{enumerate}
\end{prop}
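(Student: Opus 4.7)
The plan is to proceed through (i)--(vi) in order, drawing only on the hypothesis of strong resolvent convergence for (i) and then bootstrapping each of the remaining claims through standard measure-theoretic sandwich arguments. For (i), I would begin from the observation that, by Stone--Weierstrass, the closed $*$-subalgebra of $C_b(i\real)$ generated by the resolvent functions $z\mapsto(\rho-z)^{-1}$ with $\rho\notin i\real$ is the algebra $C_0(i\real)$ of continuous functions vanishing at infinity. Since $\lVert Z(T_\tau)\rVert\leq\lVert Z\rVert_\infty$ uniformly in $\tau$ by the spectral theorem and strong convergence is stable under finite sums, products, and adjoints evaluated on a fixed vector, a standard $\varepsilon/3$ argument lifts strong resolvent convergence to $Z(T_\tau)f\to Z(T)f$ for every $Z\in C_0(i\real)$ and $f\in H$. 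Extending from $C_0$ to all bounded continuous $Z$ then requires a cutoff $\chi_R\in C_0(i\real)$ with $\chi_R\equiv 1$ on $[-iR,iR]$; the spectral theorem for $T$ makes $(1-\chi_R(T))f$ small, while the corresponding tail for $T_\tau$ is controlled by claim (ii), so (ii) must be proved first in its continuous-cutoff form.

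For (ii), given bounded intervals $J\subset J'$, I would select continuous $0\leq\chi\leq 1$ supported in $J'$ with $\chi\equiv 1$ on $J$, so that the already-established $C_0$-convergence yields $\limsup_{\tau\to 0^+}\lVert 1_J(T_\tau)f\rVert\leq\lim_{\tau\to 0^+}\lVert\chi(T_\tau)f\rVert=\lVert\chi(T)f\rVert\leq\lVert 1_{J'}(T)f\rVert$. Claim (iii) then follows by a portmanteau-type sandwich: pick continuous $\chi_n^\pm$ with $\chi_n^-\leq 1_\Omega\leq\chi_n^+$ whose supports shrink down to $\overline\Omega$ and expand up from the interior; the hypothesis $\Theta(\partial\Omega)=0$ ensures, via dominated convergence in the scalar spectral measure $\mu_f=\langle f,\Theta(\cdot)f\rangle_H$, that both $\lVert\chi_n^\pm(T)f\rVert$ converge to $\lVert 1_\Omega(T)f\rVert$, and (i) provides the link to $T_\tau$. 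Claim (iv) is handled analogously, approximating $Z$ by continuous functions off the $\Theta$-null discontinuity set $S$ under uniform $\lVert\cdot\rVert_\infty$ domination, and (v) is the specialization to bounded $T$, in which $\Theta$ has compact support, so every bounded Borel $Z$ has effectively bounded support and every Borel set $\Omega$ meets $\sigma(T)$ in a bounded set, making the boundedness qualifications redundant.

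For (vi), I would argue as follows: for each $i\omega\in\sigma(T)$ and each small $\varepsilon>0$, choose an open interval $\Omega_\varepsilon=(\omega-\varepsilon,\omega+\varepsilon)$ whose endpoints are not atoms of $\Theta$, which is possible by countability of atoms of a PVM on $\real$; then $\Theta(\partial\Omega_\varepsilon)=0$ and $\Theta(\Omega_\varepsilon)\neq 0$ by definition of the spectrum. Claim (iii) forces $\Theta_\tau(\Omega_\varepsilon)\neq 0$ for all sufficiently small $\tau$, while compactness of $T_\tau$ together with skew-adjointness implies that $\ran\Theta_\tau(\Omega_\varepsilon)$ is spanned by eigenvectors of $T_\tau$ with eigenvalues in $i\Omega_\varepsilon$, so $T_\tau$ must possess such an eigenvalue; a diagonal selection along $\varepsilon\to 0$ produces the desired branch $\theta_\tau\to i\omega$. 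For the continuity of this branch under $p2$-continuity, I would appeal to \cite{BeckusBellissard16}, where $p2$-continuity is characterized by Hausdorff continuity of the spectrum; since the nonzero spectrum of each compact $T_\tau$ is discrete, individual isolated eigenvalue branches can be followed continuously. I expect the main technical obstacle to be (iii): converting strong convergence for continuous test functions into strong convergence for indicator projections must be carried out vector by vector through the scalar spectral measures, and the hypothesis $\Theta(\partial\Omega)=0$ is essential precisely to prevent leakage of mass at the boundary when closing the upper and lower approximations.
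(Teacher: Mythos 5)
Your proposal is correct in substance, but it takes a noticeably different route from the paper in several places, and the comparison is instructive. The paper simply cites the literature for (i) (Oliveira), (v) (Dunford--Schwartz), and the existence part of (vi) (Oliveira plus compactness), and spends its effort on (ii)--(iv); there, (iii) is proved first for intervals via an auxiliary reparametrization $f_w$ (a continuous function equal to $i\omega$ on $i[a,b]$ and vanishing outside a slightly larger interval), composing with bounded measurable $g$ through the bounded-operator result (v), and then passing from intervals to general bounded Borel sets by a $\sigma$-algebra argument; (iv) is reduced to that case by decomposing the complement of the null closed set $S$ into countably many open intervals. You instead prove (i) from scratch (Stone--Weierstrass on the resolvent algebra to get $C_0(i\real)$, then a cutoff/tail argument), handle (iii) for an arbitrary bounded Borel set in one pass via a two-sided Urysohn sandwich $\chi_n^-\leq 1_\Omega\leq \chi_n^+$ with $\chi_n^-\uparrow 1_{\operatorname{int}\Omega}$, $\chi_n^+\downarrow 1_{\overline{\Omega}}$, exploiting $\Theta(\partial\Omega)=0$ in the scalar measures $\mu_f$, derive (v) as a specialization rather than a citation, and give a self-contained construction for (vi) (nonvanishing spectral projection on a boundary-null interval, plus the spectral theorem for compact normal operators applied to the restriction to $\operatorname{ran}\Theta_\tau(\Omega_\varepsilon)$). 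Your (iii) is arguably cleaner than the paper's, since it avoids both the $f_w$ composition trick and the separate $\sigma$-algebra extension step; your (ii) is essentially identical to the paper's (continuous cutoff squeezed between the indicators); and for the continuity of the eigenvalue branch in (vi) you rely on Beckus--Bellissard exactly as the paper does.

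Two points need to be made explicit when you write this up. First, in (iii) the sandwich as you state it only yields convergence of the norms $\lVert 1_\Omega(T_\tau)f\rVert\to\lVert 1_\Omega(T)f\rVert$, and norm convergence alone does not give strong convergence of the vectors; you must insert the three-term estimate $\lVert \Theta_\tau(\Omega)f-\Theta(\Omega)f\rVert\leq \lVert (1_\Omega-\chi_n^-)(T_\tau)f\rVert+\lVert (\chi_n^-(T_\tau)-\chi_n^-(T))f\rVert+\lVert (\chi_n^--1_\Omega)(T)f\rVert$, where the first term is dominated, via $0\leq 1_\Omega-\chi_n^-\leq \chi_m^+-\chi_n^-$ and positivity of the functional calculus, by $\lVert(\chi_m^+-\chi_n^-)(T_\tau)f\rVert$, which (i) sends to $\lVert(\chi_m^+-\chi_n^-)(T)f\rVert$, small by $\Theta(\partial\Omega)=0$. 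You flag this as the crux, so it is a completable omission rather than a wrong step, but it is the whole content of (iii). Second, in (v) the reduction "$\Theta$ has compact support, so boundedness qualifications are redundant" silently uses that the $T_\tau$-side tails $\Theta_\tau(\real\setminus[-R,R])f$ are asymptotically negligible; since the $T_\tau$ need not be bounded even when $T$ is, this requires the (ii)-type estimate applied to complements of large intervals (which your machinery does supply), and of course the null-boundary and null-discontinuity hypotheses are still needed even for bounded $T$.
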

\begin{proof} Claim~(i) is actually an equivalent characterization of strong resolvent convergence \cite[][Proposition~10.1.9]{Oliveira2008}. Claim~(v) is classical result from spectral approximation theory for normal, bounded operators, e.g., \citep[][Chapter 8, Theorem 2]{DunSchII1988}. In Claim~(vi), the existence of the family $\theta_\tau$ follows from \citep[][Corollary~10.2.2]{Oliveira2008}, in conjunction with compactness of $T_\tau$. The continuity of $\tau \mapsto \theta_\tau$ follows from \citep[][Theorem~1]{BeckusBellissard16}.

    It now remains to prove Claims~(ii)--(iv). Starting from Claim~(ii), note that a property of the Borel functional calculus for a skew-adjoint operator $T:D(T) \to H $ (more commonly stated for self-adjoint operators, e.g., \cite{kowalskispectral}) is that for any Borel-measurable function $Z : i\real \to \real $ lying in $L^\infty(i\real)$, $ Z(T) $ is a bounded self-adjoint operator. Moreover, this functional calculus preserves positivity, in the sense that if $Z$ is non-negative, then $Z(T)$ is positive-semidefinite, and as a result $Z(T) \leq Z'(T) $ whenever $Z\leq Z'$. With these properties, let $Z:i\real\to \real$ be a piecewise-linear continuous function equal to $1$ on $J$, and with support contained in $J'$. Let also $1_\Omega$ be the indicator function of any set $ \Omega$. Then, the inequalities $1^2_J \leq Z^2 \leq 1^2_{J'}$ hold everywhere in $i \real$, so for each $\tau > 0$, $1^2_{J}(T_\tau) \leq Z^2(T_\tau) \leq 1^2_{J'}(T)$. In addition, since $Z$ is continuous and bounded by Claim~(i), $Z(T_\tau)$ converges strongly to $Z(T)$. The proof of Claim~(ii) can now be completed using the following inequality:
\[\begin{split}
\limsup_{\tau\to 0^+} \| 1_{J}(T_\tau) f \|_{L^2(\mu)}^2 & = \limsup_{\tau\to 0^+} \langle 1_{J}(T_\tau) f , 1_{J}(T_\tau) f \rangle_{\mu} = \limsup_{\tau\to 0^+} \langle 1_{J}^2(T_\tau) f , f \rangle_{\mu} \\
& \leq \limsup_{\tau\to 0^+} \langle Z^2(T_\tau) f , f \rangle_{\mu} = \limsup_{\tau\to 0^+} \langle Z(T_\tau) f , Z(T_\tau) f \rangle_{\mu}\\
&= \limsup_{\tau\to 0^+} \| Z(T_\tau) f \|_{L^2(\mu)}^2 = \| Z(T) f \|_{L^2(\mu)}^2\\
& = \langle Z(T) f , Z(T) f \rangle_{\mu} = \langle Z^2(T) f , f \rangle_{\mu} \\
& \leq \langle 1_{J'}^2(T) f , f \rangle_{\mu} = \| 1_{J'}(T) f \|_{L^2(\mu)}^2 .
\end{split}\]

Next, we will prove Claim~(iii) in the case that $\Omega$ is an interval $[a,b] \subset \real$ with $\Theta(\partial\Omega) = \Theta(\{ a, b \}) = 0 $ (i.e., neither of $ia$ and $ib$ is an eigenvalue of $T$). Given any $w>0$, let $f_w : i \real \to i \real $ be a continuous function such that $f_w(i\omega)$ equals  $i\omega$ for $\omega\in i[a,b]$, equals $0$ outside $i[a-w,b+w]$, and is linear on the intervals $i[a-w,a]$ and $i[b,b+w]$. By Claim~(i), $\lim_{\tau\to 0^+} f_w(T_\tau) = f_w(T)$. Moreover, the operators $f_w(T_\tau), f_w(T)$ are bounded and skew-adjoint, and therefore, by Claim~(v), for every bounded, measurable $g:i\real\to\real$,
\begin{equation}
\label{eqGFW}
\lim_{\tau\to 0^+} (g\circ f_w)(T_\tau) = \lim_{\tau\to 0^+} g(f_w(T_\tau)) = g(f_w(T)) = (g\circ f_w)(T).
\end{equation}
Setting $g=1_{i\Omega}$, then leads to
\[ g\circ f_w = 1_{i\Omega} + 1_{J_w}, \quad J_w := [b,b+w]\cap f_w^{-1}(\Omega). \]
Thus, substituting for $ g \circ f_w $ in~\eqref{eqGFW} using the latter identity, and rearranging, we obtain
\begin{equation}\label{eqn:strnglim1}
\lim_{\tau\to 0^+} \left[ \Theta_\tau(\Omega) + \Theta_\tau(J_w) - \Theta(J_w) \right] = \Theta(\Omega), \quad \forall w>0. 
\end{equation}
Note that here we have used the fact that for any Borel set $S\subset\real$, $\Theta(S) = 1_{iS}(T)$, and a similar fact for $T_\tau$. The operator $\Theta(J_w)$ is the spectral projection onto the subspace $H_w = \ran \Theta( J_w) \subseteq H$. Since $\Theta(\partial \Omega ) = 0$ and $\cap_{w>0} J_w = \{b\}$, we have $\cap_{w>0} H_w =\{0\}$. As a result, as $w\to 0^+$, the $H_w^\bot$ form an increasing sequence of subspaces with $\cup_{w>0} H_w^\bot = H $. Thus, to prove that $\Theta_\tau(\Omega)$ converges strongly to $\Theta(\Omega)$, it is enough to verify the same claim on $H_{w_0}^\bot$ for every fixed $w_0>0$. To that end, let $w_0>0$ be fixed, and consider an arbitrary $f\in H_{w_0}^\bot$. By construction, $\Theta(J_w) f $ vanishes for every $0 < w < w_0$. Moreover, by Claim~(ii), 
\[ \quad \limsup_{\tau\to 0^+} \| \Theta_\tau(J_w)f \|_{L^2(\mu)} = \limsup_{\tau\to 0^+} \| 1_{iJ_w}(T_\tau)f \|_{L^2(\mu)} \leq \| 1_{iJ_{w'}}(T_\tau)f \|_{L^2(\mu)} = 0, \quad \forall w' \in ( w, w_0 ), \]
from which it follows that $\lim_{\tau\to 0^+} \Theta_\tau(J_w)f =0$. Thus, substituting the identities $\Theta(J_w) f = 0$ and $\lim_{\tau\to 0^+} \Theta_\tau(J_w)f =0$ into \eqref{eqn:strnglim1} yields 
\[ \Theta(\Omega)f = \lim_{\tau\to 0^+} \left[ \Theta_\tau(\Omega) + \Theta_\tau(J_w) - \Theta(J_w) \right]f = \lim_{\tau\to 0^+} \Theta_\tau(\Omega) f + \lim_{\tau\to 0^+} \Theta_\tau(J_w) f + \Theta(J_w) f = \lim_{\tau\to 0^+} \Theta_\tau(\Omega) f, \]
proving that Claim~(iii) is true for $\Omega$ equal to an interval.

We now extend this result to the case that $\Omega$ is an arbitrary bounded Borel subset of $\real$ with $ \Theta( \partial \Omega ) = 0 $. For that, it is sufficient to fix an arbitrary $b>0$, and prove the result for the elements of the set $\Sigma' = \{ \Omega \in \mathcal{B}([ -b, b ]) : \Theta(\partial \Omega) = 0 \} $, where $\mathcal{B}([-b, b])$ is the Borel $\sigma$-algebra on $[-b,b]$. Let then $\Sigma$ be the collection of subsets $\Omega \subseteq \mathcal{B}([-b,b])$, such that $\Theta_\tau(\Omega)$ converges strongly to $\Theta(\Omega)$. It can be shown that $\Sigma$ is a $\sigma$-algebra. Moreover, $\Sigma$ contains all intervals having zero $\Theta$ measure on their boundary, and thus must also contain the $\sigma$-algebra generated by such intervals. But this latter $\sigma$-algebra contains $\Sigma' $, and therefore $ \Theta_\tau(\Omega) \xrightarrow{s} \Theta(\Omega)$ for all $\Omega \in \Sigma'$, proving Claim~(iii).

Finally, we prove  Claim~(iv). Let $Z$ be as claimed, with support contained in a bounded open interval $I\subset i\real$. Then, the set $I\setminus S$ is a countable union of bounded open intervals $I_1,I_2,\ldots$.  Note that $H$ is the direct sum of the mutually orthogonal spaces $\ran \Theta(I_1),\ran \Theta(I_2),\ldots$, $\ran \Theta(S)$, and $\ran \Theta(I^c)$. Among these, $\ran\Theta(I^c)$ is contained in the kernel of $Z(T)$. Moreover, in a manner similar to the proof of Claim~(iii), it can be shown that $Z|_S(T_\tau)$ converges pointwise to $0$. Thus, for every $f\in\ran \Theta(S)$, $Z(T_\tau)f = Z|_S(T_\tau)f$ converges to $ Tf=0$. In light of these facts, Claim~(iv) can be simplified to the case that $Z$ is a continuous function supported on an interval $(ia,ib)\subset i\real$ with $\Theta(\{a,b\})=0$. In this case, constructing a
 function $f_w$ as in Claim~(iii), and using the same line of reasoning, it can be shown that $Z(T_\tau)\to Z(T)$ as $\tau\to 0^+$. This proves Claim (iv) and the Proposition.
\end{proof} 

Proposition~\ref{prop:SRC} lays the foundation for many of the spectral convergence results in Theorem~\ref{thm:F}, and thus Theorem~\ref{thm:Main}. It also highlights, through Claim~(iii), the convergence properties for the functional calculus and spectrum lost from the fact that $V$ is unbounded. Yet, despite the usefulness of the results stated in Proposition~\ref{prop:SRC}, the basic assumption made, namely that $T_\tau $ converges to $T$ in strong resolvent sense, is oftentimes difficult to explicitly verify. Fortunately, in the case of skew-adjoint operators of interest here, there exist sufficient conditions for strong resolvent convergence, which are easier to verify. Before stating these conditions, we recall that a \emph{core} for a closed operator $T : D(T) \to H $ on a Hilbert space $H$ is any subspace $C\subseteq D(T)$ such that $T$ is the closure of the restricted operator $T\rvert_C$. In other words, $C$ is a core if the closure of the graph of $T\rvert_C$, as a subset of $H\times H$, is the graph of $T$. Note that $T$ may not have a unique core. We also introduce the notion of convergence in the \emph{strong dynamical sense} \cite{Oliveira2008}. Specifically, a sequence $ T_\tau : D( T_\tau) \to H $, $ \tau > 0 $, of skew-adjoint operators is said to converge to $ T : D(T) \to H $ as $ \tau \to 0 $ in the strong dynamical sense if $ e^{tT_\tau} $ converges strongly to $e^{tT}$ for every $ t \in \real $. Note that in the case of the operators $ \tilde V_\tau $ from \blue{Assumption~\ref{assump:A4}} approximating the generator $V$, strong dynamical convergence means that the unitary operators $ e^{tV_\tau} $ converge strongly to the Koopman operator $U^t = e^{tV} $ for every time $t \in \real$.

\begin{lemma}\label{lem:core_SRC}
Let $T_\tau: D(T_\tau)\to H $ and $T :D(T) \to H $ be the skew-adjoint operators from \blue{Proposition}~\ref{prop:SRC}. Then, the following hold: 
\begin{enumerate}[(i)]
\item The domain $D(T^2)$ of the operator $T^2$ is a core for $T$.
\item If $T_\tau$ converges pointwise to $T$ on a core of $T$, then it also converges in strong resolvent sense.
\item Strong resolvent convergence of $T_\tau$ to $T$ is equivalent to strong dynamical convergence. 
\end{enumerate}
\end{lemma}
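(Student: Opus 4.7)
The plan is to handle the three claims in order, leveraging the fact that $iT$ and each $iT_\tau$ is self-adjoint, so the full spectral theorem, Hille--Yosida type resolvent formulas, and Stone's theorem are available.

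For claim (i), I would use the projection-valued measure $\Theta$ of $iT$ on $\real$. For $f \in D(T)$, set $f_n = \Theta([-n,n])f$; being in the range of a bounded spectral projection, $f_n$ lies in $D(T^k)$ for every $k$, hence in $D(T^2)$. Dominated convergence applied to the finite positive measure $\langle f, \Theta(\cdot) f \rangle_H$ with integrands $1_{[-n,n]}$ and $\omega \mapsto \omega\, 1_{[-n,n]}$ yields $f_n \to f$ and $Tf_n \to Tf$ in $H$, which identifies the graph closure of $T\rvert_{D(T^2)}$ with the graph of $T$.

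For claim (ii), fix $\rho \in \cmplx$ with $\Real(\rho) \neq 0$; skew-adjointness places the spectra of $T$ and of each $T_\tau$ inside $i\real$, so the resolvents $(\rho - T)^{-1}$ and $(\rho - T_\tau)^{-1}$ are well defined and satisfy the uniform bound $\lVert (\rho - T_\tau)^{-1} \rVert \leq 1/\lvert \Real(\rho) \rvert$. Let $C$ be the core on which $T_\tau f \to T f$, and for $f \in C$ set $g = (\rho - T) f$. The algebraic identity
\[
(\rho - T_\tau)^{-1} g = f + (\rho - T_\tau)^{-1} (T - T_\tau) f,
\]
combined with the uniform resolvent bound and $(T - T_\tau)f \to 0$, gives $(\rho - T_\tau)^{-1} g \to (\rho - T)^{-1} g$. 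Since $C$ is a core, every $(h, Th)$ with $h \in D(T)$ is a graph limit of pairs $(h_j, Th_j)$ with $h_j \in C$, so $(\rho - T) C$ is dense in $(\rho - T) D(T) = H$; a standard three-epsilon argument using the uniform resolvent bound extends strong convergence of the resolvents to all of $H$.

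For claim (iii), the forward direction is immediate from Proposition~\ref{prop:SRC}(i): for each fixed $t \in \real$, the function $Z_t : i\omega \mapsto e^{i\omega t}$ is bounded and continuous on $i\real$, and the Borel functional calculus identifies $Z_t(T_\tau) = e^{t T_\tau}$ and $Z_t(T) = e^{t T}$. The converse uses the Hille--Yosida resolvent representation: for $\Real(\rho) > 0$,
\[
(\rho - T)^{-1} f = \int_0^\infty e^{-\rho t} e^{t T} f \, dt, \qquad f \in H,
\]
with the analogous formula on $(-\infty, 0]$ when $\Real(\rho) < 0$. Isometry of the unitary groups supplies the integrable majorant $\lVert e^{-\rho t} e^{t T_\tau} f \rVert = e^{-\Real(\rho) t} \lVert f \rVert$, so pointwise strong convergence of the integrand promotes to strong convergence of the resolvents via Lebesgue's dominated convergence theorem for $H$-valued Bochner integrals. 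The only subtle point in the whole lemma is justifying $H$-valued dominated convergence from merely strong pointwise convergence of the integrand; continuity in $t$ of $e^{t T_\tau} f$ and the isometric bound make this routine, so no serious obstacle is anticipated.
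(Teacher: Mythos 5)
Your proposal is correct, but it takes a different route from the paper: the paper disposes of all three claims by citation, invoking \citep[][Theorem~5]{StoSza2003} for the core property of $D(T^2)$ and Propositions~10.1.18 and~10.1.8 of \cite{Oliveira2008} (stated there for self-adjoint operators) for claims~(ii) and~(iii), whereas you reprove these facts from scratch. Your arguments are the standard ones underlying those citations: the spectral-cutoff argument $f_n = \Theta([-n,n])f$ with dominated convergence for the graph density in~(i); the Reed--Simon-type core criterion in~(ii), using the uniform bound $\lVert (\rho - T_\tau)^{-1}\rVert \leq 1/\lvert \Real \rho\rvert$, the resolvent identity on $(\rho - T)C$, density of $(\rho-T)C$ in $H$, and a three-epsilon extension; and the Trotter--Kato-style equivalence in~(iii), with the forward direction from Proposition~\ref{prop:SRC}(i) applied to $Z_t(i\omega)=e^{i\omega t}$ and the converse from the Laplace-transform (Hille--Yosida) representation of the resolvent together with dominated convergence for Bochner integrals, justified by the isometric majorant $e^{-\Real(\rho)t}\lVert f\rVert$ and strong continuity of $t \mapsto e^{tT_\tau}f$. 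What your version buys is self-containedness and an explicit verification that the self-adjoint statements transfer verbatim to the skew-adjoint setting (via $iT$), which the paper only asserts; what the paper's version buys is brevity. Two cosmetic points: in~(ii) the identity should read $(\rho - T_\tau)^{-1}g = f + (\rho - T_\tau)^{-1}(T_\tau - T)f$ (your sign is flipped, harmlessly, since only the norm of the error term matters), and the hypothesis of~(ii) implicitly requires the core $C$ to lie in $D(T_\tau)$ for all $\tau$, which is indeed built into the meaning of pointwise convergence on $C$ and holds trivially in the paper's application where the $T_\tau$ are bounded.
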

\begin{proof} Claim~(i) follows from \citep[][Theorem~5]{StoSza2003}. Claims~ (ii) \blue{and (iii)} follow from Propositions 10.1.18 and 10.1.8, respectively, of \cite{Oliveira2008}. There, the statements are for self-adjoint operators, but they apply to skew-adjoint operators as well. 
\end{proof}

\begin{rk*} Lemma~\ref{lem:core_SRC}(ii) indicates that a sufficient condition for strong resolvent convergence of a sequence skew-adjoint operators is pointwise convergence in a smaller domain (a core) than the full domain of the limit operator; that is, strong resolvent convergence is weaker than strong convergence for this class of operators. In Proposition~\ref{prop:Semigroup} ahead, we will see that the operator family $\tilde V_\tau $ employed in Theorem~\ref{thm:Main} actually converges pointwise to $V$ on the whole of $D(V)$.
\end{rk*}

\paragraph{Approximate point spectrum and pseudospectrum} \blue{Generalizing the definition in \eqref{eqAPS_Ut}, we say} that a complex number $\gamma$ lies in the $\epsilon$-approximate point spectrum $ \sigma_{ap,\epsilon}(T) $ of a closed operator $T : D(T) \to H$ on a Hilbert space $H$ for $\epsilon > 0$, if there exists $ f \in H$, with $ \lVert f \rVert_{H} = 1$, such that \cite{ChaitinHarrabi98,Chatelin11}
\begin{equation} \label{eqAPS}
\lVert T f - \gamma f \rVert_H < \epsilon. 
\end{equation}
As $\epsilon$ decreases towards 0, $\sigma_{ap,\epsilon}(T) $ forms an increasing family of open subsets of the complex plane, such that $ \cup_{\epsilon > 0} \sigma_{ap,\epsilon}(T) = \cmplx$. Moreover, if $T$ is a normal operator, $\sigma_{ap,\epsilon}(T)$ is the union of all open $\epsilon$-balls in the complex plane with centers lying in its spectrum, $\sigma(T)$. If, in addition, $T$ is bounded, $ \cap_{\epsilon > 0} \sigma_{ap,\epsilon}(T) = \sigma(T)$. The $\epsilon$-approximate point spectrum is also a subset of the $\epsilon$-pseudospectrum \blue{$\sigma_{\epsilon}(T)$} of $T$, defined as the set of complex numbers $\gamma$ such that $ \lVert ( T - \gamma )^{-1} \rVert > 1 / \epsilon $, with the convention that $ \lVert ( T - \gamma )^{-1} \rVert = \infty $ if $ \gamma \in \sigma(T)$ \cite{TrefethenEmbree05}. Specifically, $ \sigma_{\epsilon}(T) = \sigma_{ap,\epsilon}(T) \cup \sigma(T)$, and if $T$ is normal and bounded, $ \sigma_\epsilon(T) = \sigma_{ap,\epsilon}(T)$. For our purposes, a distinguished property of each element $\gamma \in \sigma_{ap,\epsilon}(T)$ is that there exists an associated unit-norm vector $f \in H$ which behaves approximately as an eigenfunction of $T$, in the sense of~\eqref{eqAPS}.

\subsection{Results from analysis on manifolds} 

We will state a number of standard results from analysis on manifolds that will be used in the proofs presented in Sections~\ref{sect:proof:ABCD} and~\ref{sect:proof:main}. In what follows, we consider that $M$ is a $C^r $ compact manifold, equipped with an arbitrary $C^{r-1} $ Riemannian metric (e.g., a metric induced from the ambient space $\mathcal{M}$, or the embedding $F:M\to Y$ into the data space $Y$ from Section~\ref{sect:numerics}), and an associated covariant derivative operator $\nabla$. We let $ C^0(M; TM) $ denote the vector space of continuous vector fields on $M$ (continuous sections of the tangent bundle $TM$), and $C^q(M; T^{*n} M )$ with $ 0 \leq q \leq r $ the vector space of tensor fields $\alpha $ of type $ (0,n )$ having continuous covariant derivatives $ \nabla^j \alpha \in C^{q-j}(M;T^{*n+j} M) $ up to order $j=r$. The Riemannian metric induces norms on these spaces defined by $ \lVert \Xi \rVert_{C^0(M;TM)} = \max_{x\in M} \lVert \Xi \rVert_x $, $ \lVert \alpha \rVert_{C^0(M;T^{*n} M)} = \max_{x\in M} \lVert \alpha \rVert_x $, and $ \lVert \alpha \rVert_{C^q(M;T^{*n}M)} = \sum_{j=0}^q \lVert \nabla^j \alpha \rVert_{C^0(M;T^{*(q+j)}M)}$, where $ \lVert \cdot \rVert_x $ denotes pointwise Riemannian norms on tensors. The case $C^q(M;T^{*n}M) $ with $n=0$ corresponds to the $C^q(M) $ spaces of functions. All of the $ C^0(M;TM) $ and $C^q(M; T^{*n}M)$ spaces become Banach spaces with the norms defined above, and by compactness of $M$, the topology of these spaces is independent of the choice of Riemannian metric. Hereafter, we will use $\iota^{(q)} $ to denote the canonical inclusion map of $C^q(M)$ into $L^2(\mu)$, and abbreviate $\iota^{(0)} = \iota$ as in Section~\ref{sect:Assump}. We will also use $\iota_{\RKHS}$ to denote the inclusion map of an RKHS $\RKHS$ with a $C^q$ reproducing kernel into $C^q(M)$. It follows from \citep[][Propositions~6.1 and 6.2]{FerreiraMenegatto2013} that the latter map is bounded. 

The following result expresses how vector fields can be viewed as bounded operators on functions. 
\begin{lemma}
\label{lemmaVec}
Let $M$ be a compact, $C^1$ manifold, equipped with a $C^0$ Riemannian metric. Then, as an operator from $C^1(M) $ to $C^0(M) $, every vector field $\Xi \in C^0(M;TM) $ is bounded, with operator norm $ \lVert \Xi \rVert $ bounded above by $ \lVert \Xi \rVert_{C^0(M;TM)}$. 
\end{lemma}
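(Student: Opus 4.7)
The plan is to use the pointwise definition of the action of a vector field on a function, combined with the Cauchy--Schwarz inequality applied to the Riemannian inner product on each tangent space.

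First, I would recall that for $\Xi \in C^0(M;TM)$ and $f \in C^1(M)$, the function $\Xi f : M \to \cmplx$ is defined pointwise by $(\Xi f)(x) = df_x(\Xi_x) = \langle \grad f(x), \Xi_x \rangle_x$, where $\grad f$ is the metric dual of $df \in C^0(M;T^*M)$. Since $\Xi$ is continuous by assumption and $df$ is continuous as $f \in C^1(M)$, the function $\Xi f$ is continuous on $M$, so $\Xi$ indeed maps $C^1(M)$ into $C^0(M)$ and is clearly linear.

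Next, I would establish the norm bound pointwise. By the Cauchy--Schwarz inequality for the Riemannian inner product at $x$,
\begin{equation*}
\lvert (\Xi f)(x) \rvert = \lvert \langle \grad f(x), \Xi_x \rangle_x \rvert \leq \lVert \grad f(x) \rVert_x \lVert \Xi_x \rVert_x \leq \lVert \Xi \rVert_{C^0(M;TM)} \lVert \nabla f \rVert_{C^0(M;T^*M)}\,,
\end{equation*}
using that $\lVert \grad f(x) \rVert_x = \lVert df_x \rVert_x = \lVert \nabla f \rVert_x$ for the isometric identification of $TM$ and $T^*M$ induced by the metric. Taking the maximum over $x \in M$, which is finite by compactness of $M$ and continuity of both factors, yields
\begin{equation*}
\lVert \Xi f \rVert_{C^0(M)} \leq \lVert \Xi \rVert_{C^0(M;TM)} \lVert \nabla f \rVert_{C^0(M;T^{*}M)} \leq \lVert \Xi \rVert_{C^0(M;TM)} \lVert f \rVert_{C^1(M)}\,,
\end{equation*}
where the last inequality uses the definition $\lVert f \rVert_{C^1(M)} = \lVert f \rVert_{C^0(M)} + \lVert \nabla f \rVert_{C^0(M;T^*M)}$ given in the excerpt. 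This is precisely the claimed operator norm bound.

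There is no substantive obstacle in this argument; the only subtle point is to make sure the identification of $\Xi_x(f)$ with the inner product against $\grad f(x)$ is carried out using the same metric that defines the pointwise norms $\lVert \cdot \rVert_x$ on $TM$ and $T^*M$, so that Cauchy--Schwarz is applied consistently. Once this bookkeeping is in place, the estimate is immediate and the topology-independence of the $C^0(M;TM)$ and $C^q(M;T^{*n}M)$ norms (noted earlier in the excerpt) ensures the conclusion does not depend on the choice of Riemannian metric.
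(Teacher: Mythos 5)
Your proof is correct and uses essentially the same argument as the paper: express $\Xi f$ as the pointwise Riemannian inner product $\langle \grad f, \Xi \rangle_x$, apply Cauchy--Schwarz, identify $\lVert \grad f \rVert_{C^0(M;TM)}$ with $\lVert \nabla f \rVert_{C^0(M;T^*M)}$ via the metric, and bound the latter by $\lVert f \rVert_{C^1(M)}$. You spell out the pointwise step and the compactness argument a bit more explicitly than the paper, but the substance is identical.
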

\begin{proof}
Denoting the gradient operator associated with the Riemannian metric on $M$ by $\grad$, the claim follows by an application of the Cauchy-Schwartz inequality for the Riemannian inner product, viz.
\begin{align*}
\lVert \Xi f \rVert_{C^0(M)} &= \lVert \Xi \cdot \grad f \rVert_{C^0(M)} \leq \lVert \Xi \rVert_{C^0(M;TM)} \lVert \grad f \rVert_{C^0(M;TM)} = \lVert \Xi \rVert_{C^0(M;TM)} \lVert \nabla f \rVert_{C^0(M;T^*M)} \\ & \leq \lVert \Xi \rVert_{C^0(M;TM)} \lVert f \rVert_{C^1(M)}. \qedhere
\end{align*}
\end{proof}

In particular, under Assumption~\ref{assump:A1}, the dynamical flow $ \Phi^t $ on $M$ is generated by a vector field $ \vec V \in C^0(M;TM) $, for which Lemma~\ref{lemmaVec} applies. This vector field is related to the generator $V$ by a conjugacy with  $\iota$ and $\iotaC$, namely, $ \iota \vec V = V \iotaC $. 

The following is a well known result from analysis \cite{Schuh1919}.

\begin{lemma}[$C^1$ convergence theorem]\label{lem:Ta0_C1}
Let $M$ be a compact, connected, $C^1$ manifold equipped with a $C^0$ Riemannian metric. Let also $f_j:M\to\real$ be a sequence of tensor fields in $C^1(M;T^{*n}M)$, such that the sequence $\{\| \nabla f_j\|_{C^0(M;T^{*(n+1)}M)}\}_{j\in\num} $ is summable. Then, if there exists $x\in M$ such that the series $F_x := \sum_{j\in\num} f_j(x)$ converges in Riemannian norm, the series $\sum_{j\in\num} f_j$ converges uniformly to a tensor field $F\in C^1(M;T^{*(n+1)}M)$ such that $F(x) = F_x$.
\end{lemma}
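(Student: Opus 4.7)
The plan is to combine the Weierstrass M-test for the derivative series with a parallel-transport version of the fundamental theorem of calculus to establish uniform convergence of the original series, then work in local coordinates to upgrade continuity of the limit to $C^1$ regularity via the classical one-variable $C^1$-convergence theorem.

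\textbf{Step 1 (Uniform convergence of the derivative series).} By hypothesis $\{\lVert \nabla f_j \rVert_{C^0(M;T^{*(n+1)}M)}\}_{j\in\num}$ is summable, and each $\nabla f_j$ is continuous. Since $C^0(M;T^{*(n+1)}M)$ is a Banach space (using compactness of $M$), the Weierstrass M-test yields a continuous tensor field $G \in C^0(M;T^{*(n+1)}M)$ with $G = \sum_{j\in\num} \nabla f_j$, the convergence being uniform.

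\textbf{Step 2 (Uniform convergence of the original series).} Since $M$ is compact and connected, the Riemannian distance $d$ induced by the given $C^0$ metric is bounded by some diameter $D<\infty$, and any two points can be joined by a piecewise smooth path. Fix the distinguished point $x\in M$ from the hypothesis; for every $y\in M$, pick a piecewise smooth path $\gamma_y$ from $x$ to $y$ of length at most $D$. Using parallel transport $P_{\gamma_y}:T^{*n}_y M \to T^{*n}_x M$ along $\gamma_y$, which is a fiberwise isometry, the fundamental theorem of calculus for tensor fields gives
\begin{equation*}
P_{\gamma_y}\, f_j(y) - f_j(x) \;=\; \int_0^{L(\gamma_y)} P_{\gamma_y|[0,t]}\bigl(\nabla_{\dot\gamma_y(t)} f_j\bigr)(\gamma_y(t))\, dt,
\end{equation*}
so that $\lVert P_{\gamma_y} f_j(y) - f_j(x) \rVert_x \leq D\, \lVert \nabla f_j \rVert_{C^0(M;T^{*(n+1)}M)}$. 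Applied to partial sums $S_N = \sum_{j=1}^N f_j$ with $N>M$, and using isometry of $P_{\gamma_y}$,
\begin{equation*}
\lVert S_N(y) - S_M(y) \rVert_y \;\leq\; \Bigl\lVert \sum_{j=M+1}^N f_j(x) \Bigr\rVert_x + D\sum_{j=M+1}^N \lVert \nabla f_j \rVert_{C^0(M;T^{*(n+1)}M)},
\end{equation*}
uniformly in $y$. Both terms tend to $0$ as $M,N\to\infty$ (the first by convergence of $F_x$, the second by summability), so $\{S_N\}$ is uniformly Cauchy in $C^0(M;T^{*n}M)$. Hence $\sum_{j\in\num} f_j$ converges uniformly to a continuous tensor field $F$, and by construction $F(x) = F_x$.

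\textbf{Step 3 ($C^1$ regularity via local charts).} Cover $M$ by finitely many $C^1$ coordinate charts on which the Christoffel symbols of the Levi-Civita connection are continuous, hence bounded. In each chart, the components $(f_j)_I$ of $f_j$ (for $I$ a multi-index of length $n$) are $C^1$ scalar functions, and by the formula for the covariant derivative, each partial derivative $\partial_k(f_j)_I$ is a continuous linear combination of components of $\nabla f_j$ and components of $f_j$ itself (the latter multiplied by bounded Christoffel coefficients). From Steps~1 and~2 the series $\sum \nabla f_j$ and $\sum f_j$ both converge uniformly on the chart; rearranging, $\sum_j \partial_k(f_j)_I$ converges uniformly to a continuous function. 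The classical $C^1$-convergence theorem on open subsets of $\real^\dimM$ then implies that the componentwise limits $F_I = \sum_j (f_j)_I$ are $C^1$, with partial derivatives given by the corresponding term-by-term sums. Consequently $F \in C^1(M;T^{*n}M)$ and $\nabla F = G$.

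The main obstacle I anticipate is Step~3: the Christoffel correction couples $\partial_k(f_j)_I$ to both $\nabla f_j$ \emph{and} $f_j$, so uniform convergence of the partial-derivative series cannot be read off from Step~1 alone. The essential point is that Step~2 has already furnished uniform convergence of $\sum f_j$, which allows the Christoffel contributions to be absorbed and reduces the chart-level problem to the standard scalar $C^1$-convergence theorem.
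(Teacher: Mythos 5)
The paper offers no proof of this lemma at all: it is stated as ``a well known result from analysis'' with a citation to \cite{Schuh1919}, so there is no in-paper argument to compare yours against. Your proof is essentially the standard argument for this classical fact, and it is correct: the Weierstrass M-test gives uniform convergence of $\sum_{j}\nabla f_j$; the covariant fundamental theorem of calculus along paths of uniformly bounded length (available by compactness and connectedness) transports convergence at the single point $x$ into a uniform Cauchy estimate for the partial sums on all of $M$; and the chart-level reduction to the classical one-variable $C^1$-convergence theorem, with the Christoffel terms absorbed using the already-established uniform convergence of $\sum_j f_j$, upgrades the limit to $C^1$ and yields $\nabla F = \sum_j \nabla f_j$. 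Your closing remark correctly isolates the one place a naive argument would fail---$\partial_k (f_j)_I$ couples to $f_j$ itself through the connection coefficients---and why Step~2 must precede Step~3.

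One caveat, which concerns the paper's loose setup more than your logic: with only a $C^0$ metric on a $C^1$ manifold the Levi-Civita connection need not exist, so ``continuous Christoffel symbols'' and ``parallel transport is a fiberwise isometry'' are implicit extra hypotheses; the paper sidesteps this by simply postulating ``an associated covariant derivative operator $\nabla$.'' Granting a continuous, metric-compatible $\nabla$ (the only reading under which the $C^0$ norms of $\nabla f_j$ are used as the statement intends), your argument goes through as written; if $\nabla$ is continuous but compatibility is not assumed, replace the isometry in Step~2 by a uniform Gronwall bound on the operator norms of parallel transport along paths of length at most $D$, which costs only a constant and changes nothing downstream. Two harmless slips: the conclusion should read $F\in C^1(M;T^{*n}M)$ (the $T^{*(n+1)}M$ in the statement is the paper's typo), and since the diameter is an infimum the paths in Step~2 should be taken of length at most, say, $D+1$.
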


This lemma leads to the following $C^r$ convergence result for functions, which will be useful for establishing the smoothness of kernels constructed as infinite sums of $C^r$ eigenfunctions. 

\begin{lemma}\label{lem:Ta0_Cm}
Let $M$ be a compact, connected, $C^r$ manifold with $ r \geq 1$, equipped with a $C^{r-1} $ Riemannian metric. Suppose that $f_j:M\to\real$ is a sequence of real-valued $C^r(M)$ functions such that (i) the sequence $\{\| f_j\|_{C^r(M)}\}_{j\in\num}$ is summable; and (ii) there exists $x \in M $ such that the series $F_x = \sum_{j=0}^{\infty} f_j(x)$ converges. Then, the series $ \sum_{j=0}^{\infty} f_j$ converges absolutely and in $ C^r(M)$ norm to a $C^r$ function $F$, such that $F(x) = F_x$.
\end{lemma}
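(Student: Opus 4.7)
The approach is to induct on $r \geq 1$, using Lemma~\ref{lem:Ta0_C1} as both the base case and the inductive engine. At stage $r$, the goal is to show that the partial sums $S_N := \sum_{j=0}^N f_j$ converge in $C^r(M)$-norm to a function $F$ with $F(x)=F_x$. Pointwise absolute convergence of $\sum f_j(y)$ at every $y \in M$ is immediate from $|f_j(y)| \leq \|f_j\|_{C^0(M)} \leq \|f_j\|_{C^r(M)}$ combined with the summability hypothesis, so this piece of the conclusion is essentially free and requires no induction.

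For the base case $r=1$, I would apply Lemma~\ref{lem:Ta0_C1} directly with tensor rank $n=0$: summability of $\|\nabla f_j\|_{C^0(M;T^*M)}$ is controlled by summability of $\|f_j\|_{C^1(M)}$, and pointwise convergence at $x$ is hypothesis (ii). The lemma returns uniform convergence of $\sum f_j$ to some $F \in C^1(M)$ with $F(x) = F_x$. Since $\sum \nabla f_j$ is also uniformly Cauchy in $C^0(M;T^*M)$ by summability, its uniform limit exists, and by the standard differentiation-under-uniform-limits principle (which is already implicit in Lemma~\ref{lem:Ta0_C1}) that limit must equal $\nabla F$. This upgrades uniform convergence to convergence in $C^1(M)$-norm.

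For the inductive step, I assume the result for $r-1 \geq 1$. Summability of $\|f_j\|_{C^r}$ implies summability of $\|f_j\|_{C^{r-1}}$, so the inductive hypothesis produces $F \in C^{r-1}(M)$ with $S_N \to F$ in $C^{r-1}(M)$-norm and $F(x) = F_x$; in particular, $\sum \nabla^{r-1} f_j$ converges uniformly to $\nabla^{r-1} F$. I would then apply Lemma~\ref{lem:Ta0_C1} with $n = r-1$ to the type-$(0,r-1)$ tensor fields $\alpha_j := \nabla^{r-1} f_j \in C^1(M; T^{*(r-1)}M)$: summability of $\|\nabla \alpha_j\|_{C^0} = \|\nabla^r f_j\|_{C^0}$ follows from the $C^r$-summability hypothesis, and pointwise convergence of $\sum \alpha_j(x)$ is inherited from the inductive step. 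The lemma returns uniform convergence of $\sum \alpha_j$ to a $C^1$ tensor field $G$ whose covariant derivative $\nabla G$ is the uniform limit of $\sum \nabla^r f_j$. Uniqueness of $C^0$ limits forces $G = \nabla^{r-1} F$; since $G$ is $C^1$, the function $F$ is $C^r$ with $\nabla^r F = \nabla G$. Combining this uniform convergence of $\sum \nabla^r f_j$ to $\nabla^r F$ with the $C^{r-1}$-norm convergence supplied by the inductive hypothesis yields $\|S_N - F\|_{C^r(M)} \to 0$, closing the induction.

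I do not foresee any serious obstacle: Lemma~\ref{lem:Ta0_C1} does all the analytic work at each stage, and the induction simply propagates its conclusion one derivative at a time. The only care required is in identifying the uniform limit $G$ of the $(r-1)$-st covariant derivatives with $\nabla^{r-1} F$, but this is an immediate consequence of uniqueness of $C^0$ limits once the lower-order convergence from the inductive hypothesis is in hand.
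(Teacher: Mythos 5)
Your proposal is correct and follows essentially the same route as the paper's proof: an induction whose engine is Lemma~\ref{lem:Ta0_C1} applied at each stage to the covariant-derivative tensor fields $\nabla^{q} f_j$, with summability of the next-order derivatives supplying the hypothesis and uniqueness of uniform limits identifying the limiting tensor field with $\nabla^{q} F$. The only difference is presentational (you induct on $r$ itself rather than on the order $q \in \{1,\dots,r\}$ at fixed $r$, and you spell out the identification $G=\nabla^{r-1}F$ and the regularity upgrade a bit more explicitly), which is harmless.
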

\begin{proof} We will prove this lemma by induction over $ q \in \{ 1, \ldots, r \} $, invoking Lemma~\ref{lem:Ta0_C1} as needed. First, note that summability of $ \{ \lVert f_j \rVert_{C^r(M)} \}_{j\in\num} $ implies summability of $ \{ \lVert \nabla^q f_j \rVert_{C^0(M;T^{*q}M)} \}_{j \in \num} \} $ for all $ q \in \{ 1, \ldots, r \} $. Because of this, and the fact that $ \sum_{j\in\num} f_j(x) $ converges, it follows from Lemma~\ref{lem:Ta0_C1} that $\sum_{j\in\num} f_j$ converges in $C^1$ norm to some $C^1$ function $F$. This establishes the base case for the induction ($q=1$). Now suppose that it has been shown that $\sum_{j\in\num} f_j $ converges to $F$ in $C^q(M)$ norm for $ 1 < q < r $. In that case, $\sum_{j\in\num} \nabla^q f_j(x) $ converges, and by summability of $\{ \lVert \nabla^{q+1} f_j \rVert_{C^0(M;T^{*(q+1)}M)} \}_{j\in\num}$, it follows from Lemma~\ref{lem:Ta0_C1} that $ \nabla^q F = \sum_{j\in\num} \nabla^q f_j$ converges in $C^1(M;T^{*q}M) $ norm. Thus, $\nabla^{q+1} F = \sum_{j\in\num} \nabla^{q+1} f_j$ converges in $C^0(M;T^{*(q+1)}M)$ norm, which in turn implies that $ \sum_{j\in\num} f_j $ converges to $F$ in $C^{q+1}(M)$ norm, and the lemma is proved by induction.
\end{proof}

\section{Proof of Theorems \ref{thm:A}--\ref{thm:D}}\label{sect:proof:ABCD}

\paragraph{Proof of Theorem \ref{thm:A}} By Assumption \ref{assump:K}, $\RKHS $ is a subspace of $C^1(M)$, and therefore for every $f\in\RKHS$, $ K^* f = \iotaC f $. Claim (i) then follows from the facts that $ \ran \iotaC \subset D(V) $, and $K$ is bounded.  To prove Claim (ii), let $K': L^2(\mu) \to C^0(M) $ be the kernel integral operator associated with the continuous kernel $k'$, and $ \iota $ the $ C^0(M) \to L^2(\mu) $ inclusion map. Because $\iota K' $ is a Hilbert-Schmidt integral operator on $L^2(\mu) $, with operator norm bounded above by its Hilbert-Schmidt norm, 
$ \lVert \iota K' \rVert \leq  \lVert k' \rVert_{L^2(\mu \times \mu)} \leq \lVert k' \rVert_{C^0(X\times X)} $, 
the claim will follow if it can be shown that $  \iota K' = VG $. To that end,  note that for every $ f \in L^2(\mu) $ and  $ x \in M $ we have $ K' f( x ) = \langle k'(x,\cdot ), f \rangle_\mu $. \blue{Thus, using the  $C^0(M) $ limit $ k'( x,\cdot) = \lim_{t\to 0} g_t $, where $ g_t = ( k( \Phi^t(x), \cdot) -k(x,\cdot) ) / t $, and continuity of inner products, we obtain}
\begin{displaymath}
    K'f(x) = \langle k'(x,\cdot),f \rangle_\mu = \langle \lim_{t\to 0} g_t, f \rangle_\mu = \lim_{t\to 0} \langle g_t, f \rangle_\mu = \lim_{t\to0} \frac{1}{t}[\langle k( \Phi^t(x), \cdot), f \rangle_\mu - \langle k( x, \cdot ), f \rangle_\mu]=  \vec V Kf(x).
\end{displaymath}
As a result, because $ \ran K \subset C^1(M) $, for any $ f \in L^2(\mu) $ it follows that
\begin{displaymath}
    \iota K' f = \iota \vec V K f = V\iotaC K f = V K^* K f = V G f,
\end{displaymath}
proving Claim~(ii). Finally, to prove Claim~(iii), we have by definition of the adjoint,
\[ D((GV)^*) := \{ f\in L^2(\mu): \; \text{$\exists h\in L^2(\mu)$ such that $\forall g\in D(V)$, $\langle f, G V g\rangle_{\mu} = \langle h, g \rangle_\mu$} \}, \quad (GV)^*f:= h,\]
\blue{where $h$ is unique by the Riesz representation theorem and density of $D(V)$ in $L^2(\mu)$}. We will now use this definition to show that $(GV)^* = -VG= -\VG$. Indeed, for every $f\in D(A) = L^2(\mu)$ and every $g\in D(V)$, setting $h = -\VG f$, we obtain 
\[ \langle h, g \rangle_\mu = \langle -\VG f, g \rangle_\mu = -\langle V G f, g \rangle_\mu = \langle G f, V g \rangle_\mu = \langle f, G V g \rangle_\mu. \]
This satisfies the definition of $(GV)^*$, proving the claim and the theorem. \qed

\paragraph{Proof of Theorem \ref{thm:B}} We begin with the proof of Claim~(i). The inclusion $ \ran K^* \subset D(V) $ holds because $\RKHS$ is a subspace of $C^1$. To prove that $VK^*$ is bounded, \blue{note that by} Lemma~\ref{lemmaVec}, and the fact that the inclusion map $\iota_{\RKHS} : \RKHS \to C^1(M)$ is bounded, 
\begin{displaymath}
    \lVert VK^*f \rVert_{L^2(\mu)} = \Vert \iota \vec V f \rVert_{L^2(\mu)} \leq \lVert \vec V f \rVert_{C^0(M)} \leq \lVert \vec V \rVert \lVert f \rVert_{C^1(M)} \leq \lVert \iota_{\RKHS} \rVert \lVert \vec V \rVert \lVert f \rVert_{\RKHS},
\end{displaymath}
proving that $ VK^* $ is bounded and completing the proof of Claim~(i). Turning to Claim~(ii), that $W$ is compact follows from the fact that it is a composition of a compact operator, $K$, by a bounded operator, $ VK^*$. Moreover, $W$ is skew-symmetric by skew-adjointness of $V$, and thus skew-adjoint because it is bounded. $W$ is also real because $K$ and $V$ are real operators. It thus remains to verify the integral formula for $ W f $ stated in the theorem. For that, it follows from the Leibniz rule for vector fields and the fact that $ k$ lies in $C^1(M \times M)$ that for every $ f \in C^1(M) $ and $ x \in X$, 
\begin{displaymath}
    k(x,\cdot) \vec V f = \vec V( k(x,\cdot) f ) - ( \vec V k(x,\cdot) ) f = \vec V(k(x,\cdot)f)+ \tilde k'(x,\cdot) f.
\end{displaymath}
Using this result, and the fact that $ \int_M \vec V( k(x,\cdot) f ) \, d\mu = \langle 1_M, V( k(x,\cdot ), f \rangle_{\mu} $ vanishes by skew-adjointness of $V$, we obtain
\begin{displaymath}
    KVK^*f(x) = K V \iotaC f(x) = K \vec V f( x ) = \int_M k(x,\cdot) \vec V f \, d\mu = \int_M \tilde k'( x, \cdot ) f \, d\mu. \qed 
\end{displaymath}

\paragraph{Proof of Theorem \ref{thm:C}} That $B=-A^* $ is a Hilbert-Schmidt integral operator with kernel $\tilde k'$ follows from standard properties of integral operators. Next, to prove Claim~(i), note that $GV$ is bounded as it has a bounded adjoint, $(GV)^* = -A$, by Theorem~\ref{thm:A}, and therefore has a unique closed extension $ \overline{GV} : L^2(\mu) \to L^2(\mu) $ equal to $(GV)^{**}$. In order to verify that $ \overline{GV} = B $, it suffices to show that $ GVf = Bf $ for all $ f $ in any dense subspace of $D(V)$; in particular, we can choose the subspace $\iotaC C^1(M)$. For any observable $ \iotaC f $ in this subspace, we have $Bf=\iota \tilde K'f $ and $ GVf = \iotaC K \vec V f $, where $\tilde K' : L^2(\mu) \to C^0(M)$ is the integral operator with kernel $\tilde K'$, defined analogously to the operator $K'$ in the proof of Theorem~\ref{thm:A}. Employing the Leibniz rule as in the proof of Theorem~\ref{thm:B}, it is straightforward to verify that $Bf $ is indeed equal to $GVf $, proving that $\GV$ is the unique closed extension of $GV$. Next, to show that $\GV$ is also an extension of $K^* W\Nyst$, it suffices to show that $GV \supseteq K^*W\Nyst$. For that, note that $K^*W\Nyst$ is a well defined operator by Theorem \ref{thm:B}, and thus, substituting the definition for $\KVKst$ in \eqref{eqn:similarity_trnsfrm}, and using the fact that $K^*\Nyst$ is the identity on $D(\Nyst)$, we obtain
    \[K^* \KVKst \Nyst = G V K^* \Nyst = G V\rvert_{D(\Nyst)}.\]
    This shows that $ K^*K W \Nyst \subseteq GV \subset B $, confirming that $\GV$ is a closed extension of $K^*W\Nyst$. If $k$ is strictly positive, then $D(\Nyst)$ is dense, and $\GV$ is the unique closed extension of $ K^*W\Nyst$. This completes the proof of Claim~(i). 
    
    Next, to prove Claim~(ii), note that because $\GV$ is bounded, the Taylor series   $e^{t\GV} = \sum_{n=0}^\infty (t\GV)^n / n!$ converges in operator norm for every $t \in \real$, and the set $\{ e^{t\GV} \}_{t\in\real}$ clearly forms a group under composition of operators. This group is norm-continuous by boundedness of $\GV$. Similarly, we have $ e^{tW} = \sum_{n=0}^{\infty} (t \KVKst)^n/n! $ in operator norm,  and observing that for every $n\in\num$, $K^* \KVKst^n = \GV^n$, we arrive at the claimed identity,
\[ K^* e^{t\KVKst} = \sum_{n=0}^{\infty} \frac{1}{n!} t^n K^* \KVKst^n = \sum_{n=0}^{\infty} \frac{1}{n!} t^n  \GV^n = e^{t\GV}. \] 
The identity $K^* e^{tW} \Nyst = e^{t\GV}\rvert_{D(\Nyst)}$ then follows from the fact that $K^*\Nyst$ is the identity on $D(\Nyst)$. 

\paragraph{Proof of Theorem \ref{thm:D}} Let $\{ \phi_j \}_{j=0}^\infty$ be an orthonormal basis of $L^2(\mu)$ consisting of eigenfunctions $\phi_j$ of $G$ corresponding to eigenvalues $ \lambda_j $ ordered in decreasing order. Let also $ \{ \psi_j \}_{j=0}^\infty$ be an orthonormal basis of $\RKHS$, whose first $J$ elements are given by~\eqref{eqn:def:psi} (with some abuse of notation as $J$ may be infinite). Recall from Section~\ref{secReview_RKHS} that $\mathcal{U} \phi_j = \psi_j$.  To prove the theorem, it suffices to show that $ G^{1/2} V G^{1/2} $ is well-defined on a dense subspace of $L^2(\mu)$, and on that subspace, $ G^{1/2}  V G^{1/2} $ and $ \mathcal{U}^* W \mathcal{U} $ are equal. To verify that $G^{1/2} V G^{1/2} $ is densely defined, note first that $G^{1/2}\phi_j $ trivially vanishes for $ j \notin J$, and therefore $ G^{1/2}V G^{1/2} \phi_j $ is well-defined and vanishes too. Moreover if $ j \in J$, $G^{1/2} \phi_j = K^* \psi_j $, and $ G^{1/2} V G^{1/2} \phi_j $ is again well defined since $ \ran K^* \subset D(V) $. As a result, the domain of $ G^{1/2} V G^{1/2} $ contains all linear combinations of $ \phi_j $ with $ j \notin J$, and all finite combinations with $ j \in J $, and is therefore a dense subspace of $L^2(\mu) $. Next, to show that $ \mathcal{U}^* W \mathcal{U} $ and $ G^{1/2} V G^{1/2} $ are equal on this subspace, it suffices to show that they have the same matrix elements in the $ \{ \phi_j \} $ basis of $L^2(\mu)$, i.e., that $ \langle \phi_i, G^{1/2} V G^{1/2} \phi_j \rangle_{\mu} $ is equal to $\langle \phi_i, \mathcal{U}^* W \mathcal{U} \phi_j \rangle_{\RKHS}$ for all $i,j \in \num_0$. Indeed, because $ \ker \mathcal{U} = \ker G^{1/2} = ( \ran G^{1/2} )^\perp$, both $ \mathcal{U} \phi_j $ and $ G^{1/2} \phi_j$ vanish when $ j \notin J$. We therefore deduce that if either of $ i $ and  $j $ does not lie in $J$, the matrix elements $\langle \phi_i, \mathcal{U}^*W\mathcal{U} \phi_j \rangle_{\mu}$ and $\langle \phi_j, G^{1/2}WG^{1/2} \phi_j \rangle_{\mu}$ both vanish. On the other hand, if $i,j \in J$, we have 
\begin{align*}
        \langle \phi_i, \mathcal{U}^* W \mathcal{U} \phi_j \rangle_{\mu} &= \langle \psi_i, W \psi_j \rangle_{\RKHS} = \langle K^* \psi_i, V K^* \psi_j \rangle_{\mu} = \langle \lambda_i^{-1/2} K^*K \phi_i, \lambda_j^{-1/2} K^* K \phi_j \rangle_{\mu} \\
        &= \langle G^{1/2} \phi_i, V G^{1/2} \phi_j \rangle_{\mu} =  \langle \phi_i, G^{1/2} V G^{1/2} \phi_j \rangle_\mu.  
\end{align*}
We have thus shown that $ \mathcal{U}^* W \mathcal{U} $ and $ G^{1/2} V G^{1/2} $ are equal on a dense subspace of $L^2(\mu)$, and because the former operator is bounded and defined on the whole of $L^2(\mu)$, it follows that $\tilde V = \mathcal{U}^*W \mathcal{U}$ is the unique closed extension of $G^{1/2}VG^{1/2}$. That $\tilde V $ is skew-adjoint, Hilbert-Schmidt, and real follows immediately. \qed 

\section{Proof of Theorems \ref{thm:E} and \ref{thm:F}}\label{sect:proof:EF} 

We will need the following lemma, describing how to convert between eigenfunctions of  $A$, $\GV$, $\tilde V$, and $\tilde W$. The proof will be omitted since it follows directly from the definitions of these operators.
\begin{lemma}\label{lem:fsdt}
Let Assumptions \ref{assump:A1} and \ref{assump:K} hold with $r=1$. Then,
\begin{enumerate}[(i)]
    \item If $\EigenW\in\mathcal{K}$ is an eigenfunction of $\tilde W$ at eigenvalue $i\omega$, then $K^* \EigenW$ is an eigenfunction of $\GV$ at eigenvalue $i\omega$. 
\item $\EigenGV' $ is an eigenfunction of $\VG$ at eigenvalue $i\omega$ iff $K \EigenGV'$ is an eigenfunction of $\tilde W$ at eigenvalue $i\omega$.
\item If $\EigenGV' \in L^2(\mu)$ is an eigenfunction of $\VG$ at eigenvalue $i\omega$, then $G^{1/2}\EigenGV'$ is an eigenfunction of $\symV$ at eigenvalue $i\omega$. 
\item If $\EigApp$ is an eigenfunction of $\symV$ at eigenvalue $i\omega$, then $G^{1/2}\EigApp$ is an eigenfunction of $\GV$ at eigenvalue $i\omega$.
\end{enumerate}

\end{lemma}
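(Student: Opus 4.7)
The plan is to verify each of the four statements by direct computation, using the defining identities $\VG = VG = VK^*K$, $\GV = \overline{GV}$, $\KVKst = KVK^*$, and $\symV = \mathcal{U}^*\KVKst\mathcal{U}$, together with the polar decomposition $K = \mathcal{U}G^{1/2}$ and its immediate consequences $K^*\mathcal{U} = G^{1/2}$ and $G^{1/2}\mathcal{U}^*\mathcal{U} = G^{1/2}$ (the latter because $\mathcal{U}^*\mathcal{U}$ is orthogonal projection onto $\overline{\ran G^{1/2}}$, which contains $\ran G^{1/2}$). A standing observation used throughout is that $\ran K^* = \ran G^{1/2} \subseteq D(V)$ by Theorem~\ref{thm:B}(i), so composing $V$ with $K^*$ or $G^{1/2}$ never raises a domain issue.

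For (i), given $\KVKst\EigenW = i\omega\EigenW$ with $\EigenW\in\mathcal{K}$, apply $K^*$ and use $K^*\KVKst = K^*KVK^* = GVK^*$, so $GVK^*\EigenW = i\omega K^*\EigenW$; since $K^*\EigenW\in D(V)$, Theorem~\ref{thm:C}(i) identifies the left-hand side with $\GV K^*\EigenW$, yielding the claimed eigenvalue equation. Nonvanishing of $K^*\EigenW$ follows from $\ker K^* = \mathcal{K}^\perp$. For (ii), the forward direction is the one-line calculation $\KVKst K\EigenGV' = KVK^*K\EigenGV' = KVG\EigenGV' = K\VG\EigenGV' = i\omega K\EigenGV'$; the converse follows by applying $K^*$ to $\KVKst K\EigenGV' = i\omega K\EigenGV'$ to obtain $G\VG\EigenGV' = i\omega G\EigenGV'$, hence $\VG\EigenGV' - i\omega\EigenGV' \in \ker G = \ker G^{1/2}$. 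This collapses to $\VG\EigenGV' = i\omega\EigenGV'$ once $\EigenGV'$ is taken outside $\ker G^{1/2}$, which is automatic when $\omega\neq 0$ since any $\EigenGV'\in\ker G^{1/2}$ satisfies $\VG\EigenGV' = VG\EigenGV' = 0$ and so can only be an eigenvector at eigenvalue $0$.

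For (iii), starting from $\VG\EigenGV' = i\omega\EigenGV'$, part (ii) supplies $\KVKst K\EigenGV' = i\omega K\EigenGV'$, and then $\symV G^{1/2}\EigenGV' = \mathcal{U}^*\KVKst\mathcal{U}G^{1/2}\EigenGV' = \mathcal{U}^*\KVKst K\EigenGV' = i\omega\mathcal{U}^*K\EigenGV' = i\omega\mathcal{U}^*\mathcal{U}G^{1/2}\EigenGV' = i\omega G^{1/2}\EigenGV'$, the last step using $G^{1/2}\mathcal{U}^*\mathcal{U} = G^{1/2}$. For (iv), given $\symV\EigApp = i\omega\EigApp$, set $\EigenW := \mathcal{U}\EigApp \in \mathcal{K}$. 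Applying $\mathcal{U}$ to the eigenvalue equation and invoking $\ran\KVKst\subseteq\mathcal{K}$, so that $\mathcal{U}\mathcal{U}^*\KVKst\mathcal{U}\EigApp = \KVKst\mathcal{U}\EigApp$, yields $\KVKst\EigenW = i\omega\EigenW$. Part (i) then provides $\GV K^*\EigenW = i\omega K^*\EigenW$, and $K^*\EigenW = K^*\mathcal{U}\EigApp = G^{1/2}\EigApp$ is precisely the claimed eigenfunction.

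The only genuine subtlety I anticipate is bookkeeping around the nullspaces of $G^{1/2}$ and $\mathcal{U}$, which enter when checking that the transported eigenfunctions are nonzero and when closing the converse of (ii). Every such vector lies in $\ker G^{1/2}\subseteq\ker\VG$ and can therefore only participate at eigenvalue $0$, so restricting to $\omega\neq 0$ (or, equivalently, working with representatives orthogonal to $\ker G^{1/2}$) eliminates the subtlety, and the proof reduces to the direct definition-chasing sketched above.
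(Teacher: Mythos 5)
Your overall strategy---pure definition-chasing with the polar decomposition $K=\mathcal{U}G^{1/2}$, $K^*\mathcal{U}=G^{1/2}$---is exactly what the paper intends (it omits the proof as following directly from the definitions of the operators), and your verifications of (i), (iii), (iv) and of the forward direction of (ii) are correct: the domain point $\ran K^*=\ran G^{1/2}\subseteq D(V)$, the identification of $GV$ with $\GV$ on $D(V)$ via Theorem~\ref{thm:C}(i), the use of $\ker K^*=\mathcal{K}^\perp$ for nonvanishing, and the steps $\mathcal{U}\mathcal{U}^*\KVKst=\KVKst$ (since $\ran\KVKst\subseteq\mathcal{K}$) and $\mathcal{U}^*\mathcal{U}G^{1/2}=G^{1/2}$ are all sound.

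The converse direction of (ii), however, has a genuine gap as you argue it. Applying $K^*$ to $\KVKst K\EigenGV'=i\omega K\EigenGV'$ gives $G(\VG\EigenGV'-i\omega\EigenGV')=0$, i.e.\ the \emph{residual} $\VG\EigenGV'-i\omega\EigenGV'$ lies in $\ker G$; your next sentence deduces $\VG\EigenGV'=i\omega\EigenGV'$ from the fact that $\EigenGV'$ itself lies outside $\ker G^{1/2}$, but that is a non sequitur: where $\EigenGV'$ sits says nothing about whether the residual, a possibly nonzero element of $\ker G$, vanishes, since $\ker G$ is not invariant under, nor annihilated by, $\VG-i\omega$. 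In fact, without injectivity of $G$ the implication can literally fail: if $K\EigenGV'$ is an eigenfunction of $\tilde W$ at $i\omega\neq 0$ and $v\neq 0$ lies in $\ker K=\ker G$, then $\EigenGV'+v$ maps under $K$ to the same eigenfunction of $\tilde W$, yet $\VG(\EigenGV'+v)=\VG\EigenGV'=i\omega\EigenGV'\neq i\omega(\EigenGV'+v)$, so it is not an eigenfunction of $\VG$. The clean repair is to invoke $L^2(\mu)$-strict positivity of $k$, so that $\ker G=\{0\}$ and $G(\VG\EigenGV'-i\omega\EigenGV')=0$ forces the residual to vanish; this costs nothing in context, because the lemma is applied only in the proof of Theorem~\ref{thm:E}, where strict positivity is assumed (and in fact only the forward direction of (ii) is used there). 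The analogous nullspace caveat affecting nonvanishing of the transported eigenvectors at $\omega=0$, which you do flag, is likewise resolved either by restricting to $\omega\neq 0$ or by assuming $\ker G=\{0\}$.
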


\paragraph{Proof of Theorem \ref{thm:E}} Starting from Claim~(i), let $\tilde W$ be the restriction of $W$ onto the closed subspace $\mathcal{K} \subseteq \mathcal{H}$. Since $\mathcal{K} $ is invariant under $W$, and $ \ker W \supseteq \mathcal{K}^\perp $ by definition, we have $ \sigma_p(W) = \sigma_p(\tilde W) $ if $ \mathcal{K}^\perp = \{ 0 \} $ (i.e., $K$ has dense range) and $ \sigma_p( W) = \sigma_p(\tilde W) \cup \{ 0 \} $ otherwise. Thus, to prove the claim, it is enough to show that $ \sigma_p(A) = \sigma_p(B)= \sigma_p(\tilde V) = \sigma_p(\tilde W) $, including eigenvalue multiplicities. To that end, note first that   $ \tilde W $ and $ \tilde V$ are unitarily equivalent by Theorem~\ref{thm:D} and strict $L^2(\mu)$-positivity of $k$, and thus $ \sigma_p(\tilde W) = \sigma_p(\tilde V) $, including multiplicities. Moreover, by Lemma~\ref{lem:fsdt}, $ \sigma_p(A) \subseteq \sigma_p( \tilde W ) \subset i \real $, and because $A$ is a real operator, it follows that $\sigma_p(A)$ is symmetric about the origin of the imaginary line $i \real$, so that
\begin{displaymath}
\sigma_p(A)= -\sigma_p(A)= -\sigma_p(A)^* = - \sigma_p(A^*) = - \sigma_p( -B ) = \sigma_p(B).
\end{displaymath}
Thus, the equality of $ \sigma_p(A) $, $\sigma_p(B)$, $\sigma_p(\tilde V) $, and $\sigma_p(\tilde W)$ will follow if it can be shown that $ \sigma_p(A) = \sigma_p(\tilde V) $. Indeed, it follows from Lemmas~\ref{lem:fsdt}(iii) and~\ref{lem:fsdt}(iv) that $ \sigma_p(A) \subseteq \sigma_p(\tilde V) $ and $ \sigma_p( \tilde V) \subseteq \sigma_p( B) $, respectively. These relationships, together with the fact that $ \sigma_p(A) = \sigma_p(B) $, imply that $ \sigma_p(A) = \sigma_p(\tilde V)$, and thus $\sigma(A)=\sigma(B)=\sigma(\tilde V) = \sigma(\tilde W)$, as claimed. The equality of the multiplicities of the eigenvalues of $A$, $B$, and $\tilde V$ follows from the facts that $K$ and $G^{1/2}$ are injective operators.	 This completes the proof of Claim~(i).

To prove Claim~(ii), note that because $k$ is $L^2(\mu)$-Markov ergodic, $G f = f $ implies that $f $ is $\mu$-a.e.\ constant. In addition, by ergodicity of the flow $ \Phi^t$, $ Vf = 0 $ implies again that $ f $ is $ \mu$-a.e.\ constant. It then follows that 
\[ \VG f = 0 \implies V (Gf) = 0 \implies Gf = \mbox{$\mu$-a.e.\ constant} \implies f = \mbox{$\mu$-a.e.\ constant}.\]
This shows that 0 is a simple eigenvalue of $A$ with constant corresponding eigenfunctions. Therefore, since $ \sigma_p(A)=\sigma_p(B)=\sigma_p(\tilde V) = \sigma_p(\tilde W) $, 0 is also a simple eigenvalue of $ B $, $ \tilde V $, and $\tilde W$, and the constancy of the corresponding eigenfunctions follows directly from the definition of these operators.

Next, to prove Claims~(iii) and~(iv), fix a nonzero eigenvalue $i\omega_j$ of $\VG$. By compactness of this operator, the corresponding eigenspace is finite-dimensional, and thus the injective operator $G^{1/2}$ maps every basis of this eigenspace to a linearly independent set. By Lemma~\ref{lem:fsdt}(iii) and Claim~(i), this set is actually a basis of the eigenspace of $\tilde V$ at eigenvalue $ i \omega_j $. As result, every eigenfunction of $\symV$ at nonzero corresponding eigenvalue lies in the range of $G^{1/2}$. Moreover, it follows from Claim~(ii) that every eigenfunction of $\tilde V $ at eigenvalue 0 is constant, and thus also lies in the range of $G^{1/2}$. We therefore conclude that every eigenfunction of $\tilde V$ lies in the range of $G^{1/2}$, and thus in the domain of $G^{-1/2}$, as claimed. The fact that $\tilde v_j \in \ran G^{1/2} $ for all $ j \in \num_0$ also implies that $z'_j$ is an eigenfunction of $A $ at eigenvalue $i\omega_j$, since 
\begin{displaymath}
\tilde V \EigApp_j = \tilde V G^{1/2} G^{-1/2} \EigApp_j = G^{1/2} V G G^{-1/2} \EigApp_j = G^{1/2} V G G^{-1/2} \EigApp_j = G^{1/2}A \EigenGV'_j. 
\end{displaymath}
In addition, we can deduce directly from Lemma~\ref{lem:fsdt}(iv) that each of the $z_j $ are eigenfunctions of $B$ at eigenvalue $i\omega_j$, as stated in Claim~(iv). 

To complete the proof of Claims~(iii) and (iv), it remains to show that $\{ z'_0, z'_1, \ldots \} $ and $ \{ z_0, z_1, \ldots \}$ form unconditional Schauder bases of $L^2(\mu)$. For that, note first that $ \EigenGV_j $ is a dual sequence to the $\EigenGV'_j$, i.e., 
\[\langle \EigenGV'_j, \EigenGV_l \rangle_\mu = \langle G^{-1/2} \EigApp_j, G^{1/2} \EigApp_l \rangle_\mu = \langle \EigApp_j, \EigApp_l \rangle_\mu = \delta_{jl}.\]
As a result, since every Schauder basis has a unique dual sequence, which is also a Schauder basis \cite{Young1981}, Claims~(iii) and~(iv) will be proved if it can be shown that $ \{ z_0, z_1, \ldots \} $ is an unconditional Schauder basis. To verify that this is indeed the case, fix $\{ \phi_0, \phi_1, \ldots \} $ from~\eqref{eqn:def:psi} as an orthonormal basis of $L^2(\mu)$ (corresponding to the eigenvalues $\lambda_0, \lambda_1, \ldots$), and $ \{ e_0, e_1, \ldots \} $ as the standard orthonormal basis of $\ell^2$, and define the unbounded operator $Z':D(Z') \subset \ell^2 \to \ell^2$, the bounded operator $L: \ell^2 \to \ell^2$, the unitary operator $U : \ell^2 \to \ell^2$, and the diagonal operator $\Lambda: \ell^2 \to \ell^2$ such that 
\[ \langle e_i, Z'e_j \rangle_{\ell^2}= \langle \phi_i, \EigenGV'_j \rangle_\mu, \quad \langle e_i, L e_j \rangle_{\ell^2} := \langle \EigenGV_i, \phi_j \rangle_\mu, \quad \langle e_i, U e_j \rangle_{\ell^2} = \langle \phi_i, \tilde z_j \rangle_{\mu}, \quad \langle e_i, \Lambda e_j \rangle_{\ell^2} = \lambda_i \delta_{ij}. \] 
Here, $D(Z')$ is defined as the dense subspace of $\ell^2$ whose elements $\sum_{j=0}^\infty c_j e_j$ satisfy $ \sum_{i,j=0}^{\infty} \lvert \langle \phi_i, z'_j \rangle_{\mu} c_j \rvert^2 < \infty $. Note that $Z'^{*}$ an $L$ are the matrix representations of the mappings $\phi_j \mapsto \EigenGV'_j$, and $\phi_j \mapsto \EigenGV_j$, respectively, in the orthonormal basis $\{\phi_0, \phi_1, \ldots \}$. With these definitions, the $\num \times \num$ matrices with elements $ \langle e_i, Z' e_j \rangle_{\ell^2}$ and $ \langle e_i, L e_j \rangle$, which represent $Z'$ and $L$, respectively, have $\ell^2$ summable columns and rows respectively. 

Next, note that $L$ is a left inverse of $Z'$, as can be verified by computing 
\begin{align}
\nonumber \langle e_i, LZ' e_j \rangle_\mu &= \sum_{j=0}^{\infty} \langle \EigenGV_i, \phi_j \rangle_\mu \langle \phi_j, \EigenGV'_l \rangle_\mu = \sum_{j=0}^{\infty} \langle \EigenGV_i, \phi_j \rangle_\mu \langle \phi_j, \phi_j \rangle_\mu \langle \phi_j, \EigenGV'_l \rangle_\mu = \sum_{j=0}^{\infty}\sum_{k=0}^{\infty} \langle \EigenGV_i, \phi_j \rangle_\mu \langle \phi_j, \phi_k \rangle_\mu \langle \phi_j, \EigenGV'_l \rangle_\mu \\
\label{eqLeftZInv}& =\left\langle \sum_{j=0}^{\infty} \langle \phi_j, \EigenGV_i \rangle_\mu \phi_j, \sum_{k=0}^{\infty} \langle \phi_j, \EigenGV'_l \rangle_\mu \phi_k \right \rangle_\mu = \langle \EigenGV_i , \EigenGV'_l \rangle_\mu = \delta_{il}.
\end{align}
Similarly, one can verify the identities $L = U^* \Lambda^{1/2}$ and $Z' = \Lambda^{-1/2} U$. Using these results, and defining $\Pi_l: \ell^2 \to \ell^2$ as the canonical orthogonal projection onto $ \spn \{ e_0, \ldots, e_{l-1} \} $, we obtain 
\begin{equation}\label{eqn:vsr}
Z'\Pi_lL = \Lambda^{-1/2} U \Pi_l U^* \Lambda^{1/2} = \Lambda^{-1/2} \Pi_l \Lambda^{1/2} = \Pi_l, \quad Z'\Pi_lL \xrightarrow[l\to\infty]{s} \Id.
\end{equation}
By \citep[][Lemma 2.1]{TJC_I_2012}, \eqref{eqLeftZInv} and \eqref{eqn:vsr} imply that the columns of the matrix representation of $Z'$, i.e., the eigenfunctions $\EigenGV'_j$, form a Schauder basis of $L^2(\mu)$. The unconditionality of this basis follows from the fact that if the $\EigenGV_j$ are permuted, \eqref{eqn:vsr} still holds, but with the rows and columns of the matrix representations of $U$, $Z'$, $L$, and $\Lambda$ correspondingly permuted. This completes the proof of Claims~(iii) and~(iv).

In Claim~(v), the fact that the $\EigenW_j$ are eigenfunctions of $\tilde W$ follows from Lemma \ref{lem:fsdt}~(ii). We also have 
\begin{displaymath}
    \zeta_j = K G^{-1/2} \tilde z_j = \mathcal{U} \tilde z_j,
\end{displaymath}
and because $ \mathcal{U} $ acts as a unitary operator from $ L^2(\mu) $ to $\mathcal{K}$, the fact that $ \{ \tilde z_0, \tilde z_1, \ldots \} $ is an orthonormal basis of $ L^2(\mu) $ implies that $ \{ \zeta_0, \zeta_1, \ldots \} $ is an orthonormal basis of $\mathcal{K}$, proving the claim.

Finally, in Claim~(vi), note first that all of the summations are well defined and independent of ordering due to the unconditionality of all the bases involved. The results for $\tilde V$ and $\tilde W$ follow from standard properties of Hilbert-Schmidt, skew-adjoint operators. Here, we will only verify the representation of $\GV$, as the case for $\VG$, is analogous. By Claim~(iv), every $f\in L^2(\mu)$ has a unique expansion $f = \sum_{j=0}^{\infty} a_j \EigenGV_j$, with the summation holding in $L^2(\mu)$ sense. Then, since $\GV\EigenGV_j = i\omega_j \EigenGV_j$ and $\GV$ is bounded,
\[ \GV f = \GV \sum_{j=0}^{\infty} a_j \EigenGV_j = \sum_{j=0}^{\infty} a_j \GV\EigenGV_j = \sum_{j=0}^{\infty} a_j i\omega_j \EigenGV_j .\]
The fact that $Bf = \sum_{j=0}^{\infty} \langle\EigenGV'_j, f \rangle_{\mu} i\omega_j \EigenGV_j $ then follows from the identity below for the coefficients $a_j$:
\[ \langle\EigenGV'_j, f \rangle_{\mu} = \left\langle\EigenGV'_j, \sum_{k=0}^\infty a_k \EigenGV_k \right\rangle_{\mu} = \sum_{k=0}^\infty a_k \langle\EigenGV'_j, \EigenGV_k \rangle_{\mu} = \sum_{k=0}^\infty a_k \delta_{jk} = a_j.\]
This completes the proof of the claim and Theorem \ref{thm:E}. \qed

\paragraph{Proof of Theorem \ref{thm:F}} It follows from the strong convergence $G_\tau\xrightarrow{s} \Id$ in Assumption~\ref{assump:A4} that 
\[ \lim_{\tau\to 0^+} \| (\GV_\tau-V)f \|_{L^2(\mu)} = \lim_{\tau\to 0^+} \| (G_\tau V-V)f \|_{L^2(\mu)} = \lim_{\tau\to 0^+} \| (G_\tau-\Id)Vf \|_{L^2(\mu)} =0, \quad \forall f\in D(V^2), \]
proving Claim~(i). Turning to Claim~(ii), it follows from Lemma~\ref{lem:core_SRC}(i) that $D(V^2) $ is a core for $V$, and thus by Assumption~\ref{assump:A4} and Lemma~\ref{lem:core_SRC}(ii) that, as $\tau \to 0^+$, $\tilde V_\tau$ converges to $V$ in strong resolvent sense. The strong convergence of $Z(\tilde V_\tau) $ to $Z(V)$ then follows by Proposition~\ref{prop:SRC}(i). The result for $\mathcal{U}_\tau^* Z(W_\tau) \mathcal{U}_\tau$ follows from the fact that this operator is equal to $ Z(\tilde V_\tau) $, by~\eqref{eqSpecUnitary}.

To prove Claim~(iii) note first that, by standard properties of the Borel functional calculus, $Z(\tilde V_\tau)$ is a uniformly bounded family of operators with $ \lVert Z(\tilde V_\tau) \rVert \leq \lVert Z \rVert_{C^0(i\real)}$. As a result, it follows from a uniform boundedness principle that $ Z(\tilde V_\tau ) G_\tau^{1/2} \xrightarrow{s} Z(V) $, as $ \tau \to 0^+$. Similarly, $G_\tau^{1/2}$ is uniformly bounded, so the strong $ \tau \to 0^+$ limit of $ G_\tau^{1/2}Z(A_\tau)$ is equal to the strong $ \tau \to 0^+$ limit of $Z(A_\tau)$. However, $G_\tau^{1/2}Z(A_\tau) = Z( \tilde V_\tau ) G^{1/2}_\tau $ by~\eqref{eqZAB}, and we conclude that $ Z(A_\tau) \xrightarrow{s} Z(V)$, as claimed. That $Z(B_\tau) \xrightarrow{s} Z(V)$ then follows immediately from the fact that $ B_\tau = - A_\tau^*$. The latter result leads in turn to the strong convergence $K^*_\tau Z(W_\tau) \mathcal{N}_\tau \xrightarrow{s} Z(V) $ on $H_\infty $, since, by Theorem~\ref{thm:C}(ii) and complex analyticity of $Z$, $K^*_\tau Z(W_\tau) \mathcal{N}_\tau $ and $Z(B_\tau) $ are equal operators on $H_\infty$.

Next, the strong convergence of $\tilde E_\tau(\Omega)$ to $E(\Omega)$ in Claim~(iv) follows from Proposition \ref{prop:SRC}(iii). Equation~\eqref{eqSpecUnitary} then leads to the result for $ \mathcal{U}_\tau^* \mathcal{E}_\tau(\Omega) \mathcal{U}_\tau$. Finally, Claim~(v) follows from Proposition \ref{prop:SRC}(vi). \qed

\section{Proof of Theorems \ref{thm:Markov}, \ref{thm:Main} and Corollaries \ref{corr:APS}, \ref{thm:predic}}\label{sect:proof:main}

\paragraph{Proof of Theorem \ref{thm:Markov}} First, note that by Lemma~\ref{lemMercer}(ii), the sequence $ \{ \lambda_j \}_{j=0}^\infty $ is summable. Moreover, since $ \lambda_j \leq 1 $, $ \{ \lambda_j^q \}_{j=0}^\infty $ is summable for every $ q \geq 1 $. Now define $ r_{\tau,j} = (\lambda_{\tau,j} / \lambda_j)^{1/2} $. Due to the exponential decay of the $\lambda_{\tau,j}$ in~\eqref{eqn:def:schemeIII}, the sequences $\{ r_{\tau,j}^q \}_{j=0}^\infty $ and $\{ r_{\tau,j}^q / \lambda_j \}_{j=0}^{\infty} $ are summable for every $ q \geq 1 $ and $ \tau > 0 $. Observe now that $ \psi_{\tau,j} $ and $ p_\tau $ can be expressed as 
\begin{equation}\label{eqn:PsiTauJ}
\psi_{\tau,j} = r_{\tau,j} \psi_j = r_{\tau,j} \lambda^{-1/2}_{\tau,j} P \phi_j, \quad p_\tau(x,y) = \sum_{j=0}^{\infty} r_{\tau,j}^2 \psi_j(x) \psi_j(y). 
\end{equation}
It therefore follows from the summability of $\{ r^2_{\tau,j} \}_{j=0}^\infty$ that for every $\tau >0$, the series for $p_\tau(x,y)$ also converges absolutely and uniformly on $X_\nu\times X_\nu$, and condition~(ii) of Lemma \ref{lem:Ta0_Cm} is satisfied. Next, observe that for every $j\in\num_0$ and $\alpha\in \{1,\ldots,r\}$ ,
\[\psi_{\tau,j} = r_{\tau,j} \lambda_j^{-1/2} \int_M p(\cdot,y) \phi_j(y)\, d\nu(y), \quad \nabla^\alpha \psi_{\tau,j} = r_{\tau,j} \lambda_j^{-1/2} \int_M \nabla^\alpha p(\cdot,y) \phi_j(y)\, d\nu(y),\]
and thus $\| \psi_{\tau,j} \|_{C^r(M)} \leq r_{\tau,j} \lambda_j^{-1/2} \| p\|_{C^r}$. Let now $ \nabla_1 f $ and $ \nabla_2 f $ denote the covariant derivatives of $ f \in C^1(M\times M) $ with respect to the first and second variables, respectively. Defining $f_j(x,y) = \psi_{\tau,j}(x) \psi_{\tau,j}(y)$, and noting that $ f_j $ is a $ C^r(M\times M) $ function by $ C^r $ regularity of $ p$ (and thus $ \psi_j$), we have
\[\| f_j \|_{C^r(M)} = \sum_{\substack{\alpha,\beta \in \{0, \ldots, r \},\\\alpha +\beta = m}} \left\| \left(\nabla^\alpha_1 \nabla^\beta_2\right) f_j(x,y) \right\|_{C^0(M; T^{*(\alpha+\beta)}M)} \leq C r_{\tau,j}^2/ \lambda_{j}, \]
where, $C$ is a constant equal to a multiple of $\| p\|^2_{C^r(M \times M )}$. This bound implies that $\left\{ \| f_j \|_{C^r} \right\}_{j=0}^{\infty} \in\ell^1$, and condition~(i) of Lemma \ref{lem:Ta0_Cm} is satisfied. We therefore conclude that Lemma~\ref{lem:Ta0_Cm} applies, and as a result, for every $x, y \in M $, $ \sum_{j=0}^{\infty} f_j(x,y) = \sum_{j=0}^\infty \psi_{\tau,j}(x) \psi_{\tau,j}(y)$ converges in $ C^r(M\times M) $ norm to a $C^r(M\times M) $ function, $ p_\tau$, as claimed. 

Next, we begin our proof of Claim~(i) by showing that $ p_\tau $ is the reproducing kernel for an RKHS. Fixing $ \tau > 0$, we start from the pre-Hilbert space $ H_0 = \spn\{ \psi_{\tau,j} \} $, equipped with the inner product 
\begin{displaymath}
\left\langle \sum_{i=0}^{m-1} a_i \psi_{\tau,j}, \sum_{j=0}^{n-1} b_j \psi_{j,\tau} \right\rangle_{H_0} = \sum_{i=0}^{m-1} \sum_{j=0}^{n-1} a_i^* \delta_{ij} b_j.
\end{displaymath}
By~\eqref{eqn:PsiTauJ}, for every $ f = \sum_{j=0}^{n-1} c_j \psi_{\tau,j} \in H_0 $, we have 
\begin{displaymath}
\lVert f \rVert_{\RKHS}^2 = \left \lVert \sum_{j=0}^{n-1} c_j r_{\tau,j} \psi_{j} \right \rVert^2_{\RKHS} = \sum_{j=0}^{n-1} \lvert r_{\tau,j} \rvert^2 \lvert c_j \rvert^2 \leq C \sum_{j=0}^{n-1} \lvert c_j \rvert^2 = C \lVert f \rVert^2_{H_0},
\end{displaymath}
where $ C = \max_{j \in \num_0} \lvert r_{\tau,j} \rvert^2$. This implies that every Cauchy sequence in $H_0$ is a Cauchy sequence in $ \RKHS$, and as a result the Hilbert space completion of $H_0$, denoted $H$, can be identified with a subspace of $\RKHS$. In particular, $H$ is a Hilbert space of functions on $M$ with an orthonormal basis $ \{ \psi_{\tau,j} \}_{j=0}^\infty$. We will next show that $H$ is an RKHS with reproducing kernel $p_\tau$ by showing that, for every $ x \in M$, the kernel sections $ p_\tau(x,\cdot) $ lie in $H$, and function evaluation at $ x $ is a bounded linear functional on $H$ equal to an inner product with these sections. Indeed, since $p$ is the reproducing kernel for $\RKHS$, for every $x\in M$, the section $p(x,\cdot)$ lies in $\RKHS$, and thus, by the Mercer representation for $p$, $\sum_{j=0}^{\infty} |\psi_{j}(x)|^2<\infty$. It therefore follows that
\begin{displaymath} \sum_{j=0}^{\infty} |\psi_{\tau,j}(x)|^2 = \sum_{j=0}^{\infty} r_{\tau,j}^2 |\psi_{j}(x)|^2<\infty,
\end{displaymath}
and because $ \{ \psi_{\tau,j} \}_{j=0}^\infty$ is an orthonormal basis of $H$, $ p_\tau(x, \cdot ) = \sum_{j=0}^\infty \psi_{\tau,j}(x) \psi_{\tau,j} $ lies in $H$. Moreover, for every $ x \in M$ and $f\in H$,
\[ f(x) = \sum_{j=0}^{\infty} \langle \psi_{\tau,j},f \rangle_H \; \psi_{\tau,j}(x) = \left\langle \sum_{j=0}^{\infty} \psi_{\tau,j}(x) \psi_{\tau,j}, f \right\rangle_H = \left\langle p_\tau(x,\cdot), f \right\rangle_H, \]
which shows that pointwise evaluation on $H$ is given by inner products with the kernel sections $ p_\tau $. We therefore conclude that $H$ is an RKHS, denoted $\RKHS_\tau$, with $ p_\tau $ as its $C^r$ reproducing kernel. As a result, $ p_\tau $ is positive-definite, and it induces integral operators $P_\tau : L^2(\nu) \to \RKHS_\tau $ and $ G_\tau = P^*_\tau P_\tau $. It also follows from the Mercer representation for $ p_\tau $ that $ G_\tau $ is a strictly positive, compact operator with the same eigenfunctions $ \phi_j $ as $ G $, corresponding to the eigenvalues $ 0 < \lambda_{\tau,j} \leq 1 $, where $ \lambda_{\tau,0}= 1$ is simple. \blue{What remains to prove Claim~(i) is to show that $ G_\tau$ is $L^2(\nu)$-Markov ergodic. We will verify this assertion following the proof of Claim~(iii).}

Turning to Claim~(ii), note that for every $j\in\num_0$, the function $ \psi_j/\lambda_j^{1/2}$ equals $ \psi_{\tau,j}/\lambda_{\tau,j}$, and thus lies in $\RKHS_\tau$. Moreover, this function lies in the same $L^2(\nu)$ equivalence class as $\phi_j$, and because the $\phi_j$ form an orthonormal basis of $L^2(\nu)$, it follows that $\RKHS_\tau$ is dense in $L^2(\nu)$. To verify the claimed inclusion relationships between $\RKHS$ and $\RKHS_\tau$, we use~\eqref{eqn:PsiTauJ} to characterize these spaces as 
\[\RKHS = \left\{ \sum_{j=0}^{\infty} a_j \psi_{j}: \; \sum_{j=0}^{\infty} |a_j|^2 < \infty \right\}, \quad \RKHS_\tau = \left\{ \sum_{j=0}^{\infty} a_j r_{\tau,j} \psi_{j} :\; \sum_{j=0}^{\infty} |a_j|^2 < \infty \right\} \]
Now note that since $\lambda_{\tau,j} = \exp [ -\tau ( 1/\lambda_j -1 ) ]$, for every $ \tau_2 > 0 $ and $ \tau_1 \in ( 0, \tau_2 ) $, we have
\begin{equation}\label{eqn:msue}
    \lambda_{\tau_2,j} / \lambda_{\tau_1,j} = \exp [ (\tau_1- \tau_2)( 1/\lambda_j -1 ) ] < 1 , 
\end{equation}
which shows that $\RKHS_{\tau_2} \subseteq \RKHS_{\tau_1}$. That $\RKHS_{\tau_1} \subseteq \RKHS$ follows from the fact that the $r_{\tau,j}$ are bounded. This completes the proof of Claim~(ii).

Next, turning to Claim~(iii), we have already established in Claim~(i) that \blue{for every $ \tau >0 $, $G_\tau $ is an $L^2(\nu)$-strictly positive, compact,  contraction on $L^2(\nu)$ with a simple eigenvalue  $\lambda_{\tau,0} = 1$.} The semigroup property follows directly from the facts the $\phi_j$ form an orthonormal eigenbasis for all $G_\tau$, $\tau \geq 0$, with eigenvalues $\lambda_{\tau,j}$, and for each $j\in\num_0$ and $ \tau_1,\tau_2 \geq 0$, $ \lambda_{\tau_1+\tau_2,j}= \lambda_{\tau_1,j} \lambda_{\tau_2,j}$. To establish strong continuity of this semigroup, it is enough to show that for every $ f \in L^2(\nu) $ and $ \epsilon > 0 $, 
\begin{equation}\label{eqLimGTau}
\lim_{\tau\to 0^+} \|(G_\tau - \Id)f\|_{L^2(\nu)} < 2\epsilon . 
\end{equation}
Indeed, expanding $f=\sum_{j=0}^{\infty} a_j \phi_j$, the partial sum $f_L = \sum_{j=0}^{L-1} a_j \phi_j$ with $L$ large-enough satisfies $\|f-f_L\|_{L^2(\nu)} < \epsilon$. Then, because
\[ (G_\tau - \Id)f = (G_\tau - \Id) f_L + (G_\tau - \Id)(f-f_L) = \sum_{j=0}^{L} a_j \left(\lambda_{\tau,j}-1\right)\phi_j + (G_\tau - \Id)(f-f_L), \]
and $\|G_\tau\| = \lambda_{\tau,0} = 1$, the last term in the above equation can be bounded as $\|(G_\tau - \Id)(f-f_L) \|_{L^2(\nu)} < 2\epsilon$. Now note that for each $j$, $\lambda_{\tau,j}-1 = \exp \left( \tau (1- \lambda_j^{-1}) \right)-1$ converges to $0$ as $\tau\to 0^+$, so that~\eqref{eqLimGTau} is satisfied. This proves Claim~(iii).  

\blue{We will now show that $G_\tau $ is $L^2(\nu)$-Markov, completing the proof of Claim~(i) and the theorem. By Hille-Yosida theory for strongly continuous, contraction semigroups of positive, compact operators \cite{ReedSimon_V2_2003methods}, there exists a positive, self-adjoint operator $ \mathcal L : D(\mathcal L) \to L^2(\nu) $ with  compact resolvent such that, for all $ \tau \geq  0 $, $ G_\tau = e^{-\tau \mathcal L}$. $\mathcal L$ is a diagonal operator with eigenbasis $\phi_j$ and corresponding eigenvalues 
\begin{displaymath}
    - \left.\frac{d\ }{d\tau} \lambda_{\tau,j}\right\rvert_{\tau=0} =  \frac{1}{\lambda_{j}} - 1. 
\end{displaymath}
In particular, since $\lambda_0 = 1$ is simple, $ \mathcal L $ has a simple eigenvalue $0$ corresponding to the constant eigenfunction $\phi_0 \equiv 1_M$. It then follows from results on Markov semigroups (e.g., \cite[Chapter~14, Theorem~2]{DellAntonio16}) that the semigroup generated by $-\mathcal L$ is actually $L^2(\nu)$-Markov ergodic. That is, for every $\tau > 0 $, $G_\tau = e^{-\tau \mathcal L}$  is a Markov operator with transition probability density $p_\tau(x,\cdot)$  relative to $\nu$. This completes the proof of Claim~(i) and Theorem~\ref{thm:Markov}}. 
\qed

Before proceeding with the proof of Theorem~\ref{thm:Main}, we will state a useful proposition, which is a consequence of the semigroup structure of the operator family $ \{ G_\tau \}_{\tau \geq 0 } $. In what follows, $\Pi_{L} : L^2(\mu) \to L^2(\mu)$ will denote the orthogonal projection onto the subspace spanned by $\{ \phi_0, \ldots , \phi_{L-1} \}$.

\begin{prop}
\label{prop:Semigroup}Under the assumptions of Theorem~\ref{thm:Main}:
\begin{enumerate}[(i)]
\item As $ \tau \to 0^+$, $ G_{\tau}^{-1/2} $ converges pointwise to the identity on $ H_\infty $. 
\item For every $ \tau > 0 $, the compactified generator $ \tilde V_\tau : L^2(\mu) \to L^2(\mu) $ from Assumption~\ref{assump:A4} is equal to $G_{\tau}^{1/2} V G_{\tau}^{1/2}$.
\item For every $\tau >0$, $A_\tau$, $B_\tau$, $\tilde V_\tau$, and $W_\tau$ are trace class operators.
\item The operator families $ \{ A_\tau \}_{\tau>0} $, $ \{ B_\tau \}_{\tau>0} $, $ \{ \tilde V_\tau \}_{\tau > 0 } $, and $ \{ W_\tau \}_{\tau>0} $ are $ p2$-continuous.
\item As $\tau\to0^+$, $A_\tau$, $B_\tau$, $\tilde V_\tau $, and $ \mathcal{U}^*_\tau W_\tau \mathcal{U}_\tau $ converge pointwise to $V$ on $D(V)$.
\end{enumerate}
\end{prop}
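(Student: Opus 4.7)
The plan is to leverage the semigroup property $G_{\tau_1+\tau_2}=G_{\tau_1}G_{\tau_2}$ from Theorem~\ref{thm:Markov}(iii), and in particular the consequence $G_\tau^{1/2}=G_{\tau/2}$, as the central tool for all five claims. For claim~(i), I would expand $f=\sum_j a_j\phi_j\in H_\infty$ in the common eigenbasis of the $G_\tau$'s and compute $\lVert G_\tau^{-1/2}f-f\rVert^2_{L^2(\mu)}=\sum_j|a_j|^2(\lambda_{\tau,j}^{-1/2}-1)^2$; since $\lambda_{\tau,j}=\exp(\tau(1-\lambda_j^{-1}))$ is monotonically decreasing in $\tau$, for $\tau\in(0,\tau_0)$ one has the pointwise domination $(\lambda_{\tau,j}^{-1/2}-1)^2\leq\lambda_{\tau_0,j}^{-1}$, and the dominant series is summable because $f\in D(\Nyst_{\tau_0})$, so dominated convergence gives the claim. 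For~(ii), the $C^1$ regularity of $p_\tau$ established in Theorem~\ref{thm:Markov} forces $\RKHS_\tau\subseteq C^1(M)$, and the polar-decomposition identity~\eqref{eqNystGInv} then yields $\ran G_\tau^{1/2}=\ran K_\tau^*\subseteq\iotaC C^1(M)\subseteq D(V)$; hence $G_\tau^{1/2}VG_\tau^{1/2}$ is defined on all of $L^2(\mu)$, bounded (since $VG_\tau^{1/2}=VK_\tau^*\mathcal{U}_\tau$ is bounded by Theorem~\ref{thm:B}(i)), and visibly skew-adjoint, so the uniqueness part of Theorem~\ref{thm:D} forces equality with $\tilde V_\tau$. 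For~(iii), $G_{\tau/2}$ is trace class because its eigenvalues $\lambda_{\tau/2,j}$ are summable by Lemma~\ref{lemMercer}(ii); writing $\tilde V_\tau=G_{\tau/2}\cdot(VG_{\tau/2})$ and $A_\tau=(VG_{\tau/2})\cdot G_{\tau/2}$ then expresses each as (trace-class)$\times$(bounded), hence trace class, while $B_\tau=-A_\tau^*$ and $W_\tau=\mathcal{U}_\tau\tilde V_\tau\mathcal{U}_\tau^*$ (with $\mathcal{U}_\tau$ unitary by $L^2(\mu)$-strict-positivity of $p_\tau$ and density of $\ran P_\tau$ in $\RKHS_\tau$) inherit the property.

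For~(iv), I would reduce $p2$-continuity to operator-norm continuity of $\tau\mapsto\tilde V_\tau$ (and the other families), which suffices because norm convergence is inherited by polynomial expressions. The semigroup yields the clean factorizations $\tilde V_{\tau+2s}=G_s\tilde V_\tau G_s$, $A_{\tau+s}=A_\tau G_s$, and $B_{\tau+s}=G_sB_\tau$ for $s>0$. Expanding
\[
\tilde V_{\tau+2s}-\tilde V_\tau=(G_s-\Id)\tilde V_\tau+\tilde V_\tau(G_s-\Id)+(G_s-\Id)\tilde V_\tau(G_s-\Id),
\]
and using that $\tilde V_\tau$ is compact by~(iii) together with the strong convergence $G_s\xrightarrow{s}\Id$ from Theorem~\ref{thm:Markov}(iii), each summand vanishes in operator norm as $s\to0^+$ by the classical compact-times-strongly-convergent-to-zero principle. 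Left continuity at $\tau$ follows from the same identity applied with $\tau-2s$, using a uniform norm bound on $\tilde V_{\tau'}$ in a neighborhood. Analogous single-sided products handle $A_\tau$ and $B_\tau$; the unitary equivalence $W_\tau=\mathcal{U}_\tau\tilde V_\tau\mathcal{U}_\tau^*$ transfers $p2$-continuity from $\tilde V_\tau$.

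For~(v), the convergence $B_\tau f=G_\tau Vf\to Vf$ on $f\in D(V)$ is immediate from strong continuity of the semigroup, and $\mathcal{U}_\tau^*W_\tau\mathcal{U}_\tau=\tilde V_\tau$ by~(ii), reducing that claim to one about $\tilde V_\tau$. For $f\in D(V)$, I decompose
\[
\tilde V_\tau f-Vf=(G_\tau-\Id)Vf+G_{\tau/2}[V,G_{\tau/2}]f,\qquad A_\tau f-Vf=(G_\tau-\Id)Vf+[V,G_\tau]f,
\]
which reduces the question to showing the commutator terms vanish in $L^2(\mu)$ as $\tau\to0^+$. This is the main obstacle of the whole proposition: although $[V,G_\tau]=A_\tau-B_\tau$ is trace class for each $\tau>0$, its operator norm diverges as $\tau\to0^+$ (since $A_\tau$ and $B_\tau$ individually do), ruling out a naive uniform-bound-then-density argument. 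I anticipate the proof exploits the explicit integral representation of $[V,G_\tau]$ as the operator with kernel $(\vec V_x+\vec V_y)p_\tau(x,y)$, obtained from the vector-field formula $V\iotaC=\iota\vec V$ (Lemma~\ref{lemmaVec}) together with integration by parts using $\vec V$-invariance of $\mu$; the commutator, applied to a fixed $f\in D(V)$, can then be pushed onto $Vf$ via another integration-by-parts, converting the bad $\tau$-dependence into a quantity controlled by the strong continuity of $G_\tau$ on $Vf\in L^2(\mu)$. This delicate reweighting of the commutator is where the full interplay of the semigroup, the symmetry of $p_\tau$, and the Markov-ergodic structure must be invoked.
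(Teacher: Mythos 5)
Parts (i)--(iv) of your proposal are essentially sound. Your dominated-convergence proof of (i) is equivalent to the paper's cutoff-in-$L$ argument; your route to (ii) via $\ran G_\tau^{1/2}=\ran K_\tau^*\subseteq \iotaC C^1(M)\subseteq D(V)$ is a valid alternative to the paper's shortcut, which simply invokes the semigroup identity $G_\tau^{1/2}=G_{\tau/2}$ and $\ran G_{\tau/2}\subseteq D(V)$; (iii) matches the paper's factorization into (trace class)$\times$(bounded). For (iv) the paper proves Hilbert--Schmidt-norm continuity by showing $\tau\mapsto p_\tau$ and $\tau\mapsto p'_\tau$ are continuous in $L^2(\mu\times\mu)$ and that kernel composition preserves this; your operator-norm argument via $\tilde V_{\tau+2s}=G_s\tilde V_\tau G_s$ and the compact-times-strongly-convergent principle is a legitimate alternative for right continuity, but your left-continuity step is not justified as written: a uniform norm bound on the varying compact factor $\tilde V_{\tau-2s}$ does not upgrade the strong convergence $G_s\to\Id$ to norm convergence of $(G_s-\Id)\tilde V_{\tau-2s}$. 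This is easily repaired by factoring through a fixed level, e.g.\ $\tilde V_{\tau-2s}=G_{u_s}\tilde V_{\tau_1}G_{u_s}$ with $\tau_1<\tau$ fixed, so that the compact factor no longer moves; it is a patch, not a flaw in strategy.

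The genuine gap is in (v), for $A_\tau$ and $\tilde V_\tau$. Your decomposition isolates the commutator $[V,G_\tau]f=A_\tau f-G_\tau Vf$, but since $G_\tau Vf\to Vf$ is the easy part, the statement ``$[V,G_\tau]f\to 0$'' is exactly equivalent to the statement ``$A_\tau f\to Vf$'' that you are trying to prove; the decomposition restates the problem rather than reducing it. The anticipated integration-by-parts argument does not close this circle: in the kernel $(\vec V_x+\vec V_y)p_\tau(x,y)$ of $A_\tau-B_\tau$, integrating the $\vec V_y$ part against $f$ by parts (using invariance of $\mu$) merely reproduces $-G_\tau Vf$, while the $\vec V_x$ part acts on the output variable and cannot be moved onto $f$ by any integration in $y$; no mechanism is exhibited by which the ``bad $\tau$-dependence'' gets transferred to $Vf$, and symmetry and Markovianity of $p_\tau$ do not supply one. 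The paper closes this step by a different device that your proposal is missing: because $\RKHS_\tau\subseteq\RKHS$ and the eigenvalues $\lambda_{\tau,j}$ are explicit and bounded by $1$, the family $P_\tau f$ is Cauchy in the \emph{fixed} space $\RKHS$ as $\tau\to 0^+$; since $VP^*\colon\RKHS\to L^2(\mu)$ is bounded (Theorem~\ref{thm:B}(i)), $A_\tau f=VP^*(P_\tau f)$ is Cauchy in $L^2(\mu)$, and because $G_\tau f\to f$ with $V(G_\tau f)=A_\tau f$ and $V$ is closed, the limit is identified as $Vf$. The convergence of $\tilde V_{2\tau}f=G_\tau A_\tau f$ then follows from uniform boundedness of $G_\tau$. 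Without an argument of this kind for $A_\tau f\to Vf$, claim (v) --- and with it the verification of Assumption~\ref{assump:A4} on which Theorem~\ref{thm:Main} rests --- remains unproven in your proposal.
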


\begin{proof} By \eqref{eqn:msue}, for every $j$, $\lambda_{\tau,j}$ increases strictly monotonically as $\tau \to 0^+$, which means that $\lambda_{\tau,j}^{-1/2}$ decreases strictly monotonically. Now, since $D(G_\tau^{-1/2}) = D(\mathcal{N_\tau}) $ (see Section~\ref{secReview_RKHS}) and $G_\tau^{-1/2} : \phi_j \mapsto \lambda_{\tau,j}^{-1/2} \phi_j$, for every $ f \in H_\infty $ and $ 0 < \tau' < \tau $, we have $ \|G_{\tau'}^{-1/2} f \|_{L^2(\mu)} \leq \| G_{\tau}^{-1/2} f \|_{L^2(\mu)} $, and thus
\[ H_\infty \subseteq D(G_\tau^{-1/2} ) \subseteq D(G_{\tau'}^{-1/2} ). \]
Therefore, fixing $\epsilon>0$ and $\tau_0>0$, it is enough to show that
\begin{equation}
\label{eqGInvLim}
\lim_{\tau\to 0^+ }\| G_\tau^{-1/2} f - f \|_{L^2(\mu)} = \lim_{\tau\to 0^+} \| (G_\tau^{-1/2} - \Id) f \|_{L^2(\mu)} < 2\epsilon, \quad \forall f\in D(G_{\tau_0}^{-1/2}). 
\end{equation}
To that end, we begin by using the triangle inequality to write down the bound 
\begin{equation}\label{eqn:vsn}
\| (G_\tau^{-1/2} - \Id) f \|_{L^2(\mu)} \leq \| (G_\tau^{-1/2} - \Id) \Pi_{L} f \|_{L^2(\mu)} + \| (G_\tau^{-1/2} - \Id) (\Id-\Pi_{L}) f \|_{L^2(\mu)} .
\end{equation}
Now, since $G_{\tau}$ and $G_\tau^{-1/2}$ are diagonal operators, they commute with $\Pi_{L}$ and $\Id-\Pi_{L}$. As a result, for every $ \tau \in (0, \tau_0 ) $, by \eqref{eqn:msue},
\[ \| (G_\tau^{-1/2} - \Id) (\Id-\Pi_{L}) f \|_{L^2(\mu)} \leq \| (G_{\tau_0}^{-1/2} - \Id) (\Id-\Pi_{L}) f \|_{L^2(\mu)} =  \| (\Id-\Pi_{L}) (G_{\tau_0}^{-1/2} - \Id) f \|_{L^2(\mu)} , \]
and the last term vanishes as $L\to \infty$. Therefore, for $L$ large-enough, the second term on the right-hand side of \eqref{eqn:vsn} is less than $\epsilon$ for every $ \tau \in (0, \tau_0) $. Similarly, for any fixed $L$, for $\tau$ small-enough, the first term is also less than $\epsilon$, proving \eqref{eqGInvLim} and Claim (i).

To prove Claim~(ii), note that for every $ \tau' > 0 $, 
\begin{displaymath}
\tilde V_{2\tau'} \supseteq G_{2\tau'}^{1/2} V G_{2\tau'}^{1/2} = G_{\tau'} V G_{\tau'},
\end{displaymath}
where the last equality follows from the semigroup structure of $ \{ G_\tau \}_{\tau \geq 0}$. However, the range of $G_\tau$ lies in the domain of $V$, so we conclude that $ \tilde V_{2\tau'} = G_{\tau'} V G_{\tau'} = G_{\tau'} A_{\tau'} $. Setting $ \tau' = \tau/2 $ and noting that $ G_{\tau/2} = G_\tau^{1/2} $ leads to the claim.

Next, to prove Claim~(iii), observe that $ \tilde V_{\tau} = G_{\tau/2} B_{\tau/2} $, which shows that $ \tilde V_{\tau} $ is trace class since $ G_{\tau/2} $ is trace class and $ B_{\tau/2} $ is bounded. Similarly, we have $B_\tau = B_{\tau/2} G_{\tau/2}$, which shows that $ B_\tau$ is trace class. That $W_\tau $ and $ A_\tau $ are trace class then follows from the fact that the former is unitarily equivalent to $\tilde V_\tau$ and the latter equal to the negative adjoint of $B_\tau$.

Turning to Claim~(iv), we will only prove p2-continuity for $ \{ \tilde V_\tau \}_{\tau>0} $. The result for $ \{ W_\tau \}_{\tau>0} $ follows immediately by unitary equivalence of $\tilde V_\tau$ and $W_\tau$; the results for $ \{ A_\tau \}_{\tau>0} $ and $ \{ B_\tau \}_{\tau>0} $, can be verified analogously to the proof for $ \{ \tilde V_\tau \}_{\tau>0} $ below.

First, by Claim~(ii), it is sufficient to establish p2-continuity for the family of operators $\{G_\tau \VG_\tau \}_{\tau > 0}$. That is, fixing a quadratic polynomial $Q$, we have to show that the operator norm $\left\| Q\left( G_\tau \VG_\tau \right) \right\|_{L^2(\mu)}$ is a continuous function of $\tau>0$. This is in turn equivalent to showing that $\tau \mapsto Q\left( G_\tau \VG_\tau \right)$ is a continuous map in the $L^2(\mu)$ operator norm topology. Note that this continuity is not affected by the addition of a constant term to the polynomial $Q$. Thus, without loss of generality, we may assume that $Q$ is a homogeneous polynomial of the form $Q(x) = \alpha x^2 + \beta x $. By Theorems~\ref{thm:Markov} and~\ref{thm:A}, $ G_\tau $ and $ A_\tau $ are both Hilbert-Schmidt integral operators with kernels $ p_\tau $ and $ p'_\tau$, respectively. Since the composition of a bounded operator with a Hilbert-Schmidt operator is again a Hilbert-Schmidt operator, it follows that 
\[ Q\left( G_\tau \VG_\tau \right) = \alpha G_\tau \circ \VG_\tau \circ G_\tau \circ \VG_\tau + \beta G_\tau\circ \VG_\tau \]
is Hilbert-Schmidt. As a result, because the Hilbert-Schmidt norm induces a stronger topology than the $L^2(\mu)$ operator norm, it is sufficient to prove the stronger claim that $\tau \mapsto Q\left( G_\tau \VG_\tau \right)$ is a continuous map in the Hilbert-Schmidt norm topology. 

By \eqref{eqn:HilSchm}, the Hilbert-Schmidt norm of the kernel integral operator $Q\left( G_\tau \VG_\tau \right)$ is just the $L^2(\mu\times\mu)$ norm of its kernel. Thus, denoting this kernel by $q_\tau:M\times M\to \real$, the task now is to show that $\tau\mapsto \left\| q_\tau \right\|_{L^2(\mu\times\mu)}$ is a continuous function of $\tau$, or, equivalently, that $\tau \mapsto q_\tau$ is continuous in the $L^2(\mu\times\mu)$ norm topology. That this is indeed the case follows from the claims below.
\begin{enumerate}[(a)]
\item \emph{$\tau \mapsto p_\tau$ and $\tau \mapsto p'_\tau$ are continuous in the $L^2(\mu\times\mu)$ norm topology.} Indeed, by \eqref{eqn:def:schemeIII} and \eqref{eqn:PsiTauJ},
\[ \left\| p_\tau - p_{\tau'} \right\|_{L^2(\mu)}^2 = \sum_{j=0}^{\infty} \left| \lambda_{\tau,j}^{1/2} - \lambda_{\tau',j}^{1/2} \right|^2 \leq \sum_{j=0}^{L} \left| \lambda_{\tau,j}^{1/2} - \lambda_{\tau',j}^{1/2} \right|^2 + \sum_{j=L+1}^{\infty} \lambda_{\tau,j} + \sum_{j=L+1}^{\infty} \lambda_{\tau',j}, \]
so that for $L$ sufficiently large, the last two terms can be made arbitrarily small, whereas for every fixed $L$ the term $\sum_{j=0}^{L} | \lambda_{\tau_j}^{1/2} - \lambda_{\tau',j}^{1/2} |^2$ converges to $0$ as $\tau \to \tau'$. This establishes $L^2(\mu\times\mu) $ continuity of $ \tau \mapsto p_\tau $. The claim for $ \tau \mapsto p'_\tau $ follows analogously.
\item \emph{If $a_\tau, b_\tau : M \times M \to \real$ are two kernel families depending continuously on $\tau$ with respect to $L^2(\mu\times\mu) $ norm, then their composition, $ c_\tau(x,y) = \int_M a_\tau(x,z)b_\tau(z,y)\, d\mu(z) $, is also continuous.} This claim can be verified via a standard calculation in analysis, which will be omitted here. 
\end{enumerate} 
The continuity of $ \tau \mapsto q_\tau$ then follows from these results since $q_\tau $ is equal to a sum of various compositions of $ p_\tau $ and $ p_\tau' $. This completes the proof of Claim~(iv).

Finally, to prove Claim~(v), fix $f= \sum_{j=0}^{\infty} a_j \phi_j \in D(V)$, and observe the following: \begin{enumerate}[(a)]
\item \emph{ $G_\tau f$ is a family of functions in $D(V)$, converging, as $\tau \to 0^+$ to $f$. } The convergence follows from Theorem \ref{thm:Markov}(iii). Moreover, since $G_\tau$ has a $C^1$ kernel $p_\tau$, the $G_\tau f$ have $C^1$ representatives. Thus, $G_\tau f$ lies in $D(V)$, as claimed.
\item \emph{$P_\tau f$ is a Cauchy sequence in $\RKHS$.} To verify this, fix a $\tau_0>0$. Then, for every $\tau,\tau'\in (0,\tau_0)$,
\[ \| P_{\tau'}f - P_{\tau}f \|_{\RKHS}^2 = \sum_{j=0}^{\infty} \left( \lambda_{\tau',j}^{1/2}-\lambda_{\tau,j}^{1/2} \right)^2 |a_j|^2 = \sum_{j=0}^{L} \left( \lambda_{\tau',j}^{1/2}-\lambda_{\tau,j}^{1/2} \right)^2 |a_j|^2 + \sum_{j=L+1}^{\infty} \left( \lambda_{\tau',j}^{1/2}-\lambda_{\tau,j}^{1/2} \right)^2 |a_j|^2, \]
and therefore, since $\lambda_{\tau,j} \in [0,1)$ for every $\tau>0$ and $j\in\num_0$, we obtain
\[ \limsup_{\tau_0\to 0^+} \| P_{\tau'}f - P_{\tau}f \|_{\RKHS}^2 \leq \limsup_{\tau_0\to 0^+} \sum_{j=0}^{L} \left( \lambda_{\tau',j}^{1/2}-\lambda_{\tau,j}^{1/2} \right)^2 |a_j|^2 + 2\sum_{j=L+1}^{\infty} |a_j|^2 = 2\sum_{j=L+1}^{\infty} |a_j|^2 . \]
The above inequality holds for every $L\in\num$, and the last term vanishes as $L\to \infty$, proving the claim.
\item \emph{$\VG_\tau f$ is a Cauchy sequence in $L^2(\mu)$}. To verify this, note that $VP^*:\RKHS \to L^2(\mu)$ is a bounded operator by Theorem \ref{thm:B}(i), and therefore, since $P_\tau f$ is a Cauchy sequence in $\RKHS$, $A_\tau f = VP^*_\tau P_\tau f = V P^* ( P_\tau f )$ is a Cauchy sequence in $L^2(\mu)$.
\end{enumerate}

We have thus shown that $G_\tau f$ is a family of functions in $D(V)$ which converges to $f$, and their images under $V$, namely $V(G_\tau f) = \VG_\tau f$ is a Cauchy sequence. Since $V$ is a closed operator, the limit of this Cauchy sequence is equal to $Vf$. Thus, $\VG_\tau f$ converges to $Vf$, and since $ f $ was arbitrary, it follows that $A_\tau $ converges to $V$ pointwise on $D(V)$. In addition, because $ G_\tau $ is uniformly bounded and converges to the identity, we have $ G_\tau A_\tau = \tilde V_{2\tau} $, and thus $\tilde V_{\tau}$, converges pointwise to $V$ on $D(V)$. Note that we have used Claim~(ii) to deduce equality of $ G_\tau A_\tau $ and $ V_{2\tau} $. Finally, the pointwise convergence of $B_\tau = G_\tau V $ to $V$ follows directly from the pointwise convergence of $ G_\tau $ to the identity, and the result for $ \mathcal{U}_\tau^* W_\tau \mathcal{U}_\tau = \tilde V_\tau $ is obvious. This completes the proof of Proposition~\ref{prop:Semigroup}.

\end{proof}

\paragraph{Proof of Theorem \ref{thm:Main}} First, Proposition~\ref{prop:Semigroup}(iii) established that $W_\tau $ and $B_\tau $ are trace class. Claim~(i) of the theorem follows from Theorem \ref{thm:B}(ii), Claim~(ii) follows from Theorem~\ref{thm:C}, and Claim~(iii) follows from Theorem~\ref{thm:E}(i) and (viii). Aside from the convergence of $P_\tau^*Z(W_\tau)\Nyst_\tau$ to $Z(V)$ for bounded continuous (as opposed to holomorphic) functions, Claims (iv)--(vii) will follow from Theorems \ref{thm:E}, \ref{thm:F} and Proposition~\ref{prop:Semigroup}(iv) if we can show that $p_\tau$ satisfies Assumption \ref{assump:A4}. Theorem \ref{thm:Markov}(iii) establishes the condition in this assumption that $G_\tau$ converges pointwise to the identity. In order to verify Assumption~\ref{assump:A4}, it thus remains to be shown that, as $\tau\to 0^+$, $\symV_\tau$ converges pointwise to $V$ on $D(V^2)$. This follows immediately from Proposition~\ref{prop:Semigroup}(v), where we have shown the stronger result that $ \tilde V_\tau $ converges to $V$ pointwise on the whole of $D(V)$.

What remains to complete the proof of Theorem~\ref{thm:Main} is to show that $ P_\tau^* Z(W_\tau) \Nyst_\tau $ converges strongly on $H_\infty$ to $Z(V) $ for bounded continuous $Z$. By~\eqref{eqNystGInv}, for every $ f \in H_\infty $ we have
\begin{displaymath}
P_\tau^* Z(W_\tau) \Nyst_\tau f = P\tau^* \mathcal{U}_\tau \mathcal{U}_\tau^* Z( W_\tau ) \mathcal{U}_\tau G_\tau^{-1/2} f = G_\tau^{1/2} Z(\tilde V_\tau) G_{\tau}^{-1/2} f,
\end{displaymath}
and therefore
\begin{align*}
P_\tau^* Z(W_\tau) \Nyst_\tau f - Z(V) f &= G_\tau^{1/2} Z(\tilde V_\tau) G_{\tau}^{-1/2} f - Z(V) f \\
&= G_\tau^{1/2} Z(\tilde V_\tau) ( G_{\tau}^{-1/2} - \Id ) f + ( G_\tau^{1/2} Z(\tilde V_\tau) - Z(V) ) f.
\end{align*}
By Proposition~\ref{prop:Semigroup}(i) and the fact that $ G_\tau^{1/2} Z(\tilde V_\tau) $ is a uniformly bounded family of operators converging pointwise to $Z(V)$, as $ \tau \to 0^+$, each of the terms in the right-hand side of the last equation converges to 0. This shows that $ P_\tau^* Z(W_\tau ) \Nyst_\tau f \xrightarrow{s} Z(V) $ on $H_\infty$, completing the proof of Theorem~\ref{thm:Main}. \qed

Before proving Corollary~\ref{corr:APS}, we will state and prove a proposition on the $\epsilon$-approximate spectrum of $U^t$. One of the important claims we make is that, suitably restricted to the space $P^*\RKHS = D(\mathcal{N}) \subset L^2(\mu)$, $e^{t B_\tau} $ converges in norm to $U^t$, as opposed to merely strongly as shown in Theorem~\ref{thm:Main}(vi). In particular, we will consider the quantity $Q(t, \tau ) = \lVert (U^t - e^{t B_\tau} ) P^* \rVert$ for $t\in\real, \tau>0$, where $ \lVert \cdot \rVert$ denotes $\RKHS \to L^2(\mu)$ operator norm. 

\begin{prop}\label{prop:AppEigen}
Let Assumptions~\ref{assump:A1}, \ref{assump:A2} hold, with $r=2$. Then the function $Q$ is continuous, vanishes at $ t = 0 $ for every $ \tau \in ( 0, \infty) $, and converges to 0 as $ \tau \to 0^+$ for every $ t\in \real$. Moreover, for every eigenfunction $ \zeta_\tau$ of $W_\tau$ with eigenvalue $ i\omega_\tau $ and every $ t \in \real$, $e^{i\omega_\tau t}$ lies in the $\epsilon$-approximate point spectrum of $U^t$, with
\[ \epsilon = Q(t,\tau) \sqrt{\Dirich(\tilde z_\tau) + 1}, \quad \tilde z_\tau := P^*_\tau \zeta_\tau / \lVert P^*_\tau \zeta_\tau \rVert_{L^2(\mu)}, \quad \lVert U^t \tilde z_\tau - e^{i\omega_\tau t} \tilde z_\tau \rVert_{L^2(\mu)} < \epsilon. \]
Moreover, for every fixed $\epsilon>0$, $R( \epsilon,\tau)$ defined in Corollary \ref{corr:APS} diverges as $ \tau \to 0^+ $.
\end{prop}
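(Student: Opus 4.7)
The plan is to dispatch the claims about $Q$ first (continuity, vanishing at $t = 0$, pointwise vanishing as $\tau \to 0^+$), then derive the approximate-eigenpair bound algebraically, and finally obtain $R(\epsilon,\tau) \to \infty$ from a $\tau$-uniform Lipschitz estimate for $t \mapsto e^{tB_\tau}P^*$. Joint continuity of $Q$ on $\real \times (0,\infty)$ follows from three facts: $t \mapsto e^{tB_\tau}$ is norm-continuous since $B_\tau$ is bounded (Theorem~\ref{thm:C}); $t \mapsto U^t P^*$ is norm-continuous since $P^*$ is compact; and $\tau \mapsto B_\tau$ is operator-norm continuous via the Hilbert--Schmidt argument from Proposition~\ref{prop:Semigroup}(iv). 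The identity $Q(0,\tau) = 0$ is trivial. Pointwise vanishing $Q(t,\tau) \to 0$ as $\tau \to 0^+$ for fixed $t$ follows from Theorem~\ref{thm:Main}(v) applied to $Z(i\omega) = e^{i\omega t}$ (giving strong convergence $e^{tB_\tau} \to U^t$ on $L^2(\mu)$) combined with the fact that composition of a strongly null, uniformly bounded family (Banach--Steinhaus) with the compact operator $P^*$ yields operator-norm convergence.

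For the approximate-eigenpair bound, Lemma~\ref{lem:fsdt}(i) applied with kernel $p_\tau$ gives that $P_\tau^*\zeta_\tau$ is an eigenfunction of $B_\tau$ at eigenvalue $i\omega_\tau$. Since $\zeta_\tau \in \RKHS_\tau \subseteq \RKHS$ (Theorem~\ref{thm:Markov}(ii)), the $L^2(\mu)$ elements $P^*\zeta_\tau$ and $P_\tau^*\zeta_\tau$ coincide, and $e^{tB_\tau}P^*\zeta_\tau = e^{i\omega_\tau t}P^*\zeta_\tau$. Dividing through by $\|P^*\zeta_\tau\|_{L^2(\mu)}$ yields
\begin{equation*}
\|U^t \tilde z_\tau - e^{i\omega_\tau t}\tilde z_\tau\|_{L^2(\mu)}
=
\frac{\|(U^t - e^{tB_\tau})P^*\zeta_\tau\|_{L^2(\mu)}}{\|P^*\zeta_\tau\|_{L^2(\mu)}}
\leq
Q(t,\tau)\,\frac{\|\zeta_\tau\|_\RKHS}{\|P^*\zeta_\tau\|_{L^2(\mu)}}.
\end{equation*}
The inclusion $\RKHS_\tau \subseteq \mathcal{K} = \overline{\ran P}$, visible from the eigenbasis representation~\eqref{eqn:PsiTauJ} together with the fact that the $\psi_j$ span $\mathcal{K}$, means $\zeta_\tau \in \mathcal{K}$; hence $\zeta_\tau/\|P^*\zeta_\tau\|_{L^2(\mu)}$ is the unique $\mathcal{K}$-representative of $\tilde z_\tau$, i.e., $\Nyst \tilde z_\tau$. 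The ratio therefore equals $\|\Nyst \tilde z_\tau\|_\RKHS = \sqrt{\Dirich(\tilde z_\tau)+1}$ by definition of the Dirichlet energy applied to the unit-normed $\tilde z_\tau$.

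The hard part is the divergence of $R(\epsilon, \tau)$, which amounts to upgrading pointwise to uniform vanishing of $Q(t,\tau)$ on each compact interval $[-T,T]$. The main obstruction is that $\|B_\tau\|$ diverges as $\tau \to 0^+$, making the naive modulus of continuity $|t_2-t_1|\|B_\tau\|$ useless. The plan is to combine the Duhamel identity
\begin{equation*}
(e^{t_2 B_\tau} - e^{t_1 B_\tau})P^*f = \int_{t_1}^{t_2} e^{sB_\tau}\, B_\tau P^*f\, ds, \qquad f \in \RKHS,
\end{equation*}
with the factorization $e^{sB_\tau} \supseteq G_\tau^{1/2}\,e^{s\tilde V_\tau}\,G_\tau^{-1/2}$ from~\eqref{eqZAB}. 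Since $B_\tau P^*f = G_\tau V P^*f$ lies in $\ran G_\tau \subseteq D(\Nyst_\tau) = D(G_\tau^{-1/2})$, the factorization applies and yields $e^{sB_\tau}B_\tau P^*f = G_\tau^{1/2}\, e^{s\tilde V_\tau}\, G_\tau^{1/2}\, V P^*f$. Using $\|G_\tau^{1/2}\| \leq 1$, unitarity of $e^{s\tilde V_\tau}$ (Theorem~\ref{thm:D}), and boundedness of $VP^*:\RKHS \to L^2(\mu)$ (Theorem~\ref{thm:B}(i)), the integrand is bounded in $L^2(\mu)$ norm by $\|VP^*\|\,\|f\|_\RKHS$ uniformly in $(s,\tau)$. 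Integration gives the $\tau$-uniform Lipschitz bound $\|(e^{t_2B_\tau}-e^{t_1B_\tau})P^*\|_{\RKHS \to L^2(\mu)} \leq |t_2-t_1|\,\|VP^*\|$. A standard $\delta$-net covering of $[-T,T]$ then upgrades pointwise vanishing at the finitely many net points (from the third claim) to uniform vanishing on $[-T,T]$, completing the proof that $R(\epsilon,\tau) \to \infty$.
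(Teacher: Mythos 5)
Your proof is correct, and for the first three claims about $Q$ and for the approximate-eigenpair bound it follows essentially the same route as the paper: joint continuity via the Hilbert--Schmidt/operator-norm continuity argument of Proposition~\ref{prop:Semigroup}(iv), pointwise vanishing of $Q$ from strong convergence $e^{tB_\tau}\to U^t$ composed with the compact operator $P^*$ (your explicit appeal to uniform boundedness via Banach--Steinhaus is a point the paper leaves implicit), and the bound $\lVert U^t\tilde z_\tau - e^{i\omega_\tau t}\tilde z_\tau\rVert_{L^2(\mu)}\leq Q(t,\tau)\,\lVert\zeta_\tau\rVert_{\RKHS}/\lVert P^*\zeta_\tau\rVert_{L^2(\mu)}$, which the paper obtains from the intertwining $P_\tau^*e^{tW_\tau}=e^{tB_\tau}P_\tau^*$ of Theorem~\ref{thm:C}(ii) while you use Lemma~\ref{lem:fsdt}(i) to make $P_\tau^*\zeta_\tau$ an eigenfunction of $B_\tau$ — these are interchangeable, and your explicit identification $\zeta_\tau/\lVert P^*\zeta_\tau\rVert = \Nyst\tilde z_\tau$ via $\RKHS_\tau\subseteq\mathcal{K}$ justifies the Dirichlet-energy identity that the paper states without comment. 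Where you genuinely depart from the paper is the divergence of $R(\epsilon,\tau)$: the paper disposes of it in one line, asserting that continuity of $Q$ yields $Q(t,\tau)<\epsilon$ uniformly on $[-T,T]$ for small $\tau$, which strictly speaking does not follow from joint continuity on $\real\times(0,\infty)$ plus pointwise vanishing alone. Your $\tau$-uniform Lipschitz estimate $\lVert(e^{t_2B_\tau}-e^{t_1B_\tau})P^*\rVert\leq|t_2-t_1|\,\lVert VP^*\rVert$, obtained from Duhamel together with $e^{sB_\tau}\supseteq G_\tau^{1/2}e^{s\tilde V_\tau}G_\tau^{-1/2}$ applied to $B_\tau P^*f=G_\tau VP^*f\in\ran G_\tau\subseteq D(G_\tau^{-1/2})$, supplies exactly the equicontinuity needed to upgrade pointwise to locally uniform vanishing, so your argument is the more rigorous one and effectively repairs the tersest step of the paper's proof. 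Two trivial touch-ups: the strong convergence $e^{tB_\tau}\to U^t$ is Theorem~\ref{thm:Main}(vi) (holomorphic functional calculus for $B_\tau$), not item (v); and the $\delta$-net step also needs the analogous, $\tau$-independent bound $\lVert(U^{t_2}-U^{t_1})P^*\rVert\leq|t_2-t_1|\,\lVert VP^*\rVert$ for the Koopman term, which follows at once from the same boundedness of $VP^*$ (Theorem~\ref{thm:B}(i)).
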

\begin{proof} By arguments analogous to those used to prove Proposition~\ref{prop:Semigroup}(iv), the map $ ( t, \tau ) \mapsto ( U^t - e^{t B_\tau} ) P^* $ is continuous in the Hilbert-Schmidt norm topology of operators from $\RKHS$ into $L^2(\mu)$ at every $(t,\tau) \in \real \times \real_+$. This implies continuity of $ ( t, \tau ) \mapsto (U^t - e^{t B_\tau})$ in the operator norm topology, and thus continuity of $Q$. That $Q(t,\tau)$ vanishes as $\tau \to 0^+$ at fixed $t$ follows from the fact that $ U^t - e^{t B_\tau}$ converges pointwise to 0, and $P^*$ is compact. That $Q(0,\cdot) = 0 $ is obvious. Next, to verify that $e^{i\omega_\tau t}$ lies in the $\epsilon$-approximate point spectrum of $U^t$ with $\epsilon= Q_t(\tau) \sqrt{\Dirich(\tilde z_\tau) + 1}$, we use Theorem~\ref{thm:C}(ii) to compute
\begin{align}
\nonumber\lVert U^t \tilde z_\tau - e^{i\omega_\tau t} \tilde z_\tau \rVert_{L^2(\mu)} &= \frac{\lVert U^t P^*_\tau \zeta_\tau - e^{i\omega_\tau t}P^*_\tau \zeta_\tau\rVert_{L^2(\mu)}}{\lVert P^*_\tau \zeta_\tau \rVert_{L^2(\mu)}} 
= \frac{\lVert U^t P^*_\tau \zeta_\tau - P^*_\tau e^{t W_\tau} \zeta_\tau \rVert_{L^2(\mu)}}{\lVert P^*_\tau \zeta_\tau \rVert_{L^2(\mu)}} \\
\nonumber&= \frac{\lVert (U^t - e^{t B_\tau}) P^*_\tau \zeta_\tau \rVert_{L^2(\mu)}}{\lVert P^*_\tau \zeta_\tau \rVert_{L^2(\mu)}} 
= \frac{\lVert (U^t - e^{t B_\tau}) P^* \zeta_\tau \rVert_{L^2(\mu)}}{\lVert P^* \zeta_\tau \rVert_{L^2(\mu)}}\\
\label{eqAPSBound} & \leq \lVert (U^t - e^{tB_\tau}) P^* \rVert \frac{\lVert \zeta_\tau \rVert_{\RKHS}}{ \lVert P^*\zeta_\tau \rVert_{L^2(\mu)}} = Q( t,\tau) \sqrt{\Dirich(\tilde z_\tau) + 1}, 
\end{align}

Finally, fix an $ \epsilon > 0 $. It follows by continuity of $Q$, that for every $ T >0$ and $\tau>0$ small enough, then for every $ t \in [-T,T]$, $ Q(t, \tau) < \epsilon $. This implies that $R(\epsilon,\tau ) > T $ for small enough $\tau$. Since $T $ was arbitrary, we can conclude that $R(\epsilon,\tau)$ diverges as $ \tau \to 0^+$. 
\end{proof}

\paragraph{Proof of Corollary~\ref{corr:APS}} The first inequality follows from the definition of $Q(t,\tau)$ and $R(\epsilon,\tau)$, in conjunction with~\eqref{eqAPSBound}. Next, to prove Claim~(i), it is sufficient to show that $e^{i\omega t} $ lies in the spectrum of $U^t$ for every $ t \in \real$. To that end, we use the triangle inequality and the fact that $ \lVert \tilde z_\tau \rVert_{L^2(\mu)} = 1$ to obtain the bound
\begin{equation}
\label{eqAPSBound2}
\lVert U^t \tilde z_\tau - e^{i\omega t } \tilde z_\tau \rVert_{L^2(\mu)} \leq \lVert U^t \tilde z_\tau - e^{i \omega_\tau t} \tilde z_\tau \rVert_{L^2(\mu)} + \lvert e^{i \omega_\tau t} - e^{i \omega t } \rvert, \quad \forall \tau \in \real_+.
\end{equation}
Now, because $ \lim_{\tau \to 0^+} \omega_\tau = \omega $, there exists $ \tau_0 > 0 $ such that for all $ \tau \in ( 0, \tau_0)$, $\lvert e^{i \omega_\tau t} - e^{i \omega t} \rvert < \epsilon/ 2$. Moreover, because $T(\epsilon,\tau)$ is unbounded, there exists $ \tau_1 \in ( 0, \tau_0 ] $ such that $t$ lies in the interval $ (-T(\epsilon/2,\tau), T(\epsilon/2,\tau) )$ from Claim~(i) for all $ \tau \in ( 0, \tau_1) $. As a result, the bound in~\eqref{eqAPSBound} becomes
\begin{displaymath}
\lVert U^t \tilde z_\tau - e^{i \omega t} \tilde z_\tau \rVert_{L^2(\mu)} \leq \epsilon / 2 + \epsilon /2 = \epsilon, \quad \forall \tau \in (0, \tau_1). 
\end{displaymath}
We therefore conclude that $ U^t - e^{i\omega t} $ has no bounded inverse, i.e., $ e^{i\omega t}$ lies in the spectrum of $U^t$, as claimed.

Claim~(ii) will be proven by contradiction. In particular, assume that there exists a sequence $\tau_j>0$ monotonically converging to $0$ as $j\to\infty$, and $\delta>0$ such that for every $j\in\num$, $\tilde{z}_{\tau_j}$ is at distance at least $\delta$ from the 1-dimensional eigenspace $\mathcal{Z}$ of $V$ corresponding to $i\omega$. Here, as a measure of distance of a vector $ z \in L^2(\mu) $ from $\mathcal{Z}$ we use $d(z,\mathcal{Z}):= \inf\{ \|z-z'\|_{L^2(\mu)} : z'\in \mathcal{Z}\}$. Since $\lVert \tilde z_{\tau_j} \rVert_{L^2(\mu)} = 1$, it follows from the boundedness of $\Dirich(\tilde z_{\tau_j})$ that $ \lVert \tilde z_{\tau_j} \rVert_{\Nyst}$ is bounded. Therefore, by compactness of the embedding of $D(\Nyst)$ into $L^2(\mu)$, $ \tilde z_{\tau_j}$ has a subsequence converging to some vector $ z \in L^2(\mu)$. By assumption on the $\tau_j$, $d(z,\mathcal{Z})$ is greater than $\delta$. We will complete the proof by showing  that $z$ lies, in fact, in $\mathcal{Z}$, leading to a contradiction. To that end, note that the condition that $\Dirich(\tilde z_\tau)$ is bounded, together with the fact that $R(\epsilon,\tau)$ diverges from Proposition~\ref{prop:AppEigen}, implies that $T(\epsilon,\tau)$ diverges. Thus, the conclusion of Claim~(i) holds, and $z$ satisfies $ \lVert U^t z - e^{i\omega t} z \rVert_{L^2(\mu)} < \epsilon$ for every $ \epsilon > 0$. We therefore conclude that $U^t z = e^{i \omega t} z$ for every $t\in\real$, i.e., that $z$ lies in $\mathcal{Z}$, in contradiction with the assumption that $ d(z,\mathcal{Z})> \delta$. This completes the proof of Claim~(ii). \qed

\paragraph{Proof of Corollary \ref{thm:predic}} Since $ \{ \phi_0, \phi_1, \ldots \} $ is an orthonormal basis of $L^2(\mu)$, for $L$ large enough, $f_L := \Pi_L f$ satisfies $\left\| f - f_L \right\|_{L^2(\mu)} < \epsilon/2$. Moreover, since $ \{ U^t \}_{t\in\real}$ is a unitary group, the inequality $\left\| U^t f - U^t f_L \right\|_{L^2(\mu)} < \epsilon/2$ is preserved for all $ t \in \real$. Moreover, $f_L$ lies in $H_\infty$ as it is a finite linear combination of the $ \phi_j$. Now define $\hat f_\epsilon = \Nyst f_L$, so that $\hat f_\epsilon\in \RKHS_\infty$, and $P^* \hat f_\epsilon = f_L$. An application of Theorem~\ref{thm:Main}(v) with $Z(i\omega) = e^{i\omega t} $ then shows that, as $ \tau \to 0^+ $, $ \lVert U^t f_L - P^*_\tau e^{tW_\tau } \hat f_\epsilon \rVert_{L^2(\mu)} $ converges to zero, where the convergence is uniform for $ t \in \mathcal{T} $ by continuity of the map $ t \mapsto U^t f_L - P_\tau^* e^{t W_\tau} \hat f_\epsilon$. Therefore, there exists $ \tau_0>0 $ such that for all $ \tau \in (0, \tau_0) $ and $ t \in \mathcal{T} $, $\lVert U^t f_L - P^*_\tau e^{i tW_\tau} \hat f_\epsilon \rVert_{L^2(\mu)} < \epsilon/2 $. Corollary \ref{thm:predic} is then proved by the bound
\[ \left\| U^t f - P_\tau^* e^{it\KVKst_\tau} \hat f_\epsilon \right\|_{L^2(\mu)} < \left\| U^t f - U^t f_L \right\|_{L^2(\mu)} + \left\| U^t f_L - P_\tau^* e^{it\KVKst_\tau} \hat f_\epsilon \right\|_{L^2(\mu)} <\epsilon/2 + \epsilon/2 = \epsilon.\qed \]

\section{Data-driven approximation }\label{sect:numerics}

We now take up the problem of approximating the operators in Theorems~\ref{thm:Markov} and~\ref{thm:Main} from a finite time series of observed data and without prior knowledge of the dynamical flow $ \Phi^t$. Specifically, we consider that available to us is a time series $ F(x_0), F(x_1), \ldots, F(x_{N-1}) $, consisting of the values of an observation function $ F : M \to Y $ that takes values in a data space $Y$, sampled at a fixed time interval $ \Delta t > 0$ along an orbit $ x_0, x_1, \ldots, x_{N-1} $ of the dynamics. As already alluded to in Section~\ref{sec:intro}, besides the lack of knowledge of the dynamical flow map $ \Phi^t $, this task presents a number of obstacles, including:
\begin{enumerate}[(i)]
    \item In general, one does not have direct access to the ergodic invariant measure $\mu$ and the associated $L^2(\mu)$ space, but is limited to working with the sampling measure $ \mu_N = \sum_{n=0}^{N-1}\delta_{x_n} /N$ supported on the finite trajectory $\{x_0 , \ldots, x_{N-1}\}$. In fact, even if $\mu$ were explicitly known, its support $X$ would typically be a non-smooth subset of the ambient manifold $M$, of zero Lebesgue measure (e.g., a fractal attractor), significantly hindering the construction of orthonormal bases of $L^2(\mu)$ by restriction of smooth basis functions defined on $M$.

    \item In many experimental scenarios, the sampled states will not lie exactly on the invariant set $X$, as it is not feasible to achieve complete convergence of the trajectory to that set.
    \item Measurements are not taken continuously in time, preventing direct evaluation of the action of the dynamical vector field $\vec V$ on functions.

\end{enumerate}

To address the first two issues, we take advantage of the fact that in many ergodic dynamical systems encountered in applications, the statistical properties of observables with respect to the sampling measures associated with a suitable class of initial points $x_0 $ coincide with those of the invariant measure \cite{SRB_young}, as discussed below.

\paragraph{Basin of a measure} The basin of an invariant measure $\mu$ is the set of initial points such that the sampling measures $ \mu_N$ on the trajectories starting from them converge weakly to $\mu$. More specifically, it is the set of points $x_0\in M$ such that for every continuous function $ f \in C^0(M) $,
\[ \lim_{N\to\infty} \int_M f \, d\mu_N = \int_M f \, d\mu, \quad \mu_N = \frac{1}{N} \sum_{n=0}^{N-1} \delta_{x_n}, \quad x_n = \Phi^{n \, \Delta t}(x_0). \]
This set will be denoted $B_\mu$. If $\mu$ is ergodic, as assumed throughout this work, then $\mu$-a.e.\ point in $M$ lies in its basin. The invariant measure $ \mu $ is said to be \emph{physical} if $B_\mu$ has nonzero measure with respect to some reference measure in the ambient manifold $M$. For instance, in typical experimental scenarios, initial points are drawn from some distribution equivalent to a smooth volume measure on $M$. In such cases, physicality of $ \mu$ ensures convergence of the data-driven techniques for a ``large'' set of initial conditions. While, in what follows, we will not require that $\mu$ be physical as an explicit assumption, it should be kept in mind that some type of physicality is oftentimes an implied assumption in practical applications.

\paragraph{Finite-difference approximation} Following \cite{Giannakis15,Giannakis17,DasGiannakis_delay_2019}, to address the discrete-time sampling of the data, we approximate the action of the dynamical vector field $ \vec V $ on $C^r $ functions using finite differences. As a concrete example, a scheme appropriate to the $C^1$ regularity in Theorem~\ref{thm:Main} and Assumption~\ref{assump:A3} is a central finite-difference scheme $ \vec V_{\Delta t} : C^0(M) \to C^0(M) $, given by
\begin{equation}\label{eqFD}
\vec V_{\Delta t} f(x) = \frac{f(\Phi^{\Delta t}(x)) - f(\Phi^{-\Delta t}(x))}{2\, \Delta t}. 
\end{equation}
By compactness of $M$, for any $ f \in C^1(M)$, the error $ \lVert \vec V_{\Delta t} f - \vec V f\rVert_{C^0(M)}$ of this scheme vanishes as $\Delta t \to 0$, and is $o(\Delta t)$ and $O((\Delta t)^2)$ if $f$ lies in $C^2(M)$ or $C^3(M)$, respectively.

The assumptions underlying our data-driven approximation schemes are as follows.

\begin{Assumption}\label{assump:A3}
    The dataset $ \{ y_0, \ldots, y_{N-1} \} $ consists of the values $ y_n = F(x_n) $ of an injective, $C^1$ observation map $F:M\mapsto Y $ into a manifold $Y$, sampled along a trajectory $ x_0, \ldots, x_{N-1} $, $ x_n = \Phi^{n\,\Delta t}(x_0) $, starting from a point $x_0 \in B_\mu $ which is not a fixed point of the dynamics. Moreover: 
\begin{enumerate}[(i)]
\item The sampling interval $ \Delta t $ is such that $ \mu $ is an ergodic invariant measure of the map $ \Phi^{\Delta t} : M \to M $. 
\item $ \kappa : Y \times Y \mapsto \real $ is a $C^1$ symmetric, strictly positive-definite kernel with $ \kappa > 0 $.
\end{enumerate}
\end{Assumption}
\blue{Note that Assumption~\ref{assump:A3}(i) is satisfied iff $\omega \, \Delta t$ is not a multiple of $2\pi$ for any Koopman eigenfrequency $\omega$. For dynamics on a separable space, there can only be countably many such $\omega$, and thus Assumption~\ref{assump:A3}(i) is satisfied for every $\Delta t$ in a full-measure, co-countable subset of the real line.} The manifold $Y$ will be referred to as the data space. While it usually has the structure of a linear space (e.g., $ Y = \real^m$), in a number of scenarios $Y$ can be nonlinear (e.g., directional measurements with $Y = S^2$). 

The techniques described below will be based on the kernel $k : M \times M \to \real$, 
\begin{equation}\label{eqKPullback}
k(x,x') := \kappa\left( F(x), F(x') \right), 
\end{equation}
induced from the kernel $ \kappa $ on data space. Note that $ k(x,x') $ can be evaluated given the data points $F(x) $ and $F(x') $, without explicit knowledge of the underlying dynamical states $ x $ and $ x'$. Moreover, the assumptions on $\kappa$ and $F$ in Assumption~\ref{assump:A3} ensure that $k$ is also a $C^1$ symmetric, strictly positive-definite kernel. We will discuss how to construct $\kappa$ when the injectivity condition on $F$ is not satisfied below.

\paragraph*{Data-driven Hilbert spaces} Since the starting point $x_0 $ is not a fixed point, and $\mu$ is an ergodic invariant measure of $ \Phi^{\Delta t}$, all sampled states $x_0, \ldots, x_{N-1} $ are distinct. Therefore $L^2(\mu_N)$, is an $N$-dimensional Hilbert space, equipped with the inner product $\langle f,g\rangle_{\mu_N} := \sum_{n=0}^{N-1} f^*(x_n) g(x_n)/ N$. This space consists of equivalence classes of complex-valued functions on $M$ having common values at $x_0,\ldots, x_{N-1}$ (i.e., the support of $\mu_N$). It is clear that $L^2(\mu_N)$ is isomorphic to the space $\cmplx^N$ equipped with a normalized Euclidean inner product. Note that one issue with establishing convergence of data-driven approximation techniques in this setting is that there is no obvious way of comparing functions in $L^2(\mu_N)$ and $L^2(\mu)$. Here, we avoid this issue by performing our approximations in suitable RKHSs, whose elements can be projected into both $L^2(\mu_N)$ and $L^2(\mu)$. The main elements of our approach, which closely parallel the theoretical results in Section~\ref{sect:Assump}, are (i) construction of a family of $L^2(\mu_N) $-Markov kernels with its associated semigroup and RKHSs, $ \RKHS_{\tau,N}$; (ii) construction skew-adjoint operators $ W_{\tau,N} $ on $\RKHS_{\tau,N}$ approximating the compactified generator $W_\tau$, and evaluation of the spectral decomposition and functional calculus of these operators; and (iii) prediction of observables by exponentiation of the data-driven generators. We will now describe these procedures, and then, in Theorem~\ref{thm:data_predic}, establish their convergence in the limit of large data. Pseudocode implementing our approach is included in Algorithms~\ref{alg:data_basis}--\ref{alg:pred} in \ref{sec:algo}. 

\paragraph{Markov kernels} Using the bistochastic normalization procedure described in Section~\ref{secReview_RKHS} with $\nu$ set to the sampling measure $\mu_N$ and $k$ to the pullback kernel from~\eqref{eqKPullback}, we construct a $C^1$, $L^2(\mu_N)$-strictly-positive, Markov ergodic kernel $p_N:M \times M \to \real$. We then apply the construction in \eqref{eqn:def:schemeIII} with $\nu=\mu_N$ to obtain a family of kernels $p_{\tau,N} : M \times M \to \real$, $ \tau > 0 $, which are also $L^2(\mu_N)$-strictly-positive and Markov ergodic. Associated with $ p_N $ and $p_{\tau,N}$ are RKHSs $\RKHS_N$ and $\RKHS_{\tau,N}$, respectively, as well as the corresponding integral operators $P_{N} : L^2(\mu_N) \to \RKHS_{N}$, $P_{\tau,N} :L^2(\mu_N) \to \RKHS_{\tau,N}$, $G_N = P_N^* P_N$, and $G_{\tau,N} = P_{\tau,N}^* P_{\tau,N}$. In accordance with Theorem~\ref{thm:Markov}, the latter form $L^2(\mu_N)$-ergodic Markov semigroups for each $N$, with associated eigenvalues $ 1 = \lambda_{\tau,N,0} > \lambda_{\tau,N,1} \geq \cdots \geq \lambda_{\tau,N,N-1} > 0 $, $L^2(\mu_N)$-orthonormal eigenfunctions $\{ \phi_{\tau,N,0},\ldots,\phi_{\tau,N,N-1}\}$, and $\RKHS_{\tau,N}$-orthonormal functions $ \{ \psi_{\tau,N,0}, \ldots, \psi_{\tau,N,N-1} \}$ (the latter, defined analogously to the $\psi_{\tau,j}$ in~\eqref{eqn:def:schemeIII}). The RKHSs $\RKHS_{\tau,N}$ also have associated Nystr\"om extension operators, $\Nyst_{\tau,N} : L^2(\mu_N) \to \mathcal{H_N} $. Note that because the $L^2(\mu_N)$ are finite-dimensional spaces, and the eigenvalues $\lambda_{\tau,N,j}$ are strictly positive, the Nystr\"om operators $\Nyst_{\tau,N}$ are everywhere-defined. 

\paragraph{Data-driven generator} Next, we construct finite-rank approximations of the compactified generator $ W_\tau$. For that, note that every finite-difference scheme $ \vec V_{\Delta t}$ for the dynamical vector field induces a corresponding operator $ \tilde V_{N,\Delta t}$ on $L^2(\mu_N)$. For instance, the central finite-difference scheme in~\eqref{eqFD} leads to 
\[
     \tilde V_{N,\Delta t} f(x_n) = \frac{f(x_{n+1}) - f(x_{n-1})}{2 \Delta t}, \quad n \in \{ 1, \ldots N-2 \}, \quad \tilde V_{N,\Delta t} f(x_0) = \tilde V_{N,\Delta t}(x_{N-1})= 0.
\]
While this operator is generally not skew-adjoint, it can be employed to construct a skew-adjoint operator $ V_{N,\Delta t} :L^2(\mu_N) \to L^2(\mu_N) $ by antisymmetrization, namely, 
\begin{equation}\label{eqFD2}
V_{N,\Delta t} = \frac{\tilde V_{N,\Delta t} - \tilde V_{N,\Delta t}^*}{2}.
\end{equation}
The latter is a data-driven approximation of $ V$, which adheres to our general scheme of approximating $V$ using skew-adjoint operators. Note that $ V_{N,\Delta t} $ is fully characterized through its matrix elements $ \langle \phi_{\tau,N,i}, V_{N,\Delta t} \phi_{\tau,N,j} \rangle_{\mu_N}$ in the $ \phi_{\tau,N,j} $ basis of $L^2(\mu_N)$, which are in turn computable by applying \eqref{eqFD2} to the eigenfunction time series $ \phi_{\tau,N,j}(x_n) $. 

Next, using $ V_{N,\Delta t} $, we construct the skew-adjoint operators $\KVKst_{\tau,N,\Delta t}$ on $\RKHS_{\tau,N}$, defined as
\begin{displaymath}
\KVKst_{\tau,N,\Delta t} = P^*_{\tau,N} V_{N,\Delta t} P_{\tau,N}.
\end{displaymath}
It follows by definition of the $ \psi_{\tau,N,j}$ basis functions of $\RKHS_{\tau,N}$ that the matrix elements of $W_{\tau,N,\Delta t} $ are related to those of $V_{N,\Delta t}$ by
\begin{equation}
\label{eqWMat}
\langle \psi_{\tau,N,i}, W_{\tau,N,\Delta t} \psi_{\tau,N,j} \rangle_{\RKHS_{\tau,N}} = \lambda_{\tau,N,i}^{1/2} \langle \phi_{\tau,N,i}, V_{N,\Delta t} \phi_{\tau,N,j} \rangle_{\mu_N} \lambda_{\tau,N,j}^{1/2}.
\end{equation}
Note that as $i$ and $j$ grow, the matrix elements of $ W_{\tau,N,\Delta t}$ diminish in magnitude compared to those of $V_{N,\Delta t}$ due to the decay of the eigenvalues. This is a manifestation of the RKHS regularization resulting from conjugation of $V_{N,\Delta t} $ by $P_{\tau,N}$. 

\begin{rk*} \blue{As is well known, finite-difference schemes are prone to errors if the sampling interval $\Delta t$ is not sufficiently small, or high-frequency noise is present in the data. The extent to which the data-driven generator matrix in~\eqref{eqWMat} is susceptible to these issues ultimately depends on the kernel $k_\tau $, as it governs the data-driven basis functions $ \psi_{\tau,N,j}$ appearing in the approximation. For instance, in \cite[Theorem~22]{Giannakis17} it was shown that for quasiperiodic systems, incorporating delay-coordinate maps in the construction of the kernel can remove temporally i.i.d.\ noise of arbitrarily large variance, allowing~\eqref{eqWMat} to be evaluated with ``clean'' eigenfunctions. While in systems with continuous spectrum that technique may have limitations (as adding delays would eventually suppress the kernel eigenfunctions spanning the continuous spectrum subspace $H_c$ \cite{DasGiannakis_delay_2019}), delay-coordinate maps should still be a useful tool for enhancing the noise robustness of the approximation in~\eqref{eqWMat}. Methods for controlling errors with respect to $ \Delta t $ would include composing the kernel with averaging operators to suppress high frequencies, or performing differentiation in the Fourier domain using spectral tapering methods. While exploring the efficacy of such approaches lies beyond the scope of this work, for the purposes of the methods presented in this paper, any convergent approximation of $V$ can be employed in place of $\vec V_{\Delta t}$ from~\eqref{eqFD}.}
\end{rk*}

\paragraph{Spectral truncation} In what follows, we will perform coherent pattern extraction and prediction using various spectrally truncated observables and operators. For that, we will need the orthogonal projections $ \Pi_{N,L} : L^2(\mu_N) \to L^2(\mu_N) $ and $ \Pi_{\tau,N,L} : \RKHS_{\tau,N} \mapsto \RKHS_{\tau,N}$ mapping into $ \spn\{ \phi_{N,0}, \ldots, \phi_{N,L-1} \} $ and $ \spn \{ \psi_{\tau,N,0}, \ldots, \psi_{\tau,N,L-1} \} $, respectively. With some abuse of notation, in what follows $\iota : B(M) \to L^2(\mu)$ and $ \iota_N : B(M) \to L^2(\mu_N) $ will be the canonical inclusion and restriction maps, respectively, on the space $B(M) $ of bounded, complex-valued, Borel functions on $M$, equipped with the supremum norm. With these definitions, given an observable $ f \in B(M) $ that is to be predicted, we will treat it by first mapping it into the spectrally truncated observable 
\begin{equation}
\label{eqFNL}
f_{N,L} = \Pi_{N,L} \iota_N f \in L^2(\mu_N), \quad 1 \leq L \leq N.
\end{equation}
Moreover, the operators used for coherent pattern extraction and prediction will be spectrally truncated analogs of $ W_{\tau,N,\Delta t} $, namely 
\begin{equation}
\label{eqWNL}\KVKst_{\tau,N,\Delta t}^{(L)} := \Pi_{\tau,N,L} \blue{ \KVKst_{\tau, N ,\Delta t} } \Pi_{\tau,N,L}, \quad 1 \leq L \leq N. 
\end{equation}
The reason for these spectral truncations will become clear below. Note that in applications the parameters $L$ in~\eqref{eqFNL} and~\eqref{eqWNL} need not be equal. Moreover, since $\Pi_{\tau,N,N} = \Id$,  $W_{\tau,N,\Delta t}^{(N)} $ is equal to $W_{\tau,N,\Delta t}$.

\paragraph{Coherent pattern extraction} For any given $L $, $\KVKst_{\tau,N,\Delta t}^{(L)}$ is a skew-symmetric operator of rank at most $L$. In particular, it is diagonal in an orthonormal basis of eigenfunctions $ \zeta_{\tau,N,\Delta t,j}^{(L)} \in \RKHS_{\tau,N}$, $ j \in \{ 0, \ldots, L-1 \}$, corresponding to purely imaginary eigenvalues $ i \omega_{\tau,N,\Delta t,j}^{(L)}$, i.e., 
\begin{equation}
\label{eqZetaNDeltat}
\KVKst_{\tau,N,\Delta t}^{(L)} \EigenW^{(L)}_{\tau,N,\Delta t,j} = i \omega^{(L)}_{\tau,N,\Delta t,j} \EigenW^{(L)}_{\tau,N,\Delta t,j}.
\end{equation}
The eigenfunctions $ \zeta_{\tau,N,\Delta t, j}^{(L)}$ will act as data-driven coherent observables, approximating the eigenfunctions $ \zeta_{\tau,j}$ of $W_\tau$. It should be noted that $ \zeta_{\tau,N,\Delta t,j}^{(L)}$ is a continuous function, constructed from the training data $ F( x_0 ), \ldots, F( x_{N-1} ) $, which can be evaluated at any $ x \in M$ from the corresponding value $ F( x ) $ of the observation map in $Y$. This procedure is known as out-of-sample evaluation.

The reason for working with $ W_{\tau,N,\Delta t}^{(L)}$, as opposed to the bare data-driven generator $W_{\tau,N,\Delta t}$, is twofold. First, in what follows, we will be interested in establishing a form of spectral convergence for the data-driven generators in the limit of large data---keeping $L$ fixed while increasing $N$ will allow us to ensure uniform convergence of the $ \psi_{\tau,N,\Delta t,j} $ with $ j \leq L - 1 $ to the corresponding $ \psi_{\tau,j} $. Moreover, working at a fixed $L \ll N$ allows to control the computational cost of data-driven approximations of $W_\tau$. In fact, following the computation of the $L \times L$ matrix representing $W_{\tau,N,\Delta t}^{(L)}$, the cost of acting with this operator on observables becomes \blue{independent of the much larger data size $N$.}

\paragraph{Functional calculus and forecasting} By skew-adjointness, the functional calculi of  $W_{\tau,N,\Delta t}^{(L)}$ can be conveniently constructed by applying any given function $Z: i\real \to \cmplx$ to their eigenvalues, and projecting to the corresponding eigenspaces. That is, 
\[Z( \KVKst_{\tau,N,\Delta t}^{(L)} ) = \sum_{j=0}^{L-1} Z( i \omega^{(L)}_{\tau,N,\Delta t,j} ) \langle \EigenW^{(L)}_{\tau,N,\Delta t,j}, \cdot \rangle_{\RKHS_{\tau,N}} \EigenW^{(L)}_{\tau,N,\Delta t,j}. \]
Given such a bounded continuous function $Z$ and a continuous observable $ f \in C^0(M) $, our approximation for $ Z(V) \iota f $ is the $\RKHS_{\tau,N}$ function 
\begin{equation}\label{eqn:def:pred_data}
g_{\tau,N,\Delta t, L, L'} = Z(\KVKst_{\tau,N,\Delta t}^{(L)}) \Nyst_{\tau,N} f_{N,L'},
\end{equation}
where \blue{$f_{N,L'}$} is given by~\eqref{eqFNL}, and $ L, L' $ are chosen such that $ 1 \leq L' \leq L \leq N $. Here, the role of the constraint $L' \leq L $ is to control the error in the dynamical evolution of $ f_{N,L'} $ by the operator $e^{t W_{\tau,N}^{(L)}}$ as $L$ increases, keeping $L'$ fixed. As with the eigenfunctions in~\eqref{eqZetaNDeltat}, $ g_{\tau,N,\Delta t, L, L'} $ can be evaluated at an arbitrary state $ x \in M $, given knowledge of $ F( x ) \in Y $. The relationships between the various maps employed in the construction of this approximation are depicted diagrammatically in Figure~\ref{figDiagram}.

\begin{figure}
\centering\begin{tikzcd}
B(M) \arrow{r}{\iota_N} \arrow{d}[swap]{\iota} \arrow[bend right = 15]{rrd}{\Nyst_{\tau,N,L'}} &L^2(\mu_N) \arrow{r}{\Pi_{N,L'}} &\spn\{\phi_{N,0},\ldots,\phi_{N,L'-1}\} \arrow{d}{\Nyst_{\tau,N}} \\
L^2(\mu) \arrow{d}[swap]{Z\left(V\right)} &\ &\RKHS_{\tau,N} \arrow{d}{Z\left( \KVKst^{(L)}_{N,\tau} \right)} \\
L^2(\mu) &L^2(\mu) \arrow[dashed]{l}[swap]{\mbox{error}} &\RKHS_{\tau,N} \arrow{l}[swap]{\iota} 
\end{tikzcd}
\caption{\label{figDiagram}Diagram illustrating the construction of the data-driven forecast function $g_{\tau,N,\Delta t,L,L'}$ from~\eqref{eqn:def:pred_data} and its relationship to $Z(V)\iota f$. Starting with a bounded observable $ f \in B(M) $, the left loop in the diagram leads to $Z(V) \iota f$, and the right loop to $ \iota g_{\tau,N,\Delta t, L, L'}$. Note that $Z(\KVKst_{\tau,N,\Delta t}^{(L)})$ maps into the $L$-dimensional subspace of $\RKHS_{\tau,N}$ spanned by $\{\psi_{\tau,N,0},\ldots,\psi_{\tau,N,L-1} \}$. The dashed arrow indicates discrepancy (error) between the data-driven function $ g_{\tau,N,\Delta t,L,L'}$ and the true observable, $Z(V) \iota f$. The composition of maps $\Nyst_{\tau,N} \circ \Pi_{N,L'} \circ \iota_N$ has been demarcated separately as an operator $\Nyst_{\tau,N,L'}$. This operator represents an entire data-driven procedure which takes as input a $B(M)$ function $f$, projects it onto its first $L'$ components of a basis for $L^2(\mu_N)$, and then outputs the Nystr\"om extension in $\RKHS_{\tau,N}$. This output can then be the input of any operator $Z( \KVKst^{(L)}_{N,\tau})$, as above.} 
\end{figure}
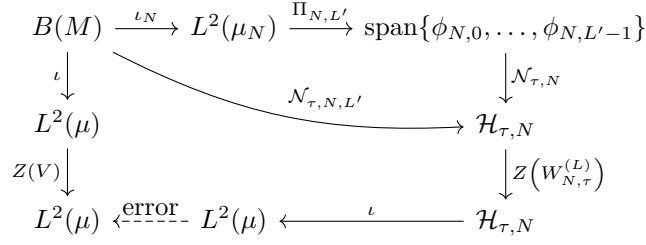

With these constructions, we have the following convergence result.

\begin{theorem}[Data-driven approximation]\label{thm:data_predic}
Let Assumptions \ref{assump:A1}, \ref{assump:A2}, and \ref{assump:A3} hold. Then:
\begin{enumerate}[(i)]
\item Every eigenfrequency $ \omega_{\tau,j} $ of $ W_\tau $, $ \tau > 0 $, can be consistently approximated by the eigenfrequencies $ \omega_{\tau,N,\Delta t,j}^{(L)} $ of $ W_\tau^{(L)}$, in the sense that 
$
\lim_{L \to \infty} \lim_{\Delta t\to 0, N\Delta t\to\infty} \omega^{(L)}_{\tau,N,\Delta t,j} = \omega_{\tau,j} .
$
\item For every eigenfunction $ \zeta_{\tau,j} $ of $ W_\tau $ corresponding to $ \omega_{\tau,j} $, there exist eigenfunctions $\zeta_{\tau,N,\Delta t}^{(L)} $ of $ W_{\tau,N,\Delta t}^{(L)} $ corresponding to $ \omega_{\tau,N,\Delta t, j}^{(L)} $ such that
$
\lim_{L\to\infty}\lim_{\Delta t\to 0, N\Delta t\to\infty} \; \| \EigenW^{(L)}_{\tau,N,\Delta t,j} - \EigenW_{\tau, j} \|_{C^0(M)} =0.
$
\item For every bounded, continuous function $Z:i\real\to\cmplx$ and every bounded observable $f\in B(M)$, 
\begin{equation}\label{eqn:data_driven}
\lim_{L'\to\infty} \lim_{\tau\to 0^+} \lim_{L\to\infty} \; \lim_{\Delta t\to 0^+, N\Delta t\to\infty} \; \| Z(V) \iota f - \iota g_{\tau,N,\Delta t, L, L'} \|_{L^2(\mu)} =0, 
\end{equation}
where $ g_{\tau,N,\Delta t, L, L'} \in \RKHS_{\tau,N} $ is the data-driven approximation from~\eqref{eqn:def:pred_data}. 
\end{enumerate}
\end{theorem}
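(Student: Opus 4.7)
The plan is to establish Theorem~\ref{thm:data_predic} by unpacking the four nested limits in~\eqref{eqn:data_driven} one at a time, from inside out: first $\Delta t \to 0$ with $N\,\Delta t \to \infty$, which reduces the data-driven objects to their population analogues at finite rank $L$; then $L \to \infty$, which lifts the spectrally truncated operator $\KVKst^{(L)}_\tau$ to the full compactification $\KVKst_\tau$; then $\tau \to 0^+$, which is precisely the content of Corollary~\ref{thm:predic}; and finally $L' \to \infty$, which handles the truncation of the input observable. Claims~(i) and~(ii) will fall out of the inner two limits; Claim~(iii) will require the full tower.

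First I would show that the data-driven Markov kernels $p_{\tau,N}$ converge to $p_\tau$ in $C^1(M\times M)$ norm as $N \to \infty$. Since $x_0 \in B_\mu$ by Assumption~\ref{assump:A3} and $\mu$ is ergodic for $\Phi^{\Delta t}$, Birkhoff's theorem gives $\mu_N \to \mu$ weakly, and because the underlying kernel $k$ from~\eqref{eqKPullback} and the functions $d, q$ entering the bistochastic normalization~\eqref{eqn:def:bistochastic} are uniformly continuous on the compact manifold $M$, one obtains uniform convergence of the normalized kernel $p_N \to p$ on $M \times M$. Standard spectral convergence theory for kernel integral operators with continuous kernels (as used in \cite{VonLuxburgEtAl08}) then yields convergence of each eigenvalue $\lambda_{N,j} \to \lambda_j$ and, through the Nystr\"om extension, $C^0$ convergence $\phi_{N,j} \to \phi_j$ at every fixed $j$. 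Substituting into~\eqref{eqn:def:schemeIII} propagates this to $C^r$ convergence of $p_{\tau,N}$, of $\lambda_{\tau,N,j}$, and of the basis functions $\psi_{\tau,N,j}$. The smoothness is controlled uniformly in $N$ because the tail of the spectral sum is dominated by the rapidly decaying factors $\lambda_{\tau,N,j}/\lambda_{N,j}$ from~\eqref{eqn:def:schemeIII}, exactly as in the proof of Theorem~\ref{thm:Markov}.

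Next I would analyze the matrix elements~\eqref{eqWMat} of the data-driven compactified generator. For each fixed pair $(i,j)$ with $i,j \leq L-1$, the $C^1$ convergence $\phi_{\tau,N,j} \to \phi_{\tau,j}$ combined with Lemma~\ref{lemmaVec} and the uniform error bound $\lVert \vec V_{\Delta t}f - \vec V f\rVert_{C^0(M)} = O(\Delta t)\lVert f\rVert_{C^2(M)}$ yields $V_{N,\Delta t} \phi_{\tau,N,j}(x_n) \to \vec V\phi_{\tau,j}(x_n)$ uniformly in $n$ as $\Delta t \to 0$ (assuming $r=2$; for $r=1$ a slight upgrade of~\eqref{eqFD} is needed, or a longer-range scheme). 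Applying Birkhoff's theorem once more to convert $\langle\cdot,\cdot\rangle_{\mu_N}$ into $\langle\cdot,\cdot\rangle_\mu$, the antisymmetrization~\eqref{eqFD2} then gives convergence of each matrix element of $\KVKst^{(L)}_{\tau,N,\Delta t}$ to the corresponding matrix element of $\KVKst^{(L)}_\tau$ in the basis $\{\psi_{\tau,j}\}_{j<L}$. Since $\KVKst^{(L)}_{\tau,N,\Delta t}$ is skew-adjoint and finite-rank, classical perturbation theory for normal matrices delivers convergence of the eigenvalues and eigenfunctions for each fixed $L$, establishing Claims~(i) and~(ii) upon letting $L \to \infty$ (using that $\KVKst^{(L)}_\tau \to \KVKst_\tau$ in Hilbert--Schmidt norm, since $\KVKst_\tau$ is trace class by Proposition~\ref{prop:Semigroup}(iii)).

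Finally, to obtain Claim~(iii), fix $L'$ and a bounded $f\in B(M)$. For fixed $L \geq L'$, the strong continuity of the functional calculus for normal finite-rank operators under matrix-element convergence gives $Z(\KVKst^{(L)}_{\tau,N,\Delta t}) \Nyst_{\tau,N} f_{N,L'} \to Z(\KVKst^{(L)}_\tau) \Nyst_\tau f_{L'}$ in $\RKHS_\tau$ as the inner limit is taken, whence $\iota g_{\tau,N,\Delta t,L,L'} \to P_\tau^* Z(\KVKst^{(L)}_\tau) \Nyst_\tau f_{L'}$ in $L^2(\mu)$. Letting $L \to \infty$ and using $\KVKst^{(L)}_\tau \to \KVKst_\tau$ yields $P_\tau^* Z(\KVKst_\tau) \Nyst_\tau f_{L'}$, and setting $\hat f_\epsilon = \Nyst_\tau f_{L'}$ in Corollary~\ref{thm:predic} shows that the $\tau \to 0^+$ limit produces $Z(V) \iota f_{L'}$. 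The outermost limit $L'\to\infty$ is then trivial since $f_{L'} \to \iota f$ in $L^2(\mu)$ and $Z(V)$ is bounded. The main technical obstacle will be coordinating the joint limit $\Delta t \to 0$, $N\,\Delta t \to \infty$: the finite-difference error carries an implicit dependence on $\lVert \phi_{\tau,N,j}\rVert_{C^2}$, which must be shown to remain uniformly bounded in $N$ (via the $C^2$ convergence $\phi_{\tau,N,j}\to\phi_{\tau,j}$ derived from the $C^r$ convergence of $p_{\tau,N}$ for $r\geq 2$), while the Birkhoff sampling error decays only in $N$, so the two scales must be coupled carefully to prevent blow-up.
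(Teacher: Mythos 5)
Your proposal follows essentially the same route as the paper: convergence of the data-driven eigenvalues and $C^0(M)$ convergence of the RKHS basis functions $\psi_{\tau,N,j}$, convergence of the matrix elements of $\KVKst_{\tau,N,\Delta t}$ via the finite-difference scheme and physicality of $\mu$, Hilbert--Schmidt convergence $\KVKst_\tau^{(L)}\to \KVKst_\tau$, finite-rank perturbation theory for Claims~(i)--(ii), and a sequential peeling of the four nested limits for Claim~(iii), reducing the $\tau\to0^+$ step to the strong convergence results of Section~\ref{sect:Assump}. Two minor corrections: for general bounded continuous $Z$ the $\tau\to0^+$ step should invoke Theorem~\ref{thm:Main}(v) rather than Corollary~\ref{thm:predic} (which covers only $Z(i\omega)=e^{i\omega t}$), and neither $C^1$ convergence of $p_{\tau,N}$ nor $r=2$ regularity is required---$C^0$ convergence of the $\psi_{\tau,N,j}$ together with the $O(\Delta t)$ finite-difference error bound for $C^1$ functions suffices, with the comparison of $g_{\tau,N,\Delta t,L,L'}\in\RKHS_{\tau,N}$ against $Z(\KVKst_\tau^{(L)})\Nyst_\tau f_{L'}\in\RKHS_\tau$ carried out in $C^0(M)$ norm (and then in $L^2(\mu)$ after inclusion), since the two functions live in different RKHSs.
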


\begin{rk*} \blue{Theorem~\ref{thm:data_predic} establishes convergence of the data-driven approximation $ g_{\tau,N,\Delta t, L, L'}$, constructed for $L' \leq L$. Since $L'$ is the last asymptotic control parameter taken to $\infty$ in \eqref{eqn:data_driven},  an alternative formulation would be to keep $L'$ constant, and state
\[ \lim_{\tau\to 0^+} \lim_{L\to\infty} \; \lim_{\Delta t\to 0^+, N\Delta t\to\infty} \; \| Z(V) \iota f - \iota g_{\tau,N,\Delta t, L, L'} \|_{L^2(\mu)} =0, \quad \forall f\in \spn\left\{ \phi_0, \ldots, \phi_{L'-1} \right\} .  \]
In this formulation, $L'$ controls the dimension of the space of response observables on which we perform prediction, whereas $L$ controls the dimension of the hypothesis space \cite{CuckerSmale01} in which the forecast function lies. Theoretically, we fix $L'$ to attain convergence since $ \spn\{ \phi_0, \ldots, \phi_{L'-1} \} $ need not be invariant under $Z(V)$. In a numerical application with a fixed training dataset, the parameters $L$ and $L'$ can be tuned independently in a cross-validation step aiming to balance bias errors (increasing with decreasing $L,L')$ and generalization errors (increasing with increasing $L,L'$).}
\end{rk*}

An application of Theorem~\ref{thm:data_predic}(ii) for $ Z( i\omega) = e^{i\omega t} $ leads to the following corollary, establishing the convergence of the data-driven forecast functions for $ U^t f $. 

\begin{cor}
\label{corPredic}
For every $f \in B(M)$, the function $ f_{\tau,N,\Delta t, L, L'}^{(t)} \in \mathcal{H_{\tau,N}} $ defined as 
\begin{displaymath}
    f_{\tau,N,\Delta t, L, L'}^{(t)} = e^{t W_{\tau,N,\Delta t}^{(L)}} \Nyst_{\tau,N} \Pi_{N,L'} \iota_N f
\end{displaymath}
is an approximation of $ U^t f $, satisfying 
\[ \lim_{L'\to\infty} \lim_{\tau\to 0} \lim_{L\to\infty} \; \lim_{\Delta t\to 0^+, N\Delta t\to\infty} \; \| U^t f - f_{\tau,N,\Delta t, L, L'}^{(t)} \|_{L^2(\mu)} =0. \]
Moreover, the map $t \mapsto f_{\tau,N,\Delta t, L, L'}^{(t)}$ is continuous, and the convergence is uniform for $ t $ lying in compact intervals.
\end{cor}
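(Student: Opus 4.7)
The plan is to recognize $\hat f_{\tau,N,\Delta t, L, L'}(t)$ as a special case of the general data-driven predictor $g_{\tau,N,\Delta t, L, L'}$ of Theorem~\ref{thm:data_predic}(iii), corresponding to the choice $Z = Z_t$, $Z_t(i\omega) = e^{i\omega t}$. Each $Z_t$ is continuous and bounded by $1$ on $i\real$, and by the Borel functional calculus for the skew-adjoint, finite-rank operator $\KVKst_{\tau,N,\Delta t}^{(L)}$ one has $Z_t(\KVKst_{\tau,N,\Delta t}^{(L)}) = e^{t\KVKst_{\tau,N,\Delta t}^{(L)}}$, so the expression~\eqref{eqn:def:pred_data} for $g_{\tau,N,\Delta t,L,L'}$ evaluated with $Z_t$ coincides with $\hat f_{\tau,N,\Delta t,L,L'}(t)$. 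Since $U^t = Z_t(V)$, Theorem~\ref{thm:data_predic}(iii) immediately yields, for every fixed $t \in \real$, the pointwise convergence $\|U^t f - \iota \hat f_{\tau,N,\Delta t,L,L'}(t)\|_{L^2(\mu)} \to 0$ under the stated iterated limit.

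Continuity of $t \mapsto \hat f_{\tau,N,\Delta t,L,L'}(t)$ as an $\RKHS_{\tau,N}$-valued curve follows from the fact that $\KVKst_{\tau,N,\Delta t}^{(L)}$ is bounded (indeed, finite-rank on the span of $\{\psi_{\tau,N,0},\ldots,\psi_{\tau,N,L-1}\}$). The Taylor series $e^{t\KVKst_{\tau,N,\Delta t}^{(L)}} = \sum_{n \geq 0}(t\KVKst_{\tau,N,\Delta t}^{(L)})^n/n!$ converges in operator norm uniformly on compact $t$-intervals, so $t \mapsto e^{t\KVKst_{\tau,N,\Delta t}^{(L)}}$ is norm-continuous (in fact real-analytic) as an $\mathcal{L}(\RKHS_{\tau,N})$-valued map, and its application to the fixed vector $\Nyst_{\tau,N}\Pi_{N,L'}\iota_N f$ yields a continuous $\RKHS_{\tau,N}$-valued predictor.

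The principal obstacle is upgrading the pointwise convergence to uniformity on a compact interval $\mathcal{T} \subset \real$. The plan is a $3\epsilon$ argument: (i) by strong continuity of the Koopman group on $L^2(\mu)$, the map $t \mapsto U^t f$ is uniformly continuous on $\mathcal{T}$, so one can choose a finite set $\{t_1,\ldots,t_K\} \subset \mathcal{T}$ with $\|U^t f - U^{t_k} f\|_{L^2(\mu)} < \epsilon/3$ for each $t \in \mathcal{T}$ and some $t_k$; (ii) apply Theorem~\ref{thm:data_predic}(iii) at each of the finitely many $t_k$, then take a common choice of the parameters $(L',\tau,L,\Delta t,N)$ (iterating the four nested limits and exploiting that $K$ is finite) so that $\|U^{t_k} f - \iota \hat f_{\tau,N,\Delta t,L,L'}(t_k)\|_{L^2(\mu)} < \epsilon/3$ holds simultaneously for all $k$; (iii) bound $\|\iota \hat f_{\tau,N,\Delta t,L,L'}(t) - \iota \hat f_{\tau,N,\Delta t,L,L'}(t_k)\|_{L^2(\mu)}$ for $|t-t_k|$ small by a Lipschitz estimate of the form $|t-t_k|\,\|\KVKst_{\tau,N,\Delta t}^{(L)}\|\,\|\Nyst_{\tau,N}\Pi_{N,L'}\iota_N f\|_{\RKHS_{\tau,N}}\,\|\iota\|$.

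The hard part is step (iii): one must verify that $\|\KVKst_{\tau,N,\Delta t}^{(L)}\|$ and $\|\iota : \RKHS_{\tau,N}\to L^2(\mu)\|$ remain uniformly bounded in the iterated limit. By skew-adjointness and finite rank, $\|\KVKst_{\tau,N,\Delta t}^{(L)}\| = \max_{j < L} |\omega_{\tau,N,\Delta t,j}^{(L)}|$; by Theorem~\ref{thm:data_predic}(i) these eigenfrequencies converge to eigenfrequencies of the compact operator $\KVKst_\tau$, so the maximum stays bounded under the iterated limit. The norm of $\iota$ is controlled by $\sup_{x\in X} \sqrt{p_{\tau,N}(x,x)}$, which remains bounded because the kernels $p_{\tau,N}$ converge in $C^0(M\times M)$ to $p_\tau$ via the bistochastic normalization machinery of Section~\ref{sect:theory}. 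With the Lipschitz constant uniformly controlled, refining the net $\{t_k\}$ to have spacing below $\epsilon/(3C)$ absorbs the remaining $\epsilon/3$ and establishes uniform convergence on $\mathcal{T}$.
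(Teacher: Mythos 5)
Your identification of $\hat f_{\tau,N,\Delta t,L,L'}(t)$ as the predictor of Theorem~\ref{thm:data_predic}(iii) with $Z_t(i\omega)=e^{i\omega t}$, and the continuity argument via the norm-convergent exponential series, are exactly the paper's route: the paper states the corollary as an immediate application of that theorem with this choice of $Z$. The genuine issue is your step (iii), the uniformity on compact intervals. Your $3\epsilon$ argument fixes the net spacing $\delta$ once (from uniform continuity of $t\mapsto U^tf$) and then needs the Lipschitz constant $\lVert \KVKst_{\tau,N,\Delta t}^{(L)}\rVert\,\lVert \Nyst_{\tau,N}\Pi_{N,L'}\iota_N f\rVert_{\RKHS_{\tau,N}}\,\lVert\iota\rVert$ to stay bounded along \emph{all four} iterated limits. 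It does not: the limits in $N,\Delta t$ and $L$ are taken at fixed $\tau$, but at the subsequent stage $\tau\to 0^+$ both of the quantities you invoke blow up. By Theorem~\ref{thm:Main}(vii) the eigenfrequencies of $\KVKst_\tau$ approximate arbitrarily large elements of the (unbounded) spectrum of $V$, so $\max_j\lvert\omega_{\tau,j}\rvert=\lVert \KVKst_\tau\rVert\to\infty$ as $\tau\to0^+$; likewise $\sup_x p_\tau(x,x)=\sum_j(\lambda_{\tau,j}/\lambda_j)\lvert\psi_j(x)\rvert^2$ is not bounded in $\tau$, so your bound on $\lVert\iota:\RKHS_{\tau,N}\to L^2(\mu)\rVert$ degenerates. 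Consequently the term $\delta\cdot C$ is not controlled at the $\tau$-stage, and the argument as written fails.

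The repair is to carry the supremum over $t\in\mathcal{T}$ through each stage of the iterated limit separately (mirroring the chain \eqref{eqn:data_driven2}--\eqref{eqn:data_driven4} in the proof of Theorem~\ref{thm:data_predic}(iii)), so that at every stage only the parameter being sent to its limit varies. The $L'$-stage is automatic since $\lVert U^t(\iota f-f_{L'})\rVert_{L^2(\mu)}$ is $t$-independent; the $L$- and data-stages are handled at fixed $\tau$, where your boundedness claims are valid (e.g.\ $\lVert e^{t\KVKst_\tau^{(L)}}-e^{t\KVKst_\tau}\rVert\le \lvert t\rvert\,\lVert \KVKst_\tau^{(L)}-\KVKst_\tau\rVert$ via Lemma~\ref{lem:DataPredic}(iii)). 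For the critical $\tau$-stage one needs a $\tau$-uniform equicontinuity estimate, which is available but requires a sharper bound than your product of norms: since $\Nyst_\tau f_{L'}\in\RKHS_\tau$ with $P_\tau^*\Nyst_\tau f_{L'}=f_{L'}$ and $\KVKst_\tau\Nyst_\tau f_{L'}=P_\tau V f_{L'}$, while $\lVert P_\tau\rVert=\lVert P_\tau^*\rVert=\lambda_{\tau,0}^{1/2}=1$, the curve $t\mapsto P_\tau^*e^{t\KVKst_\tau}\Nyst_\tau f_{L'}$ is Lipschitz in $L^2(\mu)$ with constant $\lVert Vf_{L'}\rVert_{L^2(\mu)}$, independent of $\tau$; combined with the pointwise-in-$t$ convergence from Theorem~\ref{thm:Main}(v), this gives uniform convergence on $\mathcal{T}$ as $\tau\to0^+$. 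With that substitution your strategy goes through, but as proposed the key uniform-boundedness claim is false and the hard step does not close.
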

\begin{rk*}The order in which the limits in Theorem~\ref{thm:data_predic} and Corollary~\ref{corPredic} are taken is important. In particular, the first limits taken are those of $N$ and $\Delta t$. This corresponds to the limit of large data, i.e., infinitely many samples taken at arbitrarily small sampling interval. As stated above, in order to control sampling errors and ensure spectral convergence of the data-driven operators, the limit of large data must be taken at a fixed resolution $L$. After this, the limit $L\to\infty $ is taken to facilitate a finite-rank approximation of $W_\tau $ and $Z( W_\tau) $. Next, the limit $\tau\to 0^+$ is taken as the $0$-time limit of the Markov semi-group $G_{\tau}$, leading to convergence of $W_\tau $ and $Z(W_\tau) $ to $V$ and $Z(V) $, respectively, in the sense of Theorem~\ref{thm:Main}. Finally, in Theorem~\ref{thm:data_predic}(iii) and Corollary~\ref{corPredic}, the limit $L'\to\infty$ is taken to facilitate convergence to the spectrally truncated observable $ \Pi_{L'} \iota f \in L^2(\mu) $ to $\iota f $. The latter limit is analogous to an $ \epsilon \to 0^+ $ limit of the tolerance $\epsilon $ in Corollary~\ref{thm:predic}. 
\end{rk*}

Before proving Theorem~\ref{thm:data_predic}, we will state an auxiliary lemma. In what follows, $ \Pi_{\tau,L} $ will denote the orthogonal projection on $\RKHS_\tau $, mapping into $ \spn \{ \psi_{\tau,0}, \ldots, \psi_{\tau,L-1} \} $. 

\begin{lemma}\label{lem:DataPredic}
Under the assumptions of Theorem~\ref{thm:data_predic}, the following hold:
\begin{enumerate}[(i)]
    \item The eigenvalues $ \lambda_{\tau,N,j} $ of $ G_{\tau, N} $ converge to those of $ G_\tau $, i.e., for every $ \tau \geq 0 $ and $ j \in \mathbb{N}_0 $, $ \lim_{N\to\infty} \lambda_{\tau,N,j} = \lambda_{\tau,j} $. Moreover, for every $\RKHS_{\tau} $ basis function $ \psi_{\tau,j} $ there exists a sequence of $\RKHS_{\tau,N} $ basis functions $ \psi_{\tau,N,j} $ converging to it in $C^0(M) $ norm in the same limit.
\item For every $ \tau > 0 $, the matrix elements of $ W_{\tau,N,\Delta t} $ from~\eqref{eqWMat} converge to the corresponding matrix elements of $ W_{\tau} $, i.e., 
$
\lim_{\Delta t \to 0^+, N \Delta t \to \infty} \langle \psi_{\tau,N,i},W_{\tau,N,\Delta t} \psi_{\tau,N,j} \rangle_{\RKHS_{\tau,N}} = \langle \psi_{\tau,i}, W_\tau \psi_{\tau,j} \rangle_{\RKHS_\tau}.
$
\item As $ L \to \infty $, the finite-rank, skew-adjoint operators $ W_{\tau}^{(L)} := \Pi_{\tau,L} W_\tau \Pi_{\tau,L} $ converge to $ W_\tau $ in Hilbert-Schmidt norm, and thus in $ \RKHS_\tau $ operator norm. 
\end{enumerate}
\end{lemma}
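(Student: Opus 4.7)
For claim (i), the plan is to leverage weak convergence $\mu_N \to \mu$ (guaranteed by the assumption $x_0 \in B_\mu$) to establish that the bistochastically normalized kernels $p_N : M \times M \to \real$ converge to $p$ in $C^0(M\times M)$ norm. This follows from the explicit formula~\eqref{eqn:def:bistochastic} applied with $\nu = \mu_N$: the auxiliary functions $d_N(x) = \int_M \kappa(F(x),F(y))\,d\mu_N(y)$ and $q_N$ converge uniformly on $M$ to their $\mu$-analogs by continuity of $\kappa \circ (F \times F)$ and weak convergence of $\mu_N$ against continuous integrands, and both are bounded below by strict positivity of $\kappa$ and compactness of $M$. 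Iterating in $\tau$ via~\eqref{eqn:def:schemeIII}, $p_{\tau,N}$ converges to $p_\tau$ in $C^0(M\times M)$. The spectral convergence $\lambda_{\tau,N,j} \to \lambda_{\tau,j}$ then follows by a collectively compact operator argument: viewed as operators on $C^0(M)$, the integral operators $\tilde G_{\tau,N} f(x) = \int_M p_{\tau,N}(x,y) f(y)\, d\mu_N(y)$ share their nonzero spectrum with $G_{\tau,N}$, and converge to the $C^0$ analog of $G_\tau$ in a sense sufficient for convergence of isolated eigenvalues of compact operators. The $C^0(M)$ convergence of the basis functions $\psi_{\tau,N,j}$ to $\psi_{\tau,j}$ then follows from the kernel identity $\psi_{\tau,N,j}(x) = \lambda_{\tau,N,j}^{-1/2}\int_M p_{\tau,N}(x,y)\phi_{\tau,N,j}(y)\, d\mu_N(y)$, after choosing phases so that the $L^2(\mu_N)$-eigenfunctions $\phi_{\tau,N,j}$ are consistent with $\phi_{\tau,j}$.

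For claim (ii), I substitute~\eqref{eqWMat}: the eigenvalue prefactors converge by (i). For the inner product $\langle \phi_{\tau,N,i}, V_{N,\Delta t}\phi_{\tau,N,j}\rangle_{\mu_N}$, I invoke the Nystr\"om representatives, noting that $\phi_{\tau,N,j}$ is the restriction to $\{x_0,\ldots,x_{N-1}\}$ of $\lambda_{\tau,N,j}^{-1/2}\psi_{\tau,N,j} \in C^1(M)$. The discrete difference $\tilde V_{N,\Delta t}$ applied to this sample vector equals the sample vector of the continuous finite difference $\vec V_{\Delta t}\psi_{\tau,N,j}$. Upgrading (i) to $C^1(M)$ convergence of $\psi_{\tau,N,j}$ to $\psi_{\tau,j}$ (via bounds on $\nabla_1 p_{\tau,N}$ obtained by differentiating the integrand in~\eqref{eqn:def:bistochastic} and using the $C^1$ regularity of $\kappa$), one has $\vec V_{\Delta t}\psi_{\tau,N,j} \to \vec V\psi_{\tau,j}$ in $C^0(M)$ as $\Delta t \to 0^+$ and $N\to\infty$. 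Weak convergence $\mu_N \to \mu$ passes this through to convergence of the sampled integrals against continuous functions. The antisymmetrization in~\eqref{eqFD2} introduces only an $O(\Delta t)$ discrepancy that vanishes in the limit.

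For claim (iii), since $W_\tau$ is trace class by Proposition~\ref{prop:Semigroup}(iii), it is a fortiori Hilbert-Schmidt, so that $\|W_\tau\|_{\text{HS}}^2 = \sum_{i,j=0}^\infty |\langle \psi_{\tau,i}, W_\tau \psi_{\tau,j}\rangle_{\RKHS_\tau}|^2 < \infty$ in the orthonormal basis $\{\psi_{\tau,j}\}$ of $\RKHS_\tau$. Decomposing $W_\tau - W_\tau^{(L)} = (\Id - \Pi_{\tau,L})W_\tau + \Pi_{\tau,L} W_\tau (\Id - \Pi_{\tau,L})$, the squared Hilbert-Schmidt norm of each term is a tail of the double sum above, and hence vanishes as $L\to\infty$. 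Convergence in $\RKHS_\tau$ operator norm follows since $\|\cdot\| \leq \|\cdot\|_{\text{HS}}$.

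The main obstacle lies in (i) and (ii): establishing the $C^1$ (rather than merely $C^0$) convergence of the $\psi_{\tau,N,j}$ that is needed for the finite-difference error bound in (ii). This requires controlling derivatives of $p_{\tau,N}$ uniformly in $N$, propagating the $C^1$ regularity of $\kappa$ through the bistochastic normalization and then through the Markov semigroup construction~\eqref{eqn:def:schemeIII}, which calls for a uniform-in-$N$ $C^r$ convergence argument analogous to Lemma~\ref{lem:Ta0_Cm} but with both measure and kernel depending on $N$. Once this is in place, the three claims fit together cleanly.
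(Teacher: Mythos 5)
Your route is essentially the paper's: claim (iii) is proved there by exactly your Hilbert--Schmidt tail argument (phrased via partial sums in the orthonormal basis $\{\langle \psi_{\tau,j},\cdot\rangle_{\RKHS_\tau}\psi_{\tau,i}\}$ of Hilbert--Schmidt operators, which is the same computation), and claim (ii) is likewise reduced, via \eqref{eqWMat}, to convergence of $\langle\phi_{N,i},V_{N,\Delta t}\phi_{N,j}\rangle_{\mu_N}$ to $\langle\phi_i,V\phi_j\rangle_\mu$, resting on the $C^1$ Nystr\"om representatives and physicality of $\mu$. The main difference is bookkeeping: for claim (i), and for the finite-difference convergence underlying (ii), the paper does not argue in-line at all --- it cites \cite{DasGiannakis_delay_Koop}, which follows the spectral-convergence framework of \cite{VonLuxburgEtAl08}, and asserts that the bistochastic case is analogous --- whereas you sketch those arguments yourself (uniform convergence of the normalization functions, $C^0$ kernel convergence, a collectively compact operator argument). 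Two remarks on your self-identified obstacle. First, the uniform-in-$N$ analog of Lemma~\ref{lem:Ta0_Cm} for $p_{\tau,N}$ is not actually needed: since $\psi_{\tau,N,j}=r_{\tau,N,j}\,\psi_{N,j}$ with the scalar $r_{\tau,N,j}=(\lambda_{\tau,N,j}/\lambda_{N,j})^{1/2}$ converging by claim (i), the required $C^1$ control reduces to the base kernel, i.e.\ to uniform convergence of $\nabla_1 p_N$ to $\nabla_1 p$, which follows by differentiating \eqref{eqn:def:bistochastic} under the integral, using the $C^1$ regularity of $\kappa$ and the uniform lower bounds on $d_N$ and $q_N$. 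Second, two points your sketch glosses over and should be recorded: eigenfunction convergence in (i) requires handling possible eigenvalue multiplicities (one obtains convergence to the eigenspace and must select representatives, not merely fix signs or phases); and the discrete scheme's zero values at $n=0$ and $n=N-1$ contribute $O(1/(N\,\Delta t))$ to the matrix elements, which vanishes precisely because the limit is taken with $N\,\Delta t\to\infty$.
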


\begin{proof}
Claim~(i) was proved in \cite{DasGiannakis_delay_2019}, following the approach of \cite{VonLuxburgEtAl08}, for Markov kernels constructed via the kernel normalization procedure introduced in the diffusion maps algorithm \cite{CoifmanLafon06}. The result for the bistochastic Markov kernels from~\eqref{eqn:def:bistochastic} follows analogously. 

To verify Claim~(ii), note first that the matrix elements $ \langle \phi_{N,i}, V_{N,\Delta t} \phi_{N,j} \rangle_{L^2(\mu_N)} $ converge to $ \langle \phi_i, V \phi_j \rangle_{L^2(\mu)} $ by convergence of the finite-difference approximation in~\eqref{eqFD} for $C^1$ functions, in conjunction with the fact that the measure $ \mu $ is physical; see \cite{DasGiannakis_delay_2019} for further details. The convergence of the $ \langle \psi_{\tau,N,i},W_{\tau,N,\Delta t} \psi_{\tau,N,j} \rangle_{\RKHS_{\tau,N}} $ to $ \langle \psi_{\tau,i}, W_\tau \psi_{\tau,j} \rangle_{\RKHS_\tau} $ then follows from this result in conjunction with Claim~(i). 

Claim~(iii) follows from the fact that $ \{ \psi_{\tau,ij} := \langle \psi_{\tau,j}, \cdot \rangle_{\RKHS_\tau} \psi_{\tau,i}: i, j \in \mathbb{N}_0 \} $ is an orthonormal basis of the Hilbert space of Hilbert-Schmidt operators on $ \RKHS_\tau $, and in this basis, every Hilbert-Schmidt operator $ T : \RKHS_\tau \to \RKHS_\tau $ can be decomposed as $ T = \sum_{i,j=0}^\infty \langle \psi_{\tau,i}, T \psi_{\tau,j} \rangle_{\RKHS_\tau} \psi_{\tau,ij} $. In this expansion, the partial sums $ \sum_{i,j=0}^{L-1} \langle \psi_{\tau,i}, T \psi_{\tau,j} \rangle_{\RKHS_\tau} \psi_{\tau,ij} $ are equal to $ \Pi_{\tau,L} T \Pi_{\tau,L} $, and converge in Hilbert-Schmidt norm. Applying these results for $ T = W_\tau $ leads directly to the claim. 
\end{proof}


\paragraph{Proof of Theorem~\ref{thm:data_predic}} 
Because, by Lemma~\ref{lem:DataPredic}(iii), $ W_\tau^{(L)} $ is a sequence of compact operators converging in operator norm to the compact operator $ W_\tau $, it follows that for every $ j \in \mathbb{N}_0$ such that $ \omega_{\tau,j} \neq 0 $, the eigenvalues $ i \omega_{\tau,j}^{(L)} $ of $ W_\tau^{(L)} $ converge to $ i \omega_j $. Moreover, since, as follows directly from their definition, all $ W_\tau^{(L)} $ have an eigenvalue at zero corresponding to constant eigenfunctions, we conclude that the convergence $ \omega_{\tau,j}^{(L)} \xrightarrow[L\to\infty]{} \omega_{\tau,j} $ holds for all $ j \in \mathbb{N}_0 $. The convergence of the eigenvalues implies in turn that for every eigenfunction $ \zeta_{\tau,j} $ of $W_\tau $ there exists a sequence of eigenfunctions $ \zeta_{\tau,j}^{(L)} $ of $ W_\tau^{(L)} $ converging to it in $\RKHS_\tau$ norm as $ L \to \infty $. Claims~(i) and~(ii) will then follow if it can be shown that the eigenvalues of $ W_{\tau,N,\Delta t }^{(L)} $ converge to those of $ W_\tau^{(L)} $, and the corresponding eigenfunctions converge in $C^0(M) $ norm. 

The convergence of the eigenvalues $ i \omega_{\tau,N,\Delta t}^{(L)} $ to $ i \omega_{\tau}^{(L)} $ follows from the convergence of the matrix elements of $ \KVKst^{(L)}_{\tau,N,\Delta t} $ to $ W_\tau^{(L)} $, as established in Lemma~\ref{lem:DataPredic}(ii). The existence of eigenfunctions $ \zeta_{\tau,N,\Delta t}^{(L)} $ of $ W_{\tau,N,\Delta t}^{(L)} $ converging to $ \zeta_{\tau,j}^{(L)} $ in $C^0(M) $ norm follows from the fact that both $ \zeta_{\tau,N,\Delta t}^{(L)} $ and $ \zeta_{\tau,j}^{(L)} $ are expressible as finite linear combinations of the $ \psi_{\tau,N,j} $ and $ \psi_{\tau,j} $, namely 
\begin{displaymath}
    \zeta_{\tau,N,\Delta t,j}^{(L)} = \sum_{l=0}^{L-1} c_{\tau,N,\Delta t, l,j}^{(L)} \psi_{\tau,N,l}, \quad \zeta_{\tau,j}^{(L)} = \sum_{l=0}^{L-1} c_{\tau, l,j}^{(L)} \psi_{\tau,l}, 
\end{displaymath} 
where $ \vec c_{\tau,N,j}^{(L)} = ( c_{\tau,N,\Delta t, 0,j}^{(L)}, \ldots, c_{\tau,N,\Delta t, L-1,j}^{(L)} )^\top $ and $ \vec c_{\tau,j}^{(L)} = ( c_{\tau, 0,j}^{(L)}, \ldots, c_{\tau, L-1,j}^{(L)} )^\top $ are eigenvectors of the $L\times L $ matrices representing $ W_{\tau,N,\Delta t}^{(L)} $ and $ W_{\tau}^{(L)} $, respectively. By Lemma~\ref{lem:DataPredic}(i), the $ \psi_{\tau,N,j} $ converge to $ \psi_{\tau,j} $ in $ C^0(M) $ norm, and moreover for every eigenvector $ \vec c_{\tau,j}^{(L)} $ there exist $ \vec c_{\tau,N,j}^{(L)} $ converging to it in any vector norm. We therefore conclude that $ \zeta_{\tau,N,\Delta t,j}^{(L)} $ converges in $C^0(M) $ norm to $ \zeta_{\tau,j}^{(L)} $, proving Claims~(i) and~(ii). 

Turning to Claim~(iii), we will verify that the limits in~\eqref{eqn:data_driven} hold in a sequential manner. First, defining $f_{L'} = \Pi_{L'} \iota f$, note that because $Z(V) $ is a bounded operator and the $ \Pi_{L'} $ converge pointwise to the identity, $ \lim_{L'\to\infty} \left\| Z(V)\iota f- Z(V) f_{L'}\right\|_{L^2(\mu)} =0 $. Thus, to verify \eqref{eqn:data_driven}, it suffices to show that
\begin{equation}
\label{eqn:data_driven2}\lim_{\tau\to 0^+} \lim_{L\to\infty} \; \lim_{\Delta t\to 0^+, N\Delta t\to\infty} \; \| Z(V) f_{L'} - \iota g_{\tau,N,\Delta t, L, L'} \|_{L^2(\mu)} =0.
\end{equation}
Second, observe that $f_{L'}$ lies in $ H_\infty$, and thus, by Theorem \ref{thm:Main}(v), $\lim_{\tau\to 0^+} \left\| Z(V) f_{L'} - \iota Z(W_{\tau}) \Nyst_\tau f_{L'} \right\|_{L^2(\mu)} = 0$. As a result, to prove~\eqref{eqn:data_driven2}, it is enough to show that
\begin{equation}
\label{eqn:data_driven3}\lim_{L\to\infty} \; \lim_{\Delta t\to 0^+, N\Delta t\to\infty} \; \| \iota Z(W_{\tau}) \Nyst_\tau f_{L'} - \iota g_{\tau,N,\Delta t, L, L'} \|_{L^2(\mu)} =0.
\end{equation}
Next, by Lemma \ref{lem:DataPredic}(iii), $\lim_{L\to\infty} \| Z(W_{\tau}) \Nyst_\tau f_{L'} - Z(W_{\tau}^{(L)}) \Nyst_\tau f_{L'} \|_{\RKHS_\tau} = 0$, which implies that \eqref{eqn:data_driven3} holds if it can be shown that 
\begin{equation}
\label{eqn:data_driven4}
\lim_{\Delta t\to 0^+, N\Delta t\to\infty} \; \| \iota Z(W_{\tau}^{(L)}) \Nyst_\tau f_{L'} - \iota g_{\tau,N,\Delta t, L, L'} \|_{L^2(\mu)} = 0. 
\end{equation}
The latter will follow in turn if it can be established that the vectors $ g_{N,\Delta t} = Z(\KVKst_{\tau,N,\Delta t}^{(L)}) \Nyst_{\tau,N} \Pi_{N,L'} f_{N,L'} $, with $f_{N,L'} $ given by~\eqref{eqFNL}, converge to $ g = Z(W_{\tau}^{(L)}) \Nyst_\tau \Pi_{L'} f_{L'} $ in $C^0(M)$ norm. This fact follows from arguments similar to the proof of Claim~(i). That is, writing $ g_{N,\Delta t} = \sum_{j=0}^{L-1}c_{N,\Delta t,j} \psi_{\tau,N,j}$ and $ g = \sum_{j=0}^{L-1} c_{j} \psi_{\tau,j} $, one can verify the claimed convergence from the facts that (a) the functions $ \psi_{\tau,N,j} $ converge to $ \psi_{\tau,j} $ in $C^0(M)$ norm; (b) the expansion coefficients $ c_{N,\Delta t,j}$ and $c_j$ are determined from the action of $L\times L'$ matrices representing $Z(\KVKst_{\tau,N,\Delta t}^{(L)}) \Nyst_{\tau,N} \Pi_{N,L'} $ and $Z(\KVKst_{\tau}^{(L)}) \Nyst_{\tau} \Pi_{L'} $ on the $L'$-dimensional vectors representing $f_{N,L'}$ and $f_{L'}$, respectively, all of which converge in the appropriate limit by Lemma~\ref{lem:DataPredic}. The sequence of limits in~\eqref{eqn:data_driven2}--\eqref{eqn:data_driven4} then leads to \eqref{eqn:data_driven}, proving Claim~(iii). This completes the proof of the theorem. \qed

\paragraph{Approximation errors} As stated above, the convergence results in Theorem~\ref{thm:data_predic} and Corollary~\ref{corPredic} require that the limit of large data \blue{has to} be taken before the limits involving the spectral truncation ($L$ and $L'$) and RKHS regularization ($\tau$) parameters. Yet, in practical applications, one typically works with a fixed number of samples $N$ and sampling interval $\Delta t$, and is faced with the question of tuning $L$, $L'$, and $\tau$ so as to achieve optimal performance. In particular, even though from a theoretical standpoint one would like to employ arbitrarily large $L, L'$ and arbitrarily small $\tau$, such a choice would invariably lead to overfits of the training data and/or numerical instability. In the context of prediction (i.e., Theorem~\ref{thm:data_predic}(iii) and Corollary~\ref{corPredic}), appropriate parameter values can be determined using cross-validation, i.e., by setting aside a portion of the available training data as verification data, and choosing $L$, $L'$, and $\tau$, as well as other parameters (e.g., bandwidth parameters of Gaussian kernels as in~\eqref{eqn:def:Gauss_ker} ahead), so as to maximize prediction skill in the verification dataset. In the context of spectral estimation (i.e., in the present work, coherent pattern extraction), parameter selection is more challenging, as typically there is no a priori known ground truth that can employed for cross-validation. Instead, one way to proceed is through a posteriori analysis of the results, seeking to identify eigenvalues and eigenfunctions of $W_{\tau,N,\Delta t}^{(L)}$ with minimal risk of being affected by sampling errors.

One such a posteriori metric is the Dirichlet energy of the eigenfunctions, $ \Dirich_N(P^*_N\zeta_{\tau,N,\Delta t,j}^{(L)})$, induced on $L^2(\mu_N)$ by the RKHS $\RKHS_N$ according to~\eqref{eqDirichlet}. On the basis of well known results from statistical learning theory \cite{CuckerSmale01} (generally established for i.i.d.\ data, though analogous results are expected to hold for equidistributed data in an ergodic sense, as in the present work), this functional is a useful proxy for the sensitivity of $ \zeta_{\tau,N,\Delta t,j}^{(L)}$ to sampling errors. In Corollary~\ref{corr:APS}, we established that the Dirichlet energy is also useful for identifying dynamical coherence. As a result, $\Dirich_N(P^*_N\zeta_{\tau,N,\Delta t,j}^{(L)})$ is a natural quantity to monitor for the purpose of identifying robust, data-driven coherent observables. Still, the raw Dirichlet energy does not take into account another source of error in our data-driven approximations, namely that we are approximating the unbounded generator $V$ by a finite-difference operator $V_{N,\Delta t} $ of the form in~\eqref{eqFD2}. Such operators, and as a result $W_{\tau,N,\Delta t}^{(L)}$, have $L^2(\mu_N)$ operator norm of at most $1/\Delta t$, placing an effective Nyquist limit on the eigenfrequencies $ \omega_{\tau,N,\Delta t,}^{(L)}$ that can be recovered at a given sampling interval $\Delta t$. In particular, eigenfrequencies close to that limit are expected to have high sensitivity to $\Delta t$.
The above suggests assessing the robustness of the data-driven eigenfunctions $\zeta_{\tau,N,\Delta t,j}^{(L)}$ using a functional that depends on both the Dirichlet energy and eigenfrequency. In the experiments presented in Section~\ref{sect:examples} ahead, we will employ the frequency-adjusted Dirichlet energy on $L^2(\mu_N)$ given by
\begin{equation}
    \Dirich_{N,\Delta t}(f) = \Dirich_{N}(f)\left(1- \frac{ ( \Delta t \lVert V_{N,\Delta t} f \rVert_{L^2(\mu_N)})^2}{\lVert f \rVert^{2}_{L^2(\mu_N)} } \right)^{-1}, \quad \forall f \in L^2(\mu_N) \setminus \{0\}, \quad \text{and} \quad \mathcal{D}_{N,\Delta t}(0) = 0.
\label{eqDirichletDt}
\end{equation}
By construction, this functional takes small values on functions with low roughness (in the sense of Dirichlet energy $\mathcal D_N(f)$), and thus reduced risk of sensitivity to sampling errors. \blue{Moreover, the term $ \lVert V_{N,\Delta t} f \rVert^2_{L^2(\mu_N)}$ can be thought of as a spectral energy in the frequency domain for the time series $f(x_n)$ sampled discretely at times $ t_n = n \, \Delta t $. The term $ ( 1 -  \lVert V_{N,\Delta t } f \rVert^2_{L^2(\mu_N)} / \lVert f \rVert^2_{L^2(\mu_N)} )^{-1}$ then penalizes $f$ whose frequency spectral energy is comparable to $ 1 / \Delta t^2$.}  In what follows, we will order by convention all data-driven eigenfunctions $\zeta_{\tau,N,\Delta t, j}^{(L)}$ in order of increasing $\Dirich_{N,\Delta t}(P^*_N \zeta_{\tau,N,\Delta t, j}^{(L)} )$.
We end this section with a discussion on how to obtain the kernel $ \kappa $ on data space.

\paragraph{Choice of kernel} First, note that the injectivity assumption on the observation map $F$ is with minimal loss of generality. In particular, according to the theory of delay-coordinate maps of dynamical systems \cite{SauerEtAl91}, under mild assumptions, the map $ F_Q : M \to Y^Q$, $ Q \in \mathbb{N}$, defined as
\[ F_Q(x) = \left( F(x), F(\Phi^{-\Delta t}x), \ldots, F(\Phi^{-(Q-1)\Delta t}x) \right), \]
is injective for large-enough $Q$. Moreover, $F_Q(x_n)$ can be evaluated for all states $x_n$ with $ n \in \{ Q - 1, \ldots, N \} $ given the values of $ F $ on a finite trajectory $x_0, x_1, \ldots x_{N-1}$ alone. Thus, delay-coordinate maps are a useful remedy when the observation map is non-injective, which is frequently the case \blue{with experimental or observational data acquired from high-dimensional systems (e.g., engineering or geophysical fluid flows)}.

Assuming then that the observation map $ F $ is injective, one can implement the techniques described in this section with any $C^1$ strictly positive-definite kernel on $Y$. As a concrete example for the case $ Y = \real^m$, we mention here the radial Gaussian kernels, 
\begin{equation}\label{eqn:def:Gauss_ker}
\kappa( y, y ) = \exp \left( - \frac{ d^2( y, y' ) }{ \epsilon } \right),
\end{equation} 
where $ d : Y \times Y \to \real $ is the standard Euclidean metric on $ \real^m $, and $ \epsilon $ a positive bandwidth parameter. Such kernels are popular in manifold learning techniques \cite{BelkinNiyogi03,CoifmanLafon06} due to their ability to approximate heat kernels and the spectrum of the Laplace-Beltrami operator in the $\epsilon \to 0^+$ limit. Here, we do not assume that the support $X$ of the invariant measure has manifold structure, so generally we do not have a heat kernel interpretation. Nevertheless, radial Gaussian kernels are known to be strictly positive-definite on arbitrary subsets of $\real^m$ \cite{Genton01}, which is sufficient for our purposes. The numerical experiments in Section~\ref{sect:examples} will be carried out with a variable-bandwidth variant of~\eqref{eqn:def:Gauss_ker}, whose construction and basic properties are described in \ref{appVB}.

\section{Examples and discussion}\label{sect:examples}

In this section, we apply the procedure described in Section \ref{sect:numerics} to ergodic dynamical systems with different types of spectra. The objective is to illustrate the results of Theorems \ref{thm:Markov}, \ref{thm:Main}, \ref{thm:data_predic} and Corollaries~\ref{corr:APS}, \ref{thm:predic}, and demonstrate that the framework is effective in identifying coherent observables and performing prediction in quasiperiodic and mixing systems. We consider the following three examples:    

\begin{enumerate}[(i)]
    \item A linear, ergodic flow $ \Phi^t : \mathbb{T}^2 \to \mathbb{T}^2 $ on the 2-torus, 
\begin{displaymath}
    \Phi^t(\theta_1, \theta_2) = ( \theta_1 + \alpha_1 t, \theta_2 + \alpha_2 t ) \mod 2 \pi, 
    \label{eqn:2D_oscill}
\end{displaymath}
where $ \alpha_1 $ and $ \alpha_2 $ are rationally independent frequencies, set to $ 1 $ and $ 30^{1/2} $, respectively. The observation map $ F: \mathbb{T}^2 \to \real^3 $ is given by the standard embedding of the 2-torus into $\real^3$,  
\begin{displaymath}
    F(\theta_1, \theta_2) = (F_1, F_2, F_3 ) = \left( (1+R\cos \theta_1) \cos \theta_1, (1+R \cos \theta_2) \sin \theta_1, \sin \theta_2 \right), 
\end{displaymath}
where we set $ R = 1/2$. 
\item The L63 system \cite{Lorenz63},  generated by the $C^\infty$ vector field $ \vec V$ on $\real^3$, whose components $ ( V_1, V_2, V_3)$ at $(x,y,z)\in\real^3$ are given by
\begin{displaymath}
V_1 = \sigma(y-x), \quad V_2 = x(\rho-z) -y, \quad V_3 = xy-\beta z.
\end{displaymath}
We use the standard parameter values $ \beta = 8/3 $, $\rho = 28$, $\sigma = 10 $,  and take $F:\real^3\to\real^3$ to be the identity map. 
\item The R\"ossler system \cite{Rossler76} on $\real^3$, generated by the smooth vector field $\vec V $, with components $(V_1, V_2, V_3) $ at $(x,y,z)$ given by
    \begin{displaymath}
        V_1 = -y -z, \quad V_2 = x+ a y, \quad V_3 = b + z(x-c). 
    \end{displaymath}
    We use the standard parameter values $ a = 0.1 $, $ b = 0.1 $, $ c = 14$, and as in the case of the L63 system, set $F$ to the identity map. 
    \end{enumerate}

\paragraph{Methodology} The following steps describe sequentially the entire numerical procedure carried out for each system. Additional algorithmic details, including pseudocode are included in~\ref{appVB} and~\ref{sec:algo}. 
\begin{enumerate}
    \item Numerical trajectories $x_0, x_1, \ldots, x_{N-1} $ of length $N$, with $x_n = \Phi^{n\Delta t}(x_0)$,  were generated using a sampling interval $\Delta t>0$. In the case of the torus rotation, $ \Delta t $ was set to $2 \pi/ 500 \approx 0.013$. The sampling interval in the L63 and R\"ossler experiments was 0.01 and 0.04, respectively. In all three experiments, the \blue{nominal number of samples was $N=\text{64,000}$. In the torus case, we also show eigenfunction results for $ N = 6400 $ to assess sensitivity to sampling errors.}  The trajectories for the torus experiments were computed analytically. The L63 and R\"ossler experiments utilize numerical trajectories generated in Matlab, using the \texttt{ode45} solver. These trajectories start from arbitrary initial conditions in $\real^3$, followed by a spinup period of $N\,\Delta t $ time units before collecting the actual ``production'' data.  
    \item The observation map $F$ described for each system was used to generate the respective time series $ F( x_0 ), F( x_1 ), \ldots, F( x_{N-1} ) $. For our choices of $F $, all of these time series take values in $\real^3$. In addition, we generated time series $ f( x_0 ), f(x_1), \ldots, f(x_{N-1}) $ for various other continuous, real-valued observables for use in forecasting experiments (described below). 
        
    \item Data-driven eigenpairs $ (\lambda_{N,j}, \phi_{N,j} ) $ with $ j \in \{ 0, \ldots, L - 1 \} $ were computed by applying Algorithm~\ref{alg:data_basis} to the dataset $F(x_0), \ldots, F(x_{N-1})$. Throughout, we used the variable-bandwidth Gaussian kernel described in~\ref{appVB}, in conjunction with the bistochastic normalization procedure from Section~\ref{secReview_RKHS}. In addition, we tuned the kernel bandwidth $\epsilon$ using an automatic procedure; see~\ref{appVB} for further details and references. The number of eigenfunctions employed in our experiments ranged from $L=500$ to 1000; i.e., $L \ll N $ in all cases. As described in~\ref{appVB}, the eigenpairs $ (\lambda_{N,j}, \phi_{N,j} ) $ for the bistochastic kernels employed here can be determined from the singular values and left singular vectors of a non-symmetric $N \times N $ kernel matrix, without explicit formation of the Markov kernel matrix itself. We followed that approach here, using Matlab's \texttt{svds} iterative solver to perform the singular value decomposition (SVD). All pairwise distances in data space required for kernel evaluation were computed by brute force (as opposed to using approximate nearest-neighbor search) in Matlab, retaining $5000 \ll N $ nearest neighbors per datapoint.  

    \item Using the eigenpairs $ ( \lambda_{N,j}, \phi_{N,j} ) $ from \blue{Step~3} as inputs, Algorithm~\ref{alg:generator} was applied to form the $L\times L $ operator matrices for $ W^{(L)}_{\tau,N,\Delta t} $, and compute the corresponding eigenfrequencies $\omega_{\tau,N,\Delta t,j}^{(L)} $ and eigenfunctions $ \zeta_{\tau,N,\Delta t,j}^{(L)} \in \RKHS_{\tau,N}$. Throughout, we used the central finite-difference scheme in~\eqref{eqFD} (which, in this case, is $O( (\Delta t)^2)$-accurate) to compute the matrix elements of $W^{(L)}_{\tau,N,\Delta}$, and Matlab's \texttt{eig} solver to compute the $( \omega_{\tau,N,\Delta t}^{(L)}, \zeta_{\tau,N,\Delta t}^{(L)})$ eigenpairs. In order to investigate the dependence of the spectra of $W_{\tau,N,\Delta t}^{(L)}$ on $ \tau $ (particularly from the perspective of Corollary~\ref{corr:APS}), we computed eigenfrequencies for logarithmically spaced values of $\tau $, and examined the $ \tau \mapsto \omega_{j,\tau,N,\Delta t}^{(L)}$ dependence through scatterplots. Moreover, for each eigenfunction, we computed its frequency-adjusted Dirichlet energy $\Dirich_{N,\Delta t}( P^*_N \zeta_{\tau,N,\Delta t,j}^{(L)})$ from~\eqref{eqDirichletDt}, and ordered the eigenpairs $(\omega_{\tau,N,\Delta t}^{(L)}, \zeta_{\tau,N,\Delta t}^{(L)})$ in order of increasing $\Dirich_{N,\Delta t}(P^*_N\zeta_{\tau,N,\Delta t}^{(L)})$.  
            
    \item Forecasting experiments were performed by constructing the data-driven functions  $ f_{\tau,N,\Delta t, L, L'}^{(t_m)}$ via Algorithm~\ref{alg:pred} for lead times $t_m := m\, \Delta t $, $ m \in \num_0 $, in an interval  $[0, m_\text{max} \, \Delta t ] $. In all cases, we used $L = L'$. \blue{Initial conditions for the forecasts were generated  from a time series $F(\hat x_0), \ldots, F(\hat x_{\hat N -1}) $ of the observation map, sampled on a dynamical trajectory $ \hat x_0, \ldots,\hat x_{\hat N + m_\text{max}-1} $ of length $\hat{N} + m_\text{max} $, independent of the training data. The values $U^{t_m} f(\hat x_n) = f(\hat x_{n+m}) $ of the forecast observable on this trajectory were then used as verification data to assess the out-of-sample predictions $ f_{\tau,N,\Delta t,L,L'}^{(t_m)}( F( \hat x_n ) ) $. The latter, were evaluated using Algorithm~\ref{alg:OUS}.}  In all experiments, $\hat{N}$ was equal to $N$. Forecast errors were assessed through the $L^2$ norm associated with the sampling measure $ \hat \mu_{\hat N} = \sum_{n=0}^{\hat N -1 } \delta_{\hat x_n} / \hat N $. Specifically, for a given lead time $t_m$ we compute a normalized \blue{root mean square error (RMSE)} metric, 
        \begin{displaymath}
            \blue{\varepsilon(t_m) = \lVert U^{t_m} f -   f_{\tau,N,\Delta t, L, L'}^{(t_m)} \rVert_{L^2(\hat \mu_{\hat N})} / \lVert f \rVert_{L^2(\hat \mu_{\hat N})}}, 
        \end{displaymath}
        such that values $ \varepsilon(t_m) \ll 1$ correspond to skillful forecasts, whereas $ \varepsilon(t_m) \simeq 1$ indicates loss of skill. \blue{The metric $ \varepsilon(t_m)$ is an empirical estimator of the normalized expected error with respect to the invariant measure, $ \lVert U^{t_m} f -   f_{\tau,N,\Delta t, L, L'}^{(t_m)} \rVert_{L^2(\mu)} / \lVert f \rVert_{L^2(\mu)} $, to which it converges almost surely as $ \hat N \to \infty $.}  
\end{enumerate}

We now present and discuss the experimental results for each system. Hereafter, for notational simplicity, we will drop $N$, $L$, and $\Delta t $ subscripts and superscripts from data-driven eigenfrequencies, eigenfunctions, and operators. We will also use $\Dirich_{\tau,j}$ as a shorthand notation for the frequency-adjusted Dirichlet energy from~\eqref{eqDirichletDt} of the $j$-th eigenfunction of $W_\tau$.     

\begin{figure}
    \includegraphics[width=.33\linewidth]{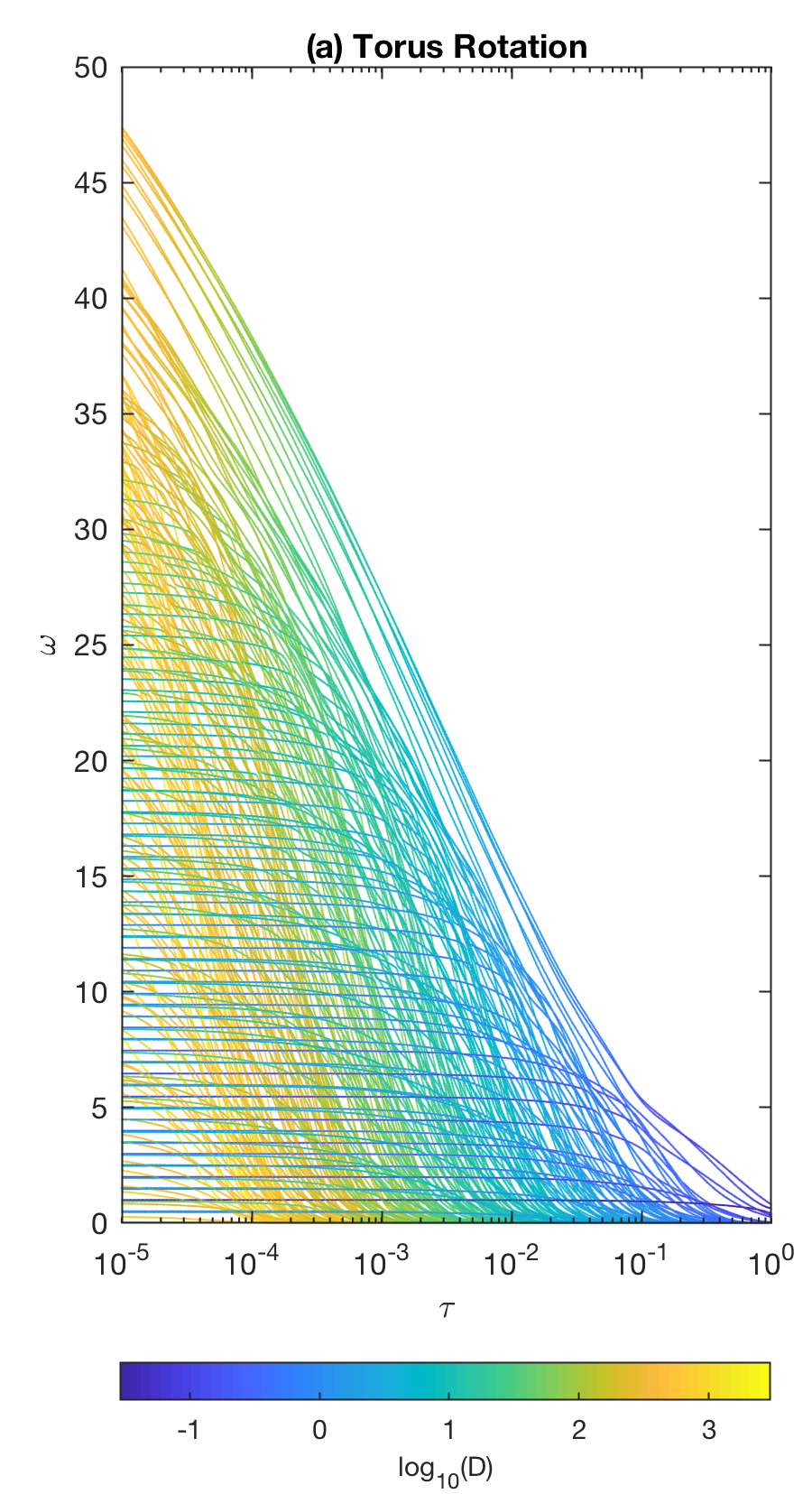}
    \includegraphics[width=.33\linewidth]{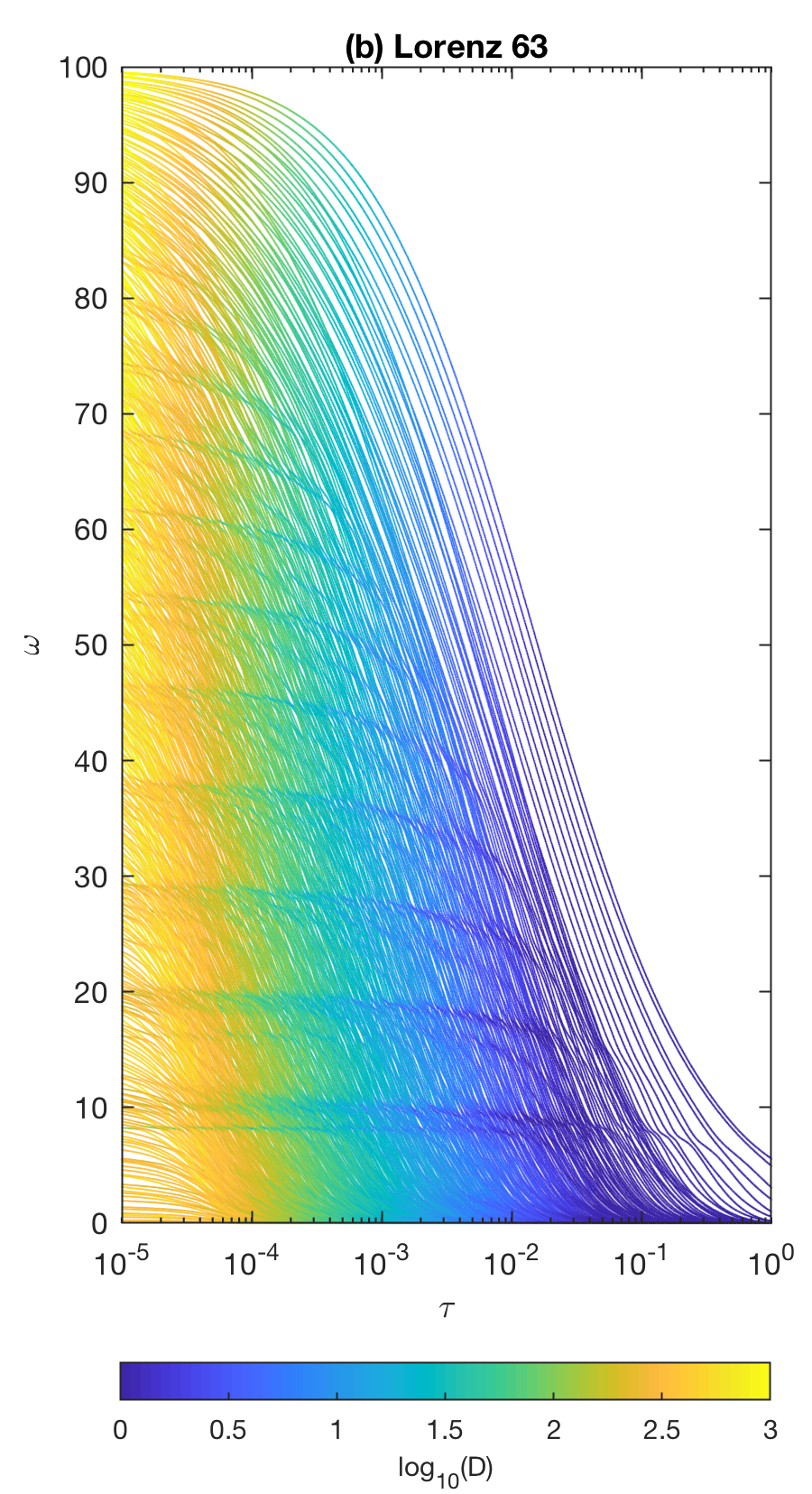}
    \includegraphics[width=.33\linewidth]{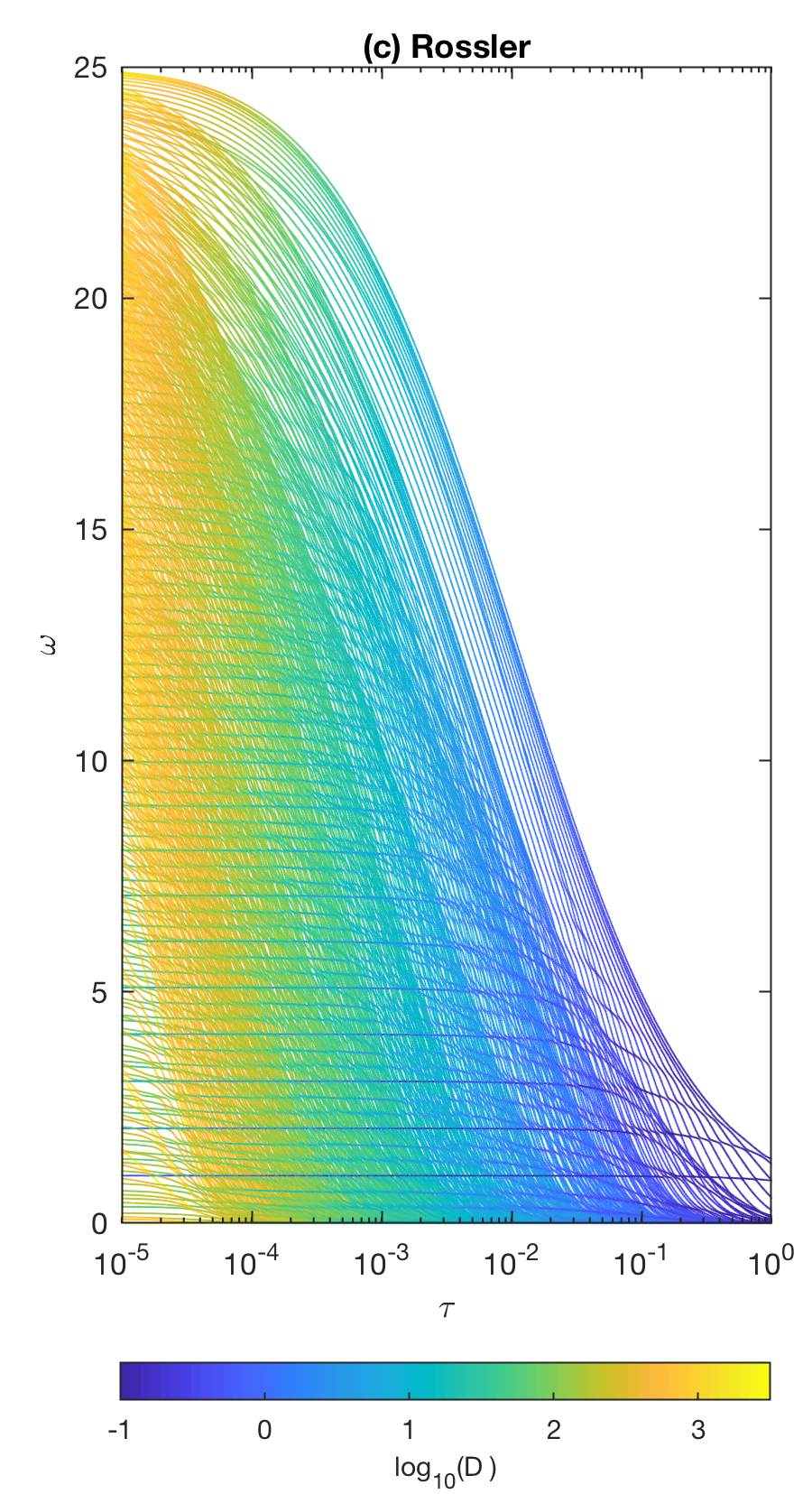}
    \caption{Eigenfrequencies $\omega_j $ of the data-driven generators $\KVKst_{\tau}$ as a function of $\tau$, for (a) the linear torus flow; (b) the L63 system; and (c) the R\"ossler system. Colors represent the logarithms of the frequency-adjusted Dirichlet energies from~\eqref{eqDirichletDt} of the corresponding eigenfunctions. Only positive frequencies are shown, as the $ \omega <0 $ parts of the spectra are mirror images of the $ \omega >0$ parts by skew-adjointness and reality of $W_\tau$.}
	\label{fig:SpecTau}
\end{figure}

\paragraph{Linear flow on the 2-torus} For any choice of rationally independent frequencies $ \alpha_1$ and $\alpha_2$, the system has a unique Borel ergodic invariant probability measure $ \mu $, which coincides with the Haar measure on $\mathbb{T}^2$. Thus, in the notation of Assumption~\ref{assump:A1}, the state space $ \mathcal{M}$, the forward-invariant compact manifold $M$, and the support of the invariant measure $X$ are all equal to $\mathbb{T}^2$. The basin of the invariant measure $B_\mu $ from Section~\ref{sect:numerics} is also equal to $\mathbb{T}^2$.  For this invariant measure, the Koopman group on $L^2(\mu)$ has pure point spectrum, consisting of eigenfrequencies of the form $j_1 \alpha_1 + j_2 \alpha_2$,  $j_1,j_2\in\mathbb{Z}$, corresponding to the eigenfunctions $e^{i(j_1\theta_1+j_2\theta_2)}$. The latter form an orthonormal basis of $L^2(\mu)$, so that the point and continuous spectrum subspaces in the invariant splitting in~\eqref{eqHDecomp} are $H_p= L^2(\mu)$ and  $H_c = \{0\}$, respectively. Note that because $\alpha_1$ and $ \alpha_2 $ are rationally independent, the set of eigenfrequencies lies dense in $\real$, which implies that the support of the PVM $E$ of this system (in this case, the closure of its set of eigenvalues) is equal to the whole real line. This makes the problem of numerically distinguishing eigenfrequencies from non-eigenfrequencies non-trivial, despite the simplicity of the underlying dynamics.

Figure~\ref{fig:SpecTau}(a) shows a scatterplot of the eigenfrequencies $\omega_{\tau,j}$ and the corresponding Dirichlet energies $ \Dirich_{\tau,j}$, computed for $L=500$ and values of $ \tau $ logarithmically spaced in the interval $ [ 10^{-5}, 1 ]$. There, the behavior of the numerically computed eigenfrequencies are broadly consistent with the results in Theorem~\ref{thm:Main} and Corollary~\ref{corr:APS}. In particular, the eigenfrequencies are seen to form continuous curves parameterized by $ \tau $ (consistent with Theorem~\ref{thm:Main}(vii) and Proposition~\ref{prop:Semigroup}(iv)), and the Dirichlet energy delineates the curves that have numerically converged over the examined values of $\tau$ (i.e., as  $ \tau $ approaches $10^{-5}$), from those that have not (as expected from Corollary~\ref{corr:APS}). Notice, in particular, that the task of visually identifying continuous eigenfrequency curves in Figure~\ref{fig:SpecTau}(a)  would be significantly more difficult without color-coding by Dirichlet energy.    

According to Corollary~\ref{corr:APS}, the eigenfrequency curves of $W_\tau $ with bounded Dirichlet energies should approximate Koopman eigenfrequencies, and the corresponding eigenfunctions should approximate Koopman eigenfunctions. Indeed, as illustrated in Figure~\ref{fig:ZetaTorus}, the leading data-driven eigenfrequencies $\omega_{\tau,j}$  for $ \tau = 10^{-5}$ agree with the theoretical eigenfrequencies to two to four significant figures. Moreover, the corresponding eigenfunctions agree well with the Koopman eigenfunctions of this system; that is, $\zeta_{\tau,j}$ in Figure~\ref{fig:ZetaTorus} have the structure of Fourier modes on the 2-torus, with near-exact sinusoidal time series at the corresponding eigenfrequencies. 

\begin{figure}
    \includegraphics[width=\linewidth]{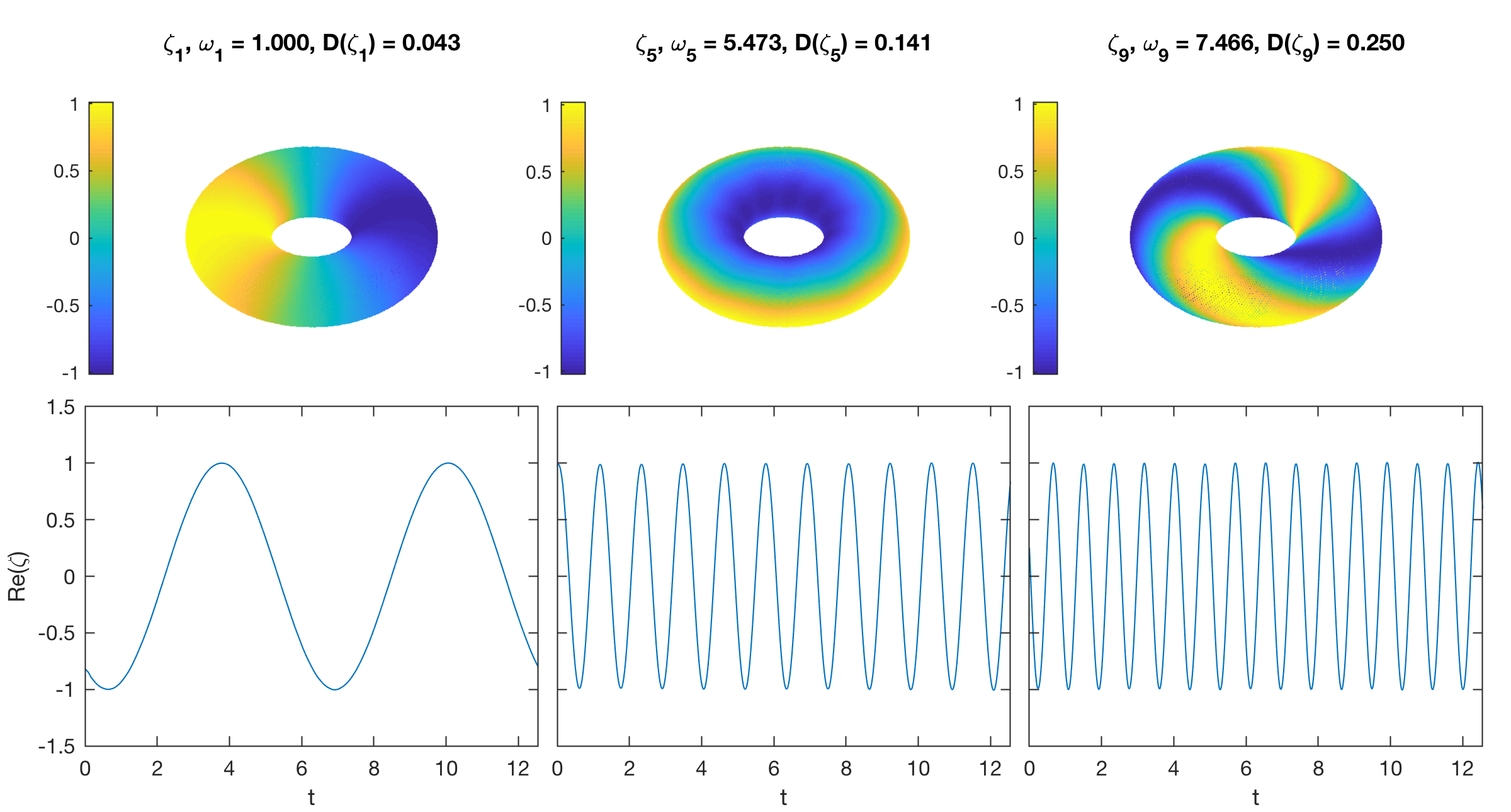}
    \caption{Representative eigenfunctions $\zeta_{\tau,j}$ of the data-driven generator $\KVKst_{\tau}$ with $ \tau = 10^{-5}$ for the linear flow on  the 2-torus. Top row: Scatterplots of $ \Real( \zeta_{\tau,j} ) $ on the training dataset embedded in $\real^3$. Bottom row: Time series $ t_n \mapsto \Real(\zeta_{\tau,j}(x_n))$ of the eigenfunctions, sampled along a portion of the dynamical trajectory in the training data. The numerical eigenfrequencies $ \omega_{\tau,j}$  and frequency-adjusted Dirichlet energies $\Dirich_{\tau,j}$ are also indicated. The eigenfrequencies with $ j = 1 $, 5, and 9 shown here agree with the theoretical eigenfrequencies $  \alpha_1 = 1 $, $ \alpha_2 \approx 5.477 $, and $ 2 \alpha_1 + 1 \alpha_2 \approx 7.477 $ to within four, three, and two significant figures, respectively.}
	\label{fig:ZetaTorus}
\end{figure}

\blue{Next, to assess the significance of RKHS regularization in spectral approximation of the generator, in Figure~\ref{figZetaTauTorus} we compare the real and imaginary parts of numerical eigenfunctions $\zeta_{\tau,j}$ of $W_\tau$ with eigenfunctions obtained from a ``naive'' approximation of the generator with $\tau = 0 $. In all cases, we select the eigenfunction whose corresponding eigenfrequency is closest to the generating eigenfrequency $ \alpha_1 = 1$, and plot the real and imaginary parts of $ \zeta_{\tau,j}$ as a scatterplot in the complex plane. For an exact approximation of a normalized Koopman eigenfunction, the plotted points should lie in the unit circle. According to the results in the figure, the naive approximation performs comparably to the regularized approximation for the experiment with $N = \text{64,000}$ and $ L=500 $ basis functions, but the quality of the approximation has considerably higher sensitivity to the number of samples and/or basis functions employed. Indeed, as is evident from Figures~\ref{figZetaTauTorus}(b) and~\ref{figZetaTauTorus}(e), decreasing the number of samples to $ N = 6400$ imparts a significant amount of high-frequency noise in the eigenfunction obtained from the naive approximation, whereas the eigenfunction based on RKHS regularization is comparatively more stable. This can be understood from the fact that the quality of the data-driven basis functions $\phi_{N,j}$ generally decreases with decreasing $N$, and the amount of quality degradation is higher the smaller the corresponding eigenvalue $\lambda_{N,j} $ is. As a result, without regularization to suppress the basis functions corresponding to small $ \lambda_{N,j}$, the quality of the naive approximation also degrades}. 

\blue{Increasing $L$ at fixed $N$ is also expected to adversely affect the naive approximation, in this case not only due to the basis function errors just mentioned, but also because the spectrum of $V$ is a dense subset of the imaginary line. That is, even with ``perfect'' basis functions, increasing $L$ without regularization will result in the creation of near-degenerate eigenfrequencies around any reference eigenfrequency in the spectrum of the approximate generator, leading to high sensitivity to perturbations. Figures~\ref{figZetaTauTorus}(c) and~\ref{figZetaTauTorus}(g) demonstrate that increasing $L$ from 500 to 1000 in the $N=6400 $ experiments results in considerable degradation of the eigenfunctions obtained from the naive approximation, whereas the eigenfunctions of the RKHS-regularized generator behave stably under this parameter change. In the case of the larger, $N= \text{64,000}$ dataset, a similar behavior is observed in Figures~\ref{figZetaTauTorus}(d) and~\ref{figZetaTauTorus}(g) by increasing $L$ to 10,000.}

\begin{figure}
    \includegraphics[width=\linewidth]{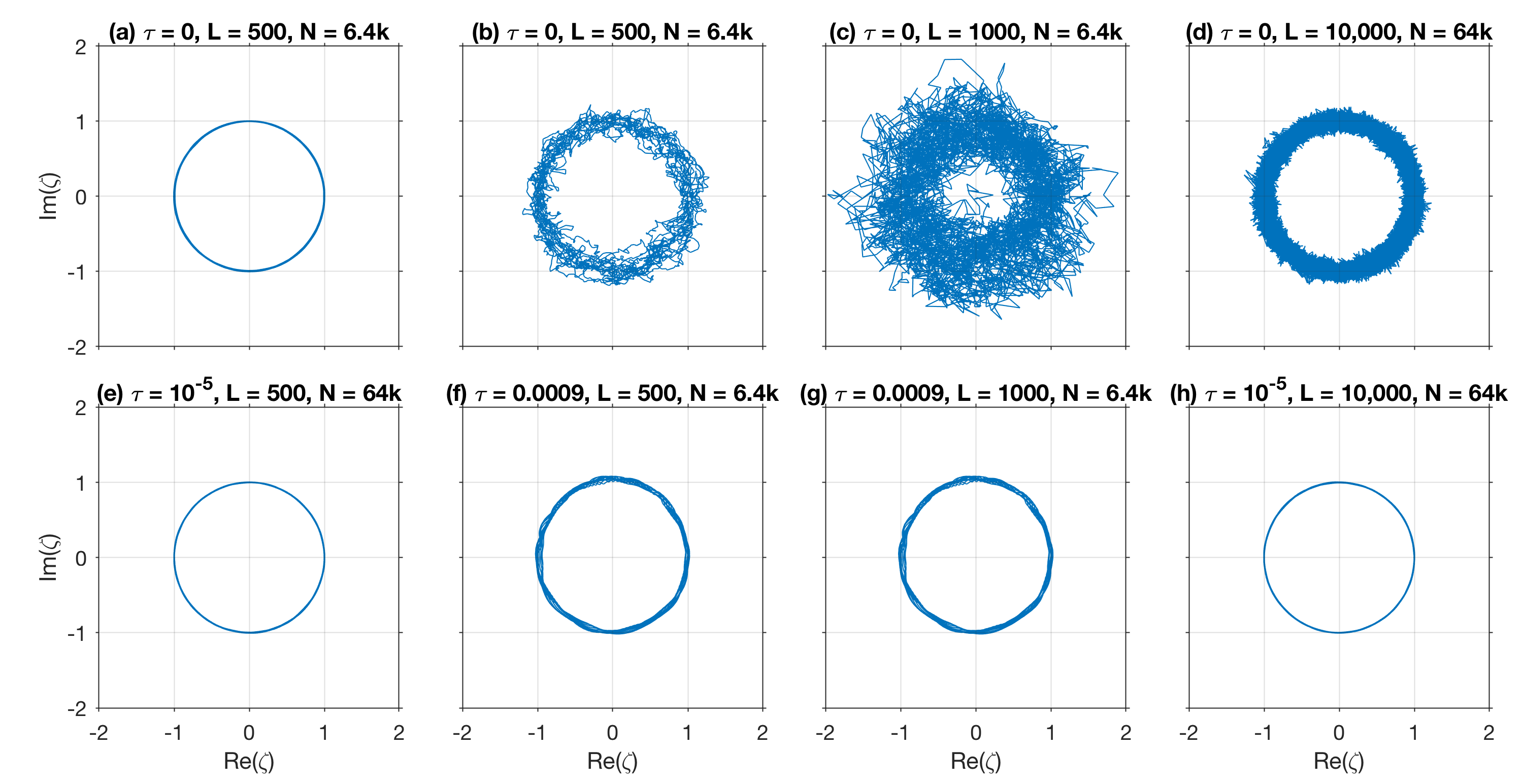}
    \caption{\blue{Real and imaginary parts of numerical Koopman eigenfunctions for the torus flow obtained from data-driven approximations of the generator without regularization (a--d) and the RKHS regularization $W_\tau $ (e--h), for different dataset sizes $N$ and values of the regularization and spectral resolution parameters $\tau $ and $L$. The eigenfunctions depicted here are those whose corresponding eigenfrequency in the data-driven spectrum is closest to the theoretical eigenfrequency $ \alpha_1 = 1$. For an exact approximation of a normalized Koopman eigenfunction, the numerical eigenfunctions should take values in the unit circle in the complex plane.}}
    \label{figZetaTauTorus}
\end{figure}

Turning now to forecasting, in Figure~\ref{fig:T2DPred} we show prediction results for the components $F_1$ and $F_3$ of the torus embedding into $\real^3$, as well as the observable $e^{F_1+F_3}$, which has a non-polynomial dependence on the components of the observation map (and in this case, the Koopman eigenfunctions). In all three cases, we use $ \tau = 10^{-5}$ and $L=500$, and examine lead times $t$ in the interval $ [ 0, 3000 \, \Delta t ] \approx [ 0, 39 ] $, which is approximately 26 times longer than the ``fast'' characteristic timescale $ 2  \pi/ \alpha_2 \approx 1.15$ of the system. Over that interval, the normalized forecast errors $\varepsilon(t)$ exhibit a linear error growth, remaining below 0.1 in the case of $F_1$, $F_3$, and below 0.2 in the case of $e^{F_1+F_3}$. The somewhat lower forecast skill for $e^{F_1+F_3}$ is consistent with the fact that infinitely many Koopman eigenfrequencies are required to fully capture the dynamical evolution of this observable. As mentioned in Section~\ref{sec:intro}, an advantageous aspect of the RKHS framework presented here over previous forecasting techniques operating on $L^2$ spaces \cite{BerryEtAl15,Giannakis17} is that it produces pointwise-evaluatable prediction functions, as opposed to expectation values with respect to probability measures with $L^2$ densities (which must be supplied by the user as initial conditions). As illustrated in Figure~\ref{fig:T2DPred}, the forecasts accurately reproduce the dynamical evolution of all three observables examined here.

\begin{figure}
    \includegraphics[width=\linewidth]{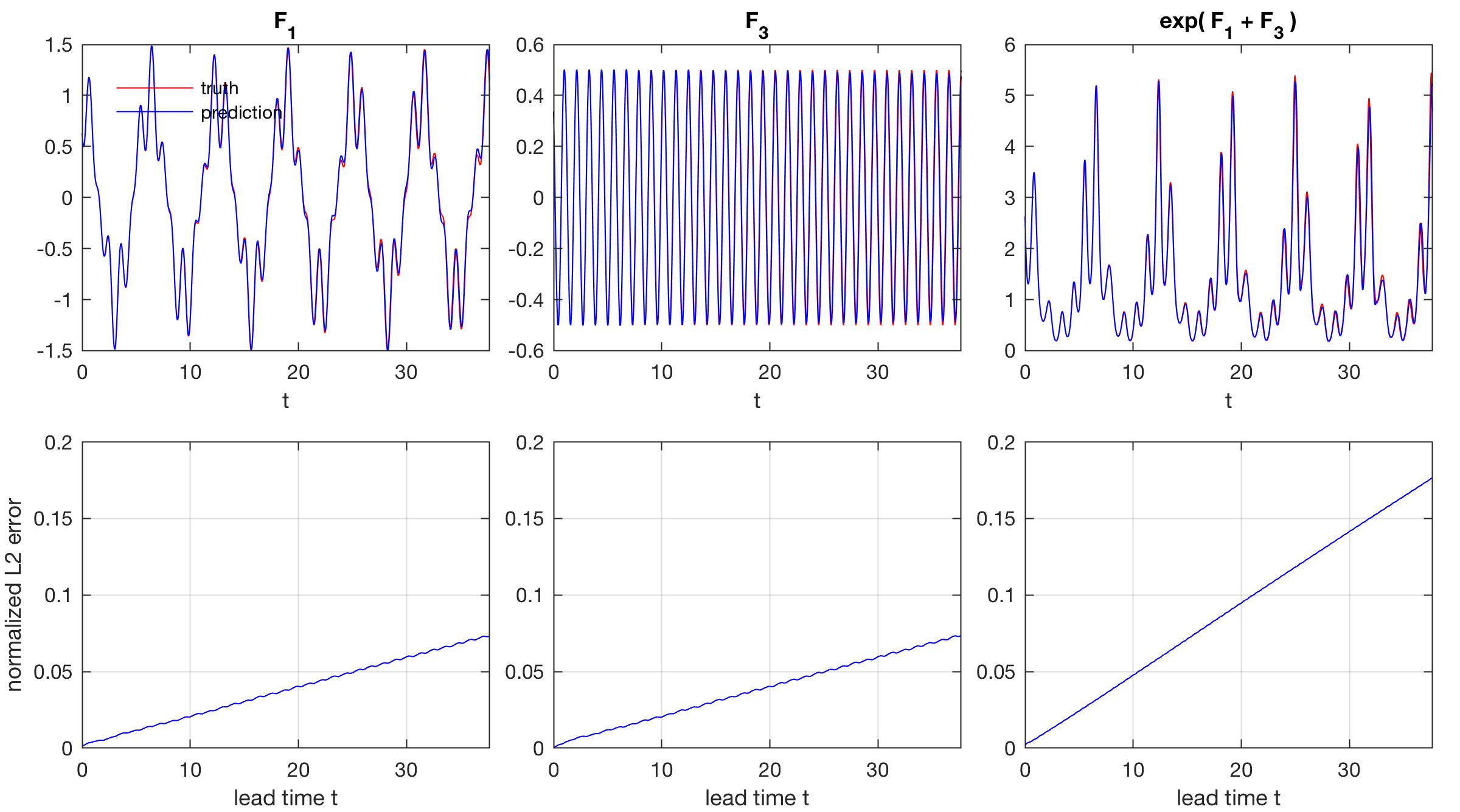}
    \caption{Data-driven prediction of the components $F_1$ and $F_3$ of the embedding $F$ of the 2-torus into $\real^3$ (left and center columns), and the non-polynomial observable $\exp(F_1+F_3)$ (right column) for the linear torus flow, using the operator $e^{t \KVKst_\tau}$ with $ \tau = 10^{-5}$. Top row: Comparison of the true and predicted signals as a function of lead time $t$ for a fixed initial condition in the verification dataset. Bottom row: Normalized RMSE $\varepsilon(t)$ as a function of lead time.}
	\label{fig:T2DPred}
\end{figure}

\paragraph{Lorenz 63 system} For our standard choice of parameters, the L63 system is known to have a compact attractor $ X \subset\mathcal{M} $ in the state space $ \mathcal{M} = \real^3$ \cite{Tucker99} with fractal dimension $\approx 2.06$ \cite{LorentzFract}, supporting a physical invariant measure $ \mu$, which has a single positive Lyapunov exponent $ \Lambda \approx 0.91$ \cite{Sprott03}. Due to dissipative dynamics, the attractor is contained within absorbing balls \cite{LawEtAl14}, playing here the role of the forward-invariant compact manifold $M \supset X$.  The system is also rigorously known to be mixing \cite{LuzzattoEtAl05}, which implies that its associated Koopman unitary group on $L^2(\mu)$ has no nonzero eigenfrequencies. Thus, the $H_p$ subspace for this system is the one-dimensional space consisting of constant functions, while $H_c $ contains all non-constant $ f \in L^2(\mu)$ with $ \int_M f \, d\mu = 0 $.

Figure~\ref{fig:SpecTau}(b) shows the dependence of the eigenfrequencies of $W_{\tau}$ for the L63 system, as well as the corresponding Dirichlet energies, on $ \tau \in [ 10^{-5}, 1 ] $, computed using $L=750$ basis functions. As one might expect, the behavior of this spectrum is qualitatively different from that of the quasiperiodic torus flow in Figure~\ref{fig:SpecTau}(a). That is, instead of the eigenfrequency curves of low Dirichlet energy interleaved with higher-Dirichlet-energy curves in Figure~\ref{fig:SpecTau}(a), the eigenfrequencies in Figure~\ref{fig:SpecTau}(b) exhibit an apparent continual growth in Dirichlet energy as $\tau $ decreases to 0. This behavior is consistent with Corollary~\ref{corr:APS}, according to which if the Dirichlet energy were to saturate along a sequence of eigenfrequencies of $W_\tau$ as $ \tau \to 0^+$, and that sequence had a nonzero limit, then that limit would necessarily be a nonzero Koopman eigenfrequency. As stated above, the latter is not possible for the L63 system. Nevertheless, upon visual inspection, one can identify in Figure~\ref{fig:SpecTau}(b) frequency bands characterized by smaller Dirichlet energy than the surrounding frequencies; e.g., frequency bands centered at $ \omega \simeq 8 $, 10, 20, 27, as well as higher frequencies. According to Corollary~\ref{corr:APS}, the corresponding eigenfunctions of $W_{\tau}$ are good candidates for coherent observables, evolving as approximate Koopman eigenfunctions, as we now verify.   

Representative eigenfunctions $ \zeta_{\tau,j}$ chosen from these frequency bands for $ \tau = 10^{-4}$, and visualized as scatterplots on the L63 attractor, as well as time series on the sampled dynamical trajectory, are displayed in Figure~\ref{fig:ZetaL63}. At least at the level of time series, the qualitative features of these eigenfunctions can be interpreted as generalizations of the Koopman eigenfunctions associated with the point spectra of measure-preserving ergodic dynamical systems. That is, similarly to Koopman eigenfunctions, the eigenfunctions of $W_{\tau}$ in Figure~\ref{fig:ZetaL63} are narrowband signals, evolving at a characteristic frequency determined from the corresponding eigenvalue $ \omega_{\tau,j} $, and with $\simeq 90^\circ$ phase difference between their real and imaginary (not shown) parts. However, unlike true Koopman eigenfunctions, the oscillatory signals associated with $ \zeta_{\tau,j}$ exhibit pronounced amplitude modulations, giving them the appearance of wavepackets. \blue{If single-frequency, constant-amplitude, sinusoidal time series  are to be thought of as hallmark features of Koopman eigenfunctions in measure-preserving systems, it appears that the eigenfunction time series of $W_{\tau}$ shown in Figure~\ref{fig:ZetaL63} lose the constancy of the amplitude, while maintaining a narrowband frequency character with high phase coherence between real and imaginary parts. In other words, these eigenfunctions reveal observables of the L63 system with an approximately cyclical behavior, despite mixing dynamics.} 

\begin{figure}
    \includegraphics[width=\linewidth]{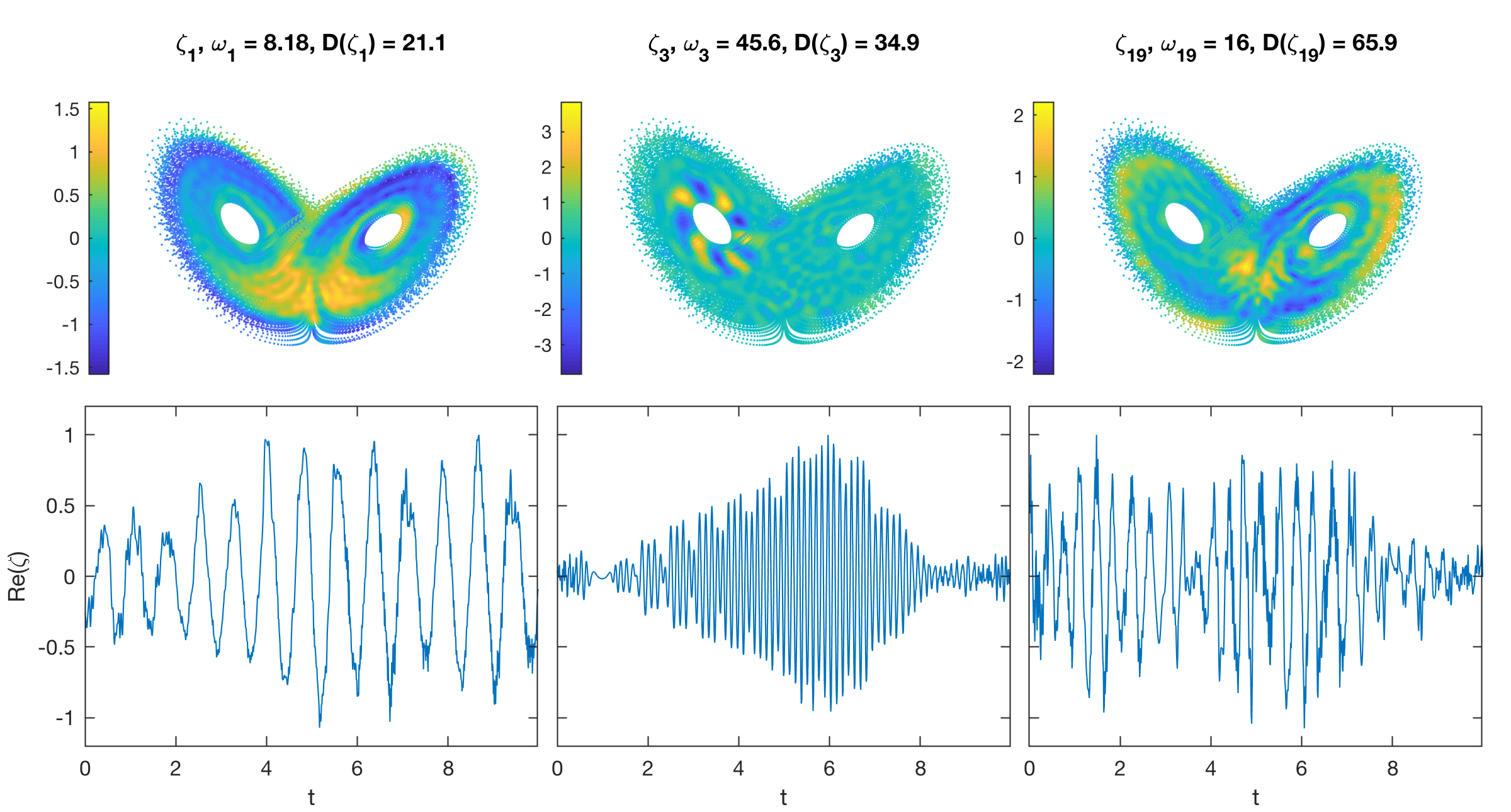}
    \caption{As in Figure~\ref{fig:ZetaTorus}, but for eigenfunctions of the data-driven generator $\KVKst_{\tau}$ with $ \tau = 10^{-4}$ for the L63 system. The eigenfunction time series in the lower panels have been scaled by their maximum absolute values so as to fit within the same axis limits. Observe the qualitatively different geometrical structure of the eigenfunctions on the Lorenz attractor. Despite these differences, the corresponding eigenfunction time series have the structure of amplitude-modulated wavetrains with a fairly distinct carrier frequency and lower-frequency modulating envelopes.}
	\label{fig:ZetaL63}
\end{figure}

Despite the qualitative similarities of the corresponding time series, it is evident from Figure~\ref{fig:ZetaL63} that the geometrical structure of the eigenfunctions of $W_{\tau}$ on the L63 attractor may exhibit significantly different characteristics. For example, eigenfunction $\zeta_{\tau,3}$ shown there (which corresponds to  fairly high eigenfrequency, $\omega_{\tau,3} \approx 46$) appear to be strongly localized on one of the two lobes of the L63 attractor, whereas eigenfunctions $\zeta_{\tau,1}$ and $\zeta_{\tau,19}$ (corresponding to lower eigenfrequencies, $ \omega_{\tau,1} \approx 8.2$ and $\omega_{\tau,19} \approx 16$, respectively) are supported on both lobes. Moreover, the level sets of $ \zeta_{\tau, 3}$ are arranged in predominantly transverse directions to the dynamical flow, whereas those of $ \zeta_{\tau,1}$ and $\zeta_{\tau,19}$ appear to be more parallel relative to the orbits of the dynamics. These differences are consistent with the fact that $ \omega_{\tau,3}$ is appreciably larger than $\omega_{\tau, 1}$, as a more transverse arrangement of level sets relative to the orbits of the dynamics means that more contour crossings per unit time take place. It should be noted that an analogous eigenfunction to $\zeta_{\tau,3}$, but supported in the opposite lobe of the L63 attractor is also present in the spectrum of $W_{\tau}$ (not shown here). It is also worthwhile noting that eigenfunction $\zeta_{\tau,1}$ bears some qualitative similarities with the pattern depicted in \citep[][Figure~13]{KordaEtAl18}. Based on its corresponding eigenfrequency and level-set structure, eigenfunction $\zeta_{\tau,19}$ resembles a second harmonic of $\zeta_{\tau,1}$.   

Next, we consider forecasting experiments for the three components $(F_1, F_2, F_3) $ of the observation map $F$, which coincide with the components of the L63 state vector in $\real^3$. We evaluate data-driven forecast functions for these observables at lead times in the interval $ [ 0, 500 \, \Delta t ] = [ 0, 5 ] $, using the regularization parameter $\tau = 10^{-5}$ and $L=750$ basis functions. Representative forecast trajectories and the corresponding normalized $L^2$ errors are displayed in Figure~\ref{fig:L63Pred}. Unlike the linear error growth seen in Figure~\ref{fig:T2DPred} for the torus experiments, the L63 forecasts exhibit an exponential-like initial error growth, lasting for lead times up to $ t \simeq 0.7$, and followed by a more gradual increase. The initial error growth period is somewhat shorter, though of the same order of magnitude, than the $e$-folding timescale associated with the system's positive Lyapunov exponent, i.e., $1/\Lambda \approx 1.1 $. In the case of observables $F_1$ and $F_2$, the normalized $L^2$ error $\varepsilon(t)$ is seen to saturate around 1.4 as $t$ approaches 5. Observable $F_3$ exhibits a somewhat slower error growth than $F_1 $ and $F_2$, which may be a manifestation of dynamical symmetry of the L63 system under the transformation $(x,y,z) \in \real^3 \mapsto ( -x, -y, z )$, making $F_3 $ a more predictable observable. 

To interpret the long-time behavior of the error $\varepsilon(t)$, note that the Koopman operator of a mixing dynamical system such as L63 has the property that, as $ t\to \infty$,  $ \langle g, U^t f \rangle_\mu$ converges to $ \langle g, 1 \rangle_\mu \langle 1, f \rangle_\mu$. Based on this, it is possible to verify that, in this limit, the normalized $L^2$ error $ \lVert (U^t - e^{t \tilde V_\tau} ) f \rVert_{L^2(\mu)} / \lVert f \rVert_{L^2(\mu)} $ associated with the quasiperiodic, unitary evolution group generated by $ \tilde V_\tau := \mathcal{U}^*_\tau W_\tau \mathcal{U}_\tau $ converges to $ \sqrt{2}$. Now, our RKHS-based prediction scheme does not employ $ \tilde V_\tau $ directly, and as follows from Corollary~\ref{thm:predic}, its error is governed by the non-unitary group generated by $B_\tau$, viz., $ \lVert U^t f - e^{t B_\tau} P^*_\tau f_\epsilon \rVert_{L^2(\mu)} / \lVert f \rVert_{L^2(\mu)} $. Nevertheless, for sufficiently small $ \tau $ and $\epsilon$, $e^{tB_\tau} P^*_\tau f_\epsilon $ can be made arbitrarily close to $e^{t\tilde V_\tau } f $, uniformly over compact time intervals. In that case, $ \varepsilon(t)$ would saturate close to $ \sqrt{2}$, as observed in Figure~\ref{fig:L63Pred}.  It is worthwhile noting that, in the presence of mixing, the forecast functions derived from expectation values must necessarily converge to a constant equal to the mean with respect to the invariant measure of the dynamics. For instance, as $ t \to \infty$, a forecast of the form $ \mathbb{E}_\rho U^t f = \langle \rho, U^t f \rangle_\mu $ \cite{BerryEtAl15,Giannakis17}, where $ \rho $ is a probability density in $L^2(\mu)$,  satisfies $ \mathbb{E}_\rho U^t f \to \langle \rho, 1 \rangle_{\mu} \langle 1, U^t f \rangle_{\mu}= \int_M f \, d\mu $. Such forecasts have asymptotic relative error $\epsilon(t)$ equal to 1, i.e., smaller relative error than the quasiperiodic unitary evolution models constructed here, but arguably a constant prediction does not provide a realistic representation of the underlying dynamics. Indeed, as illustrated by the forecast trajectories in Figure~\ref{fig:L63Pred}, the RKHS-based framework produces non-trivial, L63-like dynamics even at late times, when initial-value predictability has been lost. In that regard, the data-driven forecasts presented here are more akin to a ``simulation'' of L63 dynamics, as opposed to estimation of expectation values and/or other statistics.

\begin{figure}
    \includegraphics[width=\linewidth]{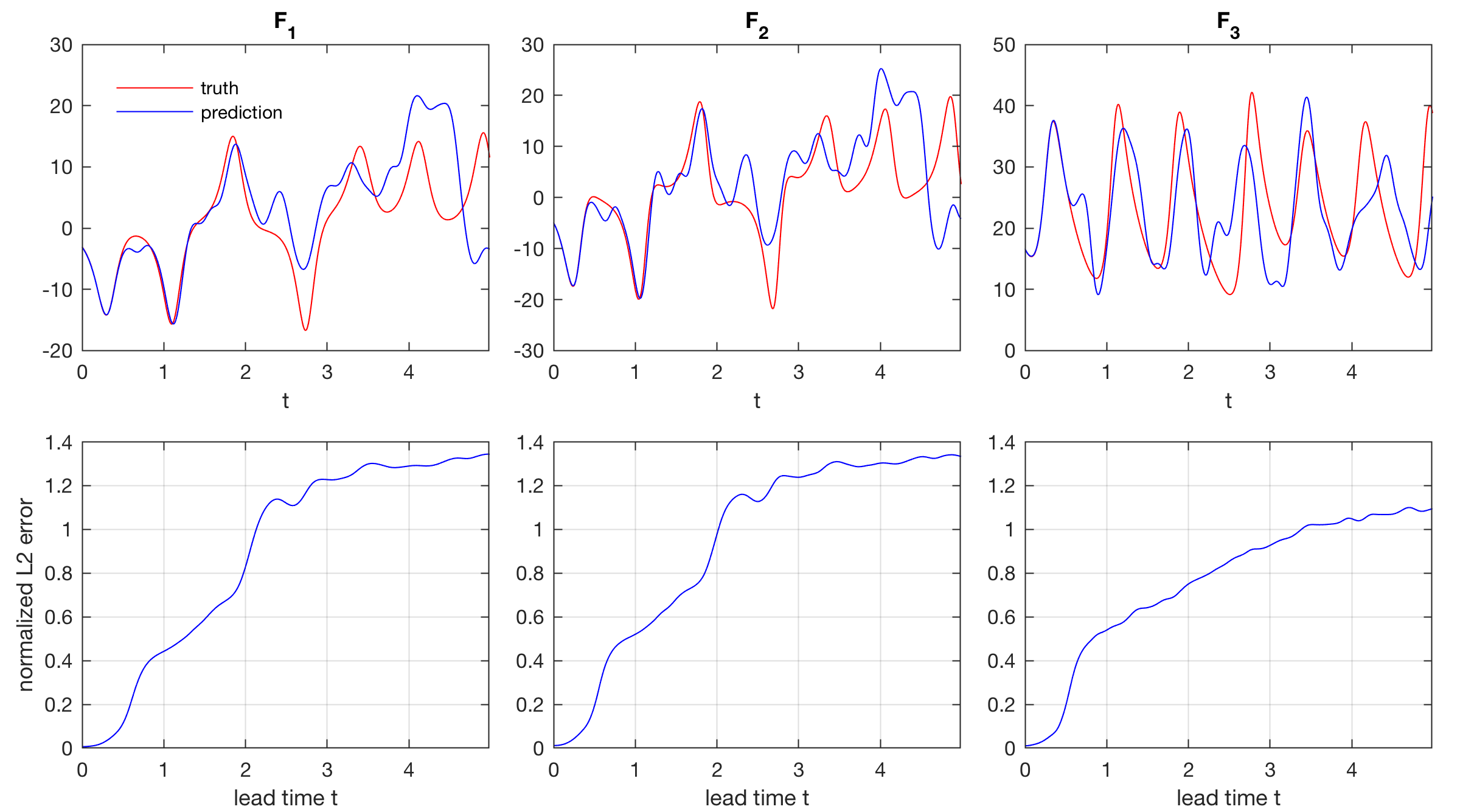}
    \caption{As in Figure~\ref{fig:T2DPred}, but for data-driven prediction of the components $F_1$, $F_2$, and $F_3$ of the L63 state vector.} 
	\label{fig:L63Pred}
\end{figure}

\paragraph{R\"ossler system} 
The R\"ossler system is sometimes viewed as a simplified analog of the L63 system, as it only has a single quadratic nonlinearity, as opposed to two nonlinearities in the L63 system. Yet, despite the simplicity of its governing equations, it exhibits complex dynamical characteristics, some of which are not seen in the L63 system. For the standard choice of parameters listed above, one well known such feature is an outward spiraling motion in the $z=0$ plane about an unstable fixed point at $(x,y,z)=(0,0,0) \in \real^3$, which undergoes intermittent bursts to large positive $z$ values when the radial coordinate $ r = \sqrt{x^2 + y^2} $ has become sufficiently large. This behavior produces a stiff signal in the $z$ coordinate, as well as banding of trajectories in state space, which are challenging to model with data-driven approaches. Another notable aspect of the R\"ossler system is that chaotic behavior predominantly takes place in the $(r,z)$ coordinates, whereas the evolution of the azimuthal angle in the $z=0 $ plane proceeds at a near-constant angular frequency, approximately equal to 1 in natural time units. The R\"ossler system is also known to possess a single positive Lyapunov exponent, approximately equal to 0.071 \cite{Sprott03}. While, to our knowledge, theorems on the existence and measure-theoretic mixing properties of the R\"ossler system analogous to \cite{LorentzFract,LuzzattoEtAl05} for the L63 system have not been established, the system has been studied extensively through analytical and numerical techniques, supporting the hypothesis that the R\"ossler system is indeed mixing, albeit at a slow rate \cite{PeiferEtAl05}.

In light of the above, it is perhaps not too surprising that the dependence of the eigenfrequencies of $W_\tau$ for this system, depicted in Figure~\ref{fig:SpecTau}(c) for $L = 750$, exhibits features reminiscent of both the torus and L63 spectra in Figures~\ref{fig:SpecTau}(a) and~\ref{fig:SpecTau}(b), respectively. That is, the spectrum of $W_\tau$ for the R\"ossler system exhibits bands of eigenfrequency curves with an apparent continual increase of Dirichlet energy with decreasing $ \tau$, as in L63,  but superposed to these curves is a set of eigenfrequencies at approximately integer multiples of a base frequency $ \alpha \simeq 1$, and  with near-constant corresponding Dirichlet energies, as in the linear torus flow. A visualization of corresponding eigenfunctions from the latter group, e.g., eigenfunctions $\zeta_{\tau,1}$ and $\zeta_{\tau,3}$ in Figure~\ref{fig:ZetaRossler} computed for $\tau = 10^{-5}$, reveals that these frequencies are indeed associated with highly coherent observables, which are predominantly functions of the azimuthal phase angle, and evolve near-periodically at integer multiples of the base frequency $ \alpha$.  Meanwhile, another group of eigenfrequencies of $W_\tau$, whose corresponding Dirichlet energies undergo a moderate increase  with decreasing $ \tau $, exhibit manifestly radial variability in state space and amplitude-modulated time series, reminiscent of the eigenfunctions recovered in the L63 system. Such an eigenfunction is $\zeta_{\tau,39}$ shown in Figure~\ref{fig:ZetaRossler}, whose corresponding eigenfrequency, $ \omega_{\tau,39} \approx 0.36 $, is smaller than the base frequency $\alpha$. \blue{On the basis of the eigenfrequency $\omega_{\tau,39}$, we can identify a characteristic timescale $2\pi/\omega_{\tau,39} \approx 17.5$ of coherent radial oscillations of the R\"ossler system.}  

\begin{figure}
    \includegraphics[width=\linewidth]{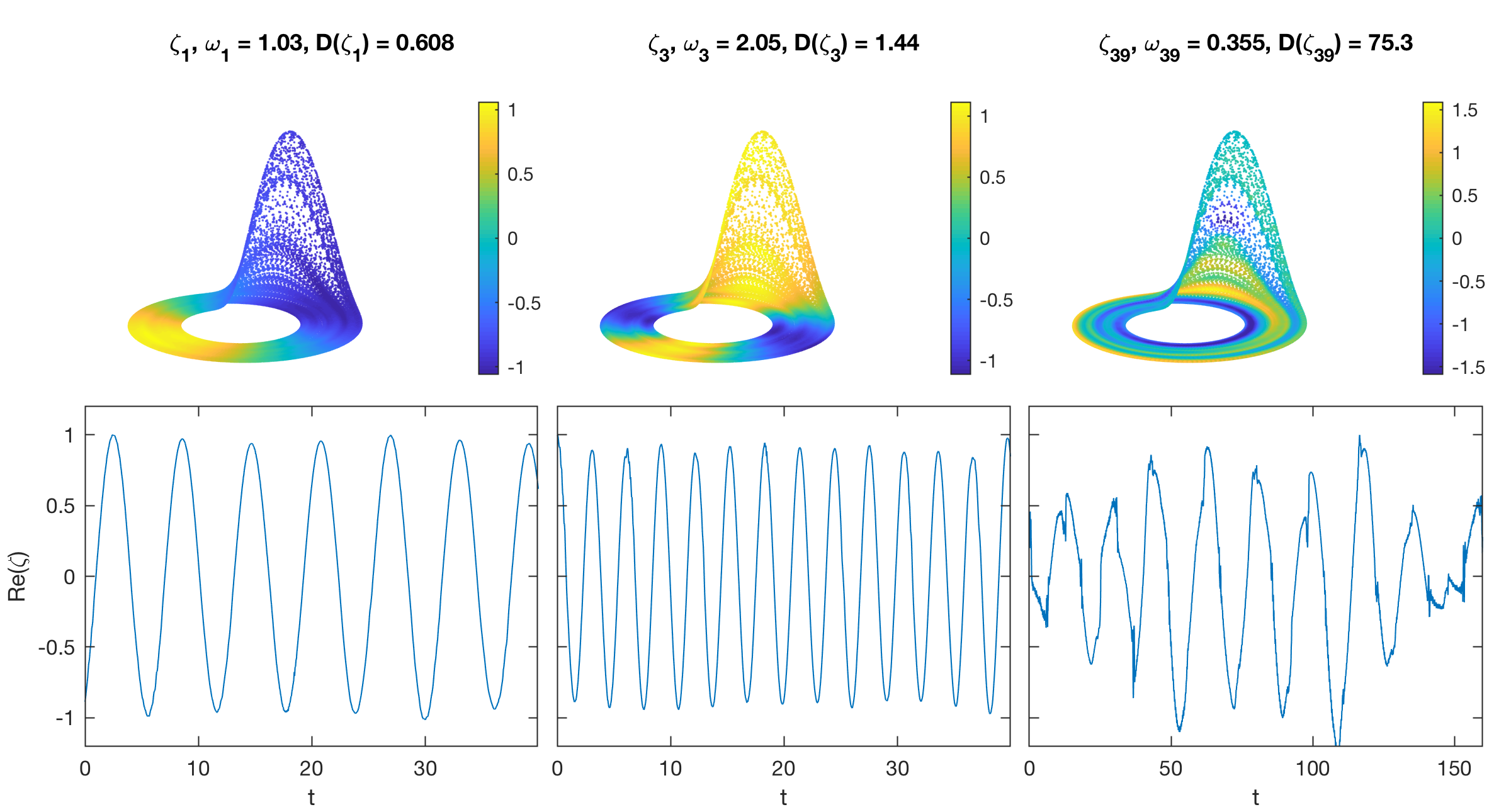}
    \caption{\label{fig:ZetaRossler}As in Figure~\ref{fig:ZetaTorus}, but for eigenfunctions of $W_\tau$, $ \tau = 10^{-5}$, for the R\"ossler system.}
\end{figure}

Next, Figure~\ref{fig:RosslerPred} shows forecasting results for the components $(F_1,F_2,F_3)$ of the R\"ossler state vector over lead times $ t \in [ 0, 2000 \, \Delta t ] = [ 0, 80 ] $, computed for $ \tau = 10^{-5}$ and $L=1000$. Due to the dynamical behavior of the R\"ossler system outlined above, one would expect that predicting $F_3 $ is significantly more challenging than predicting $F_1 $ or $F_2$, and this is indeed reflected in the results in Figure~\ref{fig:RosslerPred}. In particular, consistent with the near-linear evolution of the azimuthal phase angle in the $z=0$ plane, prediction of the observation map components $F_1$ and $F_2 $ remains skillful for the entire forecast interval examined, with the normalized error $\varepsilon(t)$ exhibiting a gradual increase to $ 0.25 $ by $ t \simeq 80 $. An inspection of the individual forecast trajectories shown in Figure~\ref{fig:RosslerPred} indicates that the errors in these forecasts are predominantly amplitude errors (as opposed to phase errors), likely caused by chaotic dynamics of the radial coordinate $r$. On the other hand, forecasts of the $F_3$ component exhibit a significantly more rapid error growth, reaching $ \varepsilon(t) \simeq 1.15$ as $t$ approaches 80.  This error can be understood from the highly stiff, intermittent nature of $F_3 $, exhibiting infrequent excursions to large positive values and virtually no negative values. As is evident from the forecast trajectory in Figure~\ref{fig:RosslerPred}, the data-driven forecasts are generally successful in capturing the timing of the $F_3$ bursts (likely aided by the high coherence of the azimuthal phase angle), but for lead times $ t \gtrsim 20 $, they struggle to reproduce the amplitude of the bursts and the non-negativity of the $F_3$ signal. In separate calculations, we have verified that the non-negativity of the forecast signal over a given time interval can be improved by increasing the number $L$ of basis functions. 

\begin{figure}
    \includegraphics[width=\linewidth]{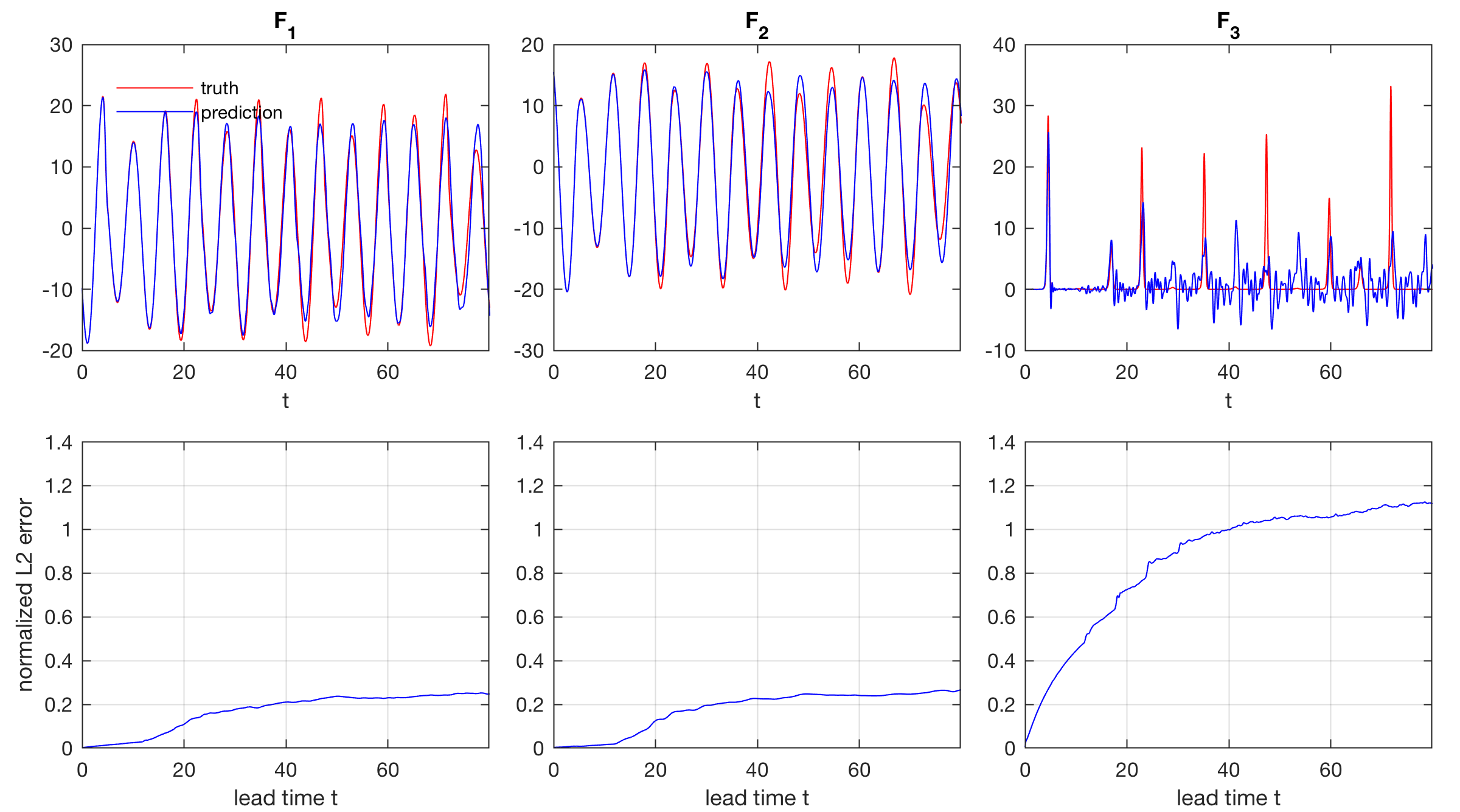}
\caption{As in Figure~\ref{fig:L63Pred} but for forecasts of the state-vector components of the R\"ossler system, using $ \tau = 10^{-5} $ and $L=7000$.} 	\label{fig:RosslerPred}
\end{figure}

\section{\label{secConclusions}Concluding remarks} 

In this paper, we have developed a data-driven framework for spectral analysis of measure-preserving, ergodic dynamical systems, using ideas from RKHS theory. A central element of our approach has been to regularize the unbounded, skew-adjoint generator of the unitary Koopman group of the system by pre- and post-composing it with integral operators associated with reproducing kernels of RKHSs, rendering it into a compact operator. We showed that if this procedure is carried out using a one-parameter family of Markov kernels of appropriate ($C^1$) regularity, the resulting regularized generators form a one-parameter family of trace-class, skew-adjoint integral operators $W_\tau$ on RKHS, converging to the Koopman generator in strong resolvent sense in a limit of vanishing regularization parameter $\tau$. As a result, at every $ \tau > 0$, $W_\tau$ can be spectrally decomposed in terms of a purely atomic projection-valued measure (PVM), with an associated discrete set of eigenfrequencies and an orthonormal basis of eigenfunctions, converging to the PVM of the Koopman generator as $ \tau \to 0^+ $ in an appropriate sense. Notably, this result holds for measure-preserving ergodic systems of arbitrary spectral characteristics (pure point, continuous, mixed), and further allows consistent approximation of the functional calculus of the Koopman generator for bounded continuous functions. In particular, exponentiation of the regularized generator leads to a unitary, quasiperiodic evolution group, $ e^{tW_\tau} $, which can be used as an approximation of the Koopman group of the system to perform forecasting of observables with convergence guarantees. We also showed that the eigenfunctions associated with this group form coherent observables lying in the approximate point spectrum of the Koopman operator, generalizing the coherent patterns associated with Koopman eigenfunctions and the point spectrum of the system.  

Another advantageous aspect of the RKHS framework is that it naturally lends itself to data-driven approximation from time-ordered measurements of the system state taken through injective observation maps, requiring little structural modification of the continuous formulation. In particular, the data-driven approximation schemes employ properties of physical measures to consistently approximate integrals with respect to the invariant measure by time averages, and take advantage of RKHS regularity to approximate the action of the generator on functions by temporal finite differences. Coupled with the ability afforded by RKHSs to perform interpolation and out-of-sample evaluation, this approach leads to data-driven forecast functions for the evolution of observables, as well as coherent eigenfunctions, whose robustness can be assessed a posteriori through a Dirichlet energy criterion. 

We demonstrated the efficacy of this approach through a suite of  coherent pattern extraction and forecasting experiments in the setting of a quasiperiodic flow on the 2-torus and the chaotic L63 and R\"ossler systems. \blue{In the case of the torus rotation, the eigenfrequencies of the RKHS-regularized operator $W_\tau$ correctly identify generating eigenfrequencies of the system, as well as integer combinations of such eigenfrequencies. Meanwhile, in the L63 and R\"ossler settings, eigenfunctions of $W_\tau $ identified via the Dirichlet energy criterion exhibit an approximately periodic evolution, behaving as approximate Koopman eigenfunctions. These eigenfunctions reveal coherent oscillatory observables of these systems with characteristic timescales determined from the corresponding eigenvalues, despite potentially mixing dynamics. Forecasting using the evolution group generated by $W_\tau $ was found to perform well in these systems, with skill likely aided by the presence of approximately periodic eigenfunctions in the respective spectra.} 

Areas of future research stemming from this work include improved representations of the generator through alternative schemes to finite differences, as well as extensions to partially observed systems (i.e., non-injective observation maps). In addition, the fact that the spectral convergence results in Theorem~\ref{thm:Main} require pointwise convergence of the approximating operators only on a core of the generator $V$, yet in Proposition~\ref{prop:Semigroup} we were able to establish pointwise convergence on the full domain $D(V)$, suggests that it may be possible to weaken the $C^1$ regularity assumptions on the kernels and their associated RKHSs underlying Theorem~\ref{thm:Main}. It would also be fruitful to explore formulations of the framework presented here utilizing methods for kernel learning \cite{BerryHarlim18,OwhadiYoo18} to optimize prediction skill of prescribed observables. \blue{Meanwhile, the approximately periodic nature of the identified eigenfunctions in the L63 and R\"ossler systems suggests possible connections between the spectral properties of $W_\tau$ and periodic orbits of the underlying flow in state space expected for non-uniformly hyperbolic dynamics. Finally, a topic of significant interest in both the Koopman and transfer operator literature is spectral analysis of dissipative and/or non-ergodic systems \cite{Mezic05,FroylandEtAl14,Mezic20}. While some of the spectral approximation techniques employed in this work make use of the skew-adjoint structure of the generator of measure-preserving systems (e.g., strong convergence in a core in Lemma~\ref{lem:core_SRC}), and we have also made use of ergodicity to establish correspondences between the spectra of various types of regularized generators (e.g., Lemma~\ref{lem:fsdt}), it would nevertheless be fruitful to explore applications of RKHS theory to spectral analysis of such ``open dynamical systems'', extending the framework developed here for the measure-preserving, ergodic setting.}

\section*{Acknowledgments} Dimitrios Giannakis acknowledges support by ONR YIP grant N00014-16-1-2649, ONR MURI grant N00014-19-1-2421, NSF grants DMS-1521775, DMS-1854383, 1842538, and DARPA grant HR0011-16-C-0116. Suddhasattwa Das was supported as a postdoctoral research fellow from the first two grants. Joanna Slawinska acknowledges support from NSF grants 1551489 and 1842538. The authors would like to thank Igor Mezi\'c for pointing out a possible connection between the results of Theorem~\ref{thm:Main} and the pseudospectrum of the Koopman operator, which led to Corollary~\ref{corr:APS}. 

\appendix 

\section{Variable-bandwidth kernels} \label{appVB}

The numerical experiments in Section~\ref{sect:examples} were performed using variable-bandwidth Gaussian kernels $ \kappa_N : Y \times Y \to \real$ of the form
    \begin{equation}\label{eqKVB}
        \kappa_N( y, y' ) = \exp \left( - \frac{ d^2( y, y' ) }{ \epsilon \sigma_N(y) \sigma_N(y') } \right).
    \end{equation}
    Here, $ \sigma_N : Y \to \real $ is a strictly-positive, $C^1$ function on $ Y$, which generally depends on the training dataset $ \{ y_0, \ldots, y_{N-1} \} $. We indicate this dependence with $N$ subscripts. Intuitively, the role of the bandwidth function $ \sigma_N $ is to correct for variations in the ``sampling density'' of the data. In particular, for a well conditioned kernel integral operator $G_N$, the number of datapoints lying within radius $O(\epsilon^{1/2}) $ balls centered at each datapoint should not exhibit significant variations across the dataset, yet, the standard radial Gaussian kernel from~\eqref{eqn:def:Gauss_ker} has no mechanism for preventing this from happening. For appropriately chosen $ \sigma_N $, the variable-bandwidth kernel in~\eqref{eqKVB} can, in effect, vary the radii of these balls to help improve conditioning. The different bandwidth functions proposed in the literature include near-neighbor distances \cite{ZelnikManorPerona04} and kernel density estimates \cite{BerryHarlim16}. In the numerical experiments of Section~\ref{sect:examples}, we will employ the latter approach, defining 
\begin{equation}\label{eqSigma}
      \sigma_N( y ) = \rho_N^{-1/\tilde m}( y ), \quad \rho_N(y) = \frac{ 1 }{ (\pi \tilde \epsilon )^{\tilde m / 2} } \int_Y e^{-d^2(y, y') / \tilde \epsilon } \, d\tilde \mu_N(y).
\end{equation} 
Here, $ \tilde \mu_N = \sum_{n=0}^{N-1} \delta_{y_n} / N $ is the sampling measure in data space, $ \tilde \epsilon $ a positive bandwidth parameter (different from $\epsilon $ in~\eqref{eqKVB}), and $ \tilde m $ a positive parameter approximating the dimension of $ F( X) $. The parameters $\epsilon $, $ \tilde \epsilon $, and $\tilde m $ are all determined from the data automatically; see \cite{BerryEtAl15,Giannakis17} for descriptions of this procedure. 

If $F(X) $ has the structure of a Riemannian submanifold of $Y$, and the pushforward $ \tilde \mu $ on of the invariant measure on $ Y $ has a smooth density, the functions $ \rho_N$ from~\eqref{eqSigma} are estimates of the sampling density  $ \rho = d\tilde \mu /d\vol$, which converge in the limit of $ N \to \infty $ followed by $ \tilde \epsilon \to 0 $. Thus, with this choice of bandwidth function, the bandwidth of the kernel $ \kappa_N $ from~\eqref{eqKVB} will be large (small) when the sampling density is small (large), achieving the desired balancing of the kernel. More quantitatively, with this choice of bandwidth functions and after suitable normalization, $ \kappa_N $ approximates the heat kernel of a conformally transformed Riemannian metric on $F(X)$, whose volume form has uniform density relative to $ \tilde \mu $ \cite{Giannakis17}. Of course, if $ F(X) $ does not have manifold structure, or $ \rho $ is not smooth, this Riemannian geometric interpretation is not applicable, but the balancing effect of the bandwidth functions on local balls still holds. It should be noted that one can prove spectral convergence results analogous to Lemma~\ref{lem:DataPredic}(i) for the class of $N$-dependent kernels on $M$ induced by $\kappa_N$; see \cite{GiannakisEtAl17} for such a result. Here, we will omit a proof of spectral convergence for the integral operators associated with $ \kappa_N$  in the interest of brevity. It is also important to note that, to our knowledge, it has not been established whether the kernels on $M$ induced by $\kappa_N $, and the kernel that they converge to as $ N \to \infty$, are $ L^2(\mu_N) $- and $ L^2(\mu) $-strictly-positive, respectively. That being said, we did not find evidence of zero eigenvalues of $G_N$ in the experiments of Section~\ref{sect:examples}. 

\section{Pseudocode }\label{sec:algo} 

In this appendix, we provide pseudocode listings for the techniques described in Section~\ref{sect:numerics}. We have split the entire process into four algorithms, the first two of which describe the construction of the data-driven eigenpairs $(\lambda_{N,j}, \phi_{N,j}) $ from Lemma~\ref{lem:DataPredic} and pointwise evaluation of the corresponding basis functions $ \psi_{N,j} $ of $\mathcal{H}_{N}$, respectively. Algorithm~\ref{alg:generator} describes the construction of the data-driven generator $W_{\tau,N,\Delta t}^{(L)}$ from Theorem~\ref{thm:data_predic}(i) and computation of its associated eigenvalues and eigenfunctions. Algorithm~\ref{alg:pred} describes the construction and pointwise (out-of-sample) evaluation of the data-driven forecast function $ f_{\tau,N,\Delta t, L, L'}^{(t)}$ from Corollary~\ref{corPredic}. In what follows, $ \vec 1 $ will denote the $N$-dimensional column vector whose elements are all equal to $1$. Moreover, the indexing of all vector and matrix elements will start from 0. 

We begin by listing Algorithm~\ref{alg:data_basis} for a general kernel $\kappa$ on data space $Y$ of the form in~\eqref{eqKPullback}, evaluated on a time series of the values of the observation map $F$ on a dynamical trajectory $x_0,\ldots,x_{N-1}$ in $M$. As stated in Sections~\ref{sect:numerics} and~\ref{sect:examples}, here we work with the variable-bandwidth Gaussian kernel described in \ref{appVB}. Evaluation of this kernel requires a kernel density estimation step, summarized in \cite[][Algorithm~1]{Giannakis17}. The variable-bandwidth Gaussian kernel also requires specification of the bandwidth parameter $\epsilon$, as well as the bandwidth and dimension parameters $ \tilde \epsilon$ and  $ \tilde m $, respectively, in~\eqref{eqSigma}.  We set these parameters automatically via the procedure described in \citep[][Appendix~A]{BerryEtAl15} and \citep[][Algorithm~1]{Giannakis17}. The main outputs of Algorithm~\ref{alg:data_basis} are the eigenpairs $( \lambda_{N,j}, \phi_{N,j})$ of the Markov operator $ G_N $ associated with the Markov kernel $p_{N}$, obtained via the bistochastic normalization procedure from Section~\ref{secReview_RKHS}. Due to the $L^2(\mu_N) \simeq \cmplx^N$ isomorphism, $ G_N $ can be represented by an $N\times N $ matrix $ \bm G $ with elements $ G_{ij} = p_N( x_{i}, x_{j} ) / N $, and the eigenvectors $ \phi_{N,j} $ by $N$-dimensional column vectors $ \vec \phi_j = ( \phi_{N,j}(x_0), \ldots, \phi_{N,j}(x_{N-1}) )^\top$.  We will abbreviate $ \lambda_{N,j} $ by $ \lambda_j$.  The eigenpairs $ ( \lambda_{j}, \vec \phi_j ) $  can be computed without explicit formation of $\bm G$, owing to the fact that $\bm G = \tilde{ \bm K } \tilde{ \bm K}^\top $, where $ \tilde{\bm K}$ is a non-symmetric $N \times N$ kernel matrix to be defined in Algorithm~\ref{alg:data_basis}. In particular, the  $\lambda_{j}$ are equal to the squared singular values of $\tilde{ \bm K }$, and the $ \vec \phi_j $ are equal to the corresponding left singular vectors. Algorithm~\ref{alg:data_basis} also outputs as auxiliary outputs the corresponding right singular vectors $ \vec \gamma_j \in \mathbb{R}^N$ of $ \tilde{ \bm K } $ and a degree vector $ \vec q \in \mathbb{R}^N$ associated with that matrix; these outputs will be used for pointwise evaluation in Algorithm~\ref{alg:OUS}.

\begin{algorithm}[Data-driven basis]\label{alg:data_basis} \

\begin{itemize}
\item Inputs 
\begin{itemize}
    \item Time series $ F(x_0), \ldots, F(x_{N-1}) $ in data space $Y$
	\item Number $L \leq N $ of eigenpairs to be computed
\end{itemize}
\item Outputs 
    \begin{itemize}
        \item Leading $L$ eigenvalues $ \lambda_0,  \ldots, \lambda_{L-1} $ of $\bm G$ and the corresponding eigenvectors $ \vec \phi_0, \ldots, \vec \phi_{L-1} \in \mathbb{R}^N$   
        \item Degree vector $ \vec q \in \mathbb{ R}^N$
        \item Right singular vectors $ \vec \gamma_0, \ldots, \vec \gamma_{L-1} \in \mathbb{R}^N$
    \end{itemize}
\item Steps
\begin{enumerate}
    \item Compute the $N \times N$ kernel matrix $\bm K$ with $K_{ij} = \kappa\left( F(x_i) , F(x_j) \right) / N$. 
    \item Compute the $N$-dimensional degree vectors $\vec{d} = \bm K \vec 1$ and $\vec{q} = \bm K \bm D^{-1} \vec 1$, where $ \bm D = \diag \vec d $.
    \item Form the $N \times N$ kernel matrix $ \tilde{ \bm K } = \bm D^{-1} \bm K \bm Q^{-1/2}$, with $ \bm Q = \diag \vec q $. 
    \item Compute the $L$ largest singular values $ \sigma_0, \ldots, \sigma_{L-1} $ of $ \tilde{ \bm K } $, and set $ \lambda_j = \sigma_j^2$. Set $ \vec \phi_j $ and $ \vec \gamma_j $ to the corresponding left and right singular vectors, respectively, normalized to unit 2-norm. 
\end{enumerate}
\end{itemize}
\end{algorithm}

Next, Algorithm~\ref{alg:OUS} carries out the task of evaluating the RKHS functions $ \psi_{N,j} \in \mathcal{H}_N$ at an arbitrary collection $ \hat x_0, \hat x_1, \ldots, \hat x_{\hat N-1} $ of points in $M$, given the corresponding values $ F(\hat x_0 ),F(\hat x_1), \ldots, F(\hat x_{\hat N-1}) $ of the observation map $F$. As with Algorithm~\ref{alg:data_basis}, this computation can be performed without explicit formation of a kernel matrix associated with $ p_N$, using instead the singular vectors $ \vec \phi_0, \ldots, \vec \phi_{L-1} $ and $ \vec \gamma_0, \ldots, \vec \gamma_{L-1} $. In what follows, we use the column vectors $ \vec \psi_j = ( \psi_{N,j}(\hat x_0), \ldots \psi_{N,j}( \hat x_{N-1}))^\top \in \mathbb{R}^{\hat N}$ to represent the values of the $ \psi_{N,j} $ at the desired points. Note that in the case of the variable-bandwidth Gaussian kernels from \ref{appVB}, the computation of the $ \vec \psi_j $ requires an additional density estimation step for the out-of-sample data $ F(\hat x_n)$, which is carried out analogously to \citep[][Algorithm~1]{Giannakis17}. Moreover, all kernel parameters $ \epsilon$, $ \tilde \epsilon$, and $ \tilde m $ are the same as those used in Algorithm~\ref{alg:data_basis}.            

\begin{algorithm}[Pointwise evaluation in RKHS]\label{alg:OUS} \
\begin{itemize}
\item Input
\begin{itemize}
    \item Values $ F(\hat x_0), \ldots, F(\hat x_{\hat N-1}) $ of the observation map at the evaluation points 
    \item Eigenvalues $  \lambda_0, \ldots, \lambda_{L-1}  $, eigenvectors $  \vec \phi_0, \ldots, \vec \phi_{L-1}  $, right singular vectors $  \vec \gamma_0, \ldots, \vec \gamma_{L-1}  $, and degree vector $\vec q $ from Algorithm~\ref{alg:data_basis}   
\end{itemize}

\item Output 
    \begin{itemize}
        \item Vectors $ \vec \psi_0, \ldots, \vec \psi_{L-1} \in \mathbb{R}^{\hat N} $ with the values of the RKHS functions $\psi_{N,0}, \ldots, \psi_{N,j}$ at the evaluation points 
    \end{itemize}
   
\item Steps
\begin{enumerate}
    \item Compute the $\hat N \times N$ kernel matrix $ \hat{ \bm K}$ with $ \hat K_{ij} = \kappa\left( F( \hat x_i) , F(x_j) \right) / N$. 
    \item Compute the $\hat N$-dimensional degree vector $\hat{d} = \hat{\bm K} \vec 1$. 
    \item Form the $\hat N \times N $ kernel matrix $ \bar{ \bm K } = \hat{\bm D}^{-1} \hat{\bm K} \bm Q^{-1/2} $,  where $ \hat{\bm D} = \diag \hat d $ and $ \bm Q = \diag \vec q $.
    \item Output $ \vec \psi_j = \bar{ \bm K} \vec \gamma_j $. 
\end{enumerate}
\end{itemize}
\end{algorithm}

Note that when working with Gaussian kernels, as done throughout this paper, we approximate the kernel matrices $ \bm K $, $ \tilde{ \bm K } $, $ \hat {\bm K} $ and $ \bar {\bm K} $ in Algorithms~\ref{alg:data_basis} and~\ref{alg:OUS} by sparse matrices (as is common practice), retaining in each case the $ k_{\text{nn}} $ largest entries  per row. In the numerical experiments of Section~\ref{sect:examples}, $k_\text{nn} $ was approximately $8\%$ of $N$.

We now describe how to construct an $L\times L$ matrix $ \bm W $ representing the data-driven generator $W_{\tau,N,\Delta t}^{(L)}$ in the $\psi_{\tau,N,j}$ basis of $ \mathcal{H}_{\tau,N}$, and use that matrix to compute the $(\omega_{\tau,N,\Delta t,j}^{(L)}, \zeta_{\tau,N,\Delta t,j}^{(L)})$ eigenpairs. We represent each eigenvector $ \zeta_{\tau,N,\Delta t,j}^{(L)} \in \mathcal{H}_{\tau,N}$ by a column vector $ \vec \xi_j = ( \xi_{0,j}, \ldots, \xi_{L-1,j} )^\top \in \cmplx^L$ storing the expansion coefficients of $ \zeta_{\tau,N,\Delta t,j}^{(L)} $ in the $ \psi_{\tau,N,j}$ basis, i.e., $ \zeta_{\tau,N,\Delta t,j}^{(L)} = \sum_{i=0}^{L-1} \xi_{i,j} \psi_{\tau,N,j}$. Given a set $ \{  \hat x_0, \ldots, \hat x_{\hat N-1} \} $ of evaluation points in $M$, the values $ \zeta_{\tau,N,\Delta t,j}^{(L)}(\hat x_n) $ will be represented by the column vectors $ \vec \zeta_j = ( \zeta_{\tau,N,\Delta t, j}^{(L)}(x_0), \ldots, \zeta_{\tau,N,\Delta t, j}^{(L)}( \hat x_{\hat N -1}))^\top \in \cmplx^{\hat N} $. In Algorithm~\ref{alg:generator} below, we describe the construction of $ \bm W$ and the computation of the $ \omega_j $, $ \vec \xi_j $, and $ \vec \zeta_j$,  using the central finite-difference scheme from~\eqref{eqFD2} to approximate the action of the generator. The algorithm can also be implemented using any skew-adjoint finite-difference scheme of appropriate regularity. Moreover, we employ the basis functions and pointwise evaluation procedures from Algorithms~\ref{alg:data_basis} and~\ref{alg:OUS}, associated with the bistochastic kernel normalization in Section~\ref{secReview_RKHS}, but Algorithm~\ref{alg:generator} can be implemented using any other Markov operator meeting the conditions of Theorem~\ref{thm:data_predic}. Algorithm~\ref{alg:generator} also returns the frequency-adjusted Dirichlet energies $ \mathcal{D}_{N,\Delta t}(\zeta_{\tau,N,\Delta, t,j}^{(L)})$ of the eigenfunctions from~\eqref{eqDirichletDt}, abbreviated $\mathcal{D}_j$. We also abbreviate $ \omega_{\tau,N,\Delta t, j}^{(L)} $ by $ \omega_j $.    

\begin{algorithm}[Data-driven generator and its eigendecomposition]\label{alg:generator} \ 
\begin{itemize}
\item Inputs
\begin{itemize}
	\item RKHS regularization parameter $\tau>0$
	\item Time step $\Delta t > 0$
    \item Eigenvalues $  \lambda_0, \ldots, \lambda_{L-1}  $, eigenvectors $  \vec \phi_0, \ldots, \vec \phi_{L-1}  $, right singular vectors $  \vec \gamma_0, \ldots, \vec \gamma_{L-1}  $, and degree vector $\vec q $ from Algorithm~\ref{alg:data_basis}
    \item  Pointwise-evaluated RKHS functions $\vec \psi_0, \ldots, \vec \psi_{L-1} $ from Algorithm~\ref{alg:OUS}
\end{itemize}
\item Outputs
\begin{itemize}
    \item Eigenfrequencies $ \omega_{0}, \ldots, \omega_{L-1} \in \mathbb{R}$,  the corresponding eigenvectors $\vec \xi_0, \ldots, \vec \xi_{L-1} \in \cmplx^L$, and the Dirichlet energies $ \mathcal{D}_0, \ldots, \mathcal{D}_{L-1} \geq 0$ 
    \item Vectors $ \vec \zeta_0, \ldots, \vec \zeta_{L-1} \in \cmplx^{\hat N} $ with the values of the eigenfunctions $ \zeta_{\tau,N,\Delta t,j}^{(L)} $ at the evaluation points
\end{itemize}
\item Steps
\begin{enumerate}
    \item Construct the $L\times L$ diagonal matrix $\tilde{\bm \Lambda} $, with $ \tilde \Lambda_{jj} = e^{\tau(1- \lambda_{j}^{-1})} $, and the $N\times L $ matrix $ \bm \Phi $, whose $j$-th column is equal to $  \vec \phi_{j} $. 
    \item Form the skew-symmetric, tridiagonal, $N\times N$ finite-difference matrix $ \bm V $ with
        \begin{displaymath}
            2\, \Delta t\,\bm V   = \begin{pmatrix}
                0 & \frac{1}{2} &    \\
                - \frac{1}{2} & 0 & 1 & \\
             & - 1 & 0 & 1   \\
            & & \ddots & \ddots & \ddots \\
             & & &  -1 & 0 & 1 \\
             & & & & -\frac{1}{2} & 0 & \frac{1}{2}
         \end{pmatrix}.
    \end{displaymath}
\item Compute the $L\times L$ skew-symmetric matrix ${\bm W} =  {\tilde{ \bm \Lambda}^{1/2} } {\bm\Phi}^\top{\bm V} {\bm\Phi} \tilde {\bm \Lambda}^{1/2}$.
\item Set the eigenfrequencies $ \omega_0, \ldots, \omega_{L-1} $ to the imaginary parts of the eigenvalues of $\bm W$. Set $ \vec \xi_j$ to the corresponding eigenvectors, normalized to unit 2-norm.
\item For each eigenvector $ \vec \xi_j $, compute the Dirichlet energy  
    \begin{displaymath}
        \mathcal{D}_j = \left( \frac{ \lVert \tilde{ \bm \Lambda }^{1/2} \bm \Lambda^{-1/2} \vec \xi_j \rVert_2^2 } {\lVert \tilde{ \bm \Lambda }^{1/2} \vec \xi_j \rVert_2^2} - 1 \right) ( 1 - ( \omega_j \, \Delta t )^2 )^{-1}, \quad \bm \Lambda = \diag( \lambda_0, \ldots, \lambda_{L-1} ).  
    \end{displaymath}
\item Form the $ \hat N \times L $ matrix $ \bm \Psi $, whose $j$-th column is equal to $ \vec \psi_j $, and set $ \vec \zeta_j = \bm \Psi \vec \xi_j$.
    \end{enumerate}
\end{itemize}
\end{algorithm}

Finally, Algorithm~\ref{alg:pred} computes the values of the data-driven forecast function $ f_{\tau,N,\Delta t,L,L'}^{(t)}$ from Corollary~\ref{corPredic} for lead time $ t \geq 0 $ at a set of evaluation points $ \{ \hat x_0, \ldots, \hat x_{\hat N} \} \subset M $, using the output of Algorithm~\ref{alg:generator} and the values $f(x_0), \ldots, f(x_{N-1}) $ of the prediction observable $f $ on the dynamical trajectory $ x_0, \ldots, x_{N-1}$. The forecast values  are output as a column vector $ \hat f = ( f_{\tau,N,\Delta t, L, L'}^{(t)}( \hat x_0), \ldots, f_{\tau,N,\Delta t, L,L'}^{(t)}( \hat x_{\hat N-1}))^\top \in \cmplx^{\hat N}$. Note that a similar approach can be employed to evaluate the approximations in Theorem~\ref{thm:data_predic}(iii) for general bounded continuous functions $Z : i \mathbb{R} \to \cmplx $. 

\begin{algorithm}[Data-driven prediction]\label{alg:pred}  \
\begin{itemize}
\item Inputs
\begin{itemize}
    \item Lead time $ t \geq 0$
	\item Number of basis functions $L'\leq L $
    \item Time series $f(x_0), \ldots, f(x_{N-1}) \in \cmplx$ of the prediction observable 
    \item Eigenvalues $  \lambda_0, \ldots, \lambda_{L-1}  $ and eigenvectors $  \vec \phi_0, \ldots, \vec \phi_{L-1}  $ from Algorithm~\ref{alg:data_basis}
    \item Eigenfrequencies $ \omega_{0}, \ldots, \omega_{L-1}$, eigenvectors $\vec \xi_0, \ldots, \vec \xi_{L-1} $, and pointwise-evaluated eigenfunctions $ \vec \zeta_0, \ldots, \vec \zeta_{L-1}  $ from Algorithm~\ref{alg:generator}
\end{itemize}
\item Outputs
    \begin{itemize}
        \item Column vector $ \hat f \in \cmplx^{\hat N} $ with the values of the forecast function for $U^tf $ at the evaluation points
    \end{itemize}
\item Steps
    \begin{enumerate}
        \item Form the column vector of observable values $ \vec f = ( f( x_0 ), \ldots, f(x_{N-1} ) )^\top \in \cmplx^N$  
        \item Compute the column vector of expansion coefficients $ \vec c = ( c_0, \ldots, c_{L-1} )^\top \in \cmplx^L $, where 
            \begin{displaymath}
                c_j = \begin{cases}
                    \vec \phi_j^\top \vec f / \lambda_j^{1/2}, & j \leq L', \\
                    0, & \text{otherwise}.
                \end{cases}
            \end{displaymath}
        \item Form the $L \times L $ diagonal matrix $ \bm U = \diag( 1, e^{i \omega_1 t}, \ldots, e^{i\omega_{L-1} t} ) $, the $ \hat N \times L $ eigenfunction matrix $ \bm Z $ whose $j$-th column is equal to $ \vec \zeta_j $, and set $\hat f = \bm Z \bm U \vec c $. 
    \end{enumerate}
    \end{itemize}
\end{algorithm}

\bibliography{References}
\end{document}